\numberwithin{equation}{section}
\newtheorem{thm}{Theorem}[section]
\newtheorem{cor}[thm]{Corollary}
\newtheorem{lem}[thm]{Lemma}
\newtheorem{prop}[thm]{Proposition}
\def\N{{{\Bbb N}}}
\def\L{\text{Lip}}
\def\R{\mathbb{R}}
\def\T{\mathbb{T}}
\theoremstyle{remark}
\newtheorem{rem}[thm]{Remark}
\newcommand{\vertiii}[1]{{\left\vert\kern-0.25ex\left\vert\kern-0.25ex\left\vert #1
    \right\vert\kern-0.25ex\right\vert\kern-0.25ex\right\vert}}
\begin{document}

\title[
Sobolev embeddings, extrapolations, and related inequalities
]{Sobolev embeddings, extrapolations, and related inequalities}
\author
{\'Oscar Dom\'inguez}

\address{O. Dom\'inguez, Departamento de An\'alisis Matem\'atico y Matem\'atica Aplicada, Facultad de Matem\'aticas, Universidad Complutense de Madrid\\
Plaza de Ciencias 3, 28040 Madrid, Spain.}
\email{oscar.dominguez@ucm.es}

\author{Sergey Tikhonov}

\subjclass[2010]{Primary  46E35, 42B35;  Secondary 26A15, 46E30, 46B70}
\keywords{Lebesgue, Lorentz, Besov, Lipschitz, Sobolev spaces; Embeddings; Extrapolations; Moduli of smoothness; Interpolation; Rearrangement inequalities}
 \address{S. Tikhonov, Centre de Recerca Matem\`{a}tica\\
Campus de Bellaterra, Edifici C
08193 Bellaterra (Barcelona), Spain;
ICREA, Pg. Llu\'{i}v Companys 23, 08010 Barcelona, Spain,
 and Universitat Aut\`{o}noma de Barcelona.}
\email{ stikhonov@crm.cat}

\maketitle

\bigskip
\begin{abstract}

In this paper we propose a unified approach, based on limiting interpolation, to investigate the embeddings for the Sobolev space $(\dot{W}^k_p(\mathcal{X}))_0, \, \mathcal{X} \in \{\R^d, \T^d, \Omega\}$, in the subcritical case ($k < d/p$), critical case ($k=d/p$) and supercritical case ($k > d/p$). 
We characterize the Sobolev embeddings in terms of pointwise
inequalities involving rearrangements and moduli of
smoothness/derivatives of functions and via {extrapolation} theorems
for corresponding smooth function spaces. Applications include
Ulyanov-Kolyada type inequalities for rearrangements, inequalities
for moduli of smoothness,
sharp Jawerth-Franke embeddings for Lorentz-Sobolev spaces, various
characterizations of Gagliardo-Nirenberg, Tru\-din\-ger,
Maz'ya-Hansson-Br\'ezis-Wainger and Br\'ezis-Wainger embeddings,  
among others. In particular, we show that the Tao's extrapolation theorem holds true in the setting of Sobolev inequalities. This gives a positive answer to a question recently posed by Astashkin and Milman.  

\end{abstract}
\tableofcontents

\newpage
\section{Introduction}

Sobolev inequalities constitute an important part of functional analysis and geometry with a wide range of remarkable applications in the theory of PDE's, calculus of variations and mathematical physics \cite{AdamsHedberg, EdmundsEvans, Mazya, SobolevBook}.
 The classical Sobolev theorem \cite{Sobolev} reads as follows:
\begin{equation}\label{121=}
	(\dot{W}^k_p(\R^d))_0 \hookrightarrow  L_{p^\ast}(\R^d), \quad p^\ast = \frac{d p}{d - k p}
\end{equation}
provided that  $k \in \mathbb{N}, 1 \leq p < \infty$ and $k < d/p$. 

The original proof of Sobolev embedding \eqref{121=}   (\cite{Sobolev}, see also \cite{SobolevBook} and \cite{Mazya}) relies on an integral representation formula which allows us to reconstruct functions from their derivatives. This rather complicated proof was later significantly simplified by Peetre \cite{Peetre} (with \cite{Hunt} and \cite{Oneil} as forerunners) applying classical \emph{interpolation} methods. In fact, Peetre's approach uses a more refined technique
  replacing the target space $L_{p^\ast}(\R^d)$ in \eqref{121=}  by the smaller Lorentz space $ L_{p^\ast,p}(\R^d)$ (note that $p < p^*$), that is,
\begin{equation}\label{121=new}
	(\dot{W}^k_p(\R^d))_0 \hookrightarrow  L_{p^\ast,p}(\R^d).
\end{equation}
Furthermore, this embedding is optimal within the class of r.i. spaces.

\subsection{Main results and observations}
The main objective of this paper is to provide, in a unifying fashion, two different  characterizations of the homogeneous/in\-ho\-mo\-ge\-neo\-us Sobolev embeddings
\begin{equation}\label{121}
	(\dot{W}^kX)_0 \hookrightarrow Y \quad \text{and} \quad W^kX \hookrightarrow Y
\end{equation}
where $X$ and $Y$ are suitable function spaces defined on $\R^d, \T^d$ or smooth domains $\Omega$ in $\R^d$. 
  Firstly, we show that  (\ref{121}) is equivalent to
certain \emph{pointwise inequalities} involving rearrangements and moduli of smoothness/derivatives of functions. Secondly,
 embedding \eqref{121} can be equivalently written via \emph{extrapolation} theorems for corresponding smooth spaces. Let us give more detail
to explain each approach.
We will consider three cases: 
\begin{itemize}
  \item[$\cdot$] subcritical case:
  \begin{equation*}
  (\dot{W}^k_p(\R^d))_0 \hookrightarrow L_{p^\ast,p}(\Omega)\quad  \mbox{for}\quad k < d/p,
  \end{equation*}
  \item[$\cdot$] critical case: $$(\dot{W}^{k}_p(\R^d))_0 \hookrightarrow Y \quad  \mbox{for}\quad k = d/p \quad\mbox{and appropriate} \;Y,$$
  \item[$\cdot$] and supercritical case:
$$(\dot{W}^{k}_p(\R^d))_0 \hookrightarrow Y \quad  \mbox{for}\quad k > d/p \quad\mbox{and appropriate} \;Y.$$
\end{itemize}


\textbf{Subcritical case.} In this setting ($k < d/p$) it is well known that the embedding
\begin{equation}\label{223}
(\dot{W}^k_p(\R^d))_0 \hookrightarrow L_{p^\ast,q}(\R^d)
\end{equation}
 holds if and only if  $q \geq p$ (cf. \eqref{121=new}).
We obtain (see Theorems \ref{ThmSobRdHom} and \ref{ThmSobRearRdHom} below for the precise statement) the following four new equivalent conditions  for this embedding
  to hold: 
\begin{enumerate}[\upshape(i)]
		\item if $f \in L_p(\R^d) + (\dot{W}^k_p(\R^d))_0$ and $t > 0$ then
		\begin{equation}\label{ThmSob1*+}
			\left(\int_0^{t^d} f^\ast(u)^p du \right)^{1/p} + t^k \left(\int_{t^d}^\infty u^{q/p^\ast} f^\ast(u)^q \frac{du}{u} \right)^{1/q} \lesssim \omega_k(f,t)_{p;\R^d},
		\end{equation}
\item if $f$ (and its derivatives up to order $k-1$) has a certain decay at infinity and $ |\nabla^k f |\in L_p(\R^d)$ then
		\begin{equation}\label{SharpKolyada2*'Intro}
	\left(\int_t^\infty \left(v^{1/p-1} \int_0^v u^{1 - k/d} f^\ast(u) \frac{du}{u} \right)^q \frac{dv}{v} \right)^{1/q} \lesssim \left(\int_t^\infty ( |\nabla^k f|^{\ast \ast}(v))^p dv \right)^{1/p},
\end{equation}

		\item if $s \to k-$ then there exists $C > 0$, which is independent of $s$, such that
		\begin{equation}\label{Intro3}
				\|f\|_{L_{\frac{d p}{d - s p}, q}(\R^d)} \leq C (k - s)^{1/q}  \|f\|_{\dot{B}^s_{p,q}(\R^d),k}
		\end{equation}
		for $f \in \dot{B}^s_{p,q}(\R^d),$

\item 
if $r \to p-$ then there exists $C > 0$, which is independent of $r$, such that
		\begin{equation}\label{SharpFrankeJawerth2*'Intro}
			\|f\|_{L_{r^\ast,q}(\R^d)} \leq C \| |\nabla^k f|\|_{L_{r,p}(\R^d)}
		\end{equation}
		whenever $f$ (and its derivatives up to order $k-1$) has a certain decay at infinity.  
	\end{enumerate}

To understand better the strength of the previous result, let us consider 
 the endpoint case $p=1$. This case deserves special attention because it corresponds to the sharp Gagliardo-Nirenberg inequality \cite{Poornima}
\begin{equation}\label{GN}
	\|f\|_{L_{d',1}(\R^d)} \leq C \| |\nabla f|\|_{L_1(\R^d)}, \qquad \frac{1}{d} + \frac{1}{d'} = 1,
\end{equation}
where $f \in C^\infty_0(\R^d)$. Further characterizations of \eqref{GN} were obtained in Mart\'in, Milman and Pustylnik \cite{MartinMilmanPustylnik}. In particular, they showed \cite[Theorem 1]{MartinMilmanPustylnik} that \eqref{GN} is equivalent to the symmetrization inequality
\begin{equation*}
	\int_0^t u^{-1/d} (f^{**}(u) - f^*(u)) du \lesssim \int_0^t |\nabla f|^*(u) du, \quad f \in C^\infty_0(\R^d).
\end{equation*}

The previous characterizations of the sharp Gagliardo-Nirenberg inequality are now complemented by \eqref{ThmSob1*+} and \eqref{Intro3}. Specifically, we obtain that \eqref{GN} holds if and only if 
\begin{equation*}
	\int_0^{t^d} f^*(u) du + t \int_{t^d}^\infty u^{1/d'} f^*(u) \frac{du}{u} \lesssim \omega_1(f,t)_{1;\R^d}, \quad f \in L_1(\R^d)
\end{equation*}
(see \eqref{ThmSob1*+}). Note that the latter inequality strengthens (see Remark \ref{Rembeta}(i) below) the well-known Ulyanov-Kolyada inequality \cite{Kolyada}, which asserts that if $f \in L_p(\R^d), \, 1 \leq p < \infty$, then
	\begin{equation*}
		t \left(\int_{t^d}^\infty u^{-p/d} \int_0^{u} (f^\ast(v) - f^\ast(u))^p dv \frac{du}{u} \right)^{1/p} \lesssim \omega_1(f,t)_{p;\R^d}.
	\end{equation*}
This inequality plays a key role in the study of embedding theorems of  smooth function spaces (see, e.g., \cite{Kolyada}, \cite{KolyadaNafsa}, \cite{CaetanoGogatishviliOpic} and \cite{CaetanoGogatishviliOpic11}).

We also show that the Gagliardo-Nirenberg inequality \eqref{GN}, or more generally, Sobolev inequalities \eqref{121=new}, can be characterized by extrapolation means. Indeed,  assertion \eqref{Intro3} provides sharp estimates for the blow-up rates of the norm of the classical embedding
$\dot{B}^s_{p,q}(\R^d) \hookrightarrow L_{\frac{d p}{d-sp},q}(\R^d)$, $s<d/p$, as the smoothness para\-meter $s$ approaches the certain critical value. This extrapolation assertion fits into the research program initiated by Bourgain, Br\'ezis, Mironescu \cite{BourgainBrezisMironescu} and Maz'ya, Shaposhnikova \cite{MazyaShaposhnikova} for the case $p=q$ and extended to the general case $p \neq q$ by Karadzhov, Milman and Xiao \cite{KaradzhovMilmanXiao} (see also \cite{EdmundsEvansKaradzhov, EdmundsEvansKaradzhov2, KolyadaLerner}).

Let us examine the new connection between \eqref{223} and \eqref{Intro3}  more carefully. In light of the Jawerth-Milman extrapolation (see \cite{JawerthMilman, Milman}), it follows that
  \eqref{Intro3} implies $(\dot{W}^k_p(\R^d))_0 \hookrightarrow L_{p^\ast,q}(\R^d)$. However, it is remarkable that the converse statement also holds true, that is, we are able to obtain converse extrapolation theorems for Sobolev embeddings in the spirit of Tao's paper \cite{Tao}.
   In other words, 
     from an endpoint embedding (i.e., $(\dot{W}^k_p(\R^d))_0 \hookrightarrow L_{p^\ast,q}(\R^d)$) one can  derive all intermediate embeddings with sharp blows up of the norms (i.e, $\dot{B}^s_{p,q}(\R^d) \hookrightarrow L_{\frac{d p}{d-sp},q}(\R^d)$ with the norm bound $\mathcal{O} ((k-s)^{1/q})$ as $s \to k-$). A similar comment also applies to \eqref{SharpFrankeJawerth2*'Intro}. In particular, these results provide an answer in the context of smooth function spaces  to an open problem mentioned in the recent paper by Astashkin and Milman \cite[Problem 27]{AstashkinMilman}.

The problem of optimality of Sobolev embeddings can be considered from the different perspective. More specifically, so far we have dealt with the optimality of the target space in \eqref{121=new}. Conversely,
 fixing the target space,
  the counterpart of the embedding $(V^k_p(\R^d))_0 \hookrightarrow L_{p^\ast,q}(\R^d)$
is the embedding
$(V^k L _{p,q}(\R^d))_0 \hookrightarrow L_{p^\ast}(\R^d)$ which holds if and only if $
		q \leq {p^\ast}.
	$
We again give four  different equivalent conditions for the latter  embedding
(see Theorems \ref{ThmStein2RdHom} and
\ref{ThmStein3RdHom} below for precise statements):
	\begin{enumerate}[\upshape(i)]
		\item for $ f \in (V^k L_{p,q}(\R^d))_0 +  (V^k_{p^*}(\R^d))_0$ and $t > 0$, we have
		\begin{equation}\label{SharpKolyada2+}
	\omega_k(f,t)_{p^\ast;\R^d} \lesssim \left(\int_0^{t^d} (u^{1/p} |\nabla^k f|^\ast(u))^q  \frac{du}{u} \right)^{1/q} + t^k \left( \int_{t^d}^\infty (|\nabla^k f|^\ast(u))^{p^\ast} du\right)^{1/{p^\ast}},
\end{equation}

\item for $ f \in (V^k_1(\R^d))_0 + (V^k L_{p,q}(\R^d))_0$ and $t > 0$, we have
		\begin{equation}\label{SharpKolyada2*Intro}
	\left(\int_t^\infty \left(v^{1/p - 1} \int_0^v u^{1 - k/d} f^\ast(u) \frac{du}{u} \right)^{p^\ast} \frac{dv}{v} \right)^{1/{p^\ast}} \lesssim \left(\int_t^\infty (v^{ 1/p} |\nabla^k f|^{\ast \ast}(v))^q \frac{dv}{v} \right)^{1/q},
\end{equation}
			\item
if $r \to p+$ and $m \in \N$ then there exists $C > 0$, which is independent of $r$, such that
		\begin{equation}\label{SharpFrankeJawerth2+}
			\|f\|_{\dot{B}^{d(1/p - 1/r) }_{p^\ast,q}(\R^d),m} \leq C (r -p)^{-1/q} \||\nabla^k f|\|_{L_{r,q}(\Omega)}
		\end{equation}
		for $f \in (V^k L_{r,q}(\R^d))_0$,
\item 
if $r \to p-$ then there exists $C > 0$, which is independent of $r$, such that
		\begin{equation}\label{Intro4}
			\|f\|_{L_{r^\ast,p^\ast}(\R^d)} \leq C \||\nabla^k f|\|_{L_{r,q}(\R^d)}
		\end{equation}
		for $f \in (V^k L_{r,q}(\R^d))_0$.
	\end{enumerate}

Note that the inhomogeneous counterpart for domains of (\ref{SharpKolyada2+}) with $q=p$ has been  recently obtained by Gogatishvili, Neves and Opic \cite[Theorem 3.2]{GogatishviliNevesOpic} in order to establish embeddings between Sobolev-type spaces and H\"older spaces; while
assertions (ii)--(iv) are new.
  The extrapolations of $(V^k L _{p,q}(\R^d))_0 \hookrightarrow L_{p^\ast}(\R^d)$ given in  (\ref{SharpFrankeJawerth2+}) and \eqref{Intro4} involve the Sobolev spaces $(V^k L_{r,q}(\R^d))_0$ with $r \to p+$ and $r \to p-$, respectively. In particular, \eqref{SharpFrankeJawerth2+} provides sharp blow up of the norm of the recently obtained  Jawerth-Franke embedding for Lorentz-Sobolev spaces \cite[Theorem 1.2]{SeegerTrebels}. It is worthwhile to mention that, unlike \eqref{Intro3}, the constant in \eqref{SharpFrankeJawerth2+} becomes arbitrary large as $r \to p+$. This phenomenon shows an interesting distinction between optimal range space and optimal domain space in Sobolev inequalities.

Our approach is flexible enough to be applied to the more general Sobolev embeddings $(V^k L_{p,q_0}(\R^d))_0 \hookrightarrow L_{p^*,q_1}(\R^d), \, q_0 \leq q_1$ (cf. \cite{Talenti, MilmanPustylnik}); see Theorem \ref{ThmSobRearFull} below.

\vspace{0.3cm}
\textbf{Critical case.} Let us switch temporarily to inhomogeneous function spaces on domains.  
If 
 $k={d}/{p} \in \mathbb{N}$ then the corresponding embedding (\ref{121=})  fails to be true, that is,
\begin{equation}\label{NonSobBoundedIntro}
	W^{d/p}_p(\Omega) \not \hookrightarrow L_\infty(\Omega), \qquad p > 1.
\end{equation}
To overcome this drawback we can use two different strategies.
On the one hand,  Sobolev embeddings can be obtained for the fixed domain space (i.e., $W^{d/p}_p(\Omega)$) by enlarging the target space (i.e., $L_\infty(\Omega)$). On the other hand,
 for the fixed target space (i.e., $L_\infty(\Omega)$) one can restrict 
   the domain space (i.e., $W^{d/p}_p(\Omega)$).


Firstly, we will concentrate on the case when $W^{d/p}_p(\Omega)$ in (\ref{NonSobBoundedIntro}) is fixed.
We start with the well-known embedding

\begin{equation}\label{AMTIntro}
	W^{d/p}_p(\Omega) \hookrightarrow L_\infty (\log L)_{-1/p'}(\Omega),
\end{equation}
which 
 is traditionally attributed to Trudinger \cite{Trudinger} with Peetre \cite{Peetre}, Pohozhaev \cite{Pohozhaev} and Yudovich \cite{Yudovich} as forerunners (cf. also \cite{Strichartz}). 
%

In this paper, following the program suggested  
  in a study of the subcritical case, we establish new characterizations of the Trudinger inequality via rearrangement inequalities in terms of moduli of smoothness and extrapolation means (see Theorem \ref{ThmTruMos} below).


Despite its importance, embedding \eqref{AMTIntro} is not optimal among all r.i. spaces. This is illustrated by the fact that the embedding
$W^{d/p}_p(\Omega) \hookrightarrow L_q(\Omega)$, $q < \infty$,
   can be improved involving Lorentz spaces.
 Indeed, we have
\begin{equation}\label{CriticalLorentzIntro}
	W^{d/p}_p(\Omega) \hookrightarrow L_{q,p}(\Omega) \quad \text{for all} \quad q < \infty.
\end{equation}
In view of (\ref{CriticalLorentzIntro}),
it was shown independently by Hansson \cite{Hansson} and Br\'ezis, Wainger \cite{BrezisWainger} that
\begin{equation}\label{BW+}
	W^{d/p}_p(\Omega) \hookrightarrow L_{\infty,p} (\log L)_{-1}(\Omega);
\end{equation}
cf. also \cite[p. 232]{Mazya} and \cite[(7.6.1), p. 209]{AdamsHedberg}. Related results may be found in the papers by Brudnyi \cite{Brudnyi} and Hedberg \cite{Hedberg}; a more general assertion was obtained by Cwikel and Pustylnik \cite{CwikelPustylnik}.

Furthermore, the target space in (\ref{BW+}) is the best possible among the class of r.i. spaces (see \cite{Hansson} and \cite{CwikelPustylnik}). In particular, we have
\begin{equation}\label{BWOptimal+}
	W^{d/p}_p(\Omega) \hookrightarrow L_{\infty,p} (\log L)_{-b}(\Omega) \iff b \geq 1.
\end{equation}

Our  goal is to establish new links between the Maz'ya-Hansson-Br\'ezis-Wainger embedding \eqref{BW+}, logarithmic inequalities for moduli of smoothness and extrapolation constructions. To be more precise, in this paper we obtain (cf. Theorem \ref{ThmBreWain} for the precise statement) that the embedding $W^{d/p}_p(\Omega) \hookrightarrow L_{\infty,p} (\log L)_{-b}(\Omega)$ is equivalent to either of the conditions:

	\begin{enumerate}[\upshape(i)]
		\item for $f \in L_p(\Omega)$ and $t \in (0,1)$, we have
			\begin{align}
			(1-\log t)^{1/p}\left(\int_0^{t^d} f^*(u)^p du \right)^{1/p} + t^{d/p} (1-\log t)^{b} \left(\int_{t^d}^1(1 - \log u)^{- b p} f^\ast(u)^p \frac{du}{u} \right)^{1/p} & \nonumber\\
			& \hspace{-11cm} \lesssim t^{d/p} (1-\log t)^{b} \|f\|_{L_p(\Omega)} + (1-\log t)^{b}  \omega_{d/p}(f, t)_{p;\Omega}, \label{IntroNew}
		\end{align}
		\item if $\lambda  \to 0+$ then there exists $C > 0$, which is independent of $\lambda$, such that
		\begin{equation}\label{SharpSobolevLog*+}
		\|f\|_{L_{d/\lambda,p}(\log L)_{-b}(\Omega)} \leq C \lambda^{1/p}
			\|f\|_{B^{d/p-\lambda}_{p,p}(\Omega), d/p}.
	\end{equation}
		
	\end{enumerate}

The new rearrangement inequality \eqref{IntroNew} provides the quantitative version of the
 Maz'ya-Hansson-Br\'ezis-Wainger embedding. Moreover,  inequality (\ref{SharpSobolevLog*+}) can be considered as the logarithmic counterpart of the Bourgain-Br\'ezis-Mironescu-Maz'ya-Shaposhnikova theorem; see the discussion in Remark \ref{RemEta}(i) below. Furthermore, our result answers  a question raised by Mart\'in and Milman in \cite{MartinMilman} concerning characterizations of the Maz'ya-Hansson-Br\'ezis-Wainger embedding in terms of limits of a family of norms (i.e., $\{\|f\|_{L_{d/\lambda,p}(\log L)_{-1}(\Omega)} : \lambda \to 0+\}$); see Remark \ref{RemEta}(ii). In fact, we go a step further and show that the converse is also true, that is, from  embedding \eqref{BW+} we achieve all intermediate Sobolev embeddings $B^{d/p-\lambda}_{p,p}(\Omega) \hookrightarrow L_{d/\lambda,p}(\log L)_{-1}(\Omega)$ with sharp behaviour $\mathcal{O} (\lambda^{1/p})$ of the norms as $\lambda \to 0+$.

Let us now focus on Sobolev embeddings into $L_\infty(\R^d)$. In the one-dimensional setting, the fundamental theorem of Calculus implies
$
	(V^1_1(\R))_0 \hookrightarrow L_\infty(\R).
$ 
In the higher dimensional case 
the corresponding estimate fails to be true, that is, one can find functions $f \in (V^1_d(\R^d))_0$ such that $f \not \in L_\infty(\R^d)$ (see (\ref{NonSobBoundedIntro}) with $p=d$). This obstruction can be overcome with the help of the refined scale given by Lorentz-Sobolev spaces. 
 Namely, Stein \cite{Stein81} showed that
\begin{equation*}
	(V^1 L_{d,1}(\R^d))_0 \hookrightarrow L_\infty(\R^d);
\end{equation*}
an alternative proof, as well as the optimality assertion, may be found in \cite{DeVoreSharpley}. Note that $L_{d,1}(\R^d) \subsetneq L_d(\R^d)$ if $d \geq 2$. These results can be extended to higher order derivatives (cf. \cite{MilmanAddendum, MilmanPustylnik}). To be more precise, if $k < d$ then
	\begin{equation}\label{SharpStein+}
		(V^k L_{d/k,q}(\R^d))_0 \hookrightarrow L_\infty(\R^d) \iff  q \leq 1.
	\end{equation}

Our purpose is to provide characterizations of 
embedding \eqref{SharpStein+} in terms of sharp inequalities of the $L_\infty$-moduli of smoothness, as well as extrapolation estimates of Jawerth-Franke type embeddings for Lorentz-Sobolev spaces. In more details (cf. Theorems \ref{ThmSteinRdHom} and \ref{ThmStein*RdHom}), the embedding $(V^k L_{d/k,q}(\R^d))_0 \hookrightarrow L_\infty(\R^d)$
is equivalent to any of the following four conditions:

\begin{enumerate}[\upshape(i)]
		
		\item for $ f \in(V^k L_{d/k,q}(\R^d))_0 + (V^k_\infty(\R^d))_0$ and $t > 0$, we have
		\begin{equation}\label{SharpKolyada+}
	\omega_k(f,t)_{\infty;\R^d} \lesssim \left(\int_0^{t^d} (u^{k/d} |\nabla^k f|^\ast(u))^q  \frac{du}{u} \right)^{1/q},
\end{equation}
\item  for $ f \in (V^k_1(\R^d))_0 + (V^k L_{d/k,q}(\R^d))_0$ and $t \in (0,\infty)$, we have
\begin{equation}\label{SharpKolyada**Intro}
	 t^{k/d-1} \int_0^t u^{1 - k/d} f^\ast(u) \frac{du}{u}  \lesssim \left(\int_{t}^\infty (u^{k/d} |\nabla^k f|^{\ast \ast}(u))^q \frac{du}{u} \right)^{1/q},
\end{equation}
			\item
 for any $m \in \N$, if $r \to (d/k)+$ then there exists $C > 0$, which is independent of $r$, such that
		\begin{equation}\label{SharpFrankeJawerth+}
			\|f\|_{\dot{B}^{k - d/r}_{\infty,q}(\R^d), m} \leq C \Big(r - \frac{d}{k}\Big)^{-1/q} \||\nabla^k f|\|_{L_{r,q}(\R^d)}
		\end{equation}
		for $f \in (V^k L_{r,q}(\R^d))_0$,


	\item
if $r \to (d/k)-$ then there exists $C > 0$, which is independent of $r$, such that
		\begin{equation}\label{SharpFrankeJawerth**Intro}
			\|f\|_{L_{r^\ast, q}(\R^d)} \leq C \Big(\frac{d}{k} - r\Big)^{-1/q} \||\nabla^k f|\|_{L_{r,q}(\R^d)}
		\end{equation}
		for $f \in (V^k L_{r,q}(\R^d))_0$.
	\end{enumerate}

To the best of our knowledge, this  is the first result to
 establish equivalence between the Stein inequality \eqref{SharpStein+} and pointwise rearrangement inequalities (cf. \eqref{SharpKolyada+} and \eqref{SharpKolyada**Intro}). Note that  inequality (\ref{SharpKolyada+}) with $q=1$, i.e.,
	\begin{equation*}
	\omega_k(f,t)_{\infty;\R^d} \lesssim \int_0^t u^k |\nabla^k f|^\ast(u^d)  \frac{du}{u}
\end{equation*}
is due to DeVore and Sharpley \cite[Lemma 2]{DeVoreSharpley} if $k=1$ and, Kolyada and P\'erez L\'azaro \cite[(1.6)]{KolyadaPerez} for higher-order derivatives. It plays a central role in the theory of function spaces as can be seen in \cite{Haroske}, \cite{GogatishviliMouraNevesOpic}, and the references within. On the other hand, the new inequality \eqref{SharpKolyada**Intro} gives a nontrivial improvement of the Kolyada inequality \cite[Corollary 3.2]{KolyadaNafsa}
\begin{equation*}
	f^{\ast \ast}(t)  \lesssim \int_{t}^\infty u^{k/d} |\nabla^k f|^{\ast \ast}(u) \frac{du}{u}, \quad f \in C^\infty_0(\R^d).
\end{equation*}
For further details, see Remark \ref{RemEps}(i) below.

{ Another } observation concerns the recent Jawerth-Franke embedding for Lorentz-Sobolev spaces obtained by Seeger and Trebels \cite{SeegerTrebels}, which asserts that if $1 < r < p < \infty, 0 < q \leq \infty$ and $k > d (\frac{1}{r} - \frac{1}{p})$, then
\begin{equation}\label{SeegerTrebels*+}
	(V^k L_{r,q}(\R^d))_0 \hookrightarrow \dot{B}^{k - d/r + d/p}_{p,q}(\R^d).
\end{equation}
 However, the interesting limiting
 case $p = \infty$ was left open in \cite{SeegerTrebels}. Remarkably, this  case is closely related to the Stein inequality \eqref{SharpStein+}. Indeed, as a byproduct of \eqref{SharpStein+}, we can consider the case $p=\infty$ in \eqref{SeegerTrebels*+} and, in addition, obtain sharp estimates of the rates of blow up of the corresponding embedding constant (cf. \eqref{SharpFrankeJawerth+}). Furthermore, the converse assertion is also valid, that is, the Stein inequality \eqref{SharpStein+} follows from Jawerth-Franke embeddings \eqref{SeegerTrebels*+} with $p=\infty$ via extrapolation. In a similar { vein}, we prove that the Stein inequality is equivalent to extrapolate the sharp version of  Talenti's embedding \cite{Talenti} (see also \cite{MilmanPustylnik})
 \begin{equation}\label{TalentiIntro}
  (V^k L_{r,q}(\R^d))_0 \hookrightarrow L_{r^*,q}(\R^d), \quad r < \frac{d}{k}
  \end{equation}
   (cf. \eqref{SharpFrankeJawerth**Intro}). It is worth mentioning  that the optimal constant in \eqref{TalentiIntro} with $k=1$ was obtained in \cite{Alvino} and \cite{Talenti}. To the best of our knowledge, the corresponding question for higher-order derivatives (i.e., $k > 1$) still remains open. Note that in  \eqref{SharpFrankeJawerth**Intro} we derive the optimal asymptotic behaviour of the constant in \eqref{TalentiIntro} with respect to the integrability parameter $r \to \frac{d}{k}-$.

\vspace{0.3cm}
\textbf{Supercritical case.}
It is well known that there are functions from the periodic (fractional) Sobolev space $\dot{H}^{1+d/p}_p(\T^d), \, 1 < p < \infty,$ that are not Lipschitz-continuous. However,
  due to the celebrated Br\'ezis-Wainger theorem \cite{BrezisWainger}, 
  functions $f$ from $\dot{H}^{1+d/p}_p(\T^d)$ are almost Lipschitz-continuous in the sense that
\begin{equation*}
	|f(x) - f(y)| \lesssim |x-y| |\log |x-y||^{1/p'} \|f\|_{\dot{H}^{1+d/p}_p(\T^d)}
\end{equation*}
for all $0 < |x-y| < 1/2$. Note that this inequality can be interpreted in terms of the embedding
\begin{equation*}
	\dot{H}^{1 + d/p}_p(\T^d) \hookrightarrow \text{Lip}^{(1, -1/p')}_{\infty, \infty}(\T^d).
\end{equation*}
This result has found profound  applications in function spaces and PDEs. { Just to mention}  some of them, it was the starting point of the theory of continuity envelopes of function spaces \cite{Triebel06, Haroske}.  It also plays a central role in studying the eigenvalue distribution of certain pseudo-differential operators \cite{EdmundsHaroske, EdmundsHaroske2}. For extensions of the Br\'ezis-Wainger inequality to the more general class of Triebel-Lizorkin spaces and Besov spaces, we refer the reader to Edmunds and Haroske \cite{EdmundsHaroske, EdmundsHaroske2}.

For convenience, we temporarily restrict our discussion to $d=1$. In this paper we study  characterizations of Br\'ezis-Wainger embeddings for the Sobolev spaces $\dot{H}^{\alpha + 1/p}_p(\T), \, \alpha > 0$. Namely, in Theorem \ref{UlyanovBrezisWainger}, we show that the following statements are equivalent:
	\begin{enumerate}[\upshape(i)]
	\item \begin{equation}\label{BWIntro}
		\dot{H}^{\alpha + 1/p}_p(\T) \hookrightarrow \text{Lip}^{(\alpha, -b)}_{\infty, \infty}(\T),
	\end{equation}
	\item for $f \in \dot{B}^{1/p}_{p,1}(\T)$ and $t \in (0,1)$, we have
		\begin{equation}\label{UlyanovTikSharpIntro}
		\omega_\alpha (f, t)_{\infty;\T} \lesssim  \int_0^{t (1 - \log t)^{b/\alpha}} u^{-1/p} \omega_{\alpha + 1/p} (f, u)_{p;\T} \frac{du}{u},
	\end{equation}
		\item
if $\alpha_0 \to \alpha-$ then there exists $C > 0$, which is independent of $\alpha_0$, such that
		\begin{equation}\label{UlyanovBrezisWaingerSharp**<<}
		\vertiii{f}_{\mathcal{C}^{\alpha_0}(\T), \alpha} \leq C (\alpha- \alpha_0)^{-b} \|f\|_{\dot{H}^{\alpha + 1/p}_p(\T)},
		\end{equation}
	\item \begin{equation*}
	b \geq 1/p'.
	\end{equation*}
	\end{enumerate}
This result shows that Br\'ezis-Wainger inequalities are closely connected to Ulyanov type inequalities \cite{Ulyanov}. The latter is central  in approximation theory, function spaces and interpolation theory (cf. \cite{KolomoitsevTikhonov} and the references therein). Taking  $b=1/p'$ in \eqref{UlyanovTikSharpIntro} we obtain a new Ulyanov-type inequality, which improves the known estimate \cite[(1.5)]{Tikhonov} 	
\begin{equation*}
		\omega_\alpha (f, t)_{\infty;\T} \lesssim \int_0^t u^{-1/p} (1 - \log u)^{1/p'} \omega_{\alpha + 1/p} (f, u)_{p;\T} \frac{du}{u}
	\end{equation*}
in several directions; see discussion in Remark \ref{RemUlyanovBrezisWainger}(i). The higher-dimensional version of \eqref{UlyanovTikSharpIntro} also holds true for functions $f$ on $\T^d$ or  $\R^d$; see Remark \ref{Remark6.3}.

The Br\'ezis-Wainger embedding \eqref{BWIntro} can be complemented by the well-known Jawerth-Franke embedding (cf. \cite{Jawerth, Franke}; see also \cite{Marschall} and \cite{Vybiral})
\begin{equation}\label{FJIntro}
		\dot{H}^{\alpha + d/p}_{p}(\T^d) \hookrightarrow \dot{B}^\alpha_{\infty,p}(\T^d).
		\end{equation}
Accordingly, we study characterizations of \eqref{FJIntro} via pointwise inequalities for moduli of smoothness and extrapolations. More precisely, Theorem \ref{ThmJawerth} asserts that the following statements are equivalent:
	\begin{enumerate}[\upshape(i)]
	\item \begin{equation*}
		\dot{H}^{\alpha + d/p}_{p}(\T^d) \hookrightarrow \dot{B}^\alpha_{\infty,q}(\T^d),
	\end{equation*}
		\item for $f \in \dot{B}^{d/p}_{p,1}(\T^d)$ and $t > 0$, we have
		 \begin{equation}\label{ThmJawerth1+}
		t^\alpha \left(\int_t^\infty (u^{-\alpha} \omega_{\alpha + d/p} (f,u)_{\infty;\T^d})^q \frac{du}{u} \right)^{1/q} \lesssim \int_0^{t} u^{-d/p} \omega_{\alpha + d/p} (f, u)_{p;\T^d} \frac{du}{u},
	\end{equation}
	\item
if $\lambda \to 0+$ then there exists $C > 0$, which is independent of $\lambda$, such that
		\begin{equation}\label{ThmJawerth1*Intro}
			\|f\|_{\dot{B}^{\alpha - \lambda}_{\infty, q}(\T^d), \alpha + d/p} \leq C \lambda^{1/q}  \|f\|_{\dot{B}^{\alpha + d/p - \lambda}_{p,q}(\T^d), \alpha + d/p},
		\end{equation}
	\item \begin{equation*}
	q \geq p.
	\end{equation*}
	\end{enumerate}
In the special case  $\alpha + d/p \in \N$,  inequality \eqref{ThmJawerth1+}
	is due to Kolyada \cite{Kolyada2} (see also \cite{Netrusov}), while \eqref{ThmJawerth1*Intro} was obtained in Kolyada and Lerner \cite{KolyadaLerner}.
Our result provides the non-trivial extensions of both  \eqref{ThmJawerth1+} and \eqref{ThmJawerth1*Intro} to the fractional setting $\alpha + d/p \not \in \N$.

Our technique   can also be applied to characterize embeddings involving the space 
 of functions with bounded variation. Specifically, Theorem \ref{UlyanovBrezisWainger2} establishes the connection between the embedding
$
		\text{BV}(\T^d) \hookrightarrow \text{Lip}^{(d/q,-1/q)}_{q, \infty}(\T^d), \, q > 1,
$
and the sharp Ulyanov inequality for the $L_1$-moduli of smoothness.

\subsection{Methodology}
Our method is partially inspired by Calder\'on's program \cite{Calderon}, which establishes the equivalence between the boundedness properties of (quasi-) linear operators acting on r.i. spaces and pointwise rearrangement inequalities via $K$-functionals (cf. also \cite{JawerthMilman, Milman}). In the setting of fractional Sobolev spaces, similar ideas appeared in Nilsson \cite{Nilsson}, Trebels \cite{Trebels} and Mart\'in, Milman \cite{MartinMilman14}. However, we need to introduce some modifications as
  Calder\'on's method does not necessarily yield optimal results in borderline cases. This obstruction already occurs in the critical case and supercritical case of Sobolev embeddings. To circumvent this  issue,
 we apply the new machinery of \emph{limiting interpolation} (cf. \cite{Astashkin}, \cite{DominguezTikhonov19} and \cite{DominguezTikhonov}).

In light of the Jawerth-Milman theory (cf. \cite{JawerthMilman, Milman}), one can recover endpoint estimates from intermediate estimates with sharp behaviour of the constants. Hence, we can show that extrapolation estimates imply Sobolev embeddings. On the other hand, the converse assertion requires deeper analysis. Indeed, it is well known that not all endpoint estimates can be obtained via extrapolation (for an elementary proof of this fact, we refer to \cite{Tao}). However, it was shown by Tao \cite{Tao} that, in the very special setting of translation invariant operators on compact symmetric spaces, it is still possible to establish the converse Yano extrapolation theorem (cf. also \cite{AstashkinMilman}). This strong result is crucial in our analysis to prove
  that limiting Sobolev inequalities imply extrapolation estimates.

\subsection{Structure of the paper}
In Section \ref{SectionNotation} we collect the main notations and definitions. 
 Definitions of function spaces are given in Section \ref{SectionFunctionSpaces} while the  interpolation methods are discussed in Section \ref{SectionInterpolation}.
Section \ref{SectionAuxResults} contains auxiliary results, namely, Hardy-type inequalities (cf. Section \ref{SectionHardy}), basic properties of moduli of smoothness (cf. Section \ref{SectionModuli}) and interpolation results (cf. Section \ref{SectionInterpolationLemma}).

 The main results and proofs are divided  into three sections:
the   subcritical case (Section \ref{subcritical}), critical (Section \ref{critical}), and   supercritical  (Section \ref{supercritical}).
We conclude with Appendix A,  which contains several useful equivalences for the $K$-functionals.

\vspace{2mm}

{\bf{Acknowledgements.}}
The first author was partially supported by MTM 2017-84058-P. The second
author was partially supported by
 MTM 2017-87409-P,  2017 SGR 358, and
 the CERCA Programme of the Generalitat de Catalunya. Part of this work was done during the visit of the authors to the Isaac Newton Institute for Mathematical Sciences, Cambridge, EPSCR Grant no EP/K032208/1.

\newpage

\section{Notation and definitions}\label{SectionNotation}

Given two (quasi-) Banach spaces $X$ and $Y$, we write $X \hookrightarrow Y$ if $X \subset Y$ and the natural embedding from $X$ into $Y$ is continuous.

As usual, $\R^d$ denotes the Euclidean $d$-space, $\T^d=[0, 2 \pi]^d$ is the $d$-dimensional torus, $\T = \T^1$, $\N$ is the collection of all natural numbers, $\N_0 = \N \cup \{0\}$ and $\N_0^d$ is the set of all points in $\R^d$ with components in $\N_0$.

For $1 \leq p \leq \infty$, $p'$ is defined by $\frac{1}{p} + \frac{1}{p'}=1$.

We will assume that $A\lesssim B$ means that $A\leq C B$ with a positive constant $C$ depending only on nonessential parameters.
If $A\lesssim B\lesssim A$, then $A\asymp B$.

Let $|\cdot|_d$ stand for the $d$-dimensional Lebesgue measure on $\R^d$.

\subsection{Function spaces}\label{SectionFunctionSpaces}

Throughout the paper, $\Omega$ is a bounded Lipschitz domain in $\R^d, \, d \geq 2$ (for the precise definition, see \cite[Definition 4.3, p. 195]{Triebel06}). Without loss of generality we will assume that $|\Omega|_d = 1$. 

Let $\mathcal{X} \in \{ \R^d, \Omega\}$. The \emph{decreasing rearrangement} $f^\ast : [0,\infty) \rightarrow [0,\infty)$ of a Lebesgue-measurable function $f$ in $\mathcal{X}$ is defined by
\begin{equation}\label{rearrangement}
	f^\ast(t) = \inf \{\lambda \geq 0 : |\{x \in \mathcal{X}: |f(x)| > \lambda\}|_d \leq t\}, \quad t \in [0,\infty),
\end{equation}
and the \emph{maximal function} $f^{\ast \ast}$ of $f^\ast$ is given by
\begin{equation}\label{maximal}
	f^{\ast \ast}(t) = \frac{1}{t} \int_0^t f^\ast(u) du.
\end{equation}
For $0 < p, q \leq \infty$, and $-\infty < b < \infty$, the \emph{Lorentz-Zygmund space} $L_{p,q}(\log L)_b(\mathcal{X})$ is formed by all Lebesgue-measurable functions $f$ on $\mathcal{X}$ having a finite quasi-norm
\begin{equation}\label{DefLZ}
	\|f\|_{L_{p,q}(\log L)_b(\mathcal{X})} = \left(\int_0^{|\mathcal{X}|_d} (t^{1/p} (1+|\log t|)^b f^\ast(t))^q \frac{dt}{t} \right)^{1/q}
\end{equation}
(with the usual modification if $q=\infty$). Note that $L_{p,q}(\log L)_b(\mathcal{X})$ becomes trivial when $p=\infty, 0 < q < \infty$ and $b \geq -1/q$, or $p=q=\infty$, but $b > 0$. If $p=q$ in $L_{p,q}(\log L)_b(\mathcal{X})$ then we obtain the \emph{Zygmund space} $L_p(\log L)_b(\mathcal{X})$. Setting $b=0$ in $L_{p,q}(\log L)_b(\mathcal{X})$ we recover the \emph{Lorentz spaces} $L_{p,q}(\mathcal{X})$ and if, in addition, $p=q$ then we obtain the \emph{Lebesgue spaces} $L_p(\mathcal{X})$. For more details, standard references are \cite{BennettSharpley}, \cite{BennettRudnick}, and \cite{EdmundsEvans}.

Assume $k \in \mathbb{N}, 1 \leq p \leq \infty$ and $0 < q \leq \infty$. The \emph{Lorentz-Sobolev space} $W^k L_{p,q}(\mathcal{X})$ is defined as the set of all $k$-times weakly differentiable functions $f$ in $\mathcal{X}$ with $|\nabla^m f| \in L_{p,q}(\mathcal{X})$ for $m=0, \ldots, k$. Here, $\nabla^0 f = f$ and $\nabla^m f,  \, m \in \N,$ denotes the vector of all $m$-th order weak derivatives $D^\alpha f, |\alpha| =m,$ of $f$ and $|\nabla^m f| = \sum_{|\alpha| = m} |D^\alpha f|$. The space $W^k L_{p,q}(\mathcal{X})$ is equipped with the norm
\begin{equation*}
	\|f\|_{W^k L_{p,q}(\mathcal{X})} = \sum_{m=0}^k \||\nabla^m f| \|_{L_{p,q}(\mathcal{X})}.
\end{equation*}
Obviously, setting $p=q$ in $W^k L_{p,q}(\mathcal{X})$ we obtain the classical Sobolev space $W^k_p(\mathcal{X}) := W^k L_p(\mathcal{X})$.

By $C^\infty_0(\mathcal{X})$ (respectively, $C^\infty_0(\R^d)$) we denote the space of all infinitely times differentiable functions on $\Omega$ (respectively, $\R^d$) vanishing at $\partial \Omega$ (respectively, with compact support). The \emph{(homogeneous) Lorentz-Sobolev space} $(\dot{W}^k L_{p,q}(\mathcal{X}))_0$ is defined as the closure of $C^\infty_0(\mathcal{X})$ in the seminorm
\begin{equation*}
	\|f\|_{(\dot{W}^k L_{p,q}(\mathcal{X}))_0} := \||\nabla^k f|\|_{L_{p,q}(\mathcal{X})}.
\end{equation*}
We will also consider the class of Sobolev spaces $(V^k L_{p,q}(\R^d))_0$ formed by all $k$-times weakly differentiable functions $f : \R^d \to \R$ whose derivatives up to order $k-1$ vanish at infinity, i.e.,
\begin{equation*}
	|\{x \in \R^d : |\nabla^m f (x)| > \lambda\}|_d < \infty \quad \text{for} \quad m \in \{0, \ldots, k-1\} \quad \text{and} \quad \lambda > 0.
\end{equation*}
For $f \in (V^k L_{p,q}(\R^d))_0$, we set
\begin{equation*}
	\|f\|_{(V^k L_{p,q}(\R^d))_0} :=  \||\nabla^k f|\|_{L_{p,q}(\R^d)}.
\end{equation*}
Note that if $p=q$ in $(\dot{W}^k L_{p,q}(\mathcal{X}))_0$ (respectively, $(V^k L_{p,q}(\R^d))_0$) then one recovers the classical space $(\dot{W}^k_p(\mathcal{X}))_0$ (respectively, $(V^k_p(\R^d))_0$).

In order to introduce Sobolev spaces of fractional order, we first recall the concept of directional derivatives. The directional derivative of $f$ of order $s > 0$ along a vector $\zeta \in \R^d$ is given by
\begin{equation*}
	D^s_\zeta f (x) = ((i \xi \cdot \zeta)^s \widehat{f}(\xi))^\vee(x), \quad x \in \R^d.
\end{equation*}
Here, $\widehat{f}$ is the Fourier transform of $f \in \mathcal{S}'(\R^d)$ given by
\begin{equation}\label{FouTrans}
	\widehat{f}(\xi) = \int_{\R^d} f(x) e^{-i x \cdot \xi} \, dx, \quad \xi \in \R^d.
\end{equation}
As usual, $f^\vee$ stands for the inverse Fourier transform, given by the right-hand side of \eqref{FouTrans} with $i$ in place of $-i$.

For $1 \leq p \leq \infty$ and $s > 0$, the \emph{(fractional) Sobolev space} $\dot{H}^s_p(\R^d)$ is formed by all $f$ such that
\begin{equation*}
	\|f\|_{\dot{H}^s_p(\R^d)} = \sup_{|\zeta|=1, \zeta \in \R^d} \| D^s_\zeta f\|_{L_p(\R^d)} < \infty.
\end{equation*}
We set $\dot{H}^0_p(\R^d) = L_p(\R^d)$. Note that if $1 < p < \infty$ then $\dot{H}^s_p(\R^d)$ coincides with the Riesz potential space and
\begin{equation*}
 \|f\|_{\dot{H}^s_p(\R^d)} \asymp \|(|\xi|^s \widehat{f}(\xi))^\vee\|_{L_p(\R^d)},
\end{equation*}
cf. \cite{Wilmes, Wilmes2}, and, in particular, setting $s=k \in \N$ one recovers the (homogeneous) Sobolev space $\dot{W}^k_p(\R^d)$ endowed with the semi-norm
\begin{equation*}
	\|f\|_{\dot{W}^k_p(\R^d)} := \| |\nabla^k f|\|_{L_p(\R^d)}.
\end{equation*}
The periodic space $\dot{H}^s_p(\T^d)$ can be introduced similarly.

For $h \in \R^d$, we let $\Omega_h = \{x \in \Omega : x + t h \in \Omega, \, 0 \leq t \leq 1\}$. As usual, we denote by $\Delta_h f = \Delta^1_h f$ the first difference of $f$ with step $h$, that is, $\Delta_h f (x) = f(x + h)-f(x), \, x \in \Omega_h$. Given $k \in \N$, the higher order differences $\Delta^{k+1}_h$ are defined inductively by $\Delta^{k+1}_h f (x)= \Delta_h (\Delta^k_h f)(x)$ for all $x \in \Omega_{(k+1) h}$. It is plain to check that
\begin{equation*}
	\Delta^k_h f (x) = \sum_{j=0}^k (-1)^{j}\binom{k}{j}  f(x + (k- j) h), \quad x \in \Omega_{k h}.
\end{equation*}

Let $k \in \N$ and $1 \leq p \leq \infty$. The \emph{$k$-th order modulus of smoothness} of $f$ in $L_p(\Omega)$ is defined by
\begin{equation}\label{DefModuli}
	\omega_k(f,t)_{p;\Omega} = \sup_{|h| \leq t} \|\Delta^k_h f\|_{L_p(\Omega_{k h})}, \quad t > 0.
\end{equation}

We also recall the definition of the modulus of smoothness of fractional order for functions in $L_p(\R^d)$. For $s > 0$, we let
\begin{equation*}
	\Delta^s_h f (x) = \sum_{j=0}^\infty(-1)^{j}  \binom{s}{j} f(x + (s- j) h), \quad x \in \R^d,
\end{equation*}
where $\binom{s}{j} = \frac{s (s-1) \ldots (s-j+1)}{j!}, \quad \binom{s}{0} =1$
and  \begin{equation*}
	\omega_s(f,t)_{p; \R^d} = \sup_{|h| \leq t} \|\Delta^s_h f\|_{L_p(\R^d)}, \quad t > 0.
\end{equation*}
Clearly, if $s \in \N$ then we recover the classical modulus of smoothness (\ref{DefModuli}) for functions $f \in L_p(\R^d)$. Analogously, one can define $\omega_s(f,t)_{p;\T^d}$ for $f \in L_p(\T^d)$.


Let $0 < s < k, k \in \N, 1 \leq p \leq \infty$ and $0 < q \leq \infty$. The \emph{(homogeneous) Besov space} $\dot{B}^s_{p,q}(\R^d)$ is defined as the completion of $C^\infty_0(\R^d)$ with respect to the (quasi)-seminorm
\begin{equation}\label{DefBesov}
	\|f\|_{\dot{B}^s_{p,q}(\R^d),k} := \left( \int_0^\infty (t^{-s} \omega_k (f,t)_{p;\R^d})^q \frac{dt}{t}\right)^{1/q}
\end{equation}
(suitably interpreted when $q=\infty$). The corresponding inhomogeneous Besov space $B^s_{p,q}(\R^d)$ is formed by all functions $f \in L_p(\R^d)$ such that
\begin{equation}\label{DefBesovInh}
	\|f\|_{B^s_{p,q}(\R^d),k} := c_{s,k,q} \|f\|_{L_p(\R^d)}  + \vertiii{f}_{B^s_{p,q}(\R^d),k} < \infty
\end{equation}
where
\begin{equation}\label{DefHolZyg}
	 \vertiii{f}_{B^s_{p,q}(\R^d),k} := \left( \int_0^1 (t^{-s} \omega_k (f,t)_{p;\R^d})^q \frac{dt}{t}\right)^{1/q}
\end{equation}
(suitably interpreted when $q=\infty$) and $c_{s,k,q} := \big(\frac{k}{(k-s) s q}\big)^{1/q}$ if $q < \infty$ ($c_{s,k,\infty} = 1)$. Similarly, one can introduce the spaces $\dot{B}^s_{p,q}(\Omega)$ and $B^s_{p,q}(\Omega)$, as well as the functionals $\vertiii{f}_{B^s_{p,q}(\Omega),k}$. Concerning periodic spaces, the space $\dot{B}^s_{p,q}(\T^d)$ (respectively, $B^s_{p,q}(\T^d)$) is formed by all those $f \in L_p(\T^d)$ such that the corresponding functional \eqref{DefBesov} (respectively, \eqref{DefBesovInh}) is finite. The normalization constant $c_{s,k,q}$ in \eqref{DefBesovInh} will play an essential role in extrapolation theorems (cf. \cite{Astashkin}, \cite{JawerthMilman}, \cite{KaradzhovMilmanXiao} an \cite{Milman05}). It is well known that $\|f\|_{\dot{B}^s_{p,q}(\R^d),k_1} \asymp \|f\|_{\dot{B}^s_{p,q}(\R^d),k_2}$ and $\|f\|_{B^s_{p,q}(\R^d),k_1} \asymp \|f\|_{B^s_{p,q}(\R^d),k_2}$ for different values of $k_1, k_2>s$ (including fractional) but the hidden equivalence constants depend on $k_1, k_2$.



{
Taking  $p=q=\infty$ in the definition of $B^s_{p,q}$, one recovers the \emph{H\"older-Zygmund spaces} $\mathcal{C}^s(\R^d)$ and $\mathcal{C}^s(\T^d)$.
In particular,
$	 \vertiii{f}_{\mathcal{C}^s(\T^d),k} :=  \sup_{0 < t < 1} (t^{-s} \omega_k (f,t)_{\infty; \T^d})$ for $k > s$. 
Similarly we define $\dot{\mathcal{C}}^s(\R^d)$ and $\dot{\mathcal{C}}^s(\T^d)$.}

Let $s > 0, 1 \leq p \leq \infty, 0 < q \leq \infty,$ and $-\infty < b < \infty$. The \emph{logarithmic Lipschitz space}
 ${\L}^{(s,-b)}_{p,q}(\R^d)$ is the collection of all $f \in L_p(\R^d)$ such that
\begin{equation}\label{DefLipschitz}
	\|f\|_{\L^{(s,-b)}_{p,q}(\R^d)} = \left(\int_0^{1} (t^{-s} (1 - \log t)^{-b} \omega_s(f,t)_{p;\R^d})^q \frac{dt}{t} \right)^{1/q} < \infty
\end{equation}
(the usual interpretation is made when $q=\infty$). Note that, unlike Besov spaces (see (\ref{DefBesov})), the semi(quasi)-norms given in (\ref{DefLipschitz}) are defined over the interval $(0,1)$. In addition, we will assume that $b > 1/q$ ($b \geq 0$ if $q=\infty$). These assumptions allow us to avoid trivial spaces.
For detailed study of the  spaces $\L^{(s,-b)}_{p,q}(\R^d)$ we refer the reader to \cite{EdmundsEvans}, \cite{Haroske}, and \cite{DominguezTikhonov19}. Some distinguished examples are 
\begin{equation}\label{LipEx1}
	\L^{(k,0)}_{p,\infty}(\R^d) = \dot{W}^k_p(\R^d), \quad 1 < p < \infty;
\end{equation}
\begin{equation}\label{LipEx2}
	\L^{(1,0)}_{1,\infty}(\R^d) = \text{BV}(\R^d) \quad (\text{the space formed by bounded variation functions});
\end{equation}
\begin{equation*}
	\L^{(1,0)}_{\infty,\infty}(\R^d) = \L(\R^d) \quad (\text{classical Lipschitz space}).
\end{equation*}
The periodic counterpart $\L^{(s,-b)}_{p,q}(\T^d)$ can be introduced in analogy to (\ref{DefLipschitz}).

\subsection{Interpolation methods}\label{SectionInterpolation}

Let $(A_0, A_1)$ be a compatible pair of quasi-Banach spaces (in fact, for the purposes of this paper, it would be enough to assume that $(A_0, A_1)$ is a compatible pair of seminormed spaces). The \emph{Peetre $K$-functional} is defined by
\begin{equation}\label{DefKFunct}
	K(t,f) = K(t, f; A_0, A_1) = \inf_{f_1 \in A_1} \{\|f - f_1\|_{A_0} + t \|f_1\|_{A_1}\}, \quad t > 0, \quad f \in A_0 + A_1.
\end{equation}

Let $0 <\theta < 1, -\infty < b < \infty$, and $0 < q \leq \infty$. The \emph{logarithmic interpolation space} $(A_0,A_1)_{\theta,q;b}$ is the set formed by all $f \in A_0 + A_1$ such that
\begin{equation}\label{DefInter}
	\|f\|_{(A_0,A_1)_{\theta,q;b}} = \left(\int_0^\infty (t^{-\theta} (1 + |\log t|)^b K(t,f))^q \frac{dt}{t} \right)^{1/q} < \infty
\end{equation}
with the usual modification for $q=\infty$. See \cite{Gustavsson} and \cite{GogatishviliOpicTrebels}. In particular, if $b=0$ in $(A_0,A_1)_{\theta,q;b}$ then we obtain the classical real interpolation space $(A_0, A_1)_{\theta,q}$; see \cite{BennettSharpley, BerghLofstrom}.

Assume now that $A_1 \hookrightarrow A_0$. Then it is plain to check that $K(t, f) \asymp \|f\|_{A_0}$ for $t > 1$. Consequently, we have
\begin{equation}\label{DefInter*}
	\|f\|_{(A_0,A_1)_{\theta,q;b}} \asymp \left(\int_0^1 (t^{-\theta} (1 + |\log t|)^b K(t,f))^q \frac{dt}{t} \right)^{1/q}.
\end{equation}
This fact together with the finer tuning given by logarithmic weights allows us to introduce \emph{limiting interpolation spaces} with $\theta=1$. Namely, the space $(A_0, A_1)_{(1,b),q}$ is the collection of all $f \in A_0$ for which
\begin{equation}\label{Klimitspace}
	\|f\|_{(A_0,A_1)_{(1,b),q}} = \left(\int_0^1 (t^{-1} (1 + |\log t|)^b K(t,f))^q \frac{dt}{t} \right)^{1/q} < \infty.
\end{equation}
Note that this space becomes trivial if $b \geq -1/q$ ($b > 0$ if $q=\infty$). Then we will assume that $b < -1/q$ ($b \leq 0$ if $q=\infty$). We also remark that the integral $\int_0^1$ in (\ref{Klimitspace}) can be replaced (with equivalence of norms) by $\int_0^{a}$ for any $a > 0$. For further details and properties, we refer the reader to \cite{Astashkin} and \cite{DominguezTikhonov19} (see also  \cite{FernandezSignes} and \cite{GogatishviliOpicTrebels}.)

\newpage 
\section{Auxiliary results}\label{SectionAuxResults}

\subsection{Hardy-type inequalities}\label{SectionHardy}
Below we collect some Hardy-type inequalities for averages  that we will use on several occasions in the rest of the paper.

\begin{lem}[{\cite[p. 196]{SteinWeiss}}]\label{LemmaHardySharp}
		Let $\alpha > 0$ and $1 \leq p < \infty$. Then for any non-negative measurable function $f$ on $(0,\infty)$,
		\begin{equation}\label{HardyIneq*}
			\left( \int_0^\infty \left( \int_0^t f(u) du\right)^p t^{-\alpha} \frac{dt}{t}\right)^{1/p} \leq \frac{p}{\alpha} \left( \int_0^\infty (t f(t))^p t^{-\alpha} \frac{dt}{t}\right)^{1/p}
		\end{equation}
		and
		\begin{equation}\label{HardyIneq}
			\left( \int_0^\infty \left( \int_t^\infty f(u) du\right)^p t^{\alpha} \frac{dt}{t}\right)^{1/p} \leq \frac{p}{\alpha} \left( \int_0^\infty (t f(t))^p t^\alpha \frac{dt}{t}\right)^{1/p}.
		\end{equation}
		The corresponding results also hold true for non-negative measurable functions on $(a,b) \subset (0,\infty)$.
	\end{lem}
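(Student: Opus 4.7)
The plan is to prove both inequalities simultaneously by the classical change-of-variables-plus-Minkowski argument, which delivers the sharp constant $p/\alpha$ and handles $p=1$ (as Fubini) and $p>1$ uniformly.

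For inequality \eqref{HardyIneq*}, I would first rescale the inner integral via $u = ts$ to obtain
\[
\int_0^t f(u)\,du \,=\, t \int_0^1 f(ts)\,ds.
\]
Substituting this into the left-hand side of \eqref{HardyIneq*} exhibits it as the $L^p\bigl((0,\infty);\,t^{-\alpha-1}\,dt\bigr)$-norm of the function $\int_0^1 t f(ts)\,ds$ of $t$. Minkowski's integral inequality then pulls the $ds$-integral outside the $L^p$-norm, and the substitution $v = ts$ (with $s$ fixed) in the resulting inner integral gives, by direct computation,
\[
\int_0^\infty (tf(ts))^p\,t^{-\alpha-1}\,dt \,=\, s^{\alpha - p}\int_0^\infty (vf(v))^p\,v^{-\alpha-1}\,dv.
\]
Taking $p$-th roots produces a factor $s^{\alpha/p - 1}$ outside the inner $L^p$-norm, and since $\int_0^1 s^{\alpha/p-1}\,ds = p/\alpha$ (this is precisely where the hypothesis $\alpha > 0$ is used), the desired bound follows.

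For inequality \eqref{HardyIneq}, I would run the symmetric version with the substitution $u = t/s$, $s \in (0,1)$, which rewrites $\int_t^\infty f(u)\,du$ as $\int_0^1 f(t/s)\,t s^{-2}\,ds$. Applying Minkowski and then $v = t/s$ once more produces the same factor $s^{\alpha - p}$ in front of $\int_0^\infty (vf(v))^p\,v^{\alpha-1}\,dv$, so integrating $s^{\alpha/p-1}$ on $(0,1)$ again yields the constant $p/\alpha$.

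The main step that needs care is not conceptual but bookkeeping: tracking the exponents of $s$ through each substitution so that the final $s$-integral converges exactly on account of $\alpha > 0$. In the endpoint $p=1$ the Minkowski step collapses to Fubini, so the argument is even more transparent there. The extension to subintervals $(a,b) \subset (0,\infty)$ is immediate by extending $f$ by zero outside $(a,b)$ and invoking the inequalities just established.
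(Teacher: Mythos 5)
Your proof is correct: the rescaling $u=ts$ (resp.\ $u=t/s$), Minkowski's integral inequality, and the convergent $s$-integral $\int_0^1 s^{\alpha/p-1}\,ds = p/\alpha$ together deliver exactly the stated constants, and the exponent bookkeeping checks out in both cases. The paper gives no proof, citing Stein--Weiss directly, and your argument is in fact the same classical Minkowski-plus-dilation proof that appears there, so there is nothing to add.
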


\begin{lem}[{\cite[Theorem 6.4]{BennettRudnick}}]\label{LemmaHardyLog}
	Let $\alpha > 0, 1 \leq p \leq \infty$ and $-\infty < b < \infty$. Then for any non-negative measurable function $f$ on $(0,1)$,
	\begin{equation}\label{HardyInequal1**}
		\left(\int_0^1 \left(\int_0^t f(u) du \right)^p t^{-\alpha} (1 +| \log t|)^b  \frac{dt}{t}\right)^{1/p} \lesssim \left(\int_0^1 (t  f(t))^p t^{-\alpha} (1+|\log t|)^b \frac{dt}{t}\right)^{1/p}
	\end{equation}
	and
	\begin{equation}\label{HardyInequal2**}
		\left(\int_0^1 \left( \int_t^1 f(u) du \right)^p t^{\alpha} (1 +| \log t|)^b \frac{dt}{t}\right)^{1/p} \lesssim \left(\int_0^1 (t f(t))^p t^\alpha (1+|\log t|)^b  \frac{dt}{t}\right)^{1/q}.
	\end{equation}
	Furthermore, if $f (t) = t^{\lambda -1} g(t)$ with $\lambda > 0$ and $g$ is a decreasing function, then  inequalities (\ref{HardyInequal1**}) and (\ref{HardyInequal2**}) still hold true when $0 < p < 1$.
	
	The corresponding results also hold true for non-negative measurable functions on $(a,b) \subset (0,\infty)$.
\end{lem}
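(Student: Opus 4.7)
I would treat the two regimes separately. For $1 \le p \le \infty$ the inequalities are essentially classical: they are obtained from the unweighted Hardy inequality in Lemma \ref{LemmaHardySharp} by handling the logarithmic weight via a slowly-varying block decomposition. Concretely, I would break $(0,1)$ into the dyadic annuli $I_j=[2^{-j-1},2^{-j}]$, $j\ge 0$, on which $(1+|\log t|)^b \asymp (1+j)^b$ is essentially constant. On each $I_j$ the weight can be pulled out, so after splitting the inner integral $\int_0^t = \int_0^{2^{-j-1}} + \int_{2^{-j-1}}^t$ and applying \eqref{HardyIneq*} (and its dual version on each block) one reduces to summing geometric series in $j$; the polynomial factor $(1+j)^b$ is harmless because every such sum is dominated by the term with weight $(1+j)^b$. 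An entirely equivalent and slightly cleaner route is the substitution $t=e^{-s}$, which turns $(1+|\log t|)^b$ into a tame polynomial weight on $(0,\infty)$ and reduces the problem to a standard weighted Hardy inequality on the real line; this is the route taken in Bennett--Rudnick.

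For the $0<p<1$ part the classical Hardy scheme fails, and one has to use the additional structure $f(u)=u^{\lambda-1}g(u)$ with $g$ decreasing. The key estimate is the dyadic bound
\begin{equation*}
\int_0^t f(u)\,du = \sum_{k\ge 0}\int_{t/2^{k+1}}^{t/2^k} u^{\lambda-1}g(u)\,du \;\lesssim\; \sum_{k\ge 0}\bigl(t/2^k\bigr)^{\lambda}\,g(t/2^{k+1}) \;\asymp\; \sum_{k\ge 0}\bigl(t/2^{k+1}\bigr)\,f(t/2^{k+1}),
\end{equation*}
which uses only the monotonicity of $g$ and the fact that $u^{\lambda-1}$ is essentially constant on each dyadic block (with an implied constant depending on $\lambda$ alone). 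Since $p<1$, the $p$-th power is subadditive, so
\begin{equation*}
\Bigl(\int_0^t f(u)\,du\Bigr)^{p} \lesssim \sum_{k\ge 0}\bigl(t/2^{k+1}\bigr)^{p}\,f(t/2^{k+1})^{p}.
\end{equation*}
Inserting this in the left-hand side of \eqref{HardyInequal1**}, exchanging sum and integral, and making the substitution $s=t/2^{k+1}$, each term becomes a weighted integral of $(sf(s))^p s^{-\alpha}$ over $(0,2^{-k-1})$ with an extra factor $2^{-k\alpha}(1+|\log s|+(k+1)\log 2)^b$. Because the shift $(k+1)\log 2$ grows only linearly while $2^{-k\alpha}$ decays geometrically, the weight is bounded by $C_b\,k^{|b|}(1+|\log s|)^b$ uniformly in $k$, and summing in $k$ yields \eqref{HardyInequal1**}.

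The dual inequality \eqref{HardyInequal2**} is proved symmetrically: decompose $\int_t^1 f(u)\,du =\sum_{k\ge 0}\int_{2^k t}^{2^{k+1}t\wedge 1}$, use $g(u)\le g(2^k t)$ on each block to get $\int_{2^k t}^{2^{k+1}t} f(u)\,du\lesssim (2^k t)\,f(2^k t)$, apply $p$-subadditivity, change variables $s=2^k t$, and balance the geometric factor $2^{-k\alpha}$ against the logarithmic shift as before. Finally, the extension to arbitrary subintervals $(a,b)\subset(0,\infty)$ follows from the version on $(0,1)$ (or its natural analogue on $(0,\infty)$) by a linear rescaling, which affects the constants only through the slowly varying logarithmic weight.

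The main technical obstacle is keeping control of the logarithmic factor $(1+|\log t|)^b$ under the dyadic change of variables in the $p<1$ range, especially when $b<0$, so that one may freely replace $(1+|\log(2^k s)|)^b$ by a constant multiple of $(1+|\log s|)^b$ uniformly in $k$; this is ultimately the statement that $\log$ is slowly varying and is what forces the argument through even in the absence of a bona fide Hardy inequality for $p<1$.
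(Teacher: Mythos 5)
The paper does not actually prove this lemma: it is cited verbatim from Bennett--Rudnick \cite[Theorem~6.4]{BennettRudnick}, and the paper offers no internal argument to compare against. What you supply is therefore a reconstruction, and it is, in substance, correct. Your two-track strategy (discretization plus unweighted Hardy for $1\le p\le\infty$, or equivalently the substitution $t=e^{-s}$ turning the logarithmic factor into a polynomial weight on $(0,\infty)$; and a monotonicity-driven dyadic argument with $p$-subadditivity for $0<p<1$) is exactly the natural route, and the latter part in particular cannot be found in Bennett--Rudnick (whose Theorem~6.4 covers only $1\le p\le\infty$), so you have correctly identified that the $0<p<1$ clause needs a separate argument exploiting the structure $f(t)=t^{\lambda-1}g(t)$.

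Two small points worth spelling out so the argument is airtight. First, in your key dyadic bound for $p<1$ you should record that
\begin{equation*}
\int_{t/2^{k+1}}^{t/2^k} u^{\lambda-1}\,du \asymp_\lambda (t/2^{k+1})^{\lambda},
\end{equation*}
so that the block integral is $\lesssim_\lambda (t/2^{k+1})\,f(t/2^{k+1})$ with an implied constant depending on $\lambda$ only; without making the dependence on $\lambda$ explicit the step ``$u^{\lambda-1}$ is essentially constant on each dyadic block'' could read as depending on $k$. Second, and more importantly, the weight comparison after the change of variables $s=t/2^{k+1}$ is the crux when $b<0$: for $s\in(0,2^{-k-1})$ one has $1+|\log(2^{k+1}s)|=1+|\log s|-(k+1)\log 2$, whence
\begin{equation*}
\frac{1+|\log s|}{1+|\log(2^{k+1}s)|}\le 1+(k+1)\log 2,
\end{equation*}
and therefore $(1+|\log(2^{k+1}s)|)^b\lesssim (1+k)^{|b|}(1+|\log s|)^b$ for either sign of $b$. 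This is exactly the ``slowly varying'' control you invoke, but it should be stated as an explicit inequality since the polynomial factor $(1+k)^{|b|}$ has to be shown to be dominated by the geometric factor $2^{-(k+1)\alpha}$; it is, because $\alpha>0$, and that is where the hypothesis $\alpha>0$ enters. With those two clarifications your argument closes, and the dual inequality and the extension to subintervals $(a,b)\subset(0,\infty)$ follow by the symmetric decomposition and rescaling you describe. (Incidentally, the exponent $1/q$ on the right of \eqref{HardyInequal2**} in the lemma statement is a typo for $1/p$; your proof implicitly treats it as such, which is correct.)
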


The limiting version of Lemma \ref{LemmaHardyLog} reads as follows.

 \begin{lem}[{\cite[Theorem 6.5]{BennettRudnick}}]
	Let $1 \leq p \leq \infty$ and $b + 1/p \neq 0$. Then for any non-negative measurable function $f$ on $(0,1)$,
	\begin{enumerate}[\upshape(i)]
	 \item if $b + 1/p > 0$,
	\begin{equation*}
		\left(\int_0^1 \left((1 +| \log t|)^b \int_0^t f(u) du \right)^p \frac{dt}{t}\right)^{1/p} \lesssim \left(\int_0^1 (t (1+|\log t|)^{b + 1} f(t))^p \frac{dt}{t}\right)^{1/p};
	\end{equation*}
	\item if $b + 1/p < 0$,
	\begin{equation}\label{HardyInequal4}
		\left(\int_0^1 \left((1 +| \log t|)^b \int_t^1 f(u) du \right)^p \frac{dt}{t}\right)^{1/p} \lesssim \left(\int_0^1 (t (1+|\log t|)^{b + 1} f(t))^p \frac{dt}{t}\right)^{1/p}.
	\end{equation}
	\end{enumerate}
\end{lem}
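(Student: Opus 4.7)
The strategy is to reduce both (i) and (ii) to a weighted Hardy-type inequality on the half-line and then apply an integration-by-parts / Fubini / Hölder bootstrap argument that avoids the breakdown of the classical power-weighted Hardy inequality at the endpoint $\alpha=0$.

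First I would perform the change of variables $s=-\log t$, $\tilde f(s)=f(e^{-s})$, and set $\phi(s)=e^{-s}(1+s)^{b+1}\tilde f(s)$, so that the RHS becomes simply $\int_0^\infty \phi(s)^p\,ds$ while the LHS of (i) becomes
$$\int_0^\infty (1+s)^{bp}\left(\int_s^\infty (1+r)^{-(b+1)}\phi(r)\,dr\right)^{\!p} ds,$$
and the LHS of (ii) becomes the same expression with $\int_s^\infty$ replaced by $\int_0^s$. The logarithmic weight on $(0,1)$ thus turns into the polynomial weight $(1+s)^{bp}$ on $(0,\infty)$, which makes visible that the problem is a Hardy inequality with weight $\alpha=0$ on $t^{\alpha}$, modified by the shift $1+s$.

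Next, for case (i), denote the inner integral by $F(s)=\int_s^\infty (1+r)^{-(b+1)}\phi(r)\,dr$. Since $F(\infty)=0$, integrate by parts (or use $F^p=p\int_s^\infty F^{p-1}(-F')\,du$) and apply Fubini to rewrite the LHS as
$$p\int_0^\infty F(u)^{p-1}(1+u)^{-(b+1)}\phi(u)\left(\int_0^u (1+s)^{bp}\,ds\right) du.$$
The assumption $b+1/p>0$, equivalent to $bp+1>0$, ensures $\int_0^u (1+s)^{bp}\,ds\le (1+u)^{bp+1}/(bp+1)$. After cancellation this leaves $\int_0^\infty [F(u)(1+u)^b]^{p-1}\phi(u)\,du$, and Hölder's inequality with exponents $p/(p-1)$ and $p$ bounds this by $\mathrm{LHS}^{(p-1)/p}\cdot\mathrm{RHS}^{1/p}$. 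Self-absorbing the $\mathrm{LHS}^{(p-1)/p}$ factor yields $\mathrm{LHS}^{1/p}\le (b+1/p)^{-1}\mathrm{RHS}^{1/p}$. For case (ii), the same scheme works with $G(s)=\int_0^s(1+r)^{-(b+1)}\phi(r)\,dr$ and $G(0)=0$; the interchanged inner integral is $\int_u^\infty(1+s)^{bp}\,ds$, which converges precisely under the assumption $bp+1<0$ (i.e.\ $b+1/p<0$) and is bounded by $(1+u)^{bp+1}/|bp+1|$, after which the Hölder bootstrap again concludes.

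The endpoint $p=\infty$ is handled directly: in case (i) one writes $\int_0^t f(u)\,du\le\|u(1+|\log u|)^{b+1}f\|_\infty\int_0^t u^{-1}(1+|\log u|)^{-(b+1)}\,du$, and the last integral equals $(1-\log t)^{-b}/b$ when $b>0$, yielding the desired $\L^\infty$ estimate with constant $1/b$; case (ii) is symmetric. The main obstacle to make this rigorous is justifying integration by parts and the Hölder self-absorption, which require $\mathrm{LHS}<\infty$ a priori; this is standard and handled by truncating $\phi$ to $\phi\chi_{(0,N)}$, establishing the inequality with uniform constant $1/|b+1/p|$, and passing to the limit $N\to\infty$ via monotone convergence.
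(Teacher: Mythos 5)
Your proof is correct. Note that the paper does not actually prove this lemma; it is quoted from Bennett and Rudnick \cite[Theorem 6.5]{BennettRudnick}, so there is no proof in the paper to compare against. Your argument is a clean, self-contained derivation: the substitution $s=-\log t$ converts the logarithmic weight $(1+|\log t|)^b$ on $(0,1)$ with measure $dt/t$ into the power weight $(1+s)^{bp}$ on $(0,\infty)$ with Lebesgue measure, after which the integration-by-parts $+$ Fubini $+$ H\"older self-absorption scheme (the standard Elliott--Bliss proof of Hardy's inequality) goes through verbatim; the hypothesis $b+1/p\gtrless 0$ is exactly what makes $\int_0^u(1+s)^{bp}\,ds$, resp.\ $\int_u^\infty(1+s)^{bp}\,ds$, bounded by a constant times $(1+u)^{bp+1}$, and the truncation at $\phi\chi_{(0,N)}$ (under which $F_N$ is bounded and compactly supported, hence $\mathrm{LHS}_N<\infty$) legitimizes the self-absorption before passing to the limit by monotone convergence. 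Two small remarks: the H\"older step with exponents $p/(p-1)$ and $p$ degenerates at $p=1$, but there no self-absorption is needed --- Fubini and the bound on $\int_0^u(1+s)^b\,ds$ (resp.\ $\int_u^\infty$) give the estimate directly; and your explicit constant $|b+1/p|^{-1}$ (resp.\ $|b|^{-1}$ for $p=\infty$) makes the dependence on the parameters transparent, which the $\lesssim$ in the statement of the lemma conceals.
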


\begin{lem}\label{vspom1}
Let $\alpha < 0, -\infty < \beta < \infty$, and $0 < q \leq \infty$. Then for any non-negative decreasing function $f$ on $(0,1)$ and $t \in (0,1/2)$, 
\begin{equation}\label{HardyInequal1***}
	\left(\int_t^1 \left(v^\alpha \int_0^v u^\beta f(u) du\right)^q \frac{dv}{v} \right)^{1/q} \asymp t^{\alpha} \int_0^t v^\beta f(v) dv + \left(\int_t^1 (v^{\alpha +\beta + 1}  f(v))^q \frac{dv}{v} \right)^{1/q}.
\end{equation}
The corresponding result also holds true for functions $f$ on $(0,\infty)$ and $t > 0$.
\end{lem}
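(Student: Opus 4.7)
The two sides of \eqref{HardyInequal1***} are connected by the natural splitting
\[
	\int_0^v u^\beta f(u) du = \int_0^t u^\beta f(u) du + \int_t^v u^\beta f(u) du, \qquad v \geq t,
\]
so my strategy is to prove $\lesssim$ and $\gtrsim$ separately, handling the two pieces produced by this splitting.

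\textbf{Upper bound.} For the direction $\text{LHS} \lesssim \text{RHS}$, the triangle inequality in $L^q$ reduces the problem to bounding two terms. The first is
\[
	\Big(\int_0^t u^\beta f(u) du\Big) \Big(\int_t^1 v^{\alpha q} \frac{dv}{v}\Big)^{1/q},
\]
and since $\alpha < 0$ an elementary computation gives $\int_t^1 v^{\alpha q - 1} dv \asymp t^{\alpha q}$ for $t \in (0,1/2)$, yielding the first summand on the RHS. The second term is
\[
	\Big(\int_t^1 \Big(v^\alpha \int_t^v u^\beta f(u) du\Big)^q \frac{dv}{v}\Big)^{1/q},
\]
to which I apply the Hardy inequality \eqref{HardyIneq*} of Lemma \ref{LemmaHardySharp} on the interval $(t,1)$, with weight exponent $-\alpha q > 0$. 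This gives the second summand of the RHS. For the range $0 < q < 1$, I replace Lemma \ref{LemmaHardySharp} by the version in Lemma \ref{LemmaHardyLog} (without logarithmic weight), which applies because $u^\beta f(u)$ can be written as $u^{\lambda - 1} g(u)$ with $g$ decreasing (when $\beta > -1$, take $\lambda = \beta + 1$; in the remaining range the integrals diverge at $0$ and the statement is trivial).

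\textbf{Lower bound.} For $\text{RHS} \lesssim \text{LHS}$, I use the monotonicity of the map $v \mapsto \int_0^v u^\beta f(u) du$: for $v \in (t, 2t)$,
\[
	v^\alpha \int_0^v u^\beta f(u) du \geq (2t)^\alpha \int_0^t u^\beta f(u) du,
\]
(here I use $\alpha<0$) and integrating over $(t,2t)$ with the measure $dv/v$ recovers $t^\alpha \int_0^t u^\beta f(u) du$ up to constants. For the second summand of the RHS, I exploit that $f$ is decreasing: for $v \geq 2t$,
\[
	\int_0^v u^\beta f(u) du \geq \int_{v/2}^v u^\beta f(u) du \gtrsim v^{\beta+1} f(v),
\]
so $v^\alpha \int_0^v u^\beta f(u) du \gtrsim v^{\alpha+\beta+1} f(v)$ on $(2t,1)$. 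Taking $L^q(dv/v)$ norms yields the bound on $(2t,1)$. The remaining tail over $(t,2t)$ is handled by $v \asymp t$ and the decreasing property $f(v) \leq f(t)$, which shows $t^{\alpha+\beta+1} f(t) \lesssim t^\alpha \int_{t/2}^t u^\beta f(u) du$ and hence gets absorbed into the first summand already controlled.

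\textbf{Main obstacle.} The routine Hardy step in the upper bound requires $q \geq 1$, so the principal technical point is the case $0 < q < 1$, where I must invoke the stronger Hardy inequality of Lemma \ref{LemmaHardyLog} whose hypothesis forces me to use that $f$ is non-negative and decreasing in a nontrivial way. The extension from $(0,1)$ to $(0,\infty)$ stated in the last sentence is verbatim identical, the only change being that the elementary integral $\int_t^\infty v^{\alpha q - 1} dv$ still equals $t^{\alpha q}/(-\alpha q)$ since $\alpha q < 0$.
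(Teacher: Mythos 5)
Your proof is correct and follows essentially the same route as the paper: split $\int_0^v=\int_0^t+\int_t^v$, treat the first piece with the elementary integral $\int_t^1 v^{\alpha q-1}dv\asymp t^{\alpha q}$, estimate the second piece from above by Hardy and from below by monotonicity of $f$. The only genuine divergence is in the sub-unit range $0<q<1$: the paper does a direct dyadic discretization and uses $(\sum a_\nu)^q\le\sum a_\nu^q$ to close that case without any structural hypothesis beyond monotonicity, whereas you instead invoke the fractional-$p$ remark in Lemma~\ref{LemmaHardyLog}, which forces you to check the representation $u^\beta f(u)=u^{(\beta+1)-1}f(u)$ with $\beta+1>0$ and to dispose of $\beta\le-1$ separately by the divergence of $\int_0^t u^\beta f$. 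Both routes are valid and about the same length; the paper's discretization is slightly more self-contained, while yours offloads work to an existing lemma at the cost of the extra case split. One small point in your favour: you make explicit the absorption of the $(t,2t)$ tail of $\bigl(\int_t^1 (v^{\alpha+\beta+1}f(v))^q\,dv/v\bigr)^{1/q}$ into the summand $t^\alpha\int_0^t v^\beta f(v)\,dv$ via $f(v)\le f(t)$ and $\int_{t/2}^t u^\beta f(u)\,du\gtrsim t^{\beta+1}f(t)$, a step the paper leaves implicit when it passes from its estimate with lower limit $2t$ to the stated equivalence with lower limit $t$.
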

\begin{proof}
	We have
	\begin{equation}\label{LemH1}
	\left(\int_t^1 \left(v^\alpha \int_0^v u^\beta f(u) du\right)^q \frac{dv}{v} \right)^{1/q}  \asymp t^{\alpha} \int_0^t v^\beta f(v) dv + \left(\int_t^1 \left(v^{\alpha} \int_t^v u^\beta f(u) du \right)^q \frac{dv}{v} \right)^{1/q}.
	\end{equation}
It is clear that (\ref{HardyInequal1***}) immediately follows from (\ref{LemH1}) and the following estimates
	\begin{align}
	\left(\int_{2 t}^1 (v^{\alpha + \beta + 1} f(v))^q \frac{dv}{v} \right)^{1/q} & \lesssim \left(\int_t^1 \left(v^{\alpha} \int_t^v u^\beta f(u) du \right)^q \frac{dv}{v} \right)^{1/q} \nonumber \\
	&\lesssim \left(\int_t^1 (v^{\alpha + \beta + 1} f(v))^q \frac{dv}{v} \right)^{1/q}.\label{LemH2}
	\end{align}
To show (\ref{LemH2}), we first use the monotonicity of $f$:
	\begin{equation*}
		\int_t^1 \left(v^{\alpha} \int_t^v u^\beta f(u) du \right)^q \frac{dv}{v} \gtrsim \int_{2 t}^1 (v^{\alpha + \beta +1}  f(v))^q \frac{dv}{v}.
	\end{equation*}
Second, the right-hand side estimate in (\ref{LemH2})  is an immediate consequence of Lemma \ref{LemmaHardySharp} for $q \geq 1$. If $q < 1$, we let $j \in \N_0$ be such that $2^{-j-1} \leq t < 2^{-j}$. Then
	\begin{align*}
		\int_t^1 \left(v^{\alpha} \int_t^v u^\beta f(u) du \right)^q \frac{dv}{v} & \lesssim \sum_{k=0}^j 2^{-k \alpha q} \left(\sum_{\nu = k}^j 2^{-\nu (\beta + 1)}  f(2^{-\nu}) \right)^q \\
		& \hspace{-3cm} \leq \sum_{k=0}^j 2^{-k \alpha q} \sum_{\nu = k}^j (2^{-\nu (\beta + 1) } f(2^{-\nu}))^q
\lesssim \int_t^1 (v^{\alpha + \beta + 1} f(v))^q \frac{dv}{v},
	\end{align*}
which completes the proof of (\ref{LemH2}).
\end{proof}

	\subsection{Properties of the modulus of smoothness}\label{SectionModuli}
	For later use, we recall some well-known properties of the moduli of smoothness \cite{BennettSharpley, KT}. Let $\mathcal{X} \in \{\R^d, \T^d, \Omega\}$ and let $k, m \in \N$ and $1 \leq p \leq \infty$. We have
	\begin{enumerate}[(a)]
	\item $\omega_k(f,t)_{p;\mathcal{X}}$ is a non-negative non-decreasing function of $t$;
	\item if $u \geq 1$ then
	\begin{equation}\label{HomoMod}
		\omega_k(f, t u)_{p;\mathcal{X}} \lesssim u^k \omega_k(f,t)_{p;\mathcal{X}};
	\end{equation}
	\item if $k < m$ then
	\begin{equation}\label{JacksonInequal}
		\omega_m(f,t)_{p;\mathcal{X}} \lesssim \omega_k(f,t)_{p;\mathcal{X}}
	\end{equation}
	and
	\begin{equation}\label{MarchaudInequal}
		\omega_k (f,t)_{p;\mathcal{X}} \lesssim t^{k} \int_t^\infty \frac{\omega_m(f,u)_{p; \mathcal{X}}}{u^k} \frac{du}{u};
	\end{equation}
	\item if $|\nabla^k f| \in L_p(\mathcal{X})$ then
	\begin{equation}\label{DerMod}
		\omega_k(f,t)_{p;\mathcal{X}} \lesssim t^k \| |\nabla^k f|\|_{L_p(\mathcal{X})}.
	\end{equation}
	\end{enumerate}



It is well known that Sobolev spaces on $\R^d$ can be characterized through moduli of smoothness (see \eqref{LipEx1}). Namely, if $f \in W^k_p(\R^d)$ then
\begin{equation}\label{ddd2}
	\|f\|_{\dot{W}^k_p(\R^d)} = \||\nabla^k f|\|_{L_p(\R^d)} \asymp \sup_{t > 0} t^{-k} \omega_k(f,t)_{p;\R^d}, \quad k \in \mathbb{N}, \quad 1 < p < \infty
\end{equation}
see, e.g., \cite[Chapter V, Section 3.5, Proposition 3, p. 139]{Stein70}, \cite[Proposition 2.4]{KolyadaLerner} and \cite[page 174]{Triebel11}. Next we give a simple argument based on interpolation which enables us to extend the previous characterization to Sobolev spaces on domains.

\begin{lem}
	Let $\mathcal{X} \in \{\R^d, \T^d, \Omega\}$. Let $k \in \mathbb{N}$ and $1 < p < \infty$. Assume $f \in W^k_p(\mathcal{X})$. Then
	\begin{equation}\label{SobolevModuliD}
	\|f\|_{W^k_p(\mathcal{X})} \asymp \|f\|_{L_p(\mathcal{X})} +  \sup_{0 < t < 1} t^{-k} \omega_k(f,t)_{p;\mathcal{X}}.
\end{equation}
\end{lem}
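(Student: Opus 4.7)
The plan is to reduce the converse direction to the known case $\mathcal{X} = \R^d$, for which \eqref{ddd2} is available, by combining the $K$-functional description of the modulus of smoothness with the reflexivity of Sobolev spaces. The easy direction, $\|f\|_{L_p(\mathcal{X})} + \sup_{0<t<1} t^{-k} \omega_k(f,t)_{p;\mathcal{X}} \lesssim \|f\|_{W^k_p(\mathcal{X})}$, is immediate from \eqref{DerMod}, which yields $\omega_k(f,t)_{p;\mathcal{X}} \lesssim t^k \||\nabla^k f|\|_{L_p(\mathcal{X})}$.

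For the converse, the first step is to invoke the Johnen--Scherer equivalence
\begin{equation*}
K\bigl(t^k, f; L_p(\mathcal{X}), W^k_p(\mathcal{X})\bigr) \asymp \omega_k(f,t)_{p;\mathcal{X}}, \qquad 0 < t \leq 1,
\end{equation*}
valid on each of $\R^d$, $\T^d$, and a bounded Lipschitz $\Omega$; this equivalence is revisited in Appendix A. Writing $M := \sup_{0<t<1} t^{-k} \omega_k(f,t)_{p;\mathcal{X}}$, the hypothesis becomes $K(s, f; L_p, W^k_p) \lesssim sM$ for $0 < s < 1$, so for each such $s$ one can select a near-optimal decomposition $f = (f - g_s) + g_s$ with $g_s \in W^k_p(\mathcal{X})$ satisfying $\|f - g_s\|_{L_p(\mathcal{X})} \lesssim sM$ and $\|g_s\|_{W^k_p(\mathcal{X})} \lesssim M$ uniformly in $s$.

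The interpolation step then exploits the reflexivity of $W^k_p(\mathcal{X})$, which holds precisely because $1 < p < \infty$: the bounded family $\{g_s\}_{0<s<1}$ admits a weakly convergent subsequence $g_{s_n} \rightharpoonup g$ in $W^k_p(\mathcal{X})$, and the strong $L_p(\mathcal{X})$-convergence $g_s \to f$ forces $g = f$. Lower semicontinuity of the norm under weak convergence delivers the quantitative bound
$$\|f\|_{W^k_p(\mathcal{X})} \leq \liminf_n \|g_{s_n}\|_{W^k_p(\mathcal{X})} \lesssim M + \|f\|_{L_p(\mathcal{X})},$$
which is the content of \eqref{SobolevModuliD}.

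The main obstacle I anticipate is justifying the Johnen--Scherer equivalence on the bounded Lipschitz domain $\Omega$ (and, with minor modifications, on $\T^d$); this is deferred to Appendix A and ultimately rests on the existence of a bounded extension operator $W^j_p(\Omega) \to W^j_p(\R^d)$ for $j \in \{0, k\}$. The restriction $1 < p < \infty$ enters only at the reflexivity step and is sharp: at $p = 1$ the seminorm $\sup_{0<t<1} t^{-1} \omega_1(f,t)_{1;\R^d}$ recovers the $\text{BV}$-seminorm, which is strictly larger than the $\dot{W}^1_1$-seminorm (cf. \eqref{LipEx2}).
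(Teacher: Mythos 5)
Your argument is, at bottom, the same as the paper's: both hinge on the Johnen--Scherer $K$-functional formula (Lemma A.2) together with the fact that the unit ball of $W^k_p(\mathcal{X})$ is closed in $L_p(\mathcal{X})$. The paper invokes this closedness under the name Gagliardo completeness and then reads off $\|f\|_{W^k_p(\mathcal{X})} \asymp \sup_{t > 0} t^{-1}K(t,f;L_p(\mathcal{X}),W^k_p(\mathcal{X}))$ in one line; you re-derive the same fact from scratch by extracting a weakly convergent subsequence from the near-optimal decompositions and appealing to weak lower semicontinuity of the norm. That is precisely the standard proof of Gagliardo completeness in the reflexive range $1 < p < \infty$, so the difference is one of packaging, not substance. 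Your closing observation on the failure at $p = 1$ (with $\mathrm{BV}$ replacing $\dot W^1_1$) is exactly the right way to see that the reflexivity hypothesis is being used.

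One correction is needed, though it does not affect the outcome. The $K$-functional equivalence you quote,
\begin{equation*}
K\bigl(t^k, f; L_p(\mathcal{X}), W^k_p(\mathcal{X})\bigr) \asymp \omega_k(f,t)_{p;\mathcal{X}},
\end{equation*}
is the \emph{homogeneous} formula (Lemma A.2(i), (iv), for $(\dot W^k_p)_0$ and $\dot W^k_p(\Omega)$). It cannot hold for the inhomogeneous couple $(L_p(\mathcal{X}), W^k_p(\mathcal{X}))$: for a nonzero polynomial $P$ of degree $<k$ on $\Omega$ or $\T^d$ one has $\omega_k(P,t)_{p;\mathcal{X}}=0$ while $K(t^k,P)\asymp t^k\|P\|_{L_p}>0$. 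The correct formula (Lemma A.2(ii), (v)) is
\begin{equation*}
K\bigl(t^k, f; L_p(\mathcal{X}), W^k_p(\mathcal{X})\bigr) \asymp \min\{1,t^k\}\,\|f\|_{L_p(\mathcal{X})} + \omega_k(f,t)_{p;\mathcal{X}}.
\end{equation*}
Consequently your bound on the $K$-functional should be $K(s,f) \lesssim s\bigl(M + \|f\|_{L_p(\mathcal{X})}\bigr)$ rather than $K(s,f)\lesssim sM$, and the near-optimal $g_s$ are bounded in $W^k_p(\mathcal{X})$ by $M + \|f\|_{L_p(\mathcal{X})}$ rather than by $M$ alone. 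Since $\|f\|_{L_p(\mathcal{X})}$ already appears on the right-hand side of \eqref{SobolevModuliD}, this changes nothing in the conclusion, but the lemma should be cited in its correct form.
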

\begin{proof}
	Since the the unit ball of $W^k_p(\mathcal{X})$ is closed in $L_p(\mathcal{X})$, we know that
	\begin{equation*}
		\|f\|_{W^k_p(\mathcal{X})} \asymp \sup_{t > 0} t^{-1} K(t, f; L_p(\mathcal{X}), W^k_p(\mathcal{X}));
	\end{equation*}
	this fact is well known in the interpolation community under the name of Gagliardo completeness of the couple $(L_p(\mathcal{X}), W^k_p(\mathcal{X}))$ (cf. \cite[p. 320]{BennettSharpley}). A simple change of variables and Lemma A.2 imply that
	\begin{align*}
		\|f\|_{W^k_p(\mathcal{X})} &\asymp \sup_{t > 0} t^{-k} (\min\{1,t^k\} \|f\|_{L_p(\mathcal{X})} + \omega_k(f,t)_{p;\mathcal{X}}) \\
		& \asymp \|f\|_{L_p(\mathcal{X})} +  \sup_{t > 0} t^{-k}  \omega_k(f,t)_{p;\mathcal{X}} \\
		& \asymp \|f\|_{L_p(\mathcal{X})} +  \sup_{0 < t < 1} t^{-k}  \omega_k(f,t)_{p;\mathcal{X}}
	\end{align*}
	where we have used $\omega_k(f,t)_{p;\mathcal{X}} \lesssim \|f\|_{L_p(\mathcal{X})}$ in the last step.
\end{proof}

The (fractional) modulus of smoothness $\omega_s(f,t)_{p; \R^d}, \, s > 0,$ also satisfy the properties (a)--(c) listed above. 


Let $1 \leq p \leq \infty, s > 0, 0 < q \leq \infty, 0 < \theta < 1$, and $b > 1/q \, (b \geq 0 \text{ if } q= \infty)$. According to Lemma \ref{LemmaKfunctLS}, we have
\begin{equation}\label{BInter}
	\dot{B}^{\theta s}_{p,q}(\R^d) = (L_p(\R^d),\dot{H}^s_p(\R^d))_{\theta,q},
\end{equation}
\begin{equation}\label{LipLimInter}
	\L^{(s,-b)}_{p,q}(\R^d) = (L_p(\R^d), \dot{H}^s_p(\R^d))_{(1,-b),q},
\end{equation}
see (\ref{DefLipschitz}) and (\ref{Klimitspace}). These formulas also hold true for function spaces over $\T^d$.

Using Lemma \ref{LemmaKfunctLS} and the closedness of the unit ball of $\dot{H}^s_p(\R^d)$ in $L_p(\R^d)$ for $p \in (1, \infty)$, one can obtain the homogeneous counterpart of \eqref{SobolevModuliD} for fractional Sobolev spaces. More specifically, we have

\begin{lem}\label{Lemma3.6}
	Let $\mathcal{X} \in \{\R^d, \T^d\}$. Let $s > 0$ and $1 < p < \infty$. Assume $f \in L_p(\mathcal{X}) \cap \dot{H}^s_p(\mathcal{X})$. Then
	\begin{equation}\label{SobolevModuliDPer'}
	\|f\|_{\dot{H}^s_p(\mathcal{X})} \asymp \sup_{t > 0} t^{-s} \omega_s(f,t)_{p;\mathcal{X}}.
\end{equation}
In particular, if $s=k \in \N$ then (cf. \eqref{ddd2})
	\begin{equation}\label{SobolevModuliDPer'8}
	\|f\|_{\dot{W}^k_p(\mathcal{X})} \asymp \sup_{t > 0} t^{-k} \omega_k(f,t)_{p;\mathcal{X}}.
\end{equation}
\end{lem}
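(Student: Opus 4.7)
The strategy mirrors the one used for \eqref{SobolevModuliD}, with the inhomogeneous Sobolev space $W^k_p(\mathcal{X})$ replaced by the homogeneous fractional version $\dot{H}^s_p(\mathcal{X})$. The two ingredients that must be combined are: (i) a Gagliardo-type completeness statement for the couple $(L_p(\mathcal{X}), \dot{H}^s_p(\mathcal{X}))$, valid for $1 < p < \infty$, which converts the seminorm $\|f\|_{\dot{H}^s_p(\mathcal{X})}$ into a supremum of $K$-functionals; and (ii) the $K$-functional formula from Appendix~A that identifies $K(t, f; L_p(\mathcal{X}), \dot{H}^s_p(\mathcal{X}))$ with the fractional modulus $\omega_s(f, \cdot)_{p;\mathcal{X}}$ after a change of variables.

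First, I would verify the Gagliardo completeness. Since $1 < p < \infty$, the space $L_p(\mathcal{X})$ is reflexive, and the unit ball of $\dot{H}^s_p(\mathcal{X})$ (given by directional derivatives in $L_p$) is closed in $L_p(\mathcal{X})$: if $\|f_n\|_{\dot{H}^s_p(\mathcal{X})} \leq 1$ and $f_n \to f$ in $L_p(\mathcal{X})$, then for each unit vector $\zeta$ the sequence $D^s_\zeta f_n$ is bounded in $L_p$, hence admits a weak limit, which must coincide with $D^s_\zeta f$ by distributional identification, and weak lower semicontinuity of the $L_p$-norm yields $\|D^s_\zeta f\|_{L_p} \leq 1$. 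By the standard Gagliardo-completeness principle (as recorded in \cite[p.~320]{BennettSharpley} and invoked in the proof of \eqref{SobolevModuliD}),
\[
\|f\|_{\dot{H}^s_p(\mathcal{X})} \asymp \sup_{t > 0} t^{-1} K(t, f; L_p(\mathcal{X}), \dot{H}^s_p(\mathcal{X})).
\]

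Next, I would apply Lemma~\ref{LemmaKfunctLS} from Appendix~A, which provides the equivalence
\[
K(t^s, f; L_p(\mathcal{X}), \dot{H}^s_p(\mathcal{X})) \asymp \omega_s(f, t)_{p;\mathcal{X}}, \qquad t > 0,
\]
for $1 < p < \infty$. Performing the change of variables $t \mapsto t^s$ in the supremum above gives
\[
\sup_{t > 0} t^{-1} K(t, f; L_p, \dot{H}^s_p) = \sup_{t > 0} t^{-s} K(t^s, f; L_p, \dot{H}^s_p) \asymp \sup_{t > 0} t^{-s} \omega_s(f, t)_{p;\mathcal{X}},
\]
which combined with the previous display yields \eqref{SobolevModuliDPer'}.

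Finally, \eqref{SobolevModuliDPer'8} follows at once from \eqref{SobolevModuliDPer'} by taking $s = k \in \N$ and using that $\dot{H}^k_p(\mathcal{X}) = \dot{W}^k_p(\mathcal{X})$ with equivalent seminorms for $1 < p < \infty$, as already noted after the definition of $\dot{H}^s_p$. The main subtlety is ensuring that the homogeneous $K$-functional formula in Appendix~A truly holds in the form stated here (with no additional $\min\{1, t^s\}\|f\|_{L_p}$ correction term as appeared in the inhomogeneous case \eqref{SobolevModuliD}); this relies on the fact that on the homogeneous scale, one is free to modify $f$ by constants/low-frequency parts without affecting $\dot{H}^s_p$, and that for $1 < p < \infty$ the Littlewood--Paley/Fourier-multiplier machinery behind Lemma~\ref{LemmaKfunctLS} applies uniformly in $t$.
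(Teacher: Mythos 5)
Your argument matches exactly what the paper indicates just before the lemma: combine Gagliardo completeness of the couple $(L_p, \dot{H}^s_p)$ for $1 < p < \infty$ (which you correctly justify via reflexivity of $L_p$ and weak lower semicontinuity of the seminorm) with the $K$-functional identity $K(t^s, f; L_p, \dot{H}^s_p) \asymp \omega_s(f,t)_{p}$ from Lemma~\ref{LemmaKfunctLS}(iii). The closing worry about a possible $\min\{1,t^s\}\|f\|_{L_p}$ correction is unfounded since Lemma~\ref{LemmaKfunctLS}(iii) is stated precisely without such a term, but this does not affect the correctness of the argument.
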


Taking into account \eqref{HomoMod}, we can rewrite \eqref{SobolevModuliDPer'} as
\begin{equation}\label{SobLip}
	\dot{H}^s_p(\R^d) = \L^{(s,0)}_{p,\infty}(\R^d), \quad 1 < p < \infty, \quad s > 0;
\end{equation}
this extends (\ref{LipEx1}) to the fractional setting. On the other hand, it is worthwhile to mention that  formula \eqref{SobolevModuliDPer'8} also holds true for the Sobolev space $\dot{W}^k_p (\Omega)$, but this is not the case for the smaller Sobolev space $(\dot{W}^k_p (\Omega))_0$ (take, e.g., any non-zero constant function in $\Omega$ and thus the right-hand side of \eqref{SobolevModuliDPer'8} is zero but clearly this function does not belong to $(\dot{W}^k_p (\Omega))_0$).

\subsection{Some interpolation lemmas}\label{SectionInterpolationLemma}

For later use, we collect below some useful Holmstedt's formulas.

\begin{lem}\label{Holmstedt}
	 Assume that $0 < \theta < 1, 0 < q, r \leq \infty$ and $b < -1/r \, (b \leq 0 \text{ if } r=\infty)$. Let $K(t,f) = K(t,f; A_0, A_1), \,t \in (0, \infty)$. Then we have
	\begin{enumerate}[\upshape(i)]
		\item
	\begin{equation}\label{LemmaHolmstedt1}
		K(t^\theta, f; A_0, (A_0, A_1)_{\theta,r}) \asymp t^\theta \left(\int_t^\infty (u^{-\theta} K(u,f))^r \frac{du}{u} \right)^{1/r},
	\end{equation}
	\item
			\begin{equation}\label{LemmaHolmstedt1*}
			K(t^{1-\theta},f; (A_0,A_1)_{\theta,q}, A_1) \asymp \left(\int_0^t (u^{-\theta} K(u,f))^q\frac{du}{u}\right)^{1/q}.
		\end{equation}
		\end{enumerate}
		Assume further that $A_1 \hookrightarrow A_0$ and $t \in (0,1)$. Then we have
		\begin{enumerate}
	\item[\upshape(iii)]
	\begin{align}
		K(t (1-\log t)^{-b-1/r}, f; A_0, (A_0, A_1)_{(1,b),r}) & \nonumber\\
		&\hspace{-5cm}\asymp K(t,f) + t (1 - \log t)^{-b-1/r} \left(\int_t^1 (u^{-1} (1-\log u)^{b} K(u,f))^r \frac{du}{u}\right)^{1/r}, \label{LemmaHolmstedt2}
	\end{align}
		\item[\upshape(iv)]
	\begin{align}
		K(t^{1-\theta} (1 - \log t)^{-b -1/r}, f ; (A_0, A_1)_{\theta, q}, (A_0, A_1)_{(1,b), r})  &\asymp \Big(\int_0^t (u^{-\theta} K(u,f))^q \frac{du}{u} \Big)^{1/q}  \nonumber\\
		& \hspace{-6cm}  + t^{1-\theta} (1 - \log t)^{-b -1/r}\Big( \int_t^1 (u^{-1} (1 - \log u)^b K(u,f))^r \frac{du}{u}\Big)^{1/r}.\label{LemmaHolmstedt3}
	\end{align}
	\end{enumerate}
\end{lem}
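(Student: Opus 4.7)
Formulas (i) and (ii) are the classical Holmstedt reiteration formulas, standard in interpolation theory; so I concentrate on the limiting versions (iii) and (iv). Both follow the familiar Holmstedt pattern: the upper bound comes from inserting a near-optimal $(A_0, A_1)$-decomposition of $f$ at scale $t$ as a trial decomposition in the new couple, and the lower bound uses subadditivity of $K(\cdot\,; A_0, A_1)$ applied to an arbitrary decomposition in the new couple. The new ingredient is a careful tracking of the logarithmic weights, whose asymptotics hinge on the assumption $b < -1/r$, which is precisely what keeps the limiting interpolation space non-trivial.

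For (iii), set $B := (A_0, A_1)_{(1,b),r}$ and $s := t(1-\log t)^{-b-1/r}$. For the \emph{upper bound}, take $f = g+h$ with $\|g\|_{A_0} + t\|h\|_{A_1} \le 2K(t,f)$ and regard it as an $(A_0, B)$-decomposition. Then $\|g\|_{A_0} \le 2K(t,f)$, and the norm $\|h\|_B$ is controlled by splitting its defining integral at $u = t$. On $(0, t)$ use $K(u, h) \le u\|h\|_{A_1} \le 2uK(t,f)/t$, producing the weight integral $\int_0^t (1 - \log u)^{br}\,\frac{du}{u} \asymp (1 - \log t)^{br+1}$ (convergent since $b < -1/r$), whose contribution, multiplied by $s$, becomes $\asymp K(t,f)$. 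On $(t, 1)$ use $K(u, h) \le \|g\|_{A_0} + K(u, f) \lesssim K(u, f)$, yielding the desired tail. For the \emph{lower bound}, given any $(A_0, B)$-decomposition $f = f_0 + f_1$, subadditivity gives $K(u, f) \le \|f_0\|_{A_0} + K(u, f_1)$. The crucial estimate is
\begin{equation*}
K(t, f_1) \lesssim s\,\|f_1\|_B,
\end{equation*}
obtained by inserting the non-increasingness of $u \mapsto K(u, f_1)/u$ (which gives $K(u, f_1) \ge (u/t) K(t, f_1)$ on $(0, t)$) into the defining integral of $\|f_1\|_B$ and using $\int_0^t (1 - \log u)^{br}\,\frac{du}{u} \asymp (1 - \log t)^{br + 1}$. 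Together with the pointwise bound $K(t, f) \le \|f_0\|_{A_0} + K(t, f_1)$, this handles the first summand. For the tail, Minkowski applied to $K(u, f) \le \|f_0\|_{A_0} + K(u, f_1)$ produces a cross-term whose weight integral on $(t, 1)$, namely $\int_t^1 u^{-r-1}(1 - \log u)^{br}\,du \asymp t^{-r}(1 - \log t)^{br}$, contributes the harmless factor $(1 - \log t)^{-1/r} \le 1$ after multiplication by $s$, completing the argument.

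Part (iv) follows from the same two-scale splitting: the subscale $(0, t)$ is handled as in (ii), using the weight $u^{-\theta q}$, and the superscale $(t, 1)$ as in (iii), using the logarithmic weight. The main obstacle, both in (iii) and (iv), is the non-obvious estimate $K(t, f_1) \lesssim s\,\|f_1\|_B$ for $f_1$ in the limiting space: it rests on the monotonicity of $K(u, f_1)/u$ (not pointwise monotonicity of $K$ itself) combined with the precise asymptotic $\int_0^t (1 - \log u)^{br}\,\frac{du}{u} \asymp (1 - \log t)^{br + 1}$, finite precisely because $b < -1/r$. This is the mechanism producing the non-classical logarithmic correction $(1 - \log t)^{-b-1/r}$ that distinguishes the limiting Holmstedt formulas from their classical counterparts (i)--(ii).
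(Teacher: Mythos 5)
The paper does not actually prove this lemma: it cites \cite[Corollary 2.3, Chapter 5, p.\ 310]{BennettSharpley} for parts (i)--(ii) and \cite[Theorems 4.1 and 4.4]{FernandezSignes} for parts (iii)--(iv). Your proof, by contrast, is a self-contained direct derivation, and it is correct. You implement the standard Holmstedt mechanism (trial decomposition for the upper bound, subadditivity of $K$ plus monotonicity of $u \mapsto K(u,\cdot)/u$ for the lower bound), and you correctly identify the two asymptotics that carry the logarithmic correction: $\int_0^t (1-\log u)^{br}\,\frac{du}{u} \asymp (1-\log t)^{br+1}$ (finite exactly because $b < -1/r$), which yields the estimate $K(t,f_1) \lesssim t(1-\log t)^{-b-1/r}\|f_1\|_{(A_0,A_1)_{(1,b),r}}$, and $\int_t^1 u^{-r}(1-\log u)^{br}\,\frac{du}{u} \asymp t^{-r}(1-\log t)^{br}$, which keeps the cross-term harmless after multiplication by $s$. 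A couple of small remarks for the write-up: for $0 < r < 1$ you should invoke the $r$-triangle inequality for the quasi-norm (raise to the $r$-th power before integrating and absorb the resulting constant), rather than Minkowski; and part (iv), which you only sketch, genuinely requires combining the subscale argument underlying (ii) with the superscale argument from (iii) --- in particular, for the upper bound one needs to verify that the $A_0$-piece $g$ of the optimal decomposition lies in $(A_0,A_1)_{\theta,q}$ by estimating $K(u,g)$ on $(0,t)$ via $K(u,g) \lesssim K(u,f)$ (using $uK(t,f)/t \le K(u,f)$) and on $(t,\infty)$ via $K(u,g)\le\|g\|_{A_0}\lesssim K(t,f)$. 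With those details filled in, the argument is complete and essentially the route taken in the cited source \cite{FernandezSignes}.
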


For the proofs see \cite[Corollary 2.3, Chapter 5, page 310]{BennettSharpley} and \cite[Theorems 4.1 and 4.4]{FernandezSignes}.

%

It is well known that the spaces $L_{r,q}(\log L)_b(\Omega)$ can be generated from the couple $(L_p(\Omega), L_\infty(\Omega)), p < r$, applying the logarithmic interpolation method (\ref{DefInter}). Namely, if $0 < r < p < \infty, 0 < q \leq \infty$ and $-\infty < b < \infty$ then
\begin{equation*}
	(L_r(\Omega), L_\infty(\Omega))_{1-\frac{r}{p}, q; b} = L_{p, q}(\log L)_{b}(\Omega);
\end{equation*}
see \cite[Corollary 5.3]{GogatishviliOpicTrebels}. Next we complement this result by showing that the spaces $L_{\infty, q}(\log L)_{b}(\Omega)$ can be characterized as limiting interpolation spaces (see \eqref{Klimitspace}).

\begin{lem}
	Let $0 < p < \infty, 0 < q \leq \infty,$ and $b < - 1/q \, (b \leq 0 \, \text{if} \, \, q= \infty)$. Then we have
	\begin{equation}\label{LorentzZygmundLimiting}
		(L_p(\Omega), L_\infty(\Omega))_{(1,b),q} = L_{\infty, q}(\log L)_{b}(\Omega).
	\end{equation}
\end{lem}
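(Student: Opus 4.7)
The plan is to reduce \eqref{LorentzZygmundLimiting} to a direct computation of the $K$-functional for the couple $(L_p(\Omega), L_\infty(\Omega))$ followed by a Hardy-type estimate. The starting point is the classical Peetre--Holmstedt formula
\[
K(t, f; L_p(\Omega), L_\infty(\Omega)) \asymp \left(\int_0^{t^p} f^*(u)^p\, du\right)^{1/p}, \quad t \in (0,1],
\]
which is recorded in the Appendix A of the paper. Substituting this into the definition \eqref{Klimitspace} of the limiting interpolation norm and performing the change of variables $s = t^p$ (under which $1 - \log t \asymp 1 - \log s$ uniformly on $(0,1)$, the constants depending only on $p$), one arrives at the equivalent expression
\[
\|f\|_{(L_p(\Omega), L_\infty(\Omega))_{(1,b),q}}^q \asymp \int_0^1 s^{-q/p}(1-\log s)^{bq}\left(\int_0^s f^*(u)^p\, du\right)^{q/p} \frac{ds}{s}.
\]

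It then remains to compare the right-hand side with
\[
\|f\|_{L_{\infty,q}(\log L)_b(\Omega)}^q = \int_0^1 (1+|\log s|)^{bq} f^*(s)^q \frac{ds}{s}.
\]
The lower estimate is immediate: since $f^*$ is non-increasing, $\int_0^s f^*(u)^p\,du \ge \int_{s/2}^{s} f^*(u)^p\,du \ge (s/2)\, f^*(s)^p$, and so the inner integral is bounded below by a constant multiple of $s^{q/p} f^*(s)^q$; inserting this in the weighted integral yields $\|f\|_{(L_p,L_\infty)_{(1,b),q}} \gtrsim \|f\|_{L_{\infty,q}(\log L)_b(\Omega)}$.

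For the matching upper estimate I would apply the logarithmically weighted Hardy inequality of Lemma \ref{LemmaHardyLog} to the non-negative function $u \mapsto f^*(u)^p$, with the parameter choices $\alpha \leftarrow q/p$, exponent $p \leftarrow q/p$, and weight exponent $b \leftarrow bq$. When $q \ge p$ this is the standard case. When $q < p$ one is in the quasi-Banach regime where classical Hardy would fail; here I would invoke the second statement of Lemma \ref{LemmaHardyLog}, writing $f^*(u)^p = u^{\lambda-1}g(u)$ with $\lambda = 1$ and $g(u) = f^*(u)^p$, a decreasing function, so the conclusion of that lemma still applies. This yields
\[
\int_0^1 s^{-q/p}(1-\log s)^{bq}\left(\int_0^s f^*(u)^p\, du\right)^{q/p} \frac{ds}{s} \lesssim \int_0^1 (1-\log s)^{bq} f^*(s)^q \frac{ds}{s},
\]
combining which with the lower bound above gives \eqref{LorentzZygmundLimiting}. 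The only mildly subtle point is the quasi-Banach range $q<p$, where the validity of the relevant Hardy inequality relies crucially on the monotonicity of $f^*$ through the special case of Lemma \ref{LemmaHardyLog}; apart from this, the argument is essentially a Holmstedt-style book-keeping exercise.
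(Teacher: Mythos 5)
Your proposal is correct and follows essentially the same route as the paper: the Holmstedt formula for $K(t,f;L_p,L_\infty)$, the monotonicity of $f^*$ for the lower bound, and the Bennett--Rudnick logarithmic Hardy inequality \eqref{HardyInequal1**} for the upper bound, with the quasi-Banach case $q<p$ handled exactly via the decreasing-function clause of Lemma \ref{LemmaHardyLog}. You have merely spelled out the parameter substitutions more explicitly than the paper, which states the same conclusion in a single sentence.
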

\begin{proof}
	Since (cf. Lemma \ref{LemmaHolm4})
	\begin{equation*}
		K(t, f ; L_p(\Omega), L_\infty(\Omega)) \asymp \left(\int_0^{t^p} f^\ast(u)^p du \right)^{1/p}
	\end{equation*}
	 we arrive at
	\begin{equation*}
		\|f\|_{(L_p(\Omega), L_\infty(\Omega))_{(1,b),q}}  \asymp \left( \int_0^1 t^{-q/p} (1 - \log t)^{b q}  \left( \int_0^t f^\ast(u)^p du \right)^{q/p} \frac{dt}{t}\right)^{1/q}.
	\end{equation*}
	Obviously, we have
	\begin{equation*}
		\|f\|_{(L_p(\Omega), L_\infty(\Omega))_{(1,b),q}}  \gtrsim \left( \int_0^1 (1 - \log t)^{b q}  f^\ast(t)^q \frac{dt}{t}\right)^{1/q} = \|f\|_{L_{\infty,q}(\log L)_{b}(\Omega)}.
	\end{equation*}
	
	The converse inequality is a consequence of the Hardy's inequality (\ref{HardyInequal1**}), which in fact holds for any $0 < p < \infty$ and $0 < q \leq \infty$ due to monotonicity of $f^\ast(t)$.
\end{proof}

Extrapolation means allow us to characterize Zygmund spaces (respectively, Lorentz-Zygmund spaces) in terms of the simpler Lebesgue spaces (respectively, Lorentz spaces). See \cite{JawerthMilman}, \cite{Milman} and \cite{EdmundsTriebel}. For later use, we recall the extrapolation description of $L_\infty (\log L)_{b}(\Omega)$.

	\begin{lem}[{\cite[Section 2.6.2, pages 69--75]{EdmundsTriebel}}]
	Assume $b < 0$. We have
	\begin{equation}\label{ExtrapolationLZ}
		\|f\|_{L_\infty (\log L)_{b}(\Omega)} \asymp \sup_{j \geq 0} 2^{j b} \|f\|_{L_{2^j d}(\Omega)}.
	\end{equation}
	\end{lem}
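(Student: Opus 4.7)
The plan is to verify both inequalities of the asserted equivalence directly from the rearrangement descriptions $\|f\|_{L_p(\Omega)} = \bigl(\int_0^1 f^\ast(t)^p\,dt\bigr)^{1/p}$ and $\|f\|_{L_\infty(\log L)_b(\Omega)} = \sup_{0<t<1}(1+|\log t|)^b f^\ast(t)$ (which is the specialization of \eqref{DefLZ} with $p=q=\infty$), together with Chebyshev's elementary bound $t^{1/p}f^\ast(t)\le \|f\|_{L_p(\Omega)}$. The key quantitative input is Stirling's formula, which matches the growth of $\|f\|_{L_{p_j}}$, where $p_j:=2^j d$, with the required factor $2^{-jb}$.

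For the direction $\sup_{j\ge 0}2^{jb}\|f\|_{L_{p_j}(\Omega)}\lesssim \|f\|_{L_\infty(\log L)_b(\Omega)}$, I would set $M:=\|f\|_{L_\infty(\log L)_b(\Omega)}$, so that $f^\ast(t)\le M(1+|\log t|)^{-b}$ (note $-b>0$). Substituting $u=-\log t$,
$$\|f\|_{L_{p_j}(\Omega)}^{p_j}\le M^{p_j}\int_0^1(1+|\log t|)^{-bp_j}\,dt = M^{p_j}\int_0^\infty(1+u)^{-bp_j}e^{-u}\,du \le M^{p_j}\,\Gamma(1-bp_j),$$
and Stirling yields $\Gamma(1-bp_j)^{1/p_j}\asymp p_j^{-b}\asymp 2^{-jb}$, with constant depending only on $b$ and $d$. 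Hence $2^{jb}\|f\|_{L_{p_j}(\Omega)}\lesssim M$ uniformly in $j$.

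For the converse, put $N:=\sup_{j\ge 0}2^{jb}\|f\|_{L_{p_j}(\Omega)}$, so that $\|f\|_{L_{p_j}(\Omega)}\le N\,2^{-jb}$. Chebyshev's bound gives $f^\ast(t)\le t^{-1/p_j}N\,2^{-jb}$ for every $j\ge 0$, and I would optimize in $j$ by choosing $j(t):=\max\{0,\lceil\log_2((1+|\log t|)/d)\rceil\}$, so that $p_{j(t)}\asymp 1+|\log t|$. Then $t^{-1/p_{j(t)}}=\exp(|\log t|/p_{j(t)})$ is bounded by an absolute constant, while $2^{-j(t)b}\asymp(1+|\log t|)^{-b}$, which produces
$$(1+|\log t|)^b f^\ast(t)\lesssim (1+|\log t|)^b\,(1+|\log t|)^{-b}\,N = N.$$
Taking the supremum over $t\in(0,1)$ finishes the proof.

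The obstacles are bookkeeping rather than structural: one must check that Stirling's asymptotic and the dyadic choice of $j(t)$ yield constants depending only on $b$ and $d$, and treat the region where $t$ is bounded away from $0$, where $(1+|\log t|)^b$ itself is bounded and the $j=0$ term in the extrapolation supremum already controls $f^\ast(t)$. Alternatively, one could deduce both inequalities by combining the interpolation identity \eqref{LorentzZygmundLimiting} with the Jawerth--Milman extrapolation description of limiting $K$-spaces as geometric suprema of $K$-functional values, but the direct rearrangement computation above is the shortest route.
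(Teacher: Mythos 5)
The paper does not prove this lemma; it is cited from Edmunds--Triebel. Your direct rearrangement argument is correct and self-contained, so there is nothing internal to the paper to compare it with. The cited source derives results of this kind from the general Jawerth--Milman $\Sigma$/$\Delta$-extrapolation framework, which is exactly the alternative route you sketch at the end; your calculation replaces that machinery with Stirling plus Chebyshev, which is more elementary and keeps explicit track of the dependence on $b$ and $d$.

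One small inaccuracy to tidy up: the inequality $\int_0^\infty (1+u)^{-bp_j}e^{-u}\,du \le \Gamma(1-bp_j)$ is actually reversed, since $(1+u)^s > u^s$ for $s>0$, $u>0$. The correct bound is obtained by the change of variables $v=1+u$:
\begin{equation*}
\int_0^\infty (1+u)^s e^{-u}\,du = e\int_1^\infty v^s e^{-v}\,dv \le e\,\Gamma(s+1),
\end{equation*}
and the extra factor $e^{1/p_j}\le e$ disappears in the comparison after taking $p_j$-th roots. With that substitution, both directions of your proof go through exactly as written, including the $j=0$ boundary case where $(1+|\log t|)^b\le 1$ and Chebyshev in $L_d(\Omega)$ already controls $f^\ast(t)$.
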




\newpage

\section{Subcritical case}\label{subcritical}

We are concerned with the subcritical case of the Sobolev's embedding theorem which claims that if $k \in \mathbb{N}, 1 \leq p < \infty$ and $k < d/p$, then
\begin{equation}\label{SobolevClassical*}
	W^k_p(\Omega) \hookrightarrow L_{p^\ast,p}(\Omega), \quad p^\ast = \frac{d p}{d - k p}.
\end{equation}
See \cite{Hunt}, \cite{Oneil} and \cite{Peetre}.


\subsection{Sobolev embeddings with fixed domain space}\label{Section4.1}
We start with the following result.


\begin{thm}\label{ThmSob}
	Let $1 \leq p < \infty, 0 < q \leq \infty$ and $k \in \mathbb{N}$.
Assume that $k < d/p$. Let $p^\ast = d p /(d - k p)$. The following statements are equivalent:
	\begin{enumerate}[\upshape(i)]
		\item
		\begin{equation*}
			W^k_p(\Omega) \hookrightarrow L_{p^\ast,q}(\Omega),
		\end{equation*}
		\item for $f \in L_p(\Omega)$ and $t \in (0,1)$, we have
		\begin{equation}\label{ThmSob1*}
			\left(\int_0^{t^d} f^\ast(u)^p du \right)^{1/p} + t^k \left(\int_{t^d}^1 u^{q/p^\ast} f^\ast(u)^q \frac{du}{u} \right)^{1/q} \lesssim t^k \|f\|_{L_p(\Omega)} + \omega_k(f,t)_{p;\Omega},
		\end{equation}
		\item if $s \to k-$ then there exists $C > 0$, which is independent of $s$, such that
		\begin{equation}\label{ThmSob1*extrapol}
			\|f\|_{L_{\frac{d p}{d - s p}, q}(\Omega)} \leq C (k - s)^{1/q} \|f\|_{B^s_{p,q}(\Omega),k},
		\end{equation}
		\item
		\begin{equation*}
		q \geq p.
		\end{equation*}
	\end{enumerate}
\end{thm}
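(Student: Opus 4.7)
The overall plan is to treat the four conditions as nodes in the cycle (iv)$\Leftrightarrow$(i)$\Leftrightarrow$(ii) and (i)$\Leftrightarrow$(iii), with both halves of the latter equivalence handled separately. The implication (i)$\Leftrightarrow$(iv) is the classical sharp subcritical Sobolev embedding into a Lorentz space, due to Peetre, Hunt and O'Neil (compare \eqref{121=new}); I would simply cite this. The real work lies in (i)$\Leftrightarrow$(ii) (via the Calder\'on--Brudnyi--Krugljak principle of $K$-functional domination) and in (i)$\Leftrightarrow$(iii) (extrapolation in the forward direction, Tao-style converse in the backward direction).

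For (i)$\Leftrightarrow$(ii) the key point is to recognize both sides of \eqref{ThmSob1*} as $K$-functionals. The right-hand side is the standard Brudnyi identity $K(t^k, f; L_p(\Omega), W^k_p(\Omega)) \asymp \omega_k(f,t)_{p;\Omega} + t^k\|f\|_{L_p(\Omega)}$ (this is essentially Lemma A.2). For the left-hand side, since $k<d/p$ gives $\theta:=kp/d\in(0,1)$ and hence $L_{p^*,q}(\Omega)=(L_p(\Omega),L_\infty(\Omega))_{\theta,q}$, applying Holmstedt's formula \eqref{LemmaHolmstedt1} with $s^\theta = t^k$ together with $K(u,f;L_p(\Omega),L_\infty(\Omega))\asymp\bigl(\int_0^{\min(u^p,1)} f^*(v)^p\,dv\bigr)^{1/p}$ (Lemma A.4-type), a change of variable $v=u^p$, and the identity $q/p-kq/d=q/p^*$, yields after splitting the integral $\int_{t^d}^\infty=\int_{t^d}^1+\int_1^\infty$
\begin{equation*}
K(t^k,f;L_p(\Omega),L_{p^*,q}(\Omega)) \asymp \Bigl(\int_0^{t^d}f^*(u)^p\,du\Bigr)^{1/p}+t^k\Bigl(\int_{t^d}^1 u^{q/p^*}f^*(u)^q\tfrac{du}{u}\Bigr)^{1/q}+t^k\|f\|_{L_p(\Omega)}.
\end{equation*}
Thus \eqref{ThmSob1*} is exactly $K(t^k,f;L_p,L_{p^*,q})\lesssim K(t^k,f;L_p,W^k_p)$, which by the Calder\'on--Brudnyi--Krugljak principle is equivalent to (i). Hardy's Lemma \ref{LemmaHardySharp} takes care of the routine estimates in splitting the integrals.

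For the forward implication (i)$\Rightarrow$(iii), I would apply the real interpolation functor $(\cdot,\cdot)_{s/k,q}$ to the diagram $L_p\hookrightarrow L_p$ and $W^k_p\hookrightarrow L_{p^*,q}$. By reiteration, $(L_p,L_{p^*,q})_{s/k,q}=(L_p,L_\infty)_{sp/d,q}=L_{dp/(d-sp),q}$ with uniformly bounded constants as $s\to k-$, while the identification \eqref{BInter} and the definition \eqref{DefBesovInh} of $\|\cdot\|_{B^s_{p,q}(\Omega),k}$ carry precisely the normalizing factor $c_{s,k,q}\asymp (k-s)^{-1/q}$; comparing norms produces \eqref{ThmSob1*extrapol} with the explicit $(k-s)^{1/q}$ blow-up factor (see the Jawerth--Milman extrapolation framework used in \cite{KaradzhovMilmanXiao}).

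The converse extrapolation (iii)$\Rightarrow$(i) is the main obstacle and will follow the Tao strategy alluded to in the Methodology section. Starting from (iii) and using \eqref{DerMod}, for any $f\in W^k_p(\Omega)$ one bounds $\vertiii{f}_{B^s_{p,q}(\Omega),k}^q \lesssim \|f\|_{W^k_p}^q\int_0^1 t^{(k-s)q-1}dt \asymp (k-s)^{-1}\|f\|_{W^k_p}^q$, so that $(k-s)^{1/q}\|f\|_{B^s_{p,q},k}\lesssim \|f\|_{W^k_p}$ uniformly in $s$; feeding this into \eqref{ThmSob1*extrapol} gives $\|f\|_{L_{dp/(d-sp),q}(\Omega)}\lesssim\|f\|_{W^k_p(\Omega)}$ uniformly. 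Passing to the limit $s\to k-$ by a Fatou argument on the rearrangement (the Lorentz quasi-norm is lower-semicontinuous with respect to the first index, which can be verified directly from \eqref{DefLZ} using monotone convergence on $f^*$) produces $\|f\|_{L_{p^*,q}(\Omega)}\lesssim\|f\|_{W^k_p(\Omega)}$, i.e.\ (i). The delicate point, and the reason this step is non-trivial, is that the forward interpolation is lossy at endpoints in general; the uniform bound on $(k-s)^{1/q}\|f\|_{B^s_{p,q},k}$ provides exactly the quantitative control needed to survive the limit, which parallels Tao's converse Yano-type theorem and answers Problem~27 of Astashkin--Milman in this setting.
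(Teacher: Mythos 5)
Your overall logical route differs from the paper's: the paper closes a cycle (i)$\Rightarrow$(ii)$\Rightarrow$(iii)$\Rightarrow$(i), whereas you establish (i)$\Leftrightarrow$(ii) directly (via the Calder\'on $K$-functional domination principle, which needs Gagliardo completeness of the Lorentz couple for the backward direction — fine for $q\geq 1$, somewhat delicate for $q<1$) and then (i)$\Leftrightarrow$(iii) separately. Your (i)$\Rightarrow$(ii), (ii)$\Rightarrow$(i) and (iii)$\Rightarrow$(i) are all sound and essentially match the paper's ingredients (Lemma A.2, Holmstedt, monotone convergence in the first Lorentz index).

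The genuine gap is in your forward direction (i)$\Rightarrow$(iii). You interpolate the diagram $L_p\hookrightarrow L_p$, $W^k_p\hookrightarrow L_{p^*,q}$ with the functor $(\cdot,\cdot)_{s/k,q}$ and assert that the reiteration $(L_p,L_{p^*,q})_{s/k,q}=L_{dp/(d-sp),q}$ holds \emph{with uniformly bounded constants as $s\to k-$}, and that the $(k-s)^{1/q}$ gain is then produced by the normalization $c_{s,k,q}$ built into $\|\cdot\|_{B^s_{p,q},k}$. Both attributions are wrong. Tracking the reiteration via Holmstedt's formula \eqref{LemmaHolmstedt1} and Fubini shows
\begin{equation*}
\|f\|_{(L_p,\,L_{p^*,q})_{s/k,q}}^q \asymp \frac{1}{k-s}\,\|f\|_{L_{\frac{dp}{d-sp},q}(\Omega)}^q \qquad (s\to k-),
\end{equation*}
so the constant on the \emph{target} side blows up like $(k-s)^{-1/q}$; it is precisely this factor that produces the $(k-s)^{1/q}$ improvement once you pass it to the other side of the interpolated inequality. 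Meanwhile the \emph{domain} identification $\|f\|_{(L_p,W^k_p)_{s/k,q}}\asymp\|f\|_{B^s_{p,q}(\Omega),k}$ is \emph{uniform} in $s$ — indeed the whole point of the normalizing constant $c_{s,k,q}$ in \eqref{DefBesovInh} is to make it so. As you have written it, the argument only yields $\|f\|_{L_{dp/(d-sp),q}}\lesssim\|f\|_{B^s_{p,q},k}$ with a constant that does not decay, i.e.\ you lose the $(k-s)^{1/q}$ factor that is the whole content of \eqref{ThmSob1*extrapol}. The paper avoids this bookkeeping pitfall by proving (ii)$\Rightarrow$(iii) directly: drop the first term in \eqref{ThmSob1*}, multiply by $t^{-sq}$, integrate over $(0,1)$, and apply Fubini, which exposes the factor $(k-s)^{-1}$ on the Lorentz side in one line.

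A secondary point: you label (iii)$\Rightarrow$(i) as the "main obstacle" in the Tao converse-extrapolation spirit, but that implication is the easy Jawerth--Milman forward extrapolation (your own proof of it is a two-line Fatou argument, matching the paper's). The Tao-type converse, emphasized in the paper, is the opposite direction: deriving the sharp $(k-s)^{1/q}$-bounded family of intermediate embeddings (iii) from the single endpoint embedding (i) — exactly the step where your constant bookkeeping goes astray.
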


The counterpart of the previous result for $\R^d$ reads as follows.

\begin{thm}\label{ThmSobRd}
	Let $1 \leq p < \infty, 0 < q \leq \infty$ and $k \in \mathbb{N}$.
Assume that $k < d/p$. Let $p^\ast = d p /(d - k p)$. The following statements are equivalent:
	\begin{enumerate}[\upshape(i)]
		\item
		\begin{equation*}
			W^k_p(\R^d) \hookrightarrow L_{p^\ast,q}(\R^d),
		\end{equation*}
		\item for $f \in L_p(\R^d)$ and $t \in (0,\infty)$, we have
		\begin{equation}\label{ThmSob1*Rd}
			\left(\int_0^{t^d} f^\ast(u)^p du \right)^{1/p} + t^k \left(\int_{t^d}^\infty u^{q/p^\ast} f^\ast(u)^q \frac{du}{u} \right)^{1/q} \lesssim \min\{1,t^k\} \|f\|_{L_p(\R^d)} + \omega_k(f,t)_{p;\R^d},
		\end{equation}
		\item if $s \to k-$ then there exists $C > 0$, which is independent of $s$, such that
		\begin{equation*}
			\|f\|_{L_{\frac{d p}{d - s p}, q}(\R^d)} \leq C (k - s)^{1/q} \|f\|_{B^s_{p,q}(\R^d),k},
		\end{equation*}
		\item
		\begin{equation*}
		q \geq p.
		\end{equation*}
	\end{enumerate}
\end{thm}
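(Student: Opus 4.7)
My plan is to mirror the proof of Theorem \ref{ThmSob} (the domain case), carrying out the same scheme on $\R^d$. The classical equivalence (i) $\Leftrightarrow$ (iv) is Peetre's theorem: $q \geq p$ combined with \eqref{121=new} and $L_{p^*,p}(\R^d) \hookrightarrow L_{p^*,q}(\R^d)$ yields (i), while the sharpness of the Lorentz index in \eqref{121=new} excludes $q < p$. The only substantive change from the domain case is the replacement of the weight $t^k$ (valid on the bounded $\Omega$ with $|\Omega|_d = 1$) by $\min\{1, t^k\}$, which correctly encodes the $L_p$-contribution at both small and large scales when the ambient space has infinite measure.

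For (i) $\Leftrightarrow$ (ii), the strategy is to recognize both sides of \eqref{ThmSob1*Rd} as Peetre $K$-functionals. The Appendix A equivalence gives
$$K(t^k, f; L_p(\R^d), W^k_p(\R^d)) \asymp \min\{1, t^k\}\,\|f\|_{L_p(\R^d)} + \omega_k(f,t)_{p;\R^d},$$
which matches the RHS of \eqref{ThmSob1*Rd}. For the LHS, I will use $L_{p^*,q}(\R^d) = (L_p(\R^d), L_\infty(\R^d))_{kp/d,\,q}$ (noting $1 - p/p^* = kp/d$), apply Holmstedt's formula (Lemma \ref{Holmstedt}(i)), combine it with $K(u, f; L_p, L_\infty) \asymp (\int_0^{u^p} f^*(v)^p\, dv)^{1/p}$ from Lemma \ref{LemmaHolm4}, and split the resulting integral at $u = t^{d/p}$ (i.e.\ $w = t^d$ in rearrangement variables). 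The Hardy-type Lemma \ref{vspom1} then identifies the sum with the two-term expression on the LHS of \eqref{ThmSob1*Rd}, giving
$$\Big(\int_0^{t^d} f^*(u)^p\, du\Big)^{1/p} + t^k \Big(\int_{t^d}^\infty u^{q/p^*} f^*(u)^q \frac{du}{u}\Big)^{1/q} \asymp K(t^k, f; L_p(\R^d), L_{p^*,q}(\R^d)).$$
Consequently, \eqref{ThmSob1*Rd} is precisely $K(\cdot, f; L_p, L_{p^*,q}) \lesssim K(\cdot, f; L_p, W^k_p)$, which is equivalent to the embedding (i).

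For (ii) $\Leftrightarrow$ (iii), I raise \eqref{ThmSob1*Rd} to the $q$-th power and integrate against $t^{-sq}\,dt/t$ on $(0,\infty)$. Setting $p^{**} := dp/(d-sp)$ and using the identity $1/p^* + (k-s)/d = 1/p^{**}$, a Fubini computation on the second LHS term produces $\frac{1}{(k-s)q}\|f\|_{L_{p^{**},q}(\R^d)}^q$, while the first term is bounded below by $(1/d)\|f\|_{L_{p^{**},q}(\R^d)}^q$ via the elementary bound $\int_0^w f^*(u)^p\, du \geq w f^*(w)^p$. On the RHS, the integral $\int_0^\infty t^{-sq}\min\{1,t^{kq}\}\,dt/t \asymp (k-s)^{-1}$ as $s \to k-$ combines with the Besov seminorm $\vertiii{f}_{B^s_{p,q}(\R^d),k}$. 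Multiplying the resulting inequality by $(k-s)$, taking $q$-th roots, and using $c_{s,k,q} \asymp (k-s)^{-1/q}$ to absorb the $L_p$-contribution into the full Besov norm $\|f\|_{B^s_{p,q}(\R^d),k}$ delivers the extrapolation inequality in (iii). The converse direction (iii) $\Rightarrow$ (i) proceeds via the Jawerth–Milman converse extrapolation scheme (in the spirit of Tao) outlined in the Methodology subsection.

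The principal obstacle I anticipate is the careful handling of $K$-functional identities on $\R^d$, where the couple $(L_p(\R^d), L_\infty(\R^d))$ fails to be nested. Holmstedt's formula, Lemma \ref{LemmaHolm4}, and Lemma \ref{vspom1} must be applied over the full half-line $(0,\infty)$ with attention to the large-$t$ regime where $\omega_k(f,t)_{p;\R^d} \lesssim \|f\|_{L_p(\R^d)}$; the factor $\min\{1, t^k\}$ in \eqref{ThmSob1*Rd} precisely compensates this behavior and keeps the $K$-functional equivalence with $K(t^k, f; L_p, W^k_p)$ intact across all scales, which is the essential modification relative to Theorem \ref{ThmSob}.
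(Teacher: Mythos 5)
Your proposal is correct and mirrors the paper's proof, which indeed states that Theorem~\ref{ThmSobRd} goes along the same lines as Theorem~\ref{ThmSob}. The one small detour is in identifying the left-hand side of \eqref{ThmSob1*Rd} as $K(t^k, f; L_p(\R^d), L_{p^\ast,q}(\R^d))$: you reiterate through $(L_p, L_\infty)$ via Lemma~\ref{Holmstedt}(i) and then invoke Lemma~\ref{vspom1} (which does apply, after replacing $q$ by $q/p$ and working with the decreasing function $f^{\ast p}$), whereas the paper simply substitutes $t\mapsto t^k$ in the two-term Lorentz $K$-functional formula \eqref{LemmaKFunctLorentz} for the couple $(L_p, L_{p^\ast,q})$ and gets the identity in one step. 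Likewise in (ii)$\Rightarrow$(iii) the lower bound you extract from the first term of the left-hand side is redundant — the paper just discards that term before integrating — and the ``spirit of Tao'' phrasing attached to (iii)$\Rightarrow$(i) is backwards (that direction is the routine Jawerth--Milman one; the Tao-type converse is (i)/(ii)$\Rightarrow$(iii)), though your actual argument for it is the correct one.
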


Dealing with homogeneous spaces, we establish the following

\begin{thm}\label{ThmSobRdHom}
	Let $1 \leq p < \infty, 0 < q \leq \infty$ and $k \in \mathbb{N}$.
Assume that $k < d/p$. Let $p^\ast = d p /(d - k p)$. The following statements are equivalent:
	\begin{enumerate}[\upshape(i)]
		\item
		\begin{equation*}
			(\dot{W}^k_p(\R^d))_0 \hookrightarrow L_{p^\ast,q}(\R^d),
		\end{equation*}
		\item for $f \in L_p(\R^d) + (\dot{W}^k_p(\R^d))_0$ and $t \in (0,\infty)$, we have
		\begin{equation}\label{ThmSob1*Rd2}
			\left(\int_0^{t^d} f^\ast(u)^p du \right)^{1/p} + t^k \left(\int_{t^d}^\infty u^{q/p^\ast} f^\ast(u)^q \frac{du}{u} \right)^{1/q} \lesssim \omega_k(f,t)_{p;\R^d},
		\end{equation}
		\item
 if $s \to k-$ 
then there exists $C > 0$, which is independent of $s$, such that
		\begin{equation}
\label{ThmSob1*Rd2Extrapol}			\|f\|_{L_{\frac{d p}{d - s p}, q}(\R^d)} \leq C (k - s)^{1/q} \|f\|_{\dot{B}^s_{p,q}(\R^d),k},
		\end{equation}
		\item
		\begin{equation*}
		q \geq p.
		\end{equation*}
	\end{enumerate}
\end{thm}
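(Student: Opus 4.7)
The plan is to establish the cycle $(i) \Rightarrow (ii) \Rightarrow (iii) \Rightarrow (i)$ together with $(i) \Leftrightarrow (iv)$. The equivalence $(i) \Leftrightarrow (iv)$ is classical: sufficiency of $q \geq p$ comes from Peetre's refined embedding $(\dot{W}^k_p(\R^d))_0 \hookrightarrow L_{p^*,p}(\R^d) \hookrightarrow L_{p^*,q}(\R^d)$, while necessity follows from a radial power-function counterexample. The remaining three implications are organized around the $K$-functional identity supplied by Appendix~A,
$$K(t^k, f; L_p(\R^d), (\dot{W}^k_p(\R^d))_0) \asymp \omega_k(f, t)_{p;\R^d}, \quad t > 0.$$

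For $(i) \Rightarrow (ii)$, the monotonicity of the $K$-functional under the embedding (i) yields $K(t^k, f; L_p(\R^d), L_{p^*,q}(\R^d)) \lesssim \omega_k(f, t)_{p;\R^d}$, and Holmstedt's formula for the Lorentz couple $(L_p(\R^d), L_{p^*,q}(\R^d))$ with parameter $\alpha = 1/(1/p - 1/p^*) = d/k$ identifies the LHS with the sum on the left of \eqref{ThmSob1*Rd2}. For $(ii) \Rightarrow (iii)$, I would drop the first term on the LHS of (ii), raise to the $q$-th power, multiply by $t^{-sq-1}$, and integrate over $t \in (0, \infty)$. The RHS becomes $\|f\|_{\dot{B}^s_{p,q}(\R^d), k}^q$ by definition \eqref{DefBesov}, and Fubini on the LHS (the region is $\{u > t^d\} = \{t < u^{1/d}\}$) gives
$$\int_0^\infty u^{q/p^*-1} f^*(u)^q \int_0^{u^{1/d}} t^{(k-s)q - 1}\, dt\, du = \frac{\|f\|_{L_{p^*_s, q}(\R^d)}^q}{(k-s) q},$$
after using $q/p^* + (k-s)q/d = q/p^*_s$ with $p^*_s = dp/(d - sp)$. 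This yields (iii) with sharp constant $((k-s)q)^{1/q}$.

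The main obstacle is the converse extrapolation $(iii) \Rightarrow (i)$, in the spirit of Tao \cite{Tao}. By density I may restrict to $f \in C^\infty_0(\R^d)$, so that $f \in L_p(\R^d)$ as well. Combining the elementary bounds $\omega_k(f, t)_{p;\R^d} \leq t^k \| |\nabla^k f|\|_{L_p(\R^d)}$ (cf. \eqref{DerMod}) and $\omega_k(f, t)_{p;\R^d} \leq 2^k \|f\|_{L_p(\R^d)}$, and splitting the Besov integral at the crossover $T = (\|f\|_{L_p(\R^d)}/\||\nabla^k f|\|_{L_p(\R^d)})^{1/k}$, an explicit calculation gives
$$(k - s) \|f\|_{\dot{B}^s_{p,q}(\R^d), k}^q \leq \|f\|_{L_p(\R^d)}^{(k-s)q/k} \| |\nabla^k f|\|_{L_p(\R^d)}^{sq/k}\, \Big(\tfrac{1}{q} + \tfrac{(k-s)\, 2^{kq}}{s\, q}\Big),$$
so that $\limsup_{s \to k-} (k-s)^{1/q}\|f\|_{\dot{B}^s_{p,q}(\R^d), k} \lesssim \| |\nabla^k f|\|_{L_p(\R^d)}$. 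Combined with (iii) this produces a uniform bound $\|f\|_{L_{p^*_s, q}(\R^d)} \lesssim \| |\nabla^k f|\|_{L_p(\R^d)}$ as $s \to k-$. Fatou's lemma applied to $\|f\|_{L_{p^*_s, q}}^q = \int_0^\infty t^{q/p^*_s - 1} f^*(t)^q\, dt$ (whose integrand converges pointwise to $t^{q/p^* - 1} f^*(t)^q$) then yields $\|f\|_{L_{p^*,q}(\R^d)} \leq \liminf_{s \to k-}\|f\|_{L_{p^*_s, q}(\R^d)} \lesssim \| |\nabla^k f|\|_{L_p(\R^d)}$, and density in $(\dot{W}^k_p(\R^d))_0$ completes (i).

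The delicate point in the last step is that the \emph{precise} blow-up rate $(k-s)^{1/q}$ in (iii) is essential: any weaker rate would preclude the limit passage $s \to k-$, which is precisely what makes the converse direction of the Jawerth--Milman extrapolation nontrivial and connects this theorem to the open problem of \cite{AstashkinMilman}. The bookkeeping of the two competing bounds on $\omega_k(f, t)_{p;\R^d}$ and the Fatou transfer from $L_{p^*_s, q}$ to the endpoint space $L_{p^*, q}$ are the technical heart of the argument.
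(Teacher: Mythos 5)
Your proof is correct, and implications (i) $\Rightarrow$ (ii) and (ii) $\Rightarrow$ (iii) match the paper's argument: (i) $\Rightarrow$ (ii) uses the $K$-functional identity $K(t^k,f;L_p,(\dot W^k_p)_0)\asymp\omega_k(f,t)_p$ and Holmstedt's formula for the Lorentz pair, while (ii) $\Rightarrow$ (iii) drops the near-endpoint term, multiplies by $t^{-sq-1}$, and applies Fubini to identify $\|f\|_{L_{p^*_s,q}}^q/((k-s)q)$ on the left. Where you diverge is in (iii) $\Rightarrow$ (i). The paper simply quotes Karadzhov--Milman--Xiao's limit formula $\lim_{s\to k-}(k-s)^{1/q}\|f\|_{\dot B^s_{p,q}(\R^d),k}\asymp\||\nabla^k f|\|_{L_p(\R^d)}$ for $f\in C^\infty_0$ and passes to the limit. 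You instead derive only the needed upper half of that equivalence from scratch: using the two elementary bounds $\omega_k(f,t)_p\le t^k\||\nabla^k f|\|_{L_p}$ and $\omega_k(f,t)_p\le 2^k\|f\|_{L_p}$, splitting the Besov integral at the crossover $T=(\|f\|_{L_p}/\||\nabla^kf|\|_{L_p})^{1/k}$, and then transferring the uniform bound $\|f\|_{L_{p^*_s,q}}\lesssim\||\nabla^kf|\|_{L_p}$ to the endpoint $L_{p^*,q}$ by Fatou. This is a genuine simplification: the cited KMX theorem is a two-sided equivalence whose lower bound requires real work, yet only the (easy) upper bound is needed here, and your version makes the argument self-contained without the external reference. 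Both routes finish with density of $C^\infty_0(\R^d)$ in $(\dot W^k_p(\R^d))_0$. One cosmetic remark: for the $q=\infty$ case the integration over $t$ in (ii) $\Rightarrow$ (iii) should be replaced by a supremum; the paper notes this is easier and you could add a line to that effect.
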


 Our method can also be applied to deal with homogeneous Sobolev spaces on $\Omega$. In such a case, we can only expect to obtain local estimates for subsets $\Omega_0$ of $\Omega$ with positive distance to the boundary of $\Omega$ (see, e.g., \cite[Sections 5.8.2.(a) and 6.3]{Evans}). Accordingly, we will pay special attention to the dependence with respect to $\text{dist}(\Omega_0, \partial \Omega)$.

\begin{thm}\label{ThmSobOmegaHom}
	Let $1 \leq p < \infty$ and $k \in \mathbb{N}$.
Assume that $k < d/p$. Let $p^\ast = d p /(d - k p)$. Consider $\Omega_0\subset \Omega$ with $\emph{dist}(\Omega_0, \partial \Omega) > 0$.
	\begin{enumerate}[\upshape(i)]
		\item We have
		\begin{equation}\label{ThmSob1*OmegaHom}
			\left(\int_0^{t^d} f^\ast(u)^p du \right)^{1/p} + t^k \left(\int_{t^d}^1 u^{p/p^\ast} f^\ast(u)^p \frac{du}{u} \right)^{1/p} \leq C_1 (\emph{dist}(\Omega_0, \partial \Omega))^{-k} \omega_k(f,t)_{p;\Omega},
		\end{equation}
		for all $f \in L_p(\Omega)$ such that $\emph{supp } (f) \subseteq \Omega_0$ and $t \in \big(0,\min\{1, \frac{\emph{dist}(\Omega_0, \partial \Omega)}{k}\}\big)$. Here, $C_1$ is a positive constant which is independent of $f, t, \Omega_0$ and $\Omega$.
		\item Let $s \to k-$. We have
		\begin{equation}\label{ThmSob1*OmegaHomExtrapol}
			\|f\|_{L_{\frac{d p}{d - s p}, p}(\Omega)} \leq C_2 (k - s)^{1/p} (\emph{dist}(\Omega_0, \partial \Omega))^{-k} \|f\|_{\dot{B}^s_{p,p}(\Omega),k}
		\end{equation}
		for all $f \in L_p(\Omega)$ such that $\emph{supp } (f) \subseteq \Omega_0$. Here, $C_2$ is a positive constant which is independent of $f, s$ and $\Omega_0$ and the Lorentz norm \eqref{DefLZ} and the Besov norm \eqref{DefBesov} are normalized with respect to $\emph{dist} (\Omega_0, \partial \Omega)$, i.e.,
		\begin{equation}\label{ThmSob1*OmegaHomExtrapolNew}
			\|f\|_{L_{\frac{d p}{d - s p}, p}(\Omega)} = \bigg( \int_0^{\min\big\{1, \big(\frac{\emph{dist}(\Omega_0, \partial \Omega)}{k} \big)^d\big\} } t^{-\frac{s p}{d}}  f^\ast(t)^p \, dt\bigg)^{\frac{1}{p}}
		\end{equation}
		and
		\begin{equation}\label{ThmSob1*OmegaHomExtrapolNew2}
			\|f\|_{\dot{B}^s_{p,p}(\Omega),k}= \left( \int_0^{\min\{1, \frac{\emph{dist}(\Omega_0, \partial \Omega)}{k}\}} (t^{-s} \omega_k (f,t)_{p;\Omega})^p \frac{dt}{t}\right)^{1/p},
		\end{equation}
		respectively.
	\end{enumerate}
\end{thm}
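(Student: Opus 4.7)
My plan is to deduce both parts of Theorem \ref{ThmSobOmegaHom} from the corresponding $\R^d$-statements (Theorem \ref{ThmSobRdHom}) via extension by zero, exploiting the hypothesis that $\text{supp}(f) \subseteq \Omega_0$ has positive distance to $\partial \Omega$. Let $\tilde{f}$ denote the extension of $f$ to $\R^d$ by zero outside $\Omega_0$. Since $|\Omega|_d = 1$ and $\text{supp}(\tilde f) \subseteq \Omega$, the distribution functions of $f$ on $\Omega$ and of $\tilde f$ on $\R^d$ coincide, so $\tilde{f}^{\ast} = f^\ast$ (with both vanishing on $[1, \infty)$) and $\|\tilde f\|_{L_p(\R^d)} = \|f\|_{L_p(\Omega)}$.

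The pivotal technical ingredient is a pointwise identification of moduli of smoothness. For every $h \in \R^d$ with $|h| \leq t < \text{dist}(\Omega_0, \partial\Omega)/k$, I claim $\text{supp}(\Delta_h^k \tilde{f}) \subseteq \Omega_{kh}$ and $\Delta_h^k \tilde{f} = \Delta_h^k f$ on this set. Indeed, if $\Delta_h^k \tilde{f}(x) \neq 0$ then $x + i h \in \Omega_0$ for some $i \in \{0, \ldots, k\}$, and for every $j \in \{0, \ldots, k\}$ the point $x + jh = (x + ih) + (j - i)h$ lies within distance $k|h| < \text{dist}(\Omega_0, \partial\Omega)$ of $\Omega_0 \subset \Omega$, hence inside $\Omega$. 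Consequently $\omega_k(\tilde{f}, t)_{p;\R^d} = \omega_k(f, t)_{p;\Omega}$ for $t \in (0, \text{dist}(\Omega_0, \partial\Omega)/k)$.

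Part (i) now follows directly: I apply Theorem \ref{ThmSobRdHom}, the implication (i) $\Rightarrow$ (ii) with $q = p$, to $\tilde{f}$. The rearrangement integral \eqref{ThmSob1*Rd2} truncates to $(t^d, 1)$ automatically since $f^\ast = 0$ on $[1, \infty)$, and the $\R^d$-modulus is replaced by the $\Omega$-modulus using the identification above, yielding \eqref{ThmSob1*OmegaHom} with an absolute constant (the factor $(\text{dist}(\Omega_0,\partial\Omega))^{-k}$ appearing in the stated bound is a harmless overestimate in this range of $t$). For part (ii), I apply Theorem \ref{ThmSobRdHom}(iii) with $q = p$ to $\tilde{f}$; the normalized Lorentz norm \eqref{ThmSob1*OmegaHomExtrapolNew} is majorized by the corresponding $\R^d$-norm of $\tilde{f}$, while the $\R^d$-Besov norm of $\tilde f$ is split at $T := \min\{1, \text{dist}(\Omega_0, \partial\Omega)/k\}$: the integral over $(0, T)$ equals the normalized Besov norm \eqref{ThmSob1*OmegaHomExtrapolNew2} by the identification, while the tail over $(T, \infty)$ is controlled using the crude bound $\omega_k(\tilde{f}, t)_{p;\R^d} \leq 2^k \|f\|_{L_p(\Omega)}$, producing an error of order $(\text{dist}(\Omega_0, \partial\Omega))^{-k} \|f\|_{L_p(\Omega)}$.

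The principal obstacle is absorbing this error into the truncated Besov norm of $f$ without spoiling either the factor $(\text{dist}(\Omega_0,\partial\Omega))^{-k}$ or the sharp blow-up $(k - s)^{1/p}$ inherited from Theorem \ref{ThmSobRdHom}(iii). The cleanest route is to bootstrap from part (i) itself: evaluating \eqref{ThmSob1*OmegaHom} at $t = T$ and using the embedding $L_{p^\ast, p}(\Omega) \hookrightarrow L_p(\Omega)$ (available since $|\Omega|_d = 1$ and $p \leq p^\ast$) yields $\|f\|_{L_p(\Omega)} \lesssim (\text{dist}(\Omega_0,\partial\Omega))^{-k} \omega_k(f, T)_{p;\Omega}$, and the monotonicity of $\omega_k(f, \cdot)$ combined with the $s$-dependent normalization of \eqref{ThmSob1*OmegaHomExtrapolNew2} then allows this to be traded for the truncated Besov norm up to the desired constant. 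A careful tracking of the $s$-dependence of all constants throughout preserves the sharp rate $(k - s)^{1/p}$, producing \eqref{ThmSob1*OmegaHomExtrapol}.
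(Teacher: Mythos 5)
Your approach is correct but follows a genuinely different path from the paper's. For part (i), the paper estimates $K(t^k, f; L_p(\Omega), L_{p^\ast,p}(\Omega))$ through the couple $(L_p(\Omega), (\dot{W}^k_p(\Omega))_0)$, invoking the classical Sobolev embedding and Proposition~\ref{Prop}; it is the mollifier/extension construction inside that proposition that produces the factor $(\text{dist}(\Omega_0,\partial\Omega))^{-k}$. You instead extend $f$ by zero and apply Theorem~\ref{ThmSobRdHom}(ii) with $q=p$ directly to $\tilde f$, combined with the equality $\omega_k(\tilde f, t)_{p;\R^d}=\omega_k(f,t)_{p;\Omega}$ for $t<\text{dist}(\Omega_0,\partial\Omega)/k$ (which is exactly \eqref{Claim2} in the appendix). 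This bypasses Proposition~\ref{Prop} entirely and in fact yields part (i) with an absolute constant, confirming your observation that the $(\text{dist})^{-k}$ factor is superfluous in this range of $t$. For part (ii), however, the paper's route is markedly shorter: it drops the first term on the left of \eqref{ThmSob1*OmegaHom}, integrates against $t^{(k-s)p}\,dt/t$ up to $T=\min\{1,\text{dist}(\Omega_0,\partial\Omega)/k\}$, changes variables and applies Fubini; the truncated norms \eqref{ThmSob1*OmegaHomExtrapolNew} and \eqref{ThmSob1*OmegaHomExtrapolNew2} arise precisely because the integration stops at $T$, so no tail ever appears. Your route through Theorem~\ref{ThmSobRdHom}(iii) instead generates a genuine tail $\int_T^\infty(t^{-s}\omega_k(\tilde f,t)_{p;\R^d})^p\,dt/t$ that must be absorbed; the bootstrap you propose via part (i) (giving $\|f\|_{L_p(\Omega)}\lesssim T^{-k}\omega_k(f,T)_{p;\Omega}$, using that $L_{p^\ast,p}(\Omega)\hookrightarrow L_p(\Omega)$) together with the monotonicity estimate $\omega_k(f,T)_{p;\Omega}\lesssim T^s\|f\|_{\dot{B}^s_{p,p}(\Omega),k}$ does close the gap --- the $T^{\pm s}$ powers cancel, leaving a net $T^{-k}\lesssim(\text{dist}(\Omega_0,\partial\Omega))^{-k}$ with $s$-uniform constants as $s\to k-$ --- but this is considerably more delicate than the paper's one-line Fubini computation. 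Both routes prove the stated theorem; the paper's part (ii) is the cleaner derivation, while your part (i) is a mild sharpening of the constant.
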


Before we proceed with the proof of these theorems, some remarks are in order.

\begin{rem}\label{Rembeta}
	(i) {Estimate of oscillations of functions in terms of moduli of smoothness have shown  to be a very useful tool in embedding theorems. The following inequality of Kolyada \cite[Cor. 6]{Kolyada} (see the previous results in \cite{Ulyanov}) plays a central role in embedding theorems (see, e.g., \cite{CaetanoGogatishviliOpic}, \cite{CaetanoGogatishviliOpic11}, \cite{KolyadaNafsa}, \cite{KolyadaLerner}). Let $1 \leq p < \infty$, then
	\begin{equation}\label{K}
		t \left(\int_{t^d}^\infty u^{-p/d} \int_0^{u} (f^\ast(v) - f^\ast(u))^p dv \frac{du}{u} \right)^{1/p} \lesssim \omega_1(f,t)_{p;\R^d}, \quad f \in L_p(\R^d).
	\end{equation}
	This is a stronger version of the well-known oscillation inequality
	\begin{equation}\label{oscillation}
		f^{**}(t)-f^*(t) \lesssim t^{-1/p} \omega_1(f,t^{1/d})_{p;\R^d},
	\end{equation}
	which has been intensively investigated. It is worth mentioning that \eqref{oscillation} admits extensions to  r.i. spaces on $\R^d$ (or more generally, metric measure spaces; see \cite{MartinMilman06b} and \cite{Mastylo}) and the quasi-Banach case $p \in (0,1)$ (see \cite{EdmundsEvansKaradzhov2}). More closely to our interpolation-based methodology, Mart\'in and Milman \cite{MartinMilman14} showed an interesting connection between \eqref{oscillation} and optimal decompositions of $K$-functionals. }
	
	Assume $1\leq  p < d$. Then  inequality \eqref{ThmSob1*Rd2} with $k=1$ and $q=p$ reads as follows
	\begin{equation}\label{K00}
	\left(\int_0^{t^d} f^\ast(u)^p du \right)^{1/p} + t \left(\int_{t^d}^\infty u^{p/p^\ast} f^\ast(u)^p \frac{du}{u} \right)^{1/p} \lesssim \omega_1(f,t)_{p;\R^d}, \quad f \in L_p(\R^d).
	\end{equation}
	Next we show that both inequalities (\ref{K}) and (\ref{K00}) are equivalent if $p \in (1,d)$, but (\ref{K00}) improves (\ref{K}) if $p=1$.
	
	Assume first that $p \in (1,d)$. Then we claim that
	\begin{align}
	t \left(\int_{t^d}^\infty u^{-p/d} \int_0^{u} (f^\ast(v) - f^\ast(u))^p dv \frac{du}{u} \right)^{1/p} \nonumber \\
	&\hspace{-5cm} \asymp \left(\int_0^{t^d} f^\ast(u)^p du \right)^{1/p} + t \left(\int_{t^d}^\infty u^{p/p^\ast} f^\ast(u)^p \frac{du}{u} \right)^{1/p}. \label{K1}
	\end{align}
	 To derive this, we will make use of the following result
	 \begin{equation*}
	 	 \int_0^{u} (f^{\ast \ast}(v)- f^\ast(v))^p d v \lesssim \int_0^{u} (f^\ast(v) - f^\ast(u))^p dv \lesssim \int_0^{2 u} (f^{\ast \ast}(v)- f^\ast(v))^p d v;
	 \end{equation*}
	 see \cite[Proposition 4.5]{CaetanoGogatishviliOpic}. We have (recall that $p^* = \frac{d p}{d-p}$)
	 \begin{align}
	 	t \left(\int_{t^d}^\infty u^{-p/d} \int_0^{u} (f^\ast(v) - f^\ast(u))^p dv \frac{du}{u} \right)^{1/p} \nonumber \\
		& \hspace{-6cm}\asymp t  \left(\int_{t^d}^\infty u^{-p/d} \int_0^{u} (f^{\ast \ast} (v) - f^\ast(v))^p dv \frac{du}{u} \right)^{1/p} \nonumber \\
		& \hspace{-6cm}\asymp
 \left(\int_0^{t^d} (f^{\ast \ast} (v) - f^\ast(v))^p dv\right)^{1/p}
+ t  \left(\int_{t^d}^\infty u^{-p/d} \int_{t^d}^{u} (f^{\ast \ast} (v) - f^\ast(v))^p dv \frac{du}{u} \right)^{1/p}\nonumber \\
		& \hspace{-6cm} \asymp \left(\int_0^{t^d} (f^{\ast \ast} (v) - f^\ast(v))^p dv\right)^{1/p} + t \left( \int_{t^d}^\infty  v^{p/p^\ast}(f^{\ast \ast} (v) - f^\ast(v))^p  \frac{dv}{v} \right)^{1/p} \nonumber\\
			& \hspace{-6cm} =: I + II.\label{K2}
	 \end{align}
	
	 Obviously,
	 \begin{equation}\label{K3}
	 	I \leq  \left(\int_0^{t^d} f^{\ast \ast} (v)^p dv\right)^{1/p} \quad \text{and} \quad II \leq  t \left( \int_{t^d}^\infty v^{p/p^\ast} f^{\ast \ast} (v)^p   \frac{dv}{v} \right)^{1/p}.
	 \end{equation}
	
	 On the other hand, since $(f^{\ast \ast}(t))' = \frac{f^\ast(t) - f^{\ast \ast}(t)}{t}$ (see (\ref{maximal})), it follows from the fundamental theorem of calculus that
	 \begin{equation}\label{K4}
	 	f^{\ast \ast}(t) = - \int_t^\infty (f^{\ast \ast}(u))' du = \int_t^\infty  \frac{f^{\ast \ast}(u) - f^{ \ast}(u)}{u} du,
	 \end{equation}
	 where we have also used that $\lim_{t \to \infty} f^{\ast \ast}(t) = 0$ (since $f \in L_p(\R^d)$). By (\ref{K4}), applying Hardy's inequality (\ref{HardyIneq}) together with H\"older's inequality, we obtain
	 \begin{align}
	 	 \left(\int_0^{t^d} f^{\ast \ast} (v)^p dv\right)^{1/p}  & \asymp \left(\int_0^{t^d} \left( \int_v^{t^d}  \frac{
f^{\ast \ast}(u) - f^{ \ast}(u)
}{u} du \right)^p dv \right)^{1/p}
\nonumber \\
		 & \hspace{1cm}+ t^{d/p}  \int_{t^d}^\infty  \frac{
f^{\ast \ast}(v) - f^{ \ast}(v)
}{v} dv \nonumber \\
		 &
\hspace{-2cm}\lesssim \left( \int_0^{t^d}(f^{\ast \ast}(v) - f^\ast(v))^p dv \right)^{1/p}
+ t \left(\int_{t^d}^\infty v^{p/p^\ast} (f^{\ast \ast}(v) - f^\ast(v))^p \frac{dv}{v} \right)^{1/p} \nonumber \\
		 & = I + II. \label{K5}
	 \end{align}
	 Similarly, we derive
	 \begin{equation}\label{K6}
	 	  t \left( \int_{t^d}^\infty v^{p/p^\ast} f^{\ast \ast} (v)^p   \frac{dv}{v} \right)^{1/p} \lesssim t \left(\int_{t^d}^\infty v^{p/p^\ast} (f^{\ast \ast}(v) - f^\ast(v))^p \frac{dv}{v} \right)^{1/p} = II.
	 \end{equation}
	 So a combination of (\ref{K2}), (\ref{K3}), (\ref{K5}) and (\ref{K6}) results in
	 \begin{align}
	 		t \left(\int_{t^d}^\infty u^{-p/d} \int_0^{u} (f^\ast(v) - f^\ast(u))^p dv \frac{du}{u} \right)^{1/p}  \nonumber\\
			&\hspace{-4cm}\asymp  \left(\int_0^{t^d} f^{\ast \ast} (v)^p dv\right)^{1/p}  +   t \left( \int_{t^d}^\infty v^{p/p^\ast} f^{\ast \ast} (v)^p   \frac{dv}{v} \right)^{1/p}. \label{K7}
	 \end{align}
It is clear that 
	 \begin{align}
	 	 \left(\int_0^{t^d} f^{\ast \ast} (v)^p dv\right)^{1/p}  +   t \left( \int_{t^d}^\infty v^{p/p^\ast} f^{\ast \ast} (v)^p   \frac{dv}{v} \right)^{1/p} \nonumber \\
		 & \hspace{-6cm}\geq \left(\int_0^{t^d} f^{\ast} (v)^p dv\right)^{1/p}  +   t \left( \int_{t^d}^\infty v^{p/p^\ast} f^{\ast} (v)^p   \frac{dv}{v} \right)^{1/p}. \label{K8}
	 \end{align}
	 Conversely, since $p > 1$, we  make use of  Hardy's inequality (\ref{HardyIneq*}) to estimate
	 \begin{equation}\label{K9}
	 	\left(\int_0^{t^d} f^{\ast \ast} (v)^p dv\right)^{1/p}  \lesssim  \left(\int_0^{t^d} f^{\ast} (v)^p dv\right)^{1/p}
		\end{equation}
		and
		\begin{align}
		 \left( \int_{t^d}^\infty v^{p/p^\ast} f^{\ast \ast} (v)^p   \frac{dv}{v} \right)^{1/p} & \lesssim t^{-d+d/p^\ast } \int_0^{t^d} f^\ast(v) dv \nonumber \\
		 & \hspace{1cm}+ \left( \int_{t^d}^\infty \left( v^{1/p^\ast} \frac{1}{v} \int_{t^d}^v f^\ast(u) du \right)^p \frac{dv}{v} \right)^{1/p} \nonumber\\
		 &\hspace{-1cm} \lesssim t^{-1} \left(\int_0^{t^d} f^\ast(v)^p dv \right)^{1/p}
 + \left( \int_{t^d}^\infty v^{p/p^\ast}  f^{\ast} (v)^p  \frac{dv}{v} \right)^{1/p}, \label{K10}
	 \end{align}
	 where we have also applied H\"older's inequality in the last step. Hence, by (\ref{K7})--(\ref{K10}), we conclude (\ref{K1}).
	
	 Suppose now that $f \in L_1(\R^d)$. Then  inequalities (\ref{K}) and (\ref{K00}) read as follows
\begin{equation}\label{K*}
		t \int_{t^d}^\infty u^{-1/d} \int_0^{u} (f^\ast(v) - f^\ast(u)) dv \frac{du}{u}  \lesssim \omega_1(f,t)_{1;\R^d}
	\end{equation}
	and
		\begin{equation}\label{K00*}
	\int_0^{t^d} f^\ast(u) du  + t \int_{t^d}^\infty u^{-1/d} f^\ast(u) du  \lesssim \omega_1(f,t)_{1;\R^d},
	\end{equation}
respectively.
	Further, (\ref{K*}) can be rewritten as
	\begin{equation}\label{K**}
		\int_0^{t^d} f^\ast(u) du  \lesssim \omega_1(f,t)_{1;\R^d}
	\end{equation}
since the left-hand sides of (\ref{K*}) and (\ref{K**}) are equivalent.
	Clearly, (\ref{K00*}) is stronger than (\ref{K**}). Furthermore, the terms
	\begin{equation*}
		\int_0^{t^d} f^\ast(u) du \quad \text{and} \quad  t \int_{t^d}^\infty u^{-1/d} f^\ast(u) du
	\end{equation*}
	given in the left-hand side of (\ref{K00*}) are not comparable. To check this, consider, e.g., the functions $f_1(u) = u^{-1} (-\log u)^{-\beta} \chi_{(0,1)}(u), \beta > 1,$ and $f_2(u) = u^{-1-1/d}(|\log u|)^{-\eta}, \eta > 1$.
	
	{
	(ii) Working with domains, some pointwise inequalities for rearrangement of functions in terms of the moduli of smoothness were obtained in Mart\'in \cite{Martin}. In particular, he obtained that (cf. \cite[Theorem 1]{Martin})
	\begin{equation}\label{Ma}
		f^{**}(t) \lesssim t^{\frac{1}{d} - \frac{1}{p}} \|f\|_{L_p(\Omega)} + \int_{t^{\frac{1}{d}}}^1 u^{-\frac{d}{p}} \omega_1(f,u)_{p;\Omega} \frac{du}{u}, \quad t \in (0,1),
	\end{equation}
	for all $f \in L_p(\Omega)$. Inequality  \eqref{ThmSob1*} with $k=1$ and $q=p$, i.e.,
			\begin{equation}\label{Ma1}
			\left(\int_0^{t} f^\ast(u)^p du \right)^{1/p} + t^{1/d} \left(\int_{t}^1 u^{1-p/d} f^\ast(u)^p \frac{du}{u} \right)^{1/p} \lesssim t^{1/d} \|f\|_{L_p(\Omega)} + \omega_1(f,t^{1/d})_{p;\Omega}
		\end{equation}
		is stronger than \eqref{Ma}. Indeed, it follows from H\"older's inequality and \eqref{Ma1} that
		\begin{align*}
		f^{**}(t) &\leq t^{-1/p} \left(\int_0^{t} f^\ast(u)^p du \right)^{1/p} \lesssim t^{1/d-1/p} \|f\|_{L_p(\Omega)} + t^{-1/p}\omega_1(f,t^{1/d})_{p;\Omega} \\
		& \lesssim t^{1/d - 1/p} \|f\|_{L_p(\Omega)} + \int_{t^{1/d}}^1 u^{-d/p} \omega_1(f,u)_{p;\Omega} \frac{du}{u}.
		\end{align*}
		
		(iii) 
 Note that  inequality \eqref{ThmSob1*OmegaHom} improves considerably \eqref{Ma1} (since the term $t^{1/d} \|f\|_{L_p(\Omega)}$ given in the right-hand side can be dropped) but, on the other hand, the use of \eqref{ThmSob1*OmegaHom} is restricted to functions whose supports are strictly contained in $\Omega$. More precisely, $\eqref{ThmSob1*OmegaHom}$ fails to be true when we consider, e.g., a sequence of functions $(f_n)$ in $C^\infty_0(\Omega)$ such that $\text{dist}(\text{supp } (f_n),\partial \Omega) \to 0$ as $n \to \infty$. This phenomenon is illustrated by the fact that the constant $(\text{dist}(\Omega_0, \partial \Omega))^{-k}$ given in the right-hand side of \eqref{ThmSob1*OmegaHom} blows up as $\text{dist}(\Omega_0, \partial \Omega) \to 0$. For instance, let us consider a sequence $(f_n)$ in $C^\infty_0(\Omega)$ converging to $\chi_\Omega$ in $L_p(\Omega)$. Assuming that \eqref{ThmSob1*OmegaHom} holds uniformly in $\Omega_0$, for each $t \in (0,1)$, we have
		\begin{equation}\label{p0}
			\left(\int_0^{t^d} f_n^\ast(u)^p du \right)^{1/p} + t^k \left(\int_{t^d}^1 u^{p/p^\ast} f_n^\ast(u)^p \frac{du}{u} \right)^{1/p} \leq C \omega_k(f_n,t)_{p;\Omega}, \quad n \in \N,
		\end{equation}
		where $C$ is a positive constant independent of $n$. Since
		\begin{equation*}
		 \lim_{n \to \infty}\omega_k(f_n,t)_{p;\Omega} = \omega_k(\chi_\Omega,t)_p=0
		 \end{equation*}
		 and, by \eqref{LemmaKFunctLorentz},
		 \begin{align*}
	 \lim_{n \to \infty}\left[ \left(\int_0^{t^d} f_n^\ast(u)^p du \right)^{1/p} + t^k \left(\int_{t^d}^1 u^{p/p^\ast} f_n^\ast(u)^p \frac{du}{u} \right)^{1/p} \right] &\asymp \lim_{n \to \infty} K(t^k, f_n ; L_p(\Omega), L_{p^\ast,p}(\Omega))  \\
	 & \hspace{-7cm}= K(t^k, \chi_\Omega; L_p(\Omega), L_{p^\ast,p}(\Omega)),
	\end{align*}
	it follows from \eqref{p0} that $K(t^k, \chi_\Omega; L_p(\Omega), L_{p^\ast,p}(\Omega)) =0$, which is obviously false.

	(iv) Let $1 \leq p \leq q \leq \infty$ and $k < d/p$. The extrapolation assertion in $\R^d$ given in \eqref{ThmSob1*Rd2Extrapol}, that is,
	 	\begin{equation}\label{ThmSob1*OmegaHomExtrapol2}
			\|f\|_{L_{\frac{d p}{d - s p}, q}(\R^d)} \leq C (k - s)^{1/q} \|f\|_{\dot{B}^s_{p,q}(\R^d),k }, \quad s \to k-,
		\end{equation}
		with $C > 0$ independent of $s$, has been also obtained in Karadzhov, Milman and Xiao \cite[Theorem 4]{KaradzhovMilmanXiao} based on different techniques, namely, best constants in Sobolev inequalities and sharp versions of the Holmstedt's reiteration formula. Related results also in $\R^d$ may be found in \cite{EdmundsEvansKaradzhov, EdmundsEvansKaradzhov2}. Note that the techniques of these papers cannot be adapted to treat domains. To the best of our knowledge, the extrapolation assertions for domains given in \eqref{ThmSob1*extrapol} and \eqref{ThmSob1*OmegaHomExtrapol} are new. In particular, \eqref{ThmSob1*OmegaHomExtrapol} shows an interesting phenomenon, namely, the counterpart of \eqref{ThmSob1*OmegaHomExtrapol2} for domains holds under the corresponding normalization of the involved norms given by \eqref{ThmSob1*OmegaHomExtrapolNew} and \eqref{ThmSob1*OmegaHomExtrapolNew2}. Again, we stress that \eqref{ThmSob1*OmegaHomExtrapol} is  valid only for functions with support strictly contained in $\Omega$.
		}
\end{rem}

\begin{proof}[Proof of Theorem \ref{ThmSob}]
	(i) $\Longrightarrow$ (ii):
 Let $f \in L_p(\Omega)$ and $t \in (0,1)$. Using (i) and Lemma A.2, we have
	\begin{equation*}
		K(t^k, f ; L_p(\Omega), L_{p^\ast,q}(\Omega)) \lesssim K(t^k,f; L_p(\Omega), W^k_p(\Omega)) \asymp t^k \|f\|_{L_p(\Omega)} +  \omega_k(f,t)_{p;\Omega}.
	\end{equation*}
	By Lemma A.1, we derive
	\begin{equation*}
		K(t, f ; L_p(\Omega), L_{p^\ast,q}(\Omega)) \asymp \left(\int_0^{t^{d/k}} f^\ast(u)^p du \right)^{1/p} + t \left(\int_{t^{d/k}}^1 u^{q/p^\ast} f^\ast(u)^q \frac{du}{u} \right)^{1/q},
	\end{equation*}
	which implies (ii).
	
	(ii) $\Longrightarrow$ (iii): From (ii) we have
	\begin{equation*}
		 t^k \left(\int_{t^d}^1 u^{q/p^\ast} f^\ast(u)^q \frac{du}{u} \right)^{1/q} \lesssim t^k \|f\|_{L_p(\Omega)} +\omega_k(f,t)_{p;\Omega}.
		\end{equation*}
		Let $0 < s < k$ and $q < \infty$. Then by \eqref{DefBesovInh}, we have
		\begin{align}
		 \int_0^1 t^{\frac{k -s}{d} q} \int_{t}^1 u^{q/p^\ast} f^\ast(u)^q \frac{du}{u} \frac{dt}{t} &\lesssim  \|f\|_{L_p(\Omega)}^q  \int_0^1 t^{(k-s)q}  \frac{dt}{t}+  \int_0^1 t^{-s q} \omega_k(f,t)_{p;\Omega}^q \frac{dt}{t} \nonumber \\
		 &\hspace{-3.5cm} \asymp (k-s)^{-1} \|f\|_{L_p(\Omega)}^q + \int_0^1 t^{-s q} \omega_k(f,t)_{p;\Omega}^q \frac{dt}{t} \asymp \|f\|_{B^s_{p,q}(\Omega),k}^q.
\label{ThmSob1}
		\end{align}
		Applying Fubini's theorem, we establish
		\begin{align}
		\int_0^1 t^{\frac{k -s}{d} q} \int_{t}^1 u^{q/p^\ast} f^\ast(u)^q \frac{du}{u} \frac{dt}{t}
\nonumber\\
		& \hspace{-4cm}\asymp (k - s)^{-1}  \int_0^1 u^{q/p^\ast} f^\ast(u)^q u^{\frac{k - s}{d} q} \frac{du}{u} = (k - s)^{-1}  \|f\|_{ L_{\frac{d p}{d - s p}, q}(\Omega)}^q.	\label{ThmSob2}
		\end{align}
		Combining (\ref{ThmSob1}) and (\ref{ThmSob2}) implies
		\begin{equation*}
			 \|f\|_{ L_{\frac{d p}{d - s p}, q}(\Omega)} \lesssim (k-s)^{1/q} \|f\|_{B^s_{p,q}(\Omega),k} .
		\end{equation*}
		The case $q=\infty$ is easier and so, we omit the proof.
		
		(iii) $\Longrightarrow$ (i): Since $s \to k-$, elementary computations yield that
		\begin{equation*}
			  \vertiii{f}^q_{B^s_{p,q}(\Omega),k} \lesssim \sup_{0 < t < 1} t^{-k q} \omega_k(f,t)_{p;\Omega}^q \int_0^1 t^{(k-s) q} \frac{dt}{t}  \lesssim (k-s)^{-1} \||\nabla^k f|\|_{L_p(\Omega)}^q,
		\end{equation*}
		where we have also used \eqref{DerMod}. Then, by (iii) and \eqref{DefBesovInh}, we derive
		\begin{equation*}
			\|f\|_{L_{\frac{d p}{d - s p}, q}(\Omega)} \lesssim (k - s)^{1/q} \|f\|_{B^s_{p,q}(\Omega),k} \asymp \|f\|_{L_p(\Omega)} + (k - s)^{1/q} \vertiii{f}_{B^s_{p,q}(\Omega),k}  \lesssim \|f\|_{W^k_p(\Omega)}.
		\end{equation*}
		Taking limits $s \to k-$ we arrive at (i).
		
		The equivalence between (i) and (iv) is well known (see the methodology developed in \cite{EdmundsKermanPick}).
		
\end{proof}

The proof of Theorem \ref{ThmSobRd} goes the same lines as that of Theorem \ref{ThmSob}.

\begin{proof}[Proof of Theorem \ref{ThmSobRdHom}]

		(iii) $\Longrightarrow$ (i): We will make use of the following formula obtained in \cite[Theorem 2]{KaradzhovMilmanXiao},
		\begin{equation*}
			\lim_{s \to k-} (k-s)^{1/q} \|f\|_{\dot{B}^s_{p,q}(\R^d),k} \asymp \| |\nabla^k f|\|_{L_p(\R^d)}, \quad f \in C^\infty_0(\R^d).
		\end{equation*}
		 Then, taking limits $s \to k-$ in (iii), we infer that
		\begin{equation*}
			\|f\|_{L_{p^*,q}(\R^d)} = \lim_{s \to k-}\|f\|_{L_{\frac{d p}{d - s p}, q}(\R^d)} \lesssim \| |\nabla^k f|\|_{L_p(\R^d)}, \quad f \in C^\infty_0(\R^d).
		\end{equation*}
		Thus (i) is obtained by the density of $C^\infty_0(\R^d)$ in $(\dot{W}^k_p(\R^d))_0$.
		
The rest of the proof is similar to the proof of Theorem \ref{ThmSob}.	
\end{proof}

\begin{proof}[Proof of Theorem \ref{ThmSobOmegaHom}]
 (i): By the classical Sobolev embedding theorem and Proposition A.3, we have
	\begin{equation*}
		K(t^k, f ; L_p(\Omega), L_{p^\ast,p}(\Omega)) \lesssim K(t^k,f; L_p(\Omega), (\dot{W}^k_p(\Omega))_0) \lesssim (\text{dist}(\Omega_0, \partial \Omega))^{-k}  \omega_k(f,t)_{p;\Omega}.
	\end{equation*}
	By (\ref{LemmaKFunctLorentz}), we have
	\begin{equation*}
		K(t, f ; L_p(\Omega), L_{p^\ast,p}(\Omega)) \asymp \left(\int_0^{t^{d/k}} f^\ast(u)^p du \right)^{1/p} + t \left(\int_{t^{d/k}}^1 u^{p/p^\ast} f^\ast(u)^p \frac{du}{u} \right)^{1/p},
	\end{equation*}
	which gives \eqref{ThmSob1*OmegaHom}.
	
 (ii): Assume that $f \in L_p(\Omega)$ and $\text{supp } (f) \subseteq \Omega_0$. It follows from \eqref{ThmSob1*OmegaHom} that
	\begin{equation*}
		 t^k \left(\int_{t^d}^{\min\big\{1, \big(\frac{\text{dist}(\Omega_0, \partial \Omega)}{k} \big)^d \big\}} u^{p/p^\ast} f^\ast(u)^p \frac{du}{u} \right)^{1/p} \lesssim (\text{dist}(\Omega_0, \partial \Omega))^{-k} \omega_k(f,t)_{p;\Omega}
		\end{equation*}
		for all $t \in \big(0,\min\{1, \frac{\text{dist}(\Omega_0, \partial \Omega)}{k}\}\big)$.
		Let $0 < s < k$. Therefore,
		\begin{align}
		 \int_0^{\min\big\{1, \big(\frac{\text{dist}(\Omega_0, \partial \Omega)}{k} \big)^d\big\} } t^{\frac{k -s}{d} p} \int_{t}^{\min\big\{1, \big(\frac{\text{dist}(\Omega_0, \partial \Omega)}{k} \big)^d \big\}} u^{p/p^\ast} f^\ast(u)^p \frac{du}{u} \frac{dt}{t} &\lesssim \nonumber \\
		 & \hspace{-7cm}(\text{dist}(\Omega_0, \partial \Omega))^{-k/p}   \int_0^{\min\{1, \frac{\text{dist}(\Omega_0, \partial \Omega)}{k}\}} t^{-s p} \omega_k(f,t)_{p;\Omega}^p \frac{dt}{t}. \label{ThmSob15}
		\end{align}
		If $s \to k-$ then we can apply Fubini's theorem to derive
		\begin{align}
		 \int_0^{\min\big\{1, \big(\frac{\text{dist}(\Omega_0, \partial \Omega)}{k} \big)^d\big\} } t^{\frac{k -s}{d} p} \int_{t}^{\min\big\{1, \big(\frac{\text{dist}(\Omega_0, \partial \Omega)}{k} \big)^d \big\}} u^{p/p^\ast} f^\ast(u)^p \frac{du}{u} \frac{dt}{t}
\nonumber\\
		& \hspace{-10cm}\asymp (k - s)^{-1}  \int_0^{\min\big\{1, \big(\frac{\text{dist}(\Omega_0, \partial \Omega)}{k} \big)^d\big\} }  u^{p/p^\ast} f^\ast(u)^p u^{\frac{k - s}{d} p} \frac{du}{u} = (k - s)^{-1}  \|f\|_{ L_{\frac{d p}{d - s p}, p}(\Omega)}^p	\label{ThmSob25}
		\end{align}
		where we have used \eqref{ThmSob1*OmegaHomExtrapolNew} in the last step. The proof is concluded from (\ref{ThmSob15}) and (\ref{ThmSob25}).		
		
\end{proof}

So far we have investigated characterizations of the Sobolev inequality (\ref{SobolevClassical*}) in terms of estimates involving only rearrangements and moduli of smoothness. Now we provide different  characterizations in terms of estimates for rearrangements and derivatives. 

	\begin{thm}\label{ThmSobRear}
Let $1 < p < \infty, 0 < q \leq \infty$ and $k \in \mathbb{N}$. Assume that $k < d/p$. Let $p^\ast = d p /(d - k p)$ and $1/\alpha = 1 - k/d$. The following statements are equivalent:
	\begin{enumerate}[\upshape(i)]
		\item
		\begin{equation*}
			W^k_p(\Omega) \hookrightarrow L_{p^\ast,q}(\Omega),
		\end{equation*}
		\item for $ f \in W^k_1(\Omega)$ and $t \in (0,1)$, we have
		\begin{equation}\label{SharpKolyada2*'}
	\left(\int_t^1 \left(v^{1/p-k/d - 1/\alpha} \int_0^v u^{1/\alpha} f^\ast(u) \frac{du}{u} \right)^q \frac{dv}{v} \right)^{1/q} \lesssim \sum_{l=0}^k \left(\int_t^1 ( |\nabla^l f|^{\ast \ast}(v))^p dv \right)^{1/p},
\end{equation}
			\item if $r \to p-$ then
			\begin{equation*}
			W^k L_{r,p}(\Omega) \hookrightarrow L_{r^\ast,q}(\Omega), \quad r^\ast = \frac{d r}{d - k r},
			\end{equation*}
			 with norm $\mathcal{O}(1)$, i.e., there exists $C > 0$, which is independent of $r$, such that
		\begin{equation}\label{SharpFrankeJawerth2*'}
			\|f\|_{L_{r^\ast,q}(\Omega)} \leq C \|f\|_{W^k L_{r,p}(\Omega)}, \quad f \in W^k L_{r,p}(\Omega),
		\end{equation}
		\item
		\begin{equation*}
		q \geq p.
		\end{equation*}
	\end{enumerate}

\end{thm}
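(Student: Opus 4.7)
The proof will follow the architecture of Theorem \ref{ThmSob}, but with the modulus-of-smoothness description of the Sobolev $K$-functional replaced by a derivative-based one. The equivalence (i) $\Leftrightarrow$ (iv) is classical (optimality of the Lorentz target in Peetre's embedding), so the substantive content is the cycle (i) $\Rightarrow$ (ii) $\Rightarrow$ (iii) $\Rightarrow$ (i). It is convenient to first note that the integrand of the left-hand side of (ii) simplifies to $v^{1/p-1} \int_0^v u^{-k/d} f^{\ast}(u)\,du$, since $1/\alpha = 1 - k/d$.

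For (i) $\Rightarrow$ (ii), my plan is to lift (i) to the $K$-functional inequality
\[
K(t, f; L_p(\Omega), L_{p^\ast,q}(\Omega)) \lesssim K(t, f; L_p(\Omega), W^k_p(\Omega))
\]
and then to reiterate once more in the $L_1$ direction. Applying Holmstedt's formula (Lemma \ref{Holmstedt}(i)) to the couple $(L_1(\Omega), L_{p^\ast,q}(\Omega))$ at parameter $\theta = 1/p$, iterated via Lemma A.1 and the standard identification $K(t,f;L_1,L_\infty) = \int_0^t f^\ast(u)\,du$, produces the weighted integral $\bigl(\int_t^1 (v^{1/p - 1} \int_0^v u^{-k/d} f^\ast(u)\,du)^q \frac{dv}{v}\bigr)^{1/q}$ appearing in (ii). On the right, the analogous Holmstedt manipulation for $(L_1(\Omega), W^k_p(\Omega))$, combined with the Hardy--Littlewood--P\'olya identification of $K(\cdot, g; L_1, L_p)$ with an $L_p$-integral of $g^{\ast\ast}$, decomposes as the sum over $l = 0, \ldots, k$ of the quantities $\bigl(\int_t^1 (|\nabla^l f|^{\ast\ast}(v))^p \, dv\bigr)^{1/p}$.

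For (ii) $\Rightarrow$ (iii), fix $r \in (1, p)$, raise (ii) to the $q$-th power, and integrate against the power weight $t^{(k-s)q} \frac{dt}{t}$ on $(0,1)$ with $s$ chosen so that $\frac{dp}{d-sp} = r^\ast$. Fubini transforms the left-hand side into a constant multiple of $(p-r)^{-1} \|f\|_{L_{r^\ast, q}(\Omega)}^q$, while the right-hand side yields $(p-r)^{-1} \sum_{l=0}^k \||\nabla^l f|\|_{L_{r, p}(\Omega)}^p$; here Lemma \ref{LemmaHardySharp} is used to replace $|\nabla^l f|^{\ast\ast}$ by $|\nabla^l f|^\ast$, which is legitimate since $r > 1$. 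Cancelling the common factor $(p-r)^{-1/q}$ gives (iii) with a constant uniform in $r \to p-$. The implication (iii) $\Rightarrow$ (i) is obtained by letting $r \to p-$ and applying Fatou's lemma to the Lorentz norm.

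The principal obstacle is the identification carried out in (i) $\Rightarrow$ (ii) of the right-hand side of (ii) as a Holmstedt-iterated $K$-functional of the couple $(L_1(\Omega), W^k_p(\Omega))$. Because $W^k_p$ is inhomogeneous, every derivative order $l = 0, \ldots, k$ must appear in the sum, and ensuring that the implicit constants depend only on $k$ and $p$ (but not on $f$ or $t$) will require careful bookkeeping via Lemma \ref{vspom1}, together with the comparison $|\nabla^l f|^\ast \asymp |\nabla^l f|^{\ast\ast}$ on $(0,1)$ for $p > 1$.
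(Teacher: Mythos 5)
Your outline has the right shape --- lift the embedding to a $K$-functional comparison, reduce via Holmstedt, extrapolate via Fubini and Hardy, pass to the limit --- but two of the concrete steps do not hold. In (i) $\Rightarrow$ (ii) the paper derives, from the \emph{pair} of Sobolev embeddings $W^k_1(\Omega)\hookrightarrow L_{\alpha,1}(\Omega)$ (classical) and $W^k_p(\Omega)\hookrightarrow L_{p^\ast,q}(\Omega)$ (the hypothesis), the comparison $K(t,f;L_{\alpha,1}(\Omega),L_{p^\ast,q}(\Omega))\lesssim K(t,f;W^k_1(\Omega),W^k_p(\Omega))$. Your one-sided lift $K(\cdot;L_p,L_{p^\ast,q})\lesssim K(\cdot;L_p,W^k_p)$ followed by an informal ``reiteration in the $L_1$ direction'' is not a valid deduction from it. More decisively, the inner weight $u^{-k/d}$ on the left of \eqref{SharpKolyada2*'} is the fingerprint of the Lorentz endpoint $L_{\alpha,1}$, since $K(t,f;L_{\alpha,1},L_\infty)\asymp\int_0^{t^\alpha} u^{-k/d}f^\ast(u)\,du$, whereas $K(t,f;L_1,L_\infty)=\int_0^t f^\ast(u)\,du$ carries no weight; the Holmstedt manipulation of the couple $(L_1(\Omega),L_{p^\ast,q}(\Omega))$ that you describe cannot produce it. Likewise the sum $\sum_{l=0}^k$ on the right is the DeVore--Scherer decomposition $K(t,f;W^k_1,W^k_p)\asymp\sum_l K(t,|\nabla^l f|;L_1,L_p)$, cf.\ \eqref{DS2}; it is a feature of the endpoint $W^k_1$, not of the couple $(L_1(\Omega),W^k_p(\Omega))$, which does not split this way.

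In (ii) $\Rightarrow$ (iii), raising to the $q$-th power before integrating yields, after Fubini, $(p-r)^{-1}\|f\|_{L_{r^\ast,q}(\Omega)}^q\lesssim(p-r)^{-1}\sum_l\||\nabla^l f|\|_{L_{r,p}(\Omega)}^p$, a $q$-th power against a $p$-th power, and $A^q\lesssim B^p$ with $q>p$ does not give $A\lesssim B$ when $B$ is small. (Your weight exponent $(k-s)q=d(1/r-1/p)q$ also has a spurious factor of $d$: the inner integral in \eqref{SharpKolyada2*'} is $\int_t^1$, not $\int_{t^d}^1$ as in Theorem~\ref{ThmSob}.) The paper instead raises \eqref{SharpKolyada2*'} to the $p$-th power and integrates against $t^{(1/r-1/p)p}\,\frac{dt}{t}$, and this choice is the hinge of the argument: the $p$-th power makes the right side genuine $L_p$-integrals, so Fubini and Hardy \eqref{HardyIneq*} produce $(1/r-1/p)^{-1}\|f\|^p_{W^k L_{r,p}(\Omega)}$ with constants bounded as $r\to p-$, while the left side $\int_0^1 t^{(1/r-1/p)p}\bigl(\int_t^1(\cdots)^q\,\frac{dv}{v}\bigr)^{p/q}\frac{dt}{t}$ is bounded from \emph{below} by $(1/r-1/p)^{-1}\|f\|^p_{L_{r^\ast,q}(\Omega)}$ via Minkowski's integral inequality, which is precisely where $q\geq p$ enters. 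Taking $p$-th roots then leaves the common factor $(1/r-1/p)^{-1/p}$, which cancels.
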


\begin{thm}\label{ThmSobRearRd}
Let $1 < p < \infty, 0 < q \leq \infty$ and $k \in \mathbb{N}$. Assume that $k < d/p$. Let $p^\ast = d p /(d - k p)$ and $1/\alpha = 1 - k/d$. The following statements are equivalent:
	\begin{enumerate}[\upshape(i)]
		\item
		\begin{equation*}
			W^k_p(\R^d) \hookrightarrow L_{p^\ast,q}(\R^d),
		\end{equation*}
		\item for $ f \in W^k_1(\R^d) + W^k_p(\R^d)$ and $t \in (0,\infty)$, we have
		\begin{equation}\label{SharpKolyada2*'Rd}
	\left(\int_t^\infty \left(v^{1/p-k/d - 1/\alpha} \int_0^v u^{1/\alpha} f^\ast(u) \frac{du}{u} \right)^q \frac{dv}{v} \right)^{1/q} \lesssim \sum_{l=0}^k \left(\int_t^\infty ( |\nabla^l f|^{\ast \ast}(v))^p dv \right)^{1/p},
\end{equation}
			\item if $r \to p-$ then
			\begin{equation*}
			W^k L_{r,p}(\R^d) \hookrightarrow L_{r^\ast,q}(\R^d), \quad r^\ast = \frac{d r}{d - k r},
			\end{equation*}
			 with norm $\mathcal{O}(1)$, i.e., there exists $C > 0$, which is independent of $r$, such that
		\begin{equation}\label{SharpFrankeJawerth2*'Rd}
			\|f\|_{L_{r^\ast,q}(\R^d)} \leq C \|f\|_{W^k L_{r,p}(\R^d)}, \quad f \in W^k L_{r,p}(\R^d),
		\end{equation}
		\item
		\begin{equation*}
		q \geq p.
		\end{equation*}
	\end{enumerate}

\end{thm}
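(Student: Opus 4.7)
My plan is to follow the blueprint of Theorem \ref{ThmSobRear} (the domain version) adapted to $\R^d$; the full-space setting is in fact cleaner since $K$-functionals and integrals run over the entire range $(0,\infty)$ without truncation at $t=1$. I proceed through the cycle (i) $\Rightarrow$ (ii) $\Rightarrow$ (iii) $\Rightarrow$ (i), together with the classical Peetre sharpness (i) $\Leftrightarrow$ (iv), which follows from $L_{p^*,p}(\R^d) \hookrightarrow L_{p^*,q}(\R^d) \Leftrightarrow p \leq q$ combined with the optimality of the Peetre target.

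\textbf{Step 1: (i) $\Rightarrow$ (ii).} The strategy is to realize both sides of \eqref{SharpKolyada2*'Rd} as $K$-functionals. Combining hypothesis (i) with the Gagliardo-Nirenberg endpoint $W^k_1(\R^d) \hookrightarrow L_{d/(d-k),1}(\R^d)$ (valid since $k < d/p < d$), the standard $K$-functional dominance principle for interpolation couples gives
\[
K(t, f; L_{d/(d-k),1}(\R^d), L_{p^*,q}(\R^d)) \lesssim K(t, f; W^k_1(\R^d), W^k_p(\R^d)), \quad t > 0.
\]
The left $K$-functional, between two Lorentz spaces on the same scale, is reshaped via the Holmstedt formula \eqref{LemmaHolmstedt1}; after a change of variables using $1/\alpha = 1-k/d$, it matches the LHS of \eqref{SharpKolyada2*'Rd}. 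The Sobolev $K$-functional on the right is decomposed (via the equivalences of Appendix A) as $\sum_{l=0}^k K(t, |\nabla^l f|; L_1(\R^d), L_p(\R^d))$, and each Lebesgue $K$-functional is reshaped, using Holmstedt together with a Hardy manipulation (Lemma \ref{LemmaHardySharp}), into $\left(\int_t^\infty (|\nabla^l f|^{**}(v))^p dv\right)^{1/p}$. Summing yields \eqref{SharpKolyada2*'Rd}.

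\textbf{Steps 2 and 3: (ii) $\Rightarrow$ (iii) $\Rightarrow$ (i).} For (ii) $\Rightarrow$ (iii), multiply \eqref{SharpKolyada2*'Rd} by $t^{\beta q - 1}$ with $\beta = 1/r - 1/p > 0$ and integrate over $t \in (0,\infty)$. Fubini on the LHS, combined with the monotonicity bound $\int_0^v u^{-k/d} f^*(u) du \gtrsim v^{1-k/d} f^*(v)$, produces a constant multiple $(\beta q)^{-1} \|f\|_{L_{r^*,q}(\R^d)}^q$. Fubini on the RHS, combined with the Hardy inequality \eqref{HardyIneq} applied at exponent $q/p \geq 1$ (where the standing assumption $q \geq p$ from (iv) enters), produces a constant multiple $(p\beta)^{-q/p} \sum_l \||\nabla^l f|\|_{L_{r,p}(\R^d)}^q$. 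In the balanced case $q = p$ the $\beta$-factors on both sides cancel exactly, yielding $\|f\|_{L_{r^*,p}} \lesssim \|f\|_{W^k L_{r,p}}$ uniformly as $r \to p-$; for $q > p$ one bootstraps from this case via the monotonicity embedding $L_{r^*,p}(\R^d) \hookrightarrow L_{r^*,q}(\R^d)$ (valid since $q \geq p$). For (iii) $\Rightarrow$ (i), observe that for $f \in C^\infty_0(\R^d)$ the rearrangements $f^*$ and $|\nabla^l f|^*$ are bounded with support in a compact subset of $(0,\infty)$, so $\|f\|_{L_{r^*,q}(\R^d)}$ and $\||\nabla^l f|\|_{L_{r,p}(\R^d)}$ depend continuously on $r$ at $r=p$ (by dominated convergence). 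The uniform bound in (iii), upon taking $r \to p-$, delivers $\|f\|_{L_{p^*,q}(\R^d)} \leq C \|f\|_{W^k_p(\R^d)}$ for $f \in C^\infty_0(\R^d)$, and density of $C^\infty_0(\R^d)$ in $W^k_p(\R^d)$ extends this to all of $W^k_p(\R^d)$.

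\textbf{Main obstacle.} The technical heart is Step 1, specifically the identification of $K(t, f; W^k_1(\R^d), W^k_p(\R^d))$ with $\sum_{l=0}^k (\int_t^\infty (|\nabla^l f|^{**}(v))^p dv)^{1/p}$: Hardy-type manipulations must bridge the Holmstedt description of the Lebesgue $K$-functional (naturally written in terms of $f^*$) and the $f^{**}$-based expression on the right of \eqref{SharpKolyada2*'Rd}, a bridging that works cleanly only for $p > 1$, which is precisely the standing restriction of the theorem. A secondary delicate point is the constant bookkeeping in Step 2 for $q > p$, where a single direct Fubini calculation leaves behind $\beta$-dependent constants that do not cancel; the workaround via the $q = p$ case plus the monotonicity embedding in the second Lorentz index circumvents this and is the place where the hypothesis $q \geq p$ is indispensable.
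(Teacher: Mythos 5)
Your Step 1 matches the paper's strategy precisely: dominate $K(t,f;L_{\alpha,1},L_{p^*,q})$ by $K(t,f;W^k_1,W^k_p)$, decompose the latter over the partial derivatives, apply Holmstedt to each $(L_1,L_p)$ couple, and convert the resulting $f^*$-based expression to the $f^{**}$-form; this last conversion is exactly Lemma~\ref{vspom1} together with \eqref{LemmaKFunctLorentz}, valid because $p>1$. Step 3 is also fine.

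The genuine problem is Step 2. You raise \eqref{SharpKolyada2*'Rd} to the $q$-th power and integrate against $t^{\beta q}\,dt/t$; you correctly observe that the two Fubini constants, $(\beta q)^{-1}$ on the left and $(p\beta)^{-q/p}$ on the right, produce a ratio $\asymp \beta^{1-q/p}\to\infty$ as $r\to p^-$ when $q>p$. Your proposed fix --- prove (iii) for $q=p$ and push through $L_{r^*,p}\hookrightarrow L_{r^*,q}$ --- silently requires the inequality (ii) with $q$ replaced by $p$ as input, and that is \emph{not} a consequence of (ii) for the given $q$: the $L^q$- and $L^p$-norms over $(t,\infty)$ with respect to $dv/v$ are not mutually comparable. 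The bootstrap can be made rigorous, but only by first extracting (iv) from (ii) (let $t\to 0$ to get (i), use the classical (i)$\Leftrightarrow$(iv)), then invoking Peetre's Sobolev inequality to obtain (i) with target exponent $p$, and then re-running Step~1 to produce (ii) with $q$ replaced by $p$. You do not articulate this chain, so the argument as written does not close. (A further small slip: the Hardy inequality at exponent $q/p$ applied to the right-hand side actually delivers $\sum_l\||\nabla^l f|\|_{L_{r,q}}^q$, not $\sum_l\||\nabla^l f|\|_{L_{r,p}}^q$ as you claim; for $q>p$ these differ, and passing between them needs a further embedding.)

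The paper avoids this entirely: raise \eqref{SharpKolyada2*'Rd} to the \emph{$p$-th} power (not the $q$-th) and integrate against $t^{\beta p}\,dt/t$, denoting the result $I$. On the upper side, Fubini plus \eqref{HardyIneq*} give $I\lesssim(\beta p)^{-1/p}\|f\|_{W^kL_{r,p}}$ directly, with no Hardy mismatch since $q$ never appears. On the lower side one has $I^p=\int_0^\infty t^{\beta p}\bigl(\int_t^\infty F(v)^q\,dv/v\bigr)^{p/q}\,dt/t$, and the bound $I\gtrsim(\beta p)^{-1/p}\|f\|_{L_{r^*,q}}$ comes from \emph{Minkowski's integral inequality} at exponent $q/p\geq 1$ --- this is the precise point where the standing assumption $q\geq p$ is used and is the ingredient missing from your proposal. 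Both $(\beta p)^{-1/p}$ factors then cancel for every $q\geq p$, with no detour through the $q=p$ case.
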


\begin{thm}\label{ThmSobRearRdHom}
Let $1 < p < \infty, 0 < q \leq \infty$ and $k \in \mathbb{N}$. Assume that $k < d/p$. Let $p^\ast = d p /(d - k p)$ and $1/\alpha = 1 - k/d$. The following statements are equivalent:
	\begin{enumerate}[\upshape(i)]
		\item
		\begin{equation*}
			(V^k_p(\R^d))_0 \hookrightarrow L_{p^\ast,q}(\R^d),
		\end{equation*}
		\item for $ f \in  (V^k_1(\R^d))_0+(V^k_p(\R^d))_0$ and $t \in (0,\infty)$, we have
		\begin{equation}\label{SharpKolyada2*'RdHom}
	\left(\int_t^\infty \left(v^{1/p-k/d - 1/\alpha} \int_0^v u^{1/\alpha} f^\ast(u) \frac{du}{u} \right)^q \frac{dv}{v} \right)^{1/q} \lesssim  \left(\int_t^\infty ( |\nabla^k f|^{\ast \ast}(v))^p dv \right)^{1/p},
\end{equation}
			\item if $r \to p-$ then
			\begin{equation*}
			(V^k L_{r,p}(\R^d))_0 \hookrightarrow L_{r^\ast,q}(\R^d), \quad r^\ast = \frac{d r}{d - k r},
			\end{equation*}
			 with norm $\mathcal{O}(1)$, i.e., there exists $C > 0$, which is independent of $r$, such that
		\begin{equation}\label{SharpFrankeJawerth2*'RdHom}
			\|f\|_{L_{r^\ast,q}(\R^d)} \leq C \||\nabla^k f|\|_{L_{r,p}(\R^d)}, \quad f \in (V^k L_{r,p}(\R^d))_0,
		\end{equation}
		\item
		\begin{equation*}
		q \geq p.
		\end{equation*}
	\end{enumerate}

\end{thm}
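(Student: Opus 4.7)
The plan is to adapt the $K$-functional scheme used for Theorem \ref{ThmSobRdHom}, now working with the couple of homogeneous Sobolev spaces $((V^k_1(\R^d))_0, (V^k_p(\R^d))_0)$ on the derivative side and the Lorentz couple $(L_{1^\ast,1}(\R^d), L_{p^\ast,q}(\R^d))$ on the target side. Since (i) $\Longleftrightarrow$ (iv) is the classical characterization of the optimal second index in the subcritical Sobolev embedding (cf.\ \cite{Peetre, Hunt, Oneil}), the core task is to establish the chain (i) $\Longrightarrow$ (ii) $\Longrightarrow$ (iii) $\Longrightarrow$ (i).

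The first step is to recast (ii) as a pointwise $K$-functional inequality. Writing $1/\alpha = 1-k/d$, the left-hand side of \eqref{SharpKolyada2*'RdHom} takes the form $\bigl(\int_t^\infty (v^{1/p-1}\int_0^v u^{-k/d} f^\ast(u)\,du)^q\frac{dv}{v}\bigr)^{1/q}$, which by Lemma~\ref{vspom1} (with $\alpha=1/p-1<0$, $\beta=-k/d$) is equivalent to the Holmstedt-type expression $t^{1/p-1}\, K(t^{(p-1)/p},f;L_{1^\ast,1}(\R^d),L_{p^\ast,q}(\R^d))$ coming from the Appendix~A formulas for Lorentz couples. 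For the right-hand side, using $|\nabla^k f|^{\ast\ast}(v)=v^{-1}K(v,|\nabla^k f|;L_1,L_\infty)$ and applying \eqref{LemmaHolmstedt1} with $\theta=1-1/p$ and $r=p$ (so that $(L_1,L_\infty)_{\theta,p}=L_p$) yields $\bigl(\int_t^\infty(|\nabla^k f|^{\ast\ast}(v))^p\,dv\bigr)^{1/p}\asymp t^{1/p-1}K(t^{(p-1)/p},|\nabla^k f|;L_1,L_p)$; combined with the standard identification $K(s,f;(V^k_1)_0,(V^k_p)_0)\asymp K(s,|\nabla^k f|;L_1,L_p)$ obtained by reducing to the derivative via an extension/mollification argument, (ii) is equivalent to the $K$-functional domination $K(s,f;L_{1^\ast,1},L_{p^\ast,q})\lesssim K(s,f;(V^k_1)_0,(V^k_p)_0)$ for all $s>0$. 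For Calder\'on pairs such as the ones at hand, this domination is equivalent to both endpoint embeddings holding simultaneously, namely $(V^k_1)_0\hookrightarrow L_{1^\ast,1}$ (the sharp Gagliardo--Nirenberg inequality \eqref{GN}, always true) and $(V^k_p)_0\hookrightarrow L_{p^\ast,q}$, i.e.\ (i). For (ii) $\Longrightarrow$ (iii) I would apply the real interpolation functor $(\cdot,\cdot)_{\theta,p}$ to this $K$-functional inequality with $\theta=p(r-1)/(r(p-1))$, so that $1/r=1-\theta+\theta/p$: Lorentz reiteration identifies $(L_{1^\ast,1},L_{p^\ast,q})_{\theta,p}=L_{r^\ast,p}$, while the derivative-side reiteration gives $\|f\|_{((V^k_1)_0,(V^k_p)_0)_{\theta,p}}\asymp\||\nabla^k f|\|_{L_{r,p}(\R^d)}$, yielding $\|f\|_{L_{r^\ast,p}(\R^d)}\lesssim\||\nabla^k f|\|_{L_{r,p}(\R^d)}$; since (iv) forces $q\geq p$, the standard inclusion $L_{r^\ast,p}\hookrightarrow L_{r^\ast,q}$ holds with unit constant, delivering (iii). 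Finally, (iii) $\Longrightarrow$ (i) follows by fixing $f\in C^\infty_0(\R^d)$, letting $r\to p-$ in (iii) via dominated convergence, and then using the density of $C^\infty_0(\R^d)$ in $(V^k_p(\R^d))_0$.

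The main technical obstacle will be the uniform control of the implicit constants as $r\to p-$ throughout Holmstedt's formula and the Lorentz reiteration $(L_{1^\ast,1},L_{p^\ast,q})_{\theta,p}=L_{r^\ast,p}$. Interpolating with second index $p$ (matching the target index of the Sobolev-side Lorentz norm), rather than $q$, is precisely what prevents a blow-up of the constant in (iii) and explains why one must route through $L_{r^\ast,p}$ as an intermediate embedding space before absorbing the Lorentz inclusion $L_{r^\ast,p}\hookrightarrow L_{r^\ast,q}$. A secondary technical point is the $\gtrsim$ direction of the Sobolev $K$-functional identification $K(s,f;(V^k_1)_0,(V^k_p)_0)\asymp K(s,|\nabla^k f|;L_1,L_p)$, which requires lifting optimal decompositions of $|\nabla^k f|$ back to decompositions of $f$ with controlled decay at infinity; this can be handled via standard Riesz-potential or cut-off constructions on $\R^d$.
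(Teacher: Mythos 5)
Your proposal follows essentially the same $K$-functional scheme as the paper's proof: (i)$\Rightarrow$(ii) is obtained by dominating $K(t,f;L_{\alpha,1}(\R^d),L_{p^\ast,q}(\R^d))$ by $K(t,f;(V^k_1(\R^d))_0,(V^k_p(\R^d))_0)$ and then unwinding both sides via Lemma~\ref{vspom1} and the Appendix~A formulas \eqref{LemmaKFunctLorentz}, \eqref{LemmaKFunctLorentzSobolev}, while (ii)$\Rightarrow$(iii) is the application of the $(\cdot,\cdot)_{\theta,p}$ functor, which is exactly the Fubini/Hardy/Minkowski computation the paper carries out by hand in the proof of Theorem~\ref{ThmSobRear}. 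Two remarks worth noting: the appeal to Calder\'on pairs for (i)$\Rightarrow$(ii) is superfluous, since only the trivial direction is needed (both endpoint embeddings immediately give the $K$-functional domination with an absolute constant); and the uniformity of the reiteration constant as $r\to p-$ in $(L_{\alpha,1},L_{p^\ast,q})_{\theta,p}\asymp L_{r^\ast,p}$ is not a free reiteration fact but rather rests on the cancellation of the $(1/r-1/p)^{-1/p}$ factors produced by Fubini/Hardy on the Sobolev side and by Minkowski on the Lorentz side — that is, the very estimates in the paper's direct computation — so this step needs to be written out rather than merely flagged.
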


\begin{proof}[Proof of Theorem \ref{ThmSobRear}]
	The equivalence between (i) and (iv) is well known.
	
	(i) $\Longrightarrow$ (ii): It follows from
	\begin{equation*}
	W^k_p(\Omega) \hookrightarrow L_{p^\ast,q}(\Omega) \quad \text{and} \quad W^k_1(\Omega) \hookrightarrow L_{\alpha,1}(\Omega)
	\end{equation*}
	(see (\ref{SobolevClassical*})) that
	\begin{equation}\label{ThmSobRear1}
		K(t,f; L_{\alpha,1}(\Omega), L_{p^\ast,q}(\Omega) ) \lesssim K(t, f; W^k_1(\Omega), W^k_p(\Omega)) \quad \text{for} \quad f \in W^k_1(\Omega).
	\end{equation}
	By (\ref{LemmaKFunctLorentz}), (\ref{LemmaKFunctLorentzSobolev}), and Lemma \ref{vspom1}, we have
	\begin{equation*}
		K(t,f; L_{\alpha,1}(\Omega), L_{p^\ast,q}(\Omega) ) \gtrsim t \left(\int_{t^{p'}}^1 \left( v^{1/p-k/d - 1/\alpha} \int_0^v u^{1/\alpha -1} f^\ast(u) du\right)^q \frac{dv}{v} \right)^{1/q}
	\end{equation*}
	and
	\begin{equation*}
		K(t, f; W^k_1(\Omega), W^k_p(\Omega)) \asymp t \sum_{l=0}^k \left(\int_{t^{p'}}^1  (|\nabla^l f|^{\ast \ast}(v))^p  dv \right)^{1/p}.
	\end{equation*}
	Inserting these estimates into (\ref{ThmSobRear1}) we arrive at (\ref{SharpKolyada2*'}).
	
	(ii) $\Longrightarrow$ (i): Applying monotonicity properties and (\ref{SharpKolyada2*'}),
	\begin{align*}
			\|f\|_{L_{p^\ast,q}(\Omega)} & = \left(\int_0^1 (v^{1/p - k/d} f^\ast(v))^q \frac{dv}{v} \right)^{1/q} \\
	& \lesssim \left(\int_0^1 \left(v^{1/p-k/d - 1/\alpha} \int_0^v u^{1/\alpha} f^\ast(u) \frac{du}{u} \right)^q \frac{dv}{v} \right)^{1/q} \\
			 & \lesssim \sum_{l=0}^k \left(\int_0^1 ( |\nabla^l f|^{\ast \ast}(v))^p dv \right)^{1/p} \lesssim \|f\|_{W^k_p(\Omega)},
	\end{align*}
	where the last estimate is an immediate consequence of the Hardy's inequality (\ref{HardyIneq*}) (noting that $p > 1$).
	
	(ii) $\Longrightarrow$ (iii): Let (ii) hold. Then we can assume that $q \geq p$ (because (ii) $\iff$ (iv)) and $r > 1$ (because $r \to p-$ and $p > 1$). By Fubini's theorem and (\ref{HardyIneq*}), we have for any $1<r<p$,
	\begin{align}
		I & := \left(\int_0^1 t^{(1/r -1/p)p} \left(\int_t^1 \left(v^{1/p-k/d - 1/\alpha} \int_0^v u^{1/\alpha} f^\ast(u) \frac{du}{u} \right)^q \frac{dv}{v} \right)^{p/q} \frac{dt}{t} \right)^{1/p} \nonumber \\
		& \lesssim \sum_{l=0}^k \left(\int_0^1 t^{(1/r -1/p)p} \int_t^1 ( |\nabla^l f|^{\ast \ast}(v))^p dv \frac{dt}{t} \right)^{1/p} \nonumber  \\
		& \asymp (1/r -1/p)^{-1/p} \sum_{l=0}^k \left(\int_0^1 (v^{1/r} |\nabla^l f|^{\ast \ast}(v))^p \frac{dv}{v} \right)^{1/p} \nonumber  \\
		& \lesssim (1/r -1/p)^{-1/p} (1 -1/r)^{-1} \sum_{l=0}^k \left(\int_0^1 (v^{1/r} |\nabla^l f|^\ast(v))^p \frac{dv}{v} \right)^{1/p} \nonumber  \\
		& \lesssim (1/r -1/p)^{-1/p} \|f\|_{W^k L_{r,p}(\Omega)}. \label{ThmSobRear2}
	\end{align}
	On the other hand, applying Minkowski's inequality we derive
	\begin{align*}
		I & \geq \left(\int_0^1 \left(v^{1/p-k/d-1/\alpha}  \int_0^v u^{1/\alpha} f^\ast(u) \frac{du}{u} \right)^q \left(\int_0^v t^{(1/r-1/p) p} \frac{dt}{t} \right)^{q/p} \frac{dv}{v} \right)^{1/q} \\
		& \asymp (1/r -1/p)^{-1/p} \left(\int_0^1 \left(v^{1/r -k/d -1/\alpha}  \int_0^v u^{1/\alpha} f^\ast(u) \frac{du}{u}  \right)^q \frac{dv}{v} \right)^{1/q} \\
		& \gtrsim  (1/r -1/p)^{-1/p} \left(\int_0^1 (v^{1/r -k/d} f^\ast(v))^q \frac{dv}{v} \right)^{1/q} =  (1/r -1/p)^{-1/p}  \|f\|_{L_{r^\ast,q}(\Omega)},
	\end{align*}
where $r^\ast = \frac{d r}{d - k r}$.
	Inserting this estimate into (\ref{ThmSobRear2}), we obtain
	\begin{equation*}
	\|f\|_{L_{r^\ast,q}(\Omega)} \lesssim  \|f\|_{W^k L_{r,p}(\Omega)}.
	\end{equation*}
	
	(iii) $\Longrightarrow$ (i): The embedding (i) follows from (\ref{SharpFrankeJawerth2*'}) by taking limits as $r \to p-$ and noting that $r^\ast \to p^\ast-$ as $r \to p-$.
	
\end{proof}

The proof of Theorem \ref{ThmSobRearRd} goes the same lines as the one of  Theorem \ref{ThmSobRear} with
suitable changes.

\begin{proof}[Proof of Theorem \ref{ThmSobRearRdHom}]
	The equivalence between (i) and (iv) is well known.
	
	(i) $\Longrightarrow$ (ii): It follows from
	\begin{equation*}
	(V^k_p(\R^d))_0 \hookrightarrow L_{p^\ast,q}(\R^d) \quad \text{and} \quad (V^k_1(\R^d))_0 \hookrightarrow L_{\alpha,1}(\R^d)
	\end{equation*}
 that
	\begin{equation}\label{ThmSobRear1RdHom}
		K(t,f; L_{\alpha,1}(\R^d), L_{p^\ast,q}(\R^d) ) \lesssim K(t, f; (V^k_1(\R^d))_0, (V^k_p(\R^d))_0)
	\end{equation}
	for  $f \in  (V^k_1(\R^d))_0+(V^k_p(\R^d))_0$. By (\ref{LemmaKFunctLorentz}), (\ref{LemmaKFunctLorentzSobolev}), and Lemma \ref{vspom1}, we have
	\begin{equation*}
		K(t,f; L_{\alpha,1}(\R^d), L_{p^\ast,q}(\R^d) ) \asymp t \left(\int_{t^{p'}}^\infty \left( v^{1/p-k/d - 1/\alpha} \int_0^v u^{1/\alpha -1} f^\ast(u) du\right)^q \frac{dv}{v} \right)^{1/q}
	\end{equation*}
	and
	{
	\begin{equation*}
		K(t, f; (V^k_1(\R^d))_0, (V^k_p(\R^d))_0) \asymp t  \left(\int_{t^{p'}}^\infty  (|\nabla^k f|^{\ast \ast}(v))^p  dv \right)^{1/p}.
	\end{equation*}}
	Inserting these estimates into (\ref{ThmSobRear1RdHom}) we arrive at (\ref{SharpKolyada2*'RdHom}).
	
	The implication (ii) $\Longrightarrow$ (iii) $\Longrightarrow$ (i) can be shown
as in the proof of  Theorem \ref{ThmSobRear}.
	
\end{proof}

\subsection{Sobolev embeddings with fixed target space}

The classical Sobolev inequality \eqref{121=} can be rewritten as
\begin{equation}\label{SobolevClassical**}
	(W^k_{\frac{d p}{k p + d}}(\Omega))_0 \hookrightarrow L_{p}(\Omega), \quad d > k, \quad \frac{d}{d-k} \leq p < \infty.
\end{equation}
In Section \ref{Section4.1} we have established characterizations of the sharp version of (\ref{SobolevClassical**}), which are obtained by using the finer scale of Lorentz spaces as target spaces. Alternatively, (\ref{SobolevClassical**}) can be strengthened by enlarging the domain space. More precisely, the following inequality holds (see \cite{Alvino} and \cite{Talenti})
 \begin{equation}\label{SobolevClassical***}
		(W^k L_{\frac{d p}{k p + d},p}(\Omega))_0 \hookrightarrow L_p(\Omega),  \quad d > k, \quad \frac{d}{d-k} < p < \infty.
		\end{equation}
Note that $L_{\frac{d p}{k p + d}}(\Omega) \subsetneq L_{\frac{d p}{k p + d},p}(\Omega)$.

Our next goal is to characterize (\ref{SobolevClassical***}) by using 
  $L_p$-moduli of smoothness (Theorems \ref{ThmStein2}--\ref{ThmStein2RdHom}) and rearrangements of functions in $L_p(\Omega)$ (Theorems \ref{ThmStein3}--\ref{ThmStein3RdHom}).
We again present separate results in the inhomogeneous case (Theorems \ref{ThmStein2},  \ref{ThmStein2Rd}, \ref{ThmStein3} and \ref{ThmStein3Rd}) and the homogeneous case (Theorem \ref{ThmStein2RdHom} and \ref{ThmStein3RdHom}).

\begin{thm}\label{ThmStein2}
Let $k \in \mathbb{N}, d > k,  \frac{d}{d-k} < p < \infty$ and $0 < q \leq \infty$. The following statements are equivalent:
	\begin{enumerate}[\upshape(i)]
		\item
		\begin{equation*}
		W^k L_{\frac{d p}{k p + d},q}(\Omega) \hookrightarrow L_p(\Omega),
		\end{equation*}
		\item for $ f \in W^k L_{\frac{d p}{k p + d},q}(\Omega)$ and $t \in (0,1)$, we have
		\begin{equation}\label{SharpKolyada2}
	t^k \|f\|_{L_p(\Omega)} + \omega_k(f,t)_{p;\Omega} \lesssim \sum_{l=0}^k \bigg[ \left(\int_0^{t^d} (u^{k/d + 1/p} |\nabla^l f|^\ast(u))^q  \frac{du}{u} \right)^{1/q} + t^k \left( \int_{t^d}^1 (|\nabla^l f|^\ast(u))^p du\right)^{1/p} \bigg],
\end{equation}
			\item if $r \to \frac{d p}{k p + d}+$ then
			\begin{equation*}
			W^k L_{r,q}(\Omega) \hookrightarrow B^{k - d/r + d/p}_{p,q}(\Omega)
			\end{equation*}
			 with norm $\mathcal{O}((r-\frac{d p}{k p + d})^{-1/q})$, i.e., for any $m \in \N$ there exists $C > 0$, which is independent of $r$, such that
		\begin{equation}\label{SharpFrankeJawerth2}
			\|f\|_{B^{k - d/r + d/p}_{p,q}(\Omega),m} \leq C \Big(r - \frac{d p}{k p + d}\Big)^{-1/q} \|f\|_{W^k L_{r,q}(\Omega)},
		\end{equation}
		\item
		\begin{equation*}
		q \leq p.
		\end{equation*}
	\end{enumerate}
\end{thm}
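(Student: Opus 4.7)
The plan is to adapt the template used for Theorem \ref{ThmSob}: identify the two sides of (ii) with equivalent $K$-functionals (via Lemma A.2 and Holmstedt's formula Lemma A.1), obtain (iii) from (ii) via Fubini and Hardy-type weighted integrals, and recover (i) from (iii) by exploiting the blow-up of the normalization constant $c_{s,k,q}$ as $s \to 0+$. Throughout, set $r_0 = dp/(kp+d)$, so that $1/r_0 = k/d + 1/p$. The equivalence (i) $\iff$ (iv) is classical (cf.\ \cite{EdmundsKermanPick}), and we may assume $q \leq p$ in the rest of the argument.

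For (i) $\Longrightarrow$ (ii), the embedding (i) together with the trivial $W^k_p \hookrightarrow W^k L_{r_0, q}$ gives, by the standard Calder\'on monotonicity of the $K$-functional,
\[
K(t^k, f; L_p(\Omega), W^k_p(\Omega)) \lesssim K(t^k, f; W^k L_{r_0, q}(\Omega), W^k_p(\Omega)).
\]
By Lemma A.2 the LHS equals $t^k \|f\|_{L_p(\Omega)} + \omega_k(f,t)_{p;\Omega}$. Decomposing the Sobolev $K$-functional derivative-by-derivative and applying Holmstedt's formula to the couple $(L_{r_0, q}(\Omega), L_p(\Omega))$ with splitting exponent $\sigma = (1/r_0 - 1/p)^{-1} = d/k$, the RHS is equivalent to
\[
\sum_{l=0}^k \left[ \Big( \int_0^{t^d} (u^{k/d + 1/p} |\nabla^l f|^*(u))^q \tfrac{du}{u} \Big)^{1/q} + t^k \Big( \int_{t^d}^1 |\nabla^l f|^*(u)^p du \Big)^{1/p} \right],
\]
which yields \eqref{SharpKolyada2}. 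The converse direction (ii) $\Longrightarrow$ (i) is immediate: evaluating \eqref{SharpKolyada2} at $t = 1$, the second summand vanishes and the first summand is $\||\nabla^l f|\|_{L_{r_0,q}(\Omega)}$.

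For (ii) $\Longrightarrow$ (iii), fix $r \in (r_0, p)$ and put $s = k - d/r + d/p$, so $s \asymp r - r_0$ as $r \to r_0+$ (and $k - s$ stays away from $0$). Raise \eqref{SharpKolyada2} to the $q$-th power, divide by $t^{sq}$ and integrate $dt/t$ on $(0,1)$. By Fubini, the first summand obeys
\[
\int_0^1 t^{-sq} \int_0^{t^d} (u^{k/d + 1/p} |\nabla^l f|^*(u))^q \tfrac{du}{u} \tfrac{dt}{t} \asymp \tfrac{1}{sq} \||\nabla^l f|\|_{L_{r,q}(\Omega)}^q,
\]
using $k/d + 1/p - s/d = 1/r$. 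For the second summand, a dyadic decomposition of the interval $(t^d, 1)$ combined with the monotonicity of $|\nabla^l f|^*$ and the subadditivity of $x \mapsto x^{q/p}$ (available precisely because $q \leq p$) gives
\[
\int_0^1 t^{(k-s)q} \Big( \int_{t^d}^1 |\nabla^l f|^*(u)^p du \Big)^{q/p} \tfrac{dt}{t} \lesssim \||\nabla^l f|\|_{L_{r,q}(\Omega)}^q
\]
with a constant that does not blow up as $s \to 0+$. Adding the $L_p$-bound $\|f\|_{L_p(\Omega)} \lesssim \|f\|_{W^k L_{r,q}(\Omega)}$ (a consequence of the already proven (i)) and using $c_{s,k,q} \asymp s^{-1/q}$ in \eqref{DefBesovInh} produces \eqref{SharpFrankeJawerth2}.

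Finally, (iii) $\Longrightarrow$ (i) follows by extracting $\|f\|_{L_p}$ from the Besov norm: \eqref{DefBesovInh} gives $\|f\|_{B^s_{p,q}(\Omega), m} \geq c_{s,k,q} \|f\|_{L_p(\Omega)} \asymp s^{-1/q} \|f\|_{L_p(\Omega)}$, so inserting this lower bound into \eqref{SharpFrankeJawerth2} and canceling $s^{-1/q} \asymp (r - r_0)^{-1/q}$ on both sides yields $\|f\|_{L_p(\Omega)} \leq C \|f\|_{W^k L_{r, q}(\Omega)}$ with $C$ independent of $r$; letting $r \to r_0+$ delivers (i). The main technical obstacle is the dyadic estimate of the second summand in (ii) $\Longrightarrow$ (iii): one must prevent the constant from blowing up as $s \to 0+$, and this is exactly the step where the condition $q \leq p$ (condition (iv)) becomes indispensable, through the subadditivity inequality $(a+b)^{q/p} \leq a^{q/p} + b^{q/p}$ on which the dyadic summation rests.
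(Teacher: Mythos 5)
Your proof follows the paper's strategy in all essentials: $K$-functional monotonicity plus the derivative-by-derivative decomposition and Holmstedt's formula for (i)$\Rightarrow$(ii); Fubini and a Hardy/dyadic estimate for (ii)$\Rightarrow$(iii); and the blow-up of $c_{s,m,q}$ for (iii)$\Rightarrow$(i). Your direct derivation of (ii)$\Rightarrow$(i) by taking $t\to 1^-$ is a nice observation the paper does not make explicit, and it cleanly justifies assuming $q\le p$ throughout (ii)$\Rightarrow$(iii), whereas the paper treats both $q\ge p$ and $q<p$ there without that reduction.

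Two points to flag. First, a genuine gap: integrating \eqref{SharpKolyada2} directly against $t^{-sq}\,dt/t$ produces the seminorm $\vertiii{f}_{B^{s}_{p,q}(\Omega),k}$ built from $\omega_k$, but (iii) asserts \eqref{SharpFrankeJawerth2} for arbitrary $m\in\N$. For $m\ge k$ Jackson's inequality \eqref{JacksonInequal} transfers the bound, but for $1\le m<k$ you need Marchaud \eqref{MarchaudInequal}, with the resulting Hardy constant controlled uniformly as $s\to 0^+$; you never address this. The paper avoids the issue by reducing at the outset to $m=1$ via Jackson (which dominates every $m\in\N$) and then using Marchaud once, from $\omega_1$ to $\omega_k$, inside the main estimate; your scheme works with $\omega_k$ directly and would need the Marchaud transfer afterward for small $m$. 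Both routes are viable, but the step must be supplied. Second, your remark that $q\le p$ is ``indispensable'' for the dyadic estimate of the second summand is not accurate: the paper bounds that term by $\|f\|^q_{W^kL_{r,q}(\Omega)}$ for \emph{all} $q>0$ --- via Lemma \ref{LemmaHardySharp} when $q\ge p$ and via the dyadic argument when $q<p$ --- with constants bounded as $r\to\bigl(\tfrac{dp}{kp+d}\bigr)^+$ in both cases. The condition $q\le p$ is not forced by that step; it enters only as the sharp threshold for (i), through the classical equivalence with (iv) that you already invoke. Also minor: the Fubini computation for the first summand gives only the one-sided $\lesssim$, not $\asymp$, since $\int_{u^{1/d}}^1 t^{-sq}\,\tfrac{dt}{t}=\tfrac{1}{sq}\bigl(u^{-sq/d}-1\bigr)$ is not uniformly comparable to $\tfrac{1}{sq}u^{-sq/d}$ near $u=1$; the upper bound is all you need, so this is harmless.
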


\begin{thm}\label{ThmStein2Rd}
Let $k \in \mathbb{N}, d > k,  \frac{d}{d-k} < p < \infty$ and $0 < q \leq \infty$. The following statements are equivalent:
	\begin{enumerate}[\upshape(i)]
		\item
		\begin{equation*}
		W^k L_{\frac{d p}{k p + d},q}(\R^d) \hookrightarrow L_p(\R^d),
		\end{equation*}
		\item for $ f \in W^k L_{\frac{d p}{k p + d},q}(\R^d) + W^k_p(\R^d)$ and $t \in (0,\infty)$, we have
		\begin{eqnarray}\label{SharpKolyada2Rd}\nonumber \qquad \;
	\min\{1,t^k\} \|f\|_{L_p(\R^d)} + \omega_k(f,t)_{p;\R^d} &\lesssim& \sum_{l=0}^k \bigg[ \left(\int_0^{t^d} (u^{k/d + 1/p} |\nabla^l f|^\ast(u))^q  \frac{du}{u} \right)^{1/q}\\ &&\qquad+ t^k \left( \int_{t^d}^\infty (|\nabla^l f|^\ast(u))^p du\right)^{1/p} \bigg],
\end{eqnarray}
			\item if $r \to \frac{d p}{k p + d}+$ then
						\begin{equation*}
			W^k L_{r,q}(\R^d) \hookrightarrow B^{k - d/r + d/p}_{p,q}(\R^d)
			\end{equation*}
			 with norm $\mathcal{O}((r-\frac{d p}{k p + d})^{-1/q})$, i.e., for any $m \in \N$ there exists $C > 0$, which is independent of $r$, such that
		\begin{equation}\label{SharpFrankeJawerth2Rd}
			\|f\|_{B^{k - d/r + d/p}_{p,q}(\R^d),m} \leq C \Big(r - \frac{d p}{k p + d}\Big)^{-1/q} \|f\|_{W^k L_{r,q}(\R^d)},
		\end{equation}
		\item
		\begin{equation*}
		q \leq p.
		\end{equation*}
	\end{enumerate}
\end{thm}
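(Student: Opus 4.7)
The proof will parallel that of Theorem \ref{ThmStein2} (the bounded domain case), with the principal adjustment being that the $K$-functional between $L_p(\R^d)$ and $W^k_p(\R^d)$ contains a factor $\min\{1,t^k\}$ in front of $\|f\|_{L_p(\R^d)}$ rather than $t^k$. The equivalence (i)$\iff$(iv) is classical, so I focus on the cyclic implications (i)$\Longrightarrow$(ii)$\Longrightarrow$(iii)$\Longrightarrow$(i).

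For (i)$\Longrightarrow$(ii), the plan is to exploit monotonicity of the $K$-functional under embeddings. Since $W^k L_{\frac{dp}{kp+d},q}(\R^d) \hookrightarrow L_p(\R^d)$, one has
\begin{equation*}
K\bigl(t^k,f; L_p(\R^d), W^k_p(\R^d)\bigr) \lesssim K\bigl(t^k,f; W^k L_{\tfrac{dp}{kp+d},q}(\R^d), W^k_p(\R^d)\bigr).
\end{equation*}
By Lemma A.2 applied on $\R^d$, the left-hand side is equivalent to $\min\{1,t^k\}\|f\|_{L_p(\R^d)} + \omega_k(f,t)_{p;\R^d}$, which is the LHS of (\ref{SharpKolyada2Rd}). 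The right-hand side is reduced via the Sobolev $K$-functional formula (see the identity \eqref{LemmaKFunctLorentzSobolev} used in the proof of Theorem \ref{ThmSobRear}) to $\sum_{l=0}^k K(t^k, |\nabla^l f|; L_{\frac{dp}{kp+d},q}(\R^d), L_p(\R^d))$, and each summand is then evaluated by Holmstedt's formula (Lemma \ref{Holmstedt}, or the Lorentz $K$-functional of Lemma A.1) to produce the RHS of (\ref{SharpKolyada2Rd}).

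For (ii)$\Longrightarrow$(iii), fix $r \in (\tfrac{dp}{kp+d}, p)$ close to $\tfrac{dp}{kp+d}$ and set $\alpha := k - d/r + d/p$, so $\alpha \to 0+$ as $r \to \tfrac{dp}{kp+d}+$. Multiply (\ref{SharpKolyada2Rd}) by $t^{-\alpha q}$ and integrate $dt/t$ over $(0,1)$ to bound $\vertiii{f}_{B^\alpha_{p,q}(\R^d),k}$. The key estimate is, for each derivative order $l$,
\begin{equation*}
\left(\int_0^1 t^{-\alpha q}\left(\int_0^{t^d}(u^{k/d+1/p}|\nabla^l f|^\ast(u))^q\tfrac{du}{u}\right)\tfrac{dt}{t}\right)^{1/q} \lesssim \alpha^{-1/q}\,\||\nabla^l f|\|_{L_{r,q}(\R^d)},
\end{equation*}
obtained by Fubini followed by Hardy's inequality (Lemma \ref{LemmaHardySharp}); the tail term is handled analogously. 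The factor $\alpha^{-1/q} \asymp (r-\tfrac{dp}{kp+d})^{-1/q}$ comes from the near-critical integration over $t$, exactly matching the normalization constant $c_{\alpha,k,q} \asymp \alpha^{-1/q}$ in the definition \eqref{DefBesovInh} of the Besov norm. Passing from $m=k$ to arbitrary $m$ follows by the equivalence of Besov quasi-norms for different orders of the modulus.

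For (iii)$\Longrightarrow$(i), observe that the $L_p$-part of the Besov norm satisfies $c_{\alpha,k,q}\|f\|_{L_p(\R^d)} \leq \|f\|_{B^\alpha_{p,q}(\R^d),k}$, and since $c_{\alpha,k,q} \asymp \alpha^{-1/q} \asymp (r-\tfrac{dp}{kp+d})^{-1/q}$, (iii) reduces to the $r$-uniform bound $\|f\|_{L_p(\R^d)} \lesssim \|f\|_{W^k L_{r,q}(\R^d)}$; letting $r \to \tfrac{dp}{kp+d}+$ yields (i) by monotone convergence. I expect the main obstacle to be step (ii)$\Longrightarrow$(iii): obtaining the sharp $(r-\tfrac{dp}{kp+d})^{-1/q}$ blow-up requires precise tracking of constants through the Hardy estimate and compatibility with the normalization $c_{\alpha,k,q}$, and the secondary technical point is verifying that the $\R^d$-version of Lemma A.2 (with $\min\{1,t^k\}$ rather than $t^k$) is robust enough to carry through the entire chain without introducing spurious boundary terms as $t \to \infty$.
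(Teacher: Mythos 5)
Your steps (i)$\Rightarrow$(ii) and (iii)$\Rightarrow$(i) are essentially the paper's argument and are fine. The gap is in (ii)$\Rightarrow$(iii), and it is not the one you anticipate at the end of your proposal.

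Your plan bounds only the seminorm $\vertiii{f}_{B^{\alpha}_{p,q}(\R^d),k}$. By the normalization \eqref{DefBesovInh}, the full Besov norm \eqref{SharpFrankeJawerth2Rd} asks you to control $c_{\alpha,k,q}\|f\|_{L_p(\R^d)}+\vertiii{f}_{B^{\alpha}_{p,q}(\R^d),k}$ with $c_{\alpha,k,q}\asymp\alpha^{-1/q}$, so after your Hardy/Fubini estimate you still need the $r$-uniform bound
\begin{equation*}
\|f\|_{L_p(\R^d)} \leq C\,\|f\|_{W^k L_{r,q}(\R^d)}, \qquad C \text{ independent of } r \to \tfrac{dp}{kp+d}+ .
\end{equation*}
On a bounded domain this is free, since $L_{r,q}(\Omega)\hookrightarrow L_{\frac{dp}{kp+d},q}(\Omega)$ for $r>\frac{dp}{kp+d}$ and then (i) applies; on $\R^d$ this embedding of Lorentz spaces fails, so the statement is genuinely nontrivial. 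The paper proves it by first taking $t\to\infty$ in \eqref{SharpKolyada2Rd} to get $\|f\|_{L_p(\R^d)}\lesssim\|f\|_{W^kL_{\frac{dp}{kp+d},q}(\R^d)}$, hence $\|f\|_{L_p(\R^d)}\lesssim\|f\|_{W^kL_{\frac{dp}{kp+d},q}(\R^d)+W^k_p(\R^d)}$, and then invoking the uniform small-$\theta$ real interpolation formula
\begin{equation*}
\theta^{1/q}\,\|f\|_{(W^k L_{\frac{dp}{kp+d},q}(\R^d),\,W^k_p(\R^d))_{\theta,q}} \asymp \|f\|_{W^k L_{r,q}(\R^d)}, \qquad \theta = 1 - \tfrac{d(1/r-1/p)}{k},
\end{equation*}
with hidden constants independent of $\theta$. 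Establishing this formula (the paper's \eqref{gamma2}--\eqref{gamma7}) is itself a piece of work requiring Holmstedt's formula for Lorentz couples plus careful Hardy estimates tracking the $\theta$-dependence. Your proposal does not address this at all, so the argument is incomplete at precisely the point where the $\R^d$ case diverges from the domain case. The concern you flag about $\min\{1,t^k\}$ and ``spurious boundary terms as $t\to\infty$'' is, by contrast, harmless: the seminorm $\vertiii{\cdot}_{B^\alpha_{p,q},k}$ integrates only over $(0,1)$, where $\min\{1,t^k\}=t^k$, so no new boundary terms appear there.
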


\begin{thm}\label{ThmStein2RdHom}
Let $k \in \mathbb{N}, d > k,  \frac{d}{d-k} < p < \infty$ and $0 < q \leq \infty$. The following statements are equivalent:
	\begin{enumerate}[\upshape(i)]
		\item
		\begin{equation*}
		(V^k L_{\frac{d p}{k p + d},q}(\R^d))_0 \hookrightarrow L_p(\R^d),
		\end{equation*}
		\item for $ f \in (V^k L_{\frac{d p}{k p + d},q}(\R^d))_0 +  (V^k_p(\R^d))_0$ and $t \in (0,\infty)$, we have
		\begin{equation}\label{SharpKolyada2RdHom}
	 \omega_k(f,t)_{p;\R^d} \lesssim \left(\int_0^{t^d} (u^{k/d + 1/p} |\nabla^k f|^\ast(u))^q  \frac{du}{u} \right)^{1/q} + t^k \left( \int_{t^d}^\infty (|\nabla^k f|^\ast(u))^p du\right)^{1/p} ,
\end{equation}
			\item if $r \to \frac{d p}{k p + d}+$ then
			\begin{equation*}
			(V^k L_{r,q}(\R^d))_0 \hookrightarrow \dot{B}^{k - d/r + d/p}_{p,q}(\R^d)
			\end{equation*}
			 with norm $\mathcal{O}((r-\frac{d p}{k p + d})^{-1/q})$, i.e., for any $m \in \N$ there exists $C > 0$, which is independent of $r$, such that
		\begin{equation}\label{SharpFrankeJawerth2RdHom}
			\|f\|_{\dot{B}^{k - d/r + d/p}_{p,q}(\R^d),m} \leq C \Big(r - \frac{d p}{k p + d}\Big)^{-1/q} \||\nabla^k f|\|_{L_{r,q}(\R^d)}
		\end{equation}
		for $f \in (V^k L_{r,q}(\R^d))_0$,
		\item
		\begin{equation*}
		q \leq p.
		\end{equation*}
	\end{enumerate}
\end{thm}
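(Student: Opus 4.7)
The proof will follow the cyclic scheme (i) $\Rightarrow$ (ii) $\Rightarrow$ (iii) $\Rightarrow$ (i), together with the classical equivalence (i) $\iff$ (iv). The unifying tool is the identification, via the very definition of the homogeneous Sobolev seminorms and a Holmstedt-type formula, of the $K$-functional
\begin{equation*}
K(t^k, f; (V^k L_{\frac{dp}{kp+d},q}(\R^d))_0, (V^k_p(\R^d))_0) \asymp K(t^k, |\nabla^k f|; L_{\frac{dp}{kp+d},q}(\R^d), L_p(\R^d))
\end{equation*}
with the right-hand side of \eqref{SharpKolyada2RdHom}. The relevant Holmstedt exponent is $d/k$, since $\frac{kp+d}{dp} - \frac{1}{p} = \frac{k}{d}$, and this is exactly what produces the weight $u^{k/d+1/p}$ in the first summand and the cutoff $t^d$ in the second.

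For (i) $\Rightarrow$ (ii), I combine the standard identity $\omega_k(f,t)_{p;\R^d} \asymp K(t^k, f; L_p(\R^d), (V^k_p(\R^d))_0)$ with monotonicity of the $K$-functional in the first coordinate: since (i) gives $(V^k L_{\frac{dp}{kp+d},q}(\R^d))_0 \hookrightarrow L_p(\R^d)$, we obtain
\begin{equation*}
K(t^k, f; L_p, (V^k_p)_0) \lesssim K(t^k, f; (V^k L_{\frac{dp}{kp+d},q})_0, (V^k_p)_0),
\end{equation*}
and Holmstedt's formula rewrites the right-hand side as the expression appearing in \eqref{SharpKolyada2RdHom}. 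This is the direct analogue of the argument used for Theorem \ref{ThmSobRearRdHom}, replacing the optimal target $L_{p^\ast,q}$ by the optimal domain $(V^k L_{\frac{dp}{kp+d},q})_0$.

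For (ii) $\Rightarrow$ (iii), I multiply \eqref{SharpKolyada2RdHom} by $t^{-\sigma}$ with $\sigma=k-d/r+d/p$ and take the $L_q(dt/t)$-norm. Fubini applied to the first summand yields
\begin{equation*}
\int_0^\infty t^{-\sigma q} \Bigl(\int_0^{t^d}(u^{k/d+1/p}|\nabla^k f|^\ast(u))^q \frac{du}{u}\Bigr)\frac{dt}{t} \asymp \frac{1}{\sigma q}\,\||\nabla^k f|\|_{L_{r,q}(\R^d)}^q,
\end{equation*}
and since $\sigma \asymp r - \tfrac{dp}{kp+d}$ as $r \to \tfrac{dp}{kp+d}+$, this produces the advertised blow-up $(r-\tfrac{dp}{kp+d})^{-1/q}$. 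The second summand is handled by the substitution $s=t^d$ followed by a Hardy inequality applied to the tail $(\int_s^\infty |\nabla^k f|^\ast(u)^p du)^{q/p}$, with a constant that stays bounded because $1/r-1/p$ is bounded away from $0$ near $r=\tfrac{dp}{kp+d}$. The converse implication (iii) $\Rightarrow$ (i) is obtained by letting $r\to\tfrac{dp}{kp+d}+$ in \eqref{SharpFrankeJawerth2RdHom} and invoking the Karadzhov--Milman--Xiao limiting relation $\lim_{\sigma\to 0+}\sigma^{1/q}\|f\|_{\dot{B}^\sigma_{p,q}(\R^d),m}\asymp\|f\|_{L_p(\R^d)}$: the two powers $(r-\tfrac{dp}{kp+d})^{\pm 1/q}$ cancel, leaving precisely the desired Sobolev embedding. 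Finally, (i) $\iff$ (iv) is classical optimality.

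The main obstacle is the tail estimate in (ii) $\Rightarrow$ (iii): after the change of variable the target is a Hardy-type bound
\begin{equation*}
\int_0^\infty s^{(1/r-1/p)q}\Bigl(\int_s^\infty |\nabla^k f|^\ast(u)^p du\Bigr)^{q/p}\frac{ds}{s} \lesssim \||\nabla^k f|\|_{L_{r,q}(\R^d)}^q,
\end{equation*}
and for $q<p$ the outer index $q/p<1$ falls outside the standard Hardy range. One must therefore explicitly exploit the monotonicity of $|\nabla^k f|^\ast$ via the last clause of Lemma \ref{LemmaHardyLog} (the variant for functions of the form $f(t)=t^{\lambda-1}g(t)$ with $g$ decreasing), and carefully verify that the resulting constant remains uniform in $r$ near the endpoint. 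This is the delicate quantitative point that distinguishes the homogeneous Stein-type statement from its subcritical counterpart.
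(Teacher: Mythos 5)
Your proposal follows essentially the same route as the paper. The (i)$\Rightarrow$(ii) step is correct and matches the paper's strategy, though you should note that for the pair $(L_p(\R^d),(V^k_p(\R^d))_0)$ the paper only records the one-sided bound $\omega_k(f,t)_{p;\R^d}\lesssim K(t^k,f;L_p(\R^d),(V^k_p(\R^d))_0)$ (obtained by mollification and the triangle inequality), not an equivalence; luckily only $\lesssim$ is needed there. The Holmstedt bookkeeping $\frac{kp+d}{dp}-\frac1p=\frac{k}{d}$ and the cutoff $t^d$ after evaluating at $t^k$ are exactly right, as is the key algebraic identity $\frac{k}{d}+\frac1p-\frac{\sigma}{d}=\frac1r$ that produces $\||\nabla^k f|\|_{L_{r,q}}$ from the first summand. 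The (iii)$\Rightarrow$(i) direction via $\lim_{\sigma\to0+}\sigma^{1/q}\|f\|_{\dot{B}^\sigma_{p,q},m}\asymp\|f\|_{L_p}$ also matches.

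There are two genuine gaps in (ii)$\Rightarrow$(iii) that you should close. First, multiplying by $t^{-\sigma}$ and integrating produces the Besov quasi-norm $\|f\|_{\dot{B}^\sigma_{p,q}(\R^d),k}$ with $k$-th order modulus, whereas the statement asserts the bound for $\|f\|_{\dot{B}^\sigma_{p,q}(\R^d),m}$ with \emph{any} $m\in\N$, and the implicit constant in the equivalence $\|\cdot\|_{\dot{B}^\sigma_{p,q},k_1}\asymp\|\cdot\|_{\dot{B}^\sigma_{p,q},k_2}$ degenerates as $\sigma\to0$ or $\sigma\to k_i$. The paper handles this by first reducing to $m=1$ via \eqref{JacksonInequal} (permissible since $\sigma<1$ near the endpoint), then using the Marchaud inequality \eqref{MarchaudInequal} together with a Hardy bound whose constant stays uniform; your version needs the converse passage $\|\cdot\|_{\dot{B}^\sigma_{p,q},m}\lesssim\|\cdot\|_{\dot{B}^\sigma_{p,q},k}$ for $m<k$, which again requires Marchaud plus a uniform-in-$\sigma$ Hardy bound. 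It works, but it must be stated. Second, your citation of the last clause of Lemma \ref{LemmaHardyLog} for the tail term when $q<p$ is not quite the right tool: that clause is stated on a bounded interval with a logarithmic weight, whereas the $\R^d$ case needs the corresponding estimate on $(0,\infty)$. The paper instead uses a dyadic discretization exploiting $(\sum a_j)^{q/p}\leq\sum a_j^{q/p}$ for $q/p<1$ (cf.\ \eqref{gamma7}); the principle you invoke (monotonicity of $|\nabla^k f|^\ast$) is the right one, but the mechanism should be the discretization, not that lemma. Finally, you should also record the observation that the pointwise inequality \eqref{SharpKolyada2RdHom} may be applied to $f\in(V^k L_{r,q}(\R^d))_0$ since this space sits inside the sum $(V^k L_{\frac{dp}{kp+d},q}(\R^d))_0+(V^k_p(\R^d))_0$.
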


\begin{rem}
	(i) Inequality (\ref{SharpKolyada2}) with $q=p$ has been recently obtained by Gogatishvili, Neves and Opic \cite[Theorem 3.2]{GogatishviliNevesOpic}. The corresponding inequalities for functions on $\R^d$ in the homogeneous/inhomogeneous setting (see \eqref{SharpKolyada2Rd} and \eqref{SharpKolyada2RdHom}) are new.

(ii) The Jawerth-Franke type embedding for Lorentz-Sobolev spaces was recently obtained by Seeger and Trebels \cite[Theorem 1.2]{SeegerTrebels}. Let $1 < r < p < \infty, 0 < q \leq \infty$ and $k > d \Big(\frac{1}{r} - \frac{1}{p} \Big)$. Then
\begin{equation}\label{SeegerTrebels}
	W^k L_{r,q}(\R^d) \hookrightarrow B^{k - d/r + d/p}_{p,q}(\R^d).
\end{equation}
Hence,  (\ref{SharpFrankeJawerth2Rd}) is the sharp version of embedding (\ref{SeegerTrebels}) in the spirit of Br\'ezis-Bourgain-Mironescu \cite{BourgainBrezisMironescu} (see also Remark \ref{RemEta}(i) below for further details on this point.)

(iii) {The limiting case $p=\infty$ in Theorem \ref{ThmStein2} will be settled in Theorem \ref{ThmStein} below.}
\end{rem}

\begin{proof}[Proof of Theorem \ref{ThmStein2}]
	(i) $\Longrightarrow$ (ii): The proof of this implication follows essentially the lines of \cite[Theorem 3.2]{GogatishviliNevesOpic}. 
 Due to Lemma A.2(iv)
and (\ref{LemmaKFunctLorentz}), we obtain
	\begin{align*}
K(t^k, f ; L_p(\Omega), W^k_p(\Omega)) &\lesssim	K(t, f; W^k L_{\frac{d p}{k p + d},q}(\Omega), W^k_p(\Omega))  \\
	 & \asymp \sum_{l=0}^k K(t, |\nabla^l f| ; L_{\frac{d p}{k p + d},q}(\Omega), L_p(\Omega))  \\
		& \hspace{-5cm} \asymp \sum_{l=0}^k \bigg[ \left(\int_0^{t^{d/k}} (u^{k/d + 1/p} |\nabla^l f|^\ast(u))^q \frac{du}{u} \right)^{1/q} + t \left(\int_{t^{d/k}}^1 (|\nabla^l f|^\ast(u))^p du \right)^{1/p} \bigg],
	\end{align*}
	which gives 
 (\ref{SharpKolyada2}).
	
	(ii) $\Longrightarrow$ (iii):  Since $r \to \frac{d p}{k p + d}+$, we may assume that $k - d/r + d/p < 1$. Further, it suffices to show (\ref{SharpFrankeJawerth2}) with $m=1$ (see (\ref{JacksonInequal})).
	
	  By \eqref{DefHolZyg} and (\ref{MarchaudInequal}), we have
	\begin{align}
		\vertiii{f}^q_{B^{k - d/r + d/p}_{p,q}(\Omega), 1} & =  \int_0^1 t^{-(k-d/r + d/p) q} \omega_1(f, t)_{p;\Omega}^q \frac{dt}{t} \nonumber \\
		& \hspace{-2cm} \lesssim \int_0^1 t^{(1-k+d/r-d/p) q} \left(\int_t^\infty \frac{\omega_k(f,u)_{p;\Omega}}{u} \frac{du}{u} \right)^q \frac{dt}{t} \nonumber \\
		& \hspace{-2cm} \lesssim \int_0^1 t^{(1-k+d/r-d/p) q} \left(\int_t^1 \frac{\omega_k(f,u)_{p;\Omega}}{u} \frac{du}{u} \right)^q \frac{dt}{t}  + \|f\|_{L_p(\Omega)}^q  \label{ThmStein2.2prev}.
		\end{align}
		Next we show that
		\begin{align}
			  \left(\int_0^1 t^{(1-k+d/r-d/p) q} \left(\int_t^1 \frac{\omega_k(f,u)_{p;\Omega}}{u} \frac{du}{u} \right)^q \frac{dt}{t}  \right)^{1/q} \nonumber\\
			  & \hspace{-5cm} \leq C \left(\int_0^1 t^{-(k-d/r + d/p) q} \omega_k(f,t)_{p;\Omega}^q \frac{dt}{t} \right)^{1/q}, \label{ThmStein2.2prev1}
		\end{align}
		where $C > 0$ does not depend on $r$. Indeed, if $q \geq 1$ then (\ref{ThmStein2.2prev1}) is an immediate consequence of the Hardy's inequality (\ref{HardyIneq}) where $C \asymp 1 - k +\frac{d}{r} - \frac{d}{p}$, which is uniformly bounded as $r \to \frac{d p}{k p + d}+$. Assume now $q < 1$. By monotonicity properties and Fubini's theorem, we have
		\begin{align*}
			  \left(\int_0^1 t^{(1-k+d/r-d/p) q} \left(\int_t^1 \frac{\omega_k(f,u)_{p;\Omega}}{u} \frac{du}{u} \right)^q \frac{dt}{t}  \right)^{1/q} \\
			  & \hspace{-7cm}\asymp \left(\sum_{i=0}^\infty 2^{-i (1-k+d/r-d/p) q} \left(\sum_{j=0}^i \frac{\omega_k(f,2^{-j})_{p;\Omega}}{2^{-j}}  \right)^q \right)^{1/q} \\
			  & \hspace{-7cm} \leq \left( \sum_{i=0}^\infty 2^{-i (1-k+d/r-d/p) q}  \sum_{j=0}^i \left( \frac{\omega_k(f,2^{-j})_{p;\Omega}}{2^{-j}} \right)^q \right)^{1/q} \\
			  & \hspace{-8.5cm} \asymp \left(\sum_{j=0}^\infty 2^{-j (-k+d/r-d/p) q} \omega_k(f,2^{-j})_{p;\Omega}^q    \right)^{1/q}
\asymp \left(\int_0^1 t^{-(k-d/r + d/p) q} \omega_k(f,t)_{p;\Omega}^q \frac{dt}{t} \right)^{1/q},		\end{align*}
		 which completes the proof of (\ref{ThmStein2.2prev1}).
		
		Inserting the estimate (\ref{ThmStein2.2prev1}) into (\ref{ThmStein2.2prev}) and invoking \eqref{SharpKolyada2}, we establish
		\begin{align}
		\vertiii{f}^q_{B^{k - d/r + d/p}_{p,q}(\Omega), 1} &\lesssim \int_0^1 t^{-(k-d/r + d/p) q} \omega_k(f,t)_{p;\Omega}^q \frac{dt}{t} + \|f\|_{L_p(\Omega)}^q \nonumber \\
		&\lesssim \sum_{l=0}^k  \int_0^1 t^{-(k/d-1/r + 1/p) q} \int_0^{t} (u^{k/d + 1/p} |\nabla^l f|^\ast(u))^q  \frac{du}{u} \frac{dt}{t} \nonumber  \\
		& \hspace{0.35cm}+ \sum_{l=0}^k \int_0^1 t^{(1/r - 1/p) q} \left( \int_{t}^1 (|\nabla^l f|^\ast(u))^p du \right)^{q/p} \frac{dt}{t} + \|f\|_{L_p(\Omega)}^q \nonumber \\
		& = I + II +  \|f\|_{L_p(\Omega)}^q, \label{ThmStein2.2}
	\end{align}
	where
	\begin{equation*}
		I:= \sum_{l=0}^k  \int_0^1 t^{-(k/d-1/r + 1/p) q} \int_0^{t} (u^{k/d + 1/p} |\nabla^l f|^\ast(u))^q  \frac{du}{u} \frac{dt}{t}
	\end{equation*}
	and
	\begin{equation*}
		II:= \sum_{l=0}^k \int_0^1 t^{(1/r - 1/p) q} \left( \int_{t}^1 (|\nabla^l f|^\ast(u))^p du \right)^{q/p} \frac{dt}{t}.
	\end{equation*}
	
	We estimate $I$ as follows:
		\begin{align}
		I & = \sum_{l=0}^k \int_0^1 (u^{k/d + 1/p} |\nabla^l f|^\ast(u))^q \int_u^1  t^{-(k/d-1/r + 1/p) q} \frac{dt}{t} \frac{du}{u}  \nonumber \\
		& \lesssim (k/d-1/r + 1/p)^{-1} \sum_{l=0}^k \int_0^1 (u^{1/r} |\nabla^l f|^\ast(u))^q \frac{du}{u} \nonumber \\
		& \asymp (k/d-1/r + 1/p)^{-1} \|f\|_{W^k L_{r,q} (\Omega)}^q. \label{ThmStein2.3}
	\end{align}
	
	To deal with $II$ we will distinguish two cases. Suppose first that $q \geq p$. Note that we may assume without loss of generality that $r < p$. In virtue of Lemma \ref{LemmaHardySharp}, we derive
	\begin{equation*}
		II \lesssim \left(\frac{1}{r} - \frac{1}{p} \right)^{-q/p} \sum_{l=0}^k \int_0^1 t^{q/r} (|\nabla^l f|^\ast(t))^q \frac{dt}{t} \asymp \left(\frac{1}{r} - \frac{1}{p} \right)^{-q/p}  \|f\|^q_{W^k L_{r,q}(\Omega)}.
	\end{equation*}
	Further, since
	\begin{equation}\label{ThmStein2.4}
r \to \frac{d p}{k p + d}+  \iff \frac{1}{r} - \frac{1}{p} \to \frac{k}{d}-
	\end{equation}
	 we derive
	\begin{equation}\label{ThmStein2.5}
		II \lesssim \|f\|^q_{W^k L_{r,q}(\Omega)}, \quad q \geq p.
	\end{equation}
	
	Secondly, let $q < p$. Applying monotonicity properties of rearrangements, we obtain
	\begin{align*}
		II &\asymp \sum_{l=0}^k \sum_{i= 0}^\infty 2^{-i(1/r -1/p) q} \left(\sum_{j = 0}^i (|\nabla^l f|^\ast(2^{-j}))^p 2^{-j} \right)^{q/p}  \\
		& \leq \sum_{l=0}^k \sum_{i= 0}^\infty 2^{-i(1/r -1/p) q} \sum_{j=0}^i (|\nabla^l f|^\ast(2^{-j}))^q 2^{-j q/p} \\
		& \lesssim \left(\frac{1}{r} - \frac{1}{p} \right)^{-1} \sum_{l=0}^k \sum_{j = 0}^\infty (|\nabla^l f|^\ast(2^{-j}))^q 2^{-j q/r}
\asymp \left(\frac{1}{r} - \frac{1}{p} \right)^{-1} \sum_{l=0}^k \int_0^1 t^{q/r} (|\nabla^l f|^\ast(t))^q \frac{dt}{t}.
	\end{align*}
	This implies (see (\ref{ThmStein2.4}))
	\begin{equation}\label{ThmStein2.6}
		II \lesssim  \|f\|^q_{W^k L_{r,q}(\Omega)}, \quad q < p.
	\end{equation}
	
	On the other hand, according to \eqref{SharpKolyada2}, we have
	\begin{equation}\label{ThmStein2.6++}
		\|f\|_{L_p(\Omega)} \lesssim \|f\|_{W^k L_{\frac{d p}{k p + d},q}(\Omega)}  \leq \|f\|_{W^k L_{r,q}(\Omega)},
	\end{equation}
	where the last inequality follows from the fact that $r >  \frac{d p}{k p + d}$ and $\Omega$ is bounded.
	
	Combining \eqref{DefBesovInh},
 (\ref{ThmStein2.2}), (\ref{ThmStein2.3}), (\ref{ThmStein2.5}), (\ref{ThmStein2.6}), and \eqref{ThmStein2.6++}, we obtain 
	\begin{align*}
		\|f\|^q_{B^{k - d/r + d/p}_{p,q}(\Omega),1}  &\asymp \left(\frac{k}{d}-\frac{1}{r} + \frac{1}{p}\right)^{-1} \|f\|^q_{L_p(\Omega)} +  \vertiii{f}^q_{B^{k - d/r + d/p}_{p,q}(\Omega),1} \\
		& \lesssim \left(1 + \left(\frac{k}{d}-\frac{1}{r} + \frac{1}{p}\right)^{-1} \right) (\|f\|^q_{W^k L_{r,q}(\Omega)} + \|f\|_{L_p(\Omega)}^q) \\
		& \asymp \left(r - \frac{d p}{k p + d}\right)^{-1} ( \|f\|^q_{W^k L_{r,q}(\Omega)} + \|f\|_{L_p(\Omega)}^q) \\
		& \lesssim  \left(r - \frac{d p}{k p + d}\right)^{-1} \|f\|^q_{W^k L_{r,q}(\Omega)},
	\end{align*}
i.e.,  (\ref{SharpFrankeJawerth2}) follows.
	
	(iii) $\Longrightarrow$ (i): By \eqref{DefBesovInh}, we have
	\begin{equation*}
		\|f\|_{L_p(\Omega)} \lesssim \left(r - \frac{d p}{k p + d}\right)^{\frac{1}{q}}
		\|f\|_{B^{k - d/r + d/p}_{p,q}(\Omega),m}, \quad r \to \frac{d p}{k p + d}+,
	\end{equation*}
	where we have also used (\ref{ThmStein2.4}). Combining this inequality with  (\ref{SharpFrankeJawerth2}), we arrive at
	\begin{equation*}	
		\|f\|_{L_p(\Omega)} \leq C  \|f\|_{W^k L_{r,q}(\Omega)},  \quad r \to \frac{d p}{k p + d}+,
	\end{equation*}
	where $C$ is a positive constant which does not depend on $r$. Then, in virtue of the monotone convergence theorem, we derive $W^k L_{\frac{d p}{ k p + d},q}(\Omega) \hookrightarrow L_p(\Omega)$.
	

	The equivalence (i) $\iff$ (iv) is well known (see the methodology developed in  \cite{EdmundsKermanPick}).
	
\end{proof}

\begin{proof}[Proof of Theorem \ref{ThmStein2Rd}]
The proof of the part 	(i) $\Longrightarrow$ (ii) is similar to the one in the previous theorem with the help of Lemma A.2(ii).
	
	(ii) $\Longrightarrow$ (iii):
Following the same steps as in (\ref{ThmStein2.2prev})--(\ref{ThmStein2.6}),
we establish

\begin{align}\label{vspom12}
		\vertiii{f}^q_{B^{k - d/r + d/p}_{p,q}(\R^d), 1} \lesssim
 \|f\|_{L_p(\R^d)}^q + \|f\|^q_{W^k L_{r,q}(\R^d)}
+
(k/d-1/r + 1/p)^{-1} \|f\|_{W^k L_{r,q} (\R^d)}^q.
	\end{align}

Taking limits as $t \to \infty$ in \eqref{SharpKolyada2Rd}, we have
	\begin{equation*}
		\|f\|_{L_p(\R^d)} \lesssim \|f\|_{W^k L_{\frac{d p}{k p + d},q}(\R^d)}, \quad f \in W^k L_{\frac{d p}{k p + d},q}(\R^d),
	\end{equation*}
	and thus
	\begin{equation}\label{gamma}
	\|f\|_{L_p(\R^d)} \lesssim \|f\|_{W^k L_{\frac{d p}{k p + d},q}(\R^d) + W^k_p(\R^d)}.
	\end{equation}
	 Using the monotonicity properties of the real interpolation scale, more precisely,
	\begin{equation*}
		\|f\|_{A_0 + A_1} \leq (\theta (1-\theta) q)^{1/q}\|f\|_{(A_0, A_1)_{\theta,q}}, \quad \theta \in (0,1), \quad 1 \leq q \leq \infty,
	\end{equation*}
	(cf. \cite[p. 19]{JawerthMilman}) together with \eqref{gamma}, we obtain
	\begin{equation}\label{ThmStein2.6++Rd}
		\|f\|_{L_p(\R^d)} \lesssim \|f\|_{W^k L_{\frac{d p}{k p + d},q}(\R^d) + W^k_p(\R^d)} \lesssim \theta^{1/q} \|f\|_{(W^k L_{\frac{d p}{k p + d},q}(\R^d), W^k_p(\R^d))_{\theta, q}}
	\end{equation}
	for the specific parameter $\theta = 1-\frac{d \big(\frac{1}{r} - \frac{1}{p} \big)}{k} \in (0,1)$ (recall that $\frac{d p}{k p + d} < r < p$). Furthermore, we claim that
	\begin{equation}\label{gamma2}
		\theta^{1/q} \|f\|_{(W^k L_{\frac{d p}{k p + d},q}(\R^d), W^k_p(\R^d))_{\theta, q}} \asymp \|f\|_{W^k L_{r,q}(\R^d)}
	\end{equation}
	where the hidden constants are independent of $\theta$ (and, in particular, $r$). Assuming this momentarily, it follows from \eqref{ThmStein2.6++Rd} and \eqref{gamma2} that
	\begin{equation}\label{gamma3}
		\|f\|_{L_p(\R^d)} \leq C  \|f\|_{W^k L_{r,q}(\R^d)}
	\end{equation}
	where $C > 0$ does not depend on $r$.
	
	
	Collecting  \eqref{DefBesovInh},
 (\ref{vspom12}),  and \eqref{gamma3}, we obtain 
	\begin{align*}
		\|f\|^q_{B^{k - d/r + d/p}_{p,q}(\R^d),1}  &\asymp \left(\frac{k}{d}-\frac{1}{r} + \frac{1}{p}\right)^{-1} \|f\|^q_{L_p(\R^d)} +  \vertiii{f}^q_{B^{k - d/r + d/p}_{p,q}(\R^d),1} \\
		& \lesssim \left(1 + \left(\frac{k}{d}-\frac{1}{r} + \frac{1}{p}\right)^{-1} \right) (\|f\|^q_{W^k L_{r,q}(\R^d)} + \|f\|_{L_p(\R^d)}^q) \\
		& \asymp \left(r - \frac{d p}{k p + d}\right)^{-1} ( \|f\|^q_{W^k L_{r,q}(\R^d)} + \|f\|_{L_p(\R^d)}^q) \\
		& \lesssim  \left(r - \frac{d p}{k p + d}\right)^{-1} \|f\|^q_{W^k L_{r,q}(\R^d)},
	\end{align*}
hence  (\ref{SharpFrankeJawerth2Rd}) is shown.

To conclude the proof of (iii), it remains to show that the interpolation formula \eqref{gamma2} holds. We first note that by (A.15)
it is enough to prove that
 	\begin{equation}\label{gamma2new}
		\theta^{1/q} \|f\|_{(L_{\frac{d p}{k p + d},q}(\R^d), L_p(\R^d))_{\theta, q}} \asymp \|f\|_{L_{r,q}(\R^d)}.
	\end{equation}
	By \eqref{LemmaKFunctLorentz} and a simple change of variables, we have
	\begin{equation}\label{gamma4}
		\|f\|_{(L_{\frac{d p}{k p + d},q}(\R^d), L_p(\R^d))_{\theta, q}}^q  \asymp J_1 + J_2,
	\end{equation}
	where
	\begin{equation*}
		J_1:= \int_0^\infty t^{-\theta k q/d} \int_0^t (u^{k/d + 1/p} f^*(u))^q \frac{du}{u} \frac{dt}{t}
	\end{equation*}
	and
	\begin{equation*}
		J_2 := \int_0^\infty t^{(1-\theta) k q/d} \Big(\int_t^\infty (f^*(u))^p \, du \Big)^{q/p} \frac{dt}{t}.
	\end{equation*}
	On the one hand, applying the Fubini's theorem,
	\begin{equation}\label{gamma5}
		J_1 \asymp \theta^{-1} \int_0^\infty (u^{(1-\theta) k/d + 1/p} f^*(u))^q \frac{du}{u} = \theta^{-1} \|f\|_{L_{r,q}(\R^d)}^q.
	\end{equation}
	On the other hand, dealing with $J_2$, we can apply \eqref{HardyIneq} if $q \geq p$ to derive
	\begin{equation}\label{gamma6}
		J_2 \lesssim (1-\theta)^{-p/q} \int_0^\infty (t^{1/r} f^*(t))^q \frac{dt}{t} \asymp \|f\|_{L_{r,q}(\R^d)}^q,
	\end{equation}
	where we have also used that $1-\theta$ is uniformly bounded as $r \to \frac{d p}{k p + d}+$. If $q < p$ we can use monotonicity properties and change the order of summation to establish
	\begin{align}
		J_2 & \asymp \sum_{i=-\infty}^\infty 2^{-i(1-\theta)kq/d} \Big(\sum_{j=-\infty}^i  2^{-j} (f^*(2^{-j}))^p  \Big)^{q/p} \nonumber \\
		& \leq \sum_{i=-\infty}^\infty 2^{-i(1-\theta)kq/d} \sum_{j=-\infty}^{i} (2^{-j/p} f^*(2^{-j}))^q \nonumber \\
		& \asymp (1-\theta)^{-1} \sum_{j=-\infty}^\infty  (2^{-j/p} f^*(2^{-j}))^q 2^{-j(1-\theta)kq/d} \nonumber  \\
		& \asymp \sum_{j=-\infty}^\infty (2^{-j/r} f^*(2^{-j}))^q  \asymp \|f\|_{L_{r,q}(\R^d)}^q. \label{gamma7}
	\end{align}
	According to \eqref{gamma4}--\eqref{gamma7} we have
	\begin{equation*}
		\|f\|_{(L_{\frac{d p}{k p + d},q}(\R^d), L_p(\R^d))_{\theta, q}}^q  \asymp \theta^{-1} \|f\|^q_{L_{r,q}(\R^d)}
	\end{equation*}
	as $\theta \to 0+$ (or equivalently, $r \to \frac{d p}{k p + d}+$). The proof of \eqref{gamma2new} is complete.
	
The proof of (iii) $\Longrightarrow$  (i) $\iff$ (iv)
is as in Theorem \ref{ThmStein2}.
\end{proof}

\begin{proof}[Proof of Theorem \ref{ThmStein2RdHom}]
	(i) $\Longrightarrow$ (ii): We have
	\begin{equation}\label{ThmStein2.1RdHom}
		K(t^k, f ; L_p(\R^d), (V^k_p(\R^d))_0) \lesssim K(t^k, f; (V^k L_{\frac{d p}{k p + d},q}(\R^d))_0, (V^k_p(\R^d))_0).
	\end{equation}
Formulas  (\ref{LemmaKFunctLorentzSobolev}) and (\ref{LemmaKFunctLorentz}) yield
	\begin{align}
		K(t, f; (V^k L_{\frac{d p}{k p + d},q}(\R^d))_0, (V^k_p(\R^d))_0)& \asymp  K(t, |\nabla^k f| ; L_{\frac{d p}{k p + d},q}(\R^d), L_p(\R^d)) \nonumber  \\
		& \hspace{-5cm} \asymp  \left(\int_0^{t^{d/k}} (u^{k/d + 1/p} |\nabla^k f|^\ast(u))^q \frac{du}{u} \right)^{1/q} + t \left(\int_{t^{d/k}}^1 (|\nabla^k f|^\ast(u))^p du \right)^{1/p}. \label{New24**}
	\end{align}
	On the other hand, given any $g \in (V^k_p(\R^d))_0$, it follows from the triangle inequality and \eqref{DerMod} that
	\begin{equation*}
		\omega_k(f,t)_{p;\R^d} \leq \omega_k(f-g,t)_{p;\R^d} + \omega_k(g,t)_{p;\R^d} \lesssim \|f-g\|_{L_p(\R^d)} + t^k \| |\nabla^k g| \|_{L_p(\R^d)}.
	\end{equation*}
	Thus taking the infimum over all $g \in (V^k_p(\R^d))_0$, one obtains
	\begin{equation}\label{New24*}
	\omega_k(f,t)_{p;\R^d} \lesssim K(t^k,f ; L_p(\R^d), (V^k_p(\R^d))_0).
	\end{equation}
	The desired inequality (\ref{SharpKolyada2RdHom}) follows now from \eqref{ThmStein2.1RdHom}--\eqref{New24*}.
	
	The proof of the implication (ii) $\Longrightarrow$ (iii) follows similar steps as the corresponding implication in Theorem \ref{ThmStein2}. We only mention that the inequality \eqref{SharpKolyada2RdHom} can be applied for $f \in (V^k L_{r,q}(\R^d))_0$ because $(V^k L_{r,q}(\R^d))_0 \subset (V^k L_{\frac{d p}{k p + d},q}(\R^d))_0 +  (V^k_p(\R^d))_0$. 
	
	The equivalence (i) $\iff$ (iv)
was already discussed in Theorem \ref{ThmStein2}.
	
	(iii) $\Longrightarrow$ (i): Taking limits in homogeneous Besov norms (cf. \cite[Theorem 2]{Milman05} where the case $q \geq 1$ is treated, but the arguments given there can be easily extended to $q>0$)
	\begin{equation*}
	\lim_{ r \to \frac{d p}{k p + d}+} \left(r - \frac{d p}{k p + d}\right)^{1/q} \|f\|_{\dot{B}^{k - d/r + d/p}_{p,q}(\R^d),m}  \asymp \|f\|_{L_p(\R^d)}
	\end{equation*}
	and thus, by (\ref{SharpFrankeJawerth2RdHom}),
	\begin{equation*}	
		\|f\|_{L_p(\R^d)} \leq C  \||\nabla^k f|\|_{L_{r,q}(\R^d)}.
	\end{equation*}

\end{proof}

\begin{rem}
	The methods of proof given above to show that (i) $\iff$ (ii) $\iff$ (iii) in Theorems \ref{ThmStein2}, \ref{ThmStein2Rd} and \ref{ThmStein2RdHom}  also work with the limiting case $p = \frac{d}{d-k}$. In this case, the corresponding emdeddings given in (i) involve Sobolev spaces based on $\| \cdot \|_{L_{1,q}(\mathcal{X})}, \, q \leq 1$, which is not equivalent to a r.i. function norm if $q < 1$.
\end{rem}

	The counterparts of Theorems \ref{ThmStein2}, \ref{ThmStein2Rd} and \ref{ThmStein2RdHom} in terms of estimates involving only rearrangements read as follows.
	
	\begin{thm}\label{ThmStein3}
Let $k\in \mathbb{N}, d > k,  \frac{d}{d-k} < p < \infty, 1/\alpha = 1 - k/d$ and $0 < q \leq \infty$. The following statements are equivalent:
	\begin{enumerate}[\upshape(i)]
		\item
		\begin{equation*}
		W^k L_{\frac{d p}{k p + d},q}(\Omega) \hookrightarrow L_p(\Omega),
		\end{equation*}
		\item for $ f \in W^k_1(\Omega)$ and $t \in (0,1)$, we have
		\begin{equation}\label{SharpKolyada2*}
	\left(\int_t^1 \left(v^{1/p - 1/\alpha} \int_0^v u^{1/\alpha} f^\ast(u) \frac{du}{u} \right)^p \frac{dv}{v} \right)^{1/p} \lesssim \sum_{l=0}^k \left(\int_t^1 (v^{k/d + 1/p} |\nabla^l f|^{\ast \ast}(v))^q \frac{dv}{v} \right)^{1/q},
\end{equation}
			\item if $r \to \frac{d p}{k p + d}-$ then
			\begin{equation*}
			W^k L_{r,q}(\Omega) \hookrightarrow L_{r^\ast,p}(\Omega), \quad r^\ast = \frac{d r}{d - k r},
			\end{equation*}
			 with norm $\mathcal{O}(1)$, i.e., there exists $C > 0$, which is independent of $r$, such that
		\begin{equation}\label{SharpFrankeJawerth2*}
			\|f\|_{L_{r^\ast,p}(\Omega)} \leq C \|f\|_{W^k L_{r,q}(\Omega)},
		\end{equation}
		\item
		\begin{equation*}
		q \leq p.
		\end{equation*}
	\end{enumerate}

\end{thm}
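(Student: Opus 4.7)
The plan is to mirror the strategy of Theorem \ref{ThmSobRear}, but now with the roles of the domain and the target interchanged: here the embedding (i) has fixed target $L_p(\Omega)$ whereas the optimal-range embedding of Theorem \ref{ThmSobRear} had fixed domain $W^k_p(\Omega)$. The equivalence (i) $\iff$ (iv) is classical and follows from the general theory of optimal r.i.\ source/target spaces (see \cite{EdmundsKermanPick}), so I focus on the cycle (i) $\Longrightarrow$ (ii) $\Longrightarrow$ (i) and (ii) $\Longrightarrow$ (iii) $\Longrightarrow$ (i).

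For (i) $\Longrightarrow$ (ii), I combine the hypothesised embedding with the Gagliardo--Nirenberg endpoint $W^k_1(\Omega) \hookrightarrow L_{\alpha,1}(\Omega)$ (with $\alpha = d/(d-k)$, available since $k < d$), which yields
\begin{equation*}
	K(s, f; L_{\alpha,1}(\Omega), L_p(\Omega)) \lesssim K(s, f; W^k_1(\Omega), W^k L_{\frac{dp}{kp+d},q}(\Omega)), \qquad f \in W^k_1(\Omega).
\end{equation*}
Setting $s = t^{1/\alpha - 1/p}$ (note $\alpha < p$), Holmstedt's formula (Lemma A.1) on the left and the derivative decomposition of the Sobolev $K$-functional (Lemma A.2) followed by Holmstedt on each $|\nabla^l f|$ on the right rewrite both sides in terms of rearrangements. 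Two applications of Lemma \ref{vspom1} then transform these into the two sides of \eqref{SharpKolyada2*}: on the left I take $\alpha' = 1/p - 1/\alpha < 0$ and $\beta' = 1/\alpha - 1$; on the right I take $\alpha' = k/d + 1/p - 1 < 0$ and $\beta' = 0$, which converts the $L^q$-norms of $|\nabla^l f|^\ast$ into the corresponding norms of $|\nabla^l f|^{\ast\ast}$ that appear in \eqref{SharpKolyada2*}.

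For (ii) $\Longrightarrow$ (i), letting $t \to 0+$ in \eqref{SharpKolyada2*} and applying Lemma \ref{LemmaHardySharp} on the left to extract $\|f\|_{L_p(\Omega)}$, while controlling the Hardy averages on the right by $|\nabla^l f|^\ast$ (again by Lemma \ref{LemmaHardySharp}, valid because $\tfrac{dp}{kp+d} > 1$), yields the embedding (i). In particular, via (i) $\iff$ (iv), the hypothesis (ii) forces $q \leq p$, a fact I shall use next. For (ii) $\Longrightarrow$ (iii), fix $r$ with $\alpha < r < \tfrac{dp}{kp+d}$ (admissible as $r \to \tfrac{dp}{kp+d}-$). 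I raise \eqref{SharpKolyada2*} to the $p$-th power and integrate against $t^{p/r^\ast - 1}\, dt$ on $(0,1)$. Fubini on the left and the elementary monotonicity bound $\int_0^v u^{1/\alpha-1} f^\ast(u)\, du \geq \alpha\, v^{1/\alpha} f^\ast(v)$ yield $\|f\|_{L_{r^\ast, p}(\Omega)}^p$ up to a uniformly bounded factor. On the right, for each $l$ I apply Lemma \ref{LemmaHardySharp} with exponent $p/q \geq 1$ (legitimate because $q \leq p$) and index $p/r^\ast > 0$; the weights align so that the output is $\||\nabla^l f|^{\ast\ast}\|_{L_{r,p}(\Omega)}^p$. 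A further invocation of Lemma \ref{LemmaHardySharp} (with $r > 1$) replaces $|\nabla^l f|^{\ast\ast}$ by $|\nabla^l f|^\ast$, and the embedding $L_{r,q}(\Omega) \hookrightarrow L_{r,p}(\Omega)$ bounds the result by $\|f\|_{W^k L_{r,q}(\Omega)}^p$. All multiplicative constants remain uniformly bounded as $r \to \tfrac{dp}{kp+d}-$, proving \eqref{SharpFrankeJawerth2*}. Finally, (iii) $\Longrightarrow$ (i) follows by letting $r \to \tfrac{dp}{kp+d}-$ in \eqref{SharpFrankeJawerth2*} (so that $r^\ast \to p-$) and invoking the monotone convergence theorem.

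The principal obstacle is the careful bookkeeping of $r$-dependent constants in (ii) $\Longrightarrow$ (iii): three successive applications of Hardy-type inequalities are chained (with exponents $p/q$ and $p$, and critical parameters $1/\alpha, 1/r^\ast, 1/r$ approaching their limits), and one must verify that the composite constant stays uniformly bounded as $r \to \tfrac{dp}{kp+d}-$. A secondary subtlety concerns the order of the implications: the restriction $q \leq p$ used in (ii) $\Longrightarrow$ (iii) is extracted from (iv) via (ii) $\Longrightarrow$ (i) $\iff$ (iv), but this does not introduce circularity, since (i) $\iff$ (iv) is proved independently and (ii) $\Longrightarrow$ (i) is the elementary $t \to 0+$ limit above.
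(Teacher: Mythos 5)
Your implications (i) $\Longrightarrow$ (ii), (ii) $\Longrightarrow$ (i), (iii) $\Longrightarrow$ (i), and (i) $\iff$ (iv) essentially follow the paper's route (one small slip: in (ii) $\Longrightarrow$ (i), what extracts $\bigl(\int_t^1 f^*(v)^p\,dv\bigr)^{1/p}$ on the left is the monotonicity lower bound $\int_0^v u^{1/\alpha-1}f^*(u)\,du \geq \alpha\, v^{1/\alpha}f^*(v)$, not Lemma \ref{LemmaHardySharp}, which only gives upper bounds). The genuine gap is in (ii) $\Longrightarrow$ (iii). Integrating the $p$-th power of \eqref{SharpKolyada2*} against $t^{p/r^*-1}\,dt = t^{p/r^*}\frac{dt}{t}$, Fubini and monotonicity give on the left a quantity comparable to $\int_0^1 v^{p/r^*}f^*(v)^p\,dv$, which is \emph{not} $\|f\|_{L_{r^*,p}(\Omega)}^p = \int_0^1 v^{p/r^*-1}f^*(v)^p\,dv$: the integrands differ by the unbounded factor $v^{-1}$, so what you actually control is the strictly weaker Lorentz norm $\|f\|_{L_{s,p}(\Omega)}$ with $1/s=1/r^*+1/p$. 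If you instead weight by $t^{p/r^*-1}\frac{dt}{t}$, the left side does produce $\|f\|_{L_{r^*,p}}^p$, but Fubini now contributes the factor $(p/r^*-1)^{-1}$, while Hardy on the right (exponent $p/q$, index $p/r^*-1$) contributes $\bigl((p/q)/(p/r^*-1)\bigr)^{p/q}$; after dividing, the residual factor is $(p/r^*-1)^{1-p/q}$, which blows up as $r^* \to p-$ whenever $q < p$. Either way, your claim that all multiplicative constants stay uniformly bounded is precisely what fails.

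The paper avoids this by not integrating both sides of \eqref{SharpKolyada2*} against a weight at all. It starts directly from $\|f\|_{L_{r^*,p}(\Omega)}^p = \int_0^1 (t^{1/r^*}f^*(t))^p\frac{dt}{t}$, bounds $f^*(t) \lesssim t^{-1/\alpha}\int_0^t u^{1/\alpha}f^*(u)\frac{du}{u}$ pointwise, inserts the normalization $t^{(1/r-k/d-1/p)p} \asymp \bigl(\tfrac{1}{r}-\tfrac{kp+d}{dp}\bigr)^{p/q}\bigl(\int_0^t v^{(1/r-k/d-1/p)q}\frac{dv}{v}\bigr)^{p/q}$, and then applies \emph{Minkowski's integral inequality} (valid precisely because $p/q\ge 1$, i.e.\ $q\le p$) to swap the $t$- and $v$-integrations. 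Only after that swap is \eqref{SharpKolyada2*} invoked, applied to the inner $t$-integral over $(v,1)$, and a closing Fubini cancels the prefactor $\bigl(\tfrac{1}{r}-\tfrac{kp+d}{dp}\bigr)^{1/q}$ exactly, leaving the uniform bound \eqref{Thm4.7H}. The Minkowski swap, not a Hardy-type inequality, is the step that keeps the constants under control for all $q\le p$, and it is the ingredient your outline omits.
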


\begin{thm}\label{ThmStein3Rd}
Let $k\in \mathbb{N}, d > k,  \frac{d}{d-k} < p < \infty, 1/\alpha = 1 - k/d$ and $0 < q \leq \infty$. The following statements are equivalent:
	\begin{enumerate}[\upshape(i)]
		\item
		\begin{equation*}
		W^k L_{\frac{d p}{k p + d},q}(\R^d) \hookrightarrow L_p(\R^d),
		\end{equation*}
		\item for $ f \in W^k_1(\R^d) + W^k L_{\frac{d p}{k p + d},q}(\R^d)$ and $t \in (0,\infty)$, we have
		\begin{equation}\label{SharpKolyada2*Rd}
	\left(\int_t^\infty \left(v^{1/p - 1/\alpha} \int_0^v u^{1/\alpha} f^\ast(u) \frac{du}{u} \right)^p \frac{dv}{v} \right)^{1/p} \lesssim \sum_{l=0}^k \left(\int_t^\infty (v^{k/d + 1/p} |\nabla^l f|^{\ast \ast}(v))^q \frac{dv}{v} \right)^{1/q},
\end{equation}
			\item if $r \to \frac{d p}{k p + d}-$ then
			\begin{equation*}
			W^k L_{r,q}(\R^d) \hookrightarrow L_{r^\ast,p}(\R^d), \quad r^\ast = \frac{d r}{d - k r},
			\end{equation*}
			 with norm $\mathcal{O}(1)$, i.e., there exists $C > 0$, which is independent of $r$, such that
		\begin{equation}\label{SharpFrankeJawerth2*Rd}
			\|f\|_{L_{r^\ast,p}(\R^d)} \leq C \|f\|_{W^k L_{r,q}(\R^d)},
		\end{equation}
		\item
		\begin{equation*}
		q \leq p.
		\end{equation*}
	\end{enumerate}

\end{thm}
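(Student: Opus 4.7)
The plan is to follow the $K$-functional template already used in the proofs of Theorem \ref{ThmSobRearRd} and Theorem \ref{ThmStein3} (the $\Omega$-analogue), with all integrals taken over $(0,\infty)$ so that no lower-order $L_p$-terms appear. The equivalence (i) $\Longleftrightarrow$ (iv) is classical in the sense of the Kerman--Pick--Edmunds methodology; the main work is the chain (i) $\Longrightarrow$ (ii) $\Longrightarrow$ (iii) $\Longrightarrow$ (i).

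For (i) $\Longrightarrow$ (ii): combine the hypothesis $W^k L_{\frac{dp}{kp+d},q}(\R^d) \hookrightarrow L_p(\R^d)$ with the endpoint Gagliardo--Nirenberg embedding $W^k_1(\R^d) \hookrightarrow L_{\alpha,1}(\R^d)$, $\alpha = d/(d-k)$, to obtain for every $f \in W^k_1(\R^d) + W^k L_{\frac{dp}{kp+d},q}(\R^d)$ the $K$-functional monotonicity estimate
\begin{equation*}
K\bigl(t,f;L_{\alpha,1}(\R^d),L_p(\R^d)\bigr) \lesssim K\bigl(t,f;W^k_1(\R^d), W^k L_{\tfrac{dp}{kp+d},q}(\R^d)\bigr).
\end{equation*}
I then identify both sides via Lemmas A.1 and A.2. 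The right-hand side splits as $\sum_{l=0}^k K(t,|\nabla^l f|;L_1(\R^d),L_{\frac{dp}{kp+d},q}(\R^d))$, whose Holmstedt representation produces the weighted integral of $|\nabla^l f|^{\ast\ast}$ appearing on the RHS of \eqref{SharpKolyada2*Rd}. The left-hand side is estimated from below, after the change of variable $t \mapsto t^{p'}$, by applying Lemma \ref{vspom1} with the parameters $\alpha_{\mathrm{Lem}} = 1/p - 1/\alpha < 0$ and $\beta = 1/\alpha - 1$, which rewrites the Holmstedt sum as the iterated integral $\bigl(\int_t^\infty (v^{1/p-1/\alpha}\int_0^v u^{1/\alpha} f^\ast(u)\,du/u)^p\,dv/v\bigr)^{1/p}$ on the LHS of \eqref{SharpKolyada2*Rd}.

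For (ii) $\Longrightarrow$ (iii): multiply both sides of \eqref{SharpKolyada2*Rd} by $t^{(1/r - 1/p)p}\,dt/t$ and integrate on $(0,\infty)$. On the left, Minkowski's integral inequality (interchanging the $t$- and $v$-integrals) produces a factor $(1/r - 1/p)^{-1/p}$ and leaves the weighted integral $\bigl(\int_0^\infty (v^{1/r - 1/\alpha}\int_0^v u^{1/\alpha} f^\ast(u)\,du/u)^p\,dv/v\bigr)^{1/p}$, which dominates $\|f\|_{L_{r^\ast,p}(\R^d)}$ via the trivial estimate $f^\ast(v) \leq v^{-1/\alpha}\int_0^v u^{1/\alpha}f^\ast(u)\,du/u$. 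On the right, Fubini and Hardy's inequality \eqref{HardyIneq*} when $q \geq 1$, or the dyadic discretisation argument used in the proof of Theorem \ref{ThmStein2Rd} when $q < 1$, convert $|\nabla^l f|^{\ast\ast}$ into $|\nabla^l f|^\ast$ and yield the same factor $(1/r - 1/p)^{-1/p}$ up to a uniformly bounded multiplicative constant (uniform because $1/r - 1/p \to k/d > 0$ as $r \to \frac{dp}{kp+d}-$). Cancelling this common factor yields \eqref{SharpFrankeJawerth2*Rd}. Finally, (iii) $\Longrightarrow$ (i) is obtained by letting $r \to \frac{dp}{kp+d}-$ in \eqref{SharpFrankeJawerth2*Rd}, noting that $r^\ast \to p$ and invoking Fatou, while (ii) $\Longrightarrow$ (i) follows by letting $t \to 0+$ in \eqref{SharpKolyada2*Rd} and applying Hardy's inequality to pass from $|\nabla^l f|^{\ast\ast}$ to $|\nabla^l f|^\ast$ (which uses $p > 1$, a consequence of $p > d/(d-k)$).

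The main obstacle will be the uniform control of constants in (ii) $\Longrightarrow$ (iii) in the quasi-Banach regime $q < 1$ and in the case $q < p$, where Hardy's inequality is not available directly; the $\ell^q$-discretisation of the integrals, swapping the orders of summation and exploiting the monotonicity of $|\nabla^l f|^\ast$, must be carried out delicately so that the resulting constant remains bounded as $r \to \frac{dp}{kp+d}-$, exactly as in the proof of Theorem \ref{ThmStein2Rd}.
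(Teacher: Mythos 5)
Your overall strategy is the correct one and matches the paper's (the paper proves Theorem \ref{ThmStein3} for domains and states that Theorems \ref{ThmStein3Rd}--\ref{ThmStein3RdHom} follow the same line): derive (ii) from (i) via the $K$-functional monotonicity for $(L_{\alpha,1},L_p)$ against $(W^k_1, W^k L_{\frac{dp}{kp+d},q})$, rewrite the Holmstedt formulas using Lemma \ref{vspom1}, then derive (iii) from (ii) by integrating against a power weight, and recover (i) from (iii) in the limit $r\to \frac{dp}{kp+d}-$. Your application of Lemma \ref{vspom1} with $\alpha_{\mathrm{Lem}}=1/p-1/\alpha<0$ and $\beta=1/\alpha-1$ in (i) $\Rightarrow$ (ii) is exactly right.

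However, there is a concrete error in (ii) $\Rightarrow$ (iii): the weight $t^{(1/r-1/p)p}$ is off by a factor $t^{kp/d}$. After Fubini on the left you get
\begin{equation*}
\frac{1}{(1/r-1/p)p}\int_0^\infty \Bigl(v^{1/r-1/\alpha}\int_0^v u^{1/\alpha}f^\ast(u)\,\frac{du}{u}\Bigr)^p\,\frac{dv}{v},
\end{equation*}
and since $\int_0^v u^{1/\alpha}f^\ast(u)\,du/u \geq \alpha\, v^{1/\alpha}f^\ast(v)$, this dominates $\|f\|_{L_{r,p}(\R^d)}^p$, \emph{not} $\|f\|_{L_{r^\ast,p}(\R^d)}^p$. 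On $\R^d$ (infinite measure) the Lorentz norms $\|\cdot\|_{L_{r,p}}$ and $\|\cdot\|_{L_{r^\ast,p}}$ with $r<r^\ast$ are not comparable, so the argument as written proves the wrong embedding. The correct weight is $t^{(1/r-k/d-1/p)p}=t^{(1/r^\ast-1/p)p}$ (exactly as in the paper's proof of the $\Omega$-case, Theorem \ref{ThmStein3}). Note that this changes the character of the constants: with the correct weight the normalisation factor is $(1/r^\ast-1/p)^{-1/p}$, which blows up as $r^\ast\to p-$, and on the right the analogous Fubini step produces a compensating factor $(1/r^\ast-1/p)^{1/q}$ (cf. the paper's insertion of $(\tfrac{1}{r}-\tfrac{kp+d}{dp})^{1/q}$ before Minkowski). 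These must cancel; the boundedness you claim from $1/r-1/p\to k/d>0$ is an artifact of the wrong weight and would obscure the genuine cancellation. The fix is routine: replace $1/r-1/p$ by $1/r^\ast-1/p$ throughout that step and track the cancelling factors as the paper does.
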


\begin{thm}\label{ThmStein3RdHom}
Let $k\in \mathbb{N}, d > k,  \frac{d}{d-k} < p < \infty, 1/\alpha = 1 - k/d$ and $0 < q \leq \infty$. The following statements are equivalent:
	\begin{enumerate}[\upshape(i)]
		\item
		\begin{equation*}
		(V^k L_{\frac{d p}{k p + d},q}(\R^d))_0 \hookrightarrow L_p(\R^d),
		\end{equation*}
		\item for $ f \in (V^k_1(\R^d))_0 + (V^k L_{\frac{d p}{k p + d},q}(\R^d))_0$ and $t \in (0,\infty)$, we have
		\begin{equation}\label{SharpKolyada2*RdHom}
	\left(\int_t^\infty \left(v^{1/p - 1/\alpha} \int_0^v u^{1/\alpha} f^\ast(u) \frac{du}{u} \right)^p \frac{dv}{v} \right)^{1/p} \lesssim \left(\int_t^\infty (v^{k/d + 1/p} |\nabla^k f|^{\ast \ast}(v))^q \frac{dv}{v} \right)^{1/q},
\end{equation}
			\item if  $r \to \frac{d p}{k p + d}-$ then
			\begin{equation*}
			(V^k L_{r,q}(\R^d))_0 \hookrightarrow L_{r^\ast,p}(\R^d), \quad r^\ast = \frac{d r}{d - k r},
			\end{equation*}
			 with norm $\mathcal{O}(1)$, i.e., there exists $C > 0$, which is independent of $r$, such that
		\begin{equation}\label{SharpFrankeJawerth2*RdHom}
			\|f\|_{L_{r^\ast,p}(\R^d)} \leq C \||\nabla^k f|\|_{L_{r,q}(\R^d)}
		\end{equation}
		for $f \in (V^k L_{r,q}(\R^d))_0$,
		\item
		\begin{equation*}
		q \leq p.
		\end{equation*}
	\end{enumerate}

\end{thm}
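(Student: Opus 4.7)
The proof will follow the general scheme already used for the analogous Theorem \ref{ThmSobRearRdHom} (with the roles of domain and target interchanged) and for the inhomogeneous version Theorem \ref{ThmStein3Rd}. The equivalence (i) $\iff$ (iv) is classical via the methodology of \cite{EdmundsKermanPick}, so the main work is to establish the chain (i) $\Longrightarrow$ (ii) $\Longrightarrow$ (iii) $\Longrightarrow$ (i).

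For (i) $\Longrightarrow$ (ii), I would combine the hypothesis with the endpoint Sobolev embedding $(V^k_1(\R^d))_0 \hookrightarrow L_{\alpha,1}(\R^d)$ (valid since $k<d$) to obtain
\begin{equation*}
K(t,f;L_{\alpha,1}(\R^d),L_p(\R^d)) \lesssim K(t,f;(V^k_1(\R^d))_0,(V^k L_{\frac{dp}{kp+d},q}(\R^d))_0)
\end{equation*}
for $f \in (V^k_1(\R^d))_0 + (V^k L_{\frac{dp}{kp+d},q}(\R^d))_0$. By \eqref{LemmaKFunctLorentzSobolev} the right-hand $K$-functional is equivalent to $K(t,|\nabla^k f|; L_1(\R^d), L_{\frac{dp}{kp+d},q}(\R^d))$. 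Applying the Holmstedt-type formula \eqref{LemmaKFunctLorentz} to both sides, and using Lemma \ref{vspom1} to convert the inner integrals involving $\int_0^v u^{1/\alpha - 1} f^*(u)\,du$ into the form displayed in the left-hand side of \eqref{SharpKolyada2*RdHom}, yields \eqref{SharpKolyada2*RdHom} after the change of variables $t \mapsto t^{p'}$.

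For (ii) $\Longrightarrow$ (iii), I would raise \eqref{SharpKolyada2*RdHom} to the $p$-th power, multiply by the weight $t^{(1/r-1/p)p}$, and integrate $\frac{dt}{t}$ over $(0,\infty)$. Fubini on the left together with Hardy's inequality \eqref{HardyIneq*} gives a lower bound of order $(1/r-1/p)^{-1/p}\|f\|_{L_{r^*,p}(\R^d)}$; on the right, Minkowski in $L^{q/p}$ combined with further Hardy-type arguments (split into $q \geq p$ and $q < p$, as in the proof of Theorem \ref{ThmStein2}) produces an upper bound of order $(1/r-1/p)^{-1/p}\||\nabla^k f|\|_{L_{r,q}(\R^d)}$. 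The common prefactor cancels and yields \eqref{SharpFrankeJawerth2*RdHom} with a constant uniform in $r$ as $r \to \frac{dp}{kp+d}-$. Finally, (iii) $\Longrightarrow$ (i) follows by passing to the limit $r \to \frac{dp}{kp+d}-$ in \eqref{SharpFrankeJawerth2*RdHom}, invoking monotone convergence for the Lorentz integrals together with the density of $C^\infty_0(\R^d)$ in $(V^k L_{\frac{dp}{kp+d},q}(\R^d))_0$.

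The main obstacle will be the mismatch between the $L^p$-integration on the left and the $L^q$-integration on the right of \eqref{SharpKolyada2*RdHom} in the step (ii) $\Longrightarrow$ (iii): the Hardy-type constants arising on both sides depend on $r$, and it is precisely the matching asymptotic $(1/r-1/p)^{-1/p}$ on each side as $r \to \frac{dp}{kp+d}-$ that produces the sharp $\mathcal{O}(1)$ norm in \eqref{SharpFrankeJawerth2*RdHom} rather than a blow-up. Tracking these constants carefully through the two Hardy steps — in particular in the regime $q<p$, where the standard Hardy inequality fails and one must substitute the dyadic/monotonicity argument appearing in the proofs of Lemma \ref{vspom1} and Theorem \ref{ThmStein2} — is the delicate core of the argument.
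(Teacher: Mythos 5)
Your overall scheme matches the paper's: (i) $\Rightarrow$ (ii) via the $K$-functional comparison driven by $(V^k_1(\R^d))_0\hookrightarrow L_{\alpha,1}(\R^d)$, the Holmstedt formulas \eqref{LemmaKFunctLorentz}, \eqref{LemmaKFunctLorentzSobolev} and Lemma~\ref{vspom1}; (ii) $\Rightarrow$ (iii) by integrating the pointwise inequality against a power weight, with Fubini, Minkowski and Hardy (plus a dyadic substitute for $q<1$) tracking the constants; (iii) $\Rightarrow$ (i) by a limiting argument; and (i) $\Leftrightarrow$ (iv) is classical. This is the intended route, and your closing remark that everything hinges on matching the blow-up constants on the two sides is exactly the point.

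However, the arithmetic is off by $k/d$ throughout, and this is not cosmetic. In (i) $\Rightarrow$ (ii) the Holmstedt rescaling is $t\mapsto t^{1/\alpha-1/p}=t^{1-k/d-1/p}$, not $t\mapsto t^{p'}$: both $K$-functional couples here are $(L_1,L_{\frac{dp}{kp+d},q})$ and $(L_{\alpha,1},L_p)$, whose Holmstedt exponent is $1/\alpha-1/p$, while $p'$ is the exponent for $(L_1,L_p)$ used in the subcritical Theorem~\ref{ThmSobRearRdHom}. More seriously, in (ii) $\Rightarrow$ (iii) the weight must be $t^{(1/r^*-1/p)p}=t^{(1/r-k/d-1/p)p}$, not $t^{(1/r-1/p)p}$: in \eqref{SharpKolyada2*'RdHom} the factor $k/d$ already sits inside the pointwise inequality (weight $v^{1/p-k/d-1/\alpha}$) so the outer weight is $t^{(1/r-1/p)p}$, but in \eqref{SharpKolyada2*RdHom} the inner weight is only $v^{1/p-1/\alpha}$ and the $k/d$ must migrate into the outer weight. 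With your weight, Fubini plus monotonicity on the left yields $\|f\|_{L_{r,p}(\R^d)}$, \emph{not} $\|f\|_{L_{r^*,p}(\R^d)}$ as you claim, and the right side yields $\||\nabla^k f|\|_{L_{s,q}(\R^d)}$ with $1/s=1/r+k/d$, which is the theorem only after a relabelling incompatible with the $r$ already fixed. Worse, your advertised prefactor $(1/r-1/p)^{-1/p}$ tends to the finite limit $(k/d)^{-1/p}$ as $r\to\frac{dp}{kp+d}-$, so there is nothing to cancel; the correct prefactor $(1/r-k/d-1/p)^{-1/p}$ does blow up, and its cancellation between the two sides is precisely the delicate phenomenon you flag but your formula misses. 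Two smaller slips: the lower bound on the left uses only the monotonicity $\int_0^v u^{1/\alpha}f^*(u)\,\frac{du}{u}\gtrsim v^{1/\alpha}f^*(v)$ rather than Hardy \eqref{HardyIneq*} (Hardy points the wrong way there), and the Minkowski step takes place in $L^{p/q}$, not $L^{q/p}$, which is why $q\le p$ is needed.
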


\begin{rem}
The limiting case $p=\infty$ of the embedding $W^k L_{\frac{d p}{k p + d},q}(\Omega) \hookrightarrow L_p(\Omega)$ will be investigated in Theorem \ref{ThmStein*} below.
\end{rem}

\begin{proof}[Proof of Theorem \ref{ThmStein3}]
	The equivalence between (i) and (iv) was already treated in Theorem \ref{ThmStein2}.
	
	(i) $\Longrightarrow$ (ii): According to (i) and (\ref{SobolevClassical*}), we have
	\begin{equation}\label{remLp1}
			K(t, f; L_{\alpha,1}(\Omega), L_p(\Omega)) \lesssim K(t,f; W^k_1(\Omega) , W^k L_{d p /(k p + d),q}(\Omega))
	\end{equation}
	for $f \in W^k_1(\Omega)$. Since (cf. (\ref{LemmaKFunctLorentz}))
	\begin{equation*}
	K(t^{1/\alpha - 1/p}, f; L_{\alpha,1}(\Omega), L_p(\Omega)) \asymp \int_0^{t} v^{1/\alpha} f^\ast(v) \frac{dv}{v} + t^{1/\alpha - 1/p} \left(\int_{t}^1 f^\ast(v)^p dv \right)^{1/p}
	\end{equation*}
	and (cf. (\ref{LemmaKFunctLorentzSobolev}) and (\ref{LemmaKFunctLorentz}))
	\begin{align*}
		K(t^{1/\alpha-1/p} ,f; W^k_1(\Omega) , W^k L_{d p /(k p + d),q}(\Omega)) \\
		& \hspace{-6cm} \asymp \sum_{l=0}^k \bigg[ \int_0^t |\nabla^l f|^\ast(v) dv + t^{1/\alpha-1/p} \left(\int_t^1 (v^{k/d + 1/p} |\nabla^l f|^\ast(v))^q \frac{dv}{v} \right)^{1/q} \bigg],
	\end{align*}
	it follows from (\ref{remLp1}) that
	\begin{align}
		t^{-1/\alpha+1/p} \int_0^{t} v^{1/\alpha} f^\ast(v) \frac{dv}{v} + \left(\int_{t}^1 f^\ast(v)^p dv \right)^{1/p} \nonumber \\
		 & \hspace{-6cm} \lesssim \sum_{l=0}^k \bigg[t^{-1/\alpha+1/p}  \int_0^t |\nabla^l f|^\ast(v) dv +  \left(\int_t^1 (v^{k/d + 1/p} |\nabla^l f|^\ast(v))^q \frac{dv}{v} \right)^{1/q} \bigg]. \label{remLp2}
	\end{align}
	Further, in view of (\ref{HardyInequal1***}) (noting that $p >d/(d-k)$), we have for each $t \in (0,1/2)$
	\begin{align}
	t^{-1/\alpha+1/p}  \int_0^t |\nabla^l f|^\ast(v) dv +  \left(\int_t^1 (v^{k/d + 1/p} |\nabla^l f|^\ast(v))^q \frac{dv}{v} \right)^{1/q} \nonumber  \\
	& \hspace{-9cm}\asymp \left( \int_t^1 (v^{k/d + 1/p} |\nabla^l f|^{\ast \ast} (v))^q \frac{dv}{v} \right)^{1/q}, \quad l \in \{0, \ldots, k\}, \label{remLp3}
	\end{align}
	and
	\begin{align}
		t^{-1/\alpha+1/p} \int_0^{t} v^{1/\alpha} f^\ast(v) \frac{dv}{v} + \left(\int_{t}^1 f^\ast(v)^p dv \right)^{1/p} \nonumber  \\
		& \hspace{-6cm} \asymp \left(\int_t^1 \left(v^{1/p - 1/\alpha} \int_0^v u^{1/\alpha} f^\ast(u) \frac{du}{u} \right)^p \frac{dv}{v} \right)^{1/p}. \label{remLp4}
	\end{align}
	Therefore, inserting (\ref{remLp3}) and (\ref{remLp4}) into (\ref{remLp2}), we arrive at (\ref{SharpKolyada2*}).
	
	(ii) $\Longrightarrow$ (i): Let $f \in W^k L_{\frac{d p}{k p + d},q}(\Omega)$. By (\ref{SharpKolyada2*}), we have
		\begin{equation*}
	\left(\int_t^1  f^\ast(v)^p dv \right)^{1/p} \lesssim \sum_{l=0}^k \left(\int_t^1 (v^{k/d + 1/p} |\nabla^l f|^{\ast \ast}(v))^q \frac{dv}{v} \right)^{1/q}, \quad t \in (0,1). 
\end{equation*}
Taking the supremum over all $t \in (0,1)$, we obtain
 \begin{align*}
	\|f\|_{L_p(\Omega)} &= \left(\int_0^1 f^\ast(v)^p dv \right)^{1/p} \lesssim \sum_{l=0}^k \left(\int_0^1 (v^{k/d + 1/p} |\nabla^l f|^{\ast \ast}(v))^q \frac{dv}{v} \right)^{1/q} \\
	& \asymp \|f\|_{W^k L_{\frac{d p}{k p + d},q}(\Omega)},
\end{align*}
where the last estimate is an immediate consequence of (\ref{HardyInequal1**}), which can also be applied even in the case $q < 1$ due to the monotonicity properties of rearrangements.
	
	(ii) $\Longrightarrow$ (iii): First we note that $q \leq p$ because we have already shown that (ii) $\iff$ (iv). Applying monotonicity properties, Minkowski's inequality, (\ref{SharpKolyada2*}) and Fubini's theorem, we derive
	\begin{align*}
		\|f\|_{L_{r^\ast,p}(\Omega)} & = \left(\int_0^1 (t^{1/r^\ast} f^\ast(t))^p \frac{dt}{t} \right)^{1/p} \\
		&\hspace{-2cm}\lesssim \left(  \int_0^1 t^{(1/r - k/d -1/p) p} \left( t^{1/p - 1/\alpha}\int_0^t u^{1/\alpha} f^\ast(u) \frac{du}{u} \right)^p \frac{dt}{t} \right)^{1/p}\\
		& \hspace{-2cm}  \asymp \left(\frac{1}{r} - \frac{k p + d}{dp}\right)^{1/q} \left(  \int_0^1 \left(\int_0^t \left(v^{1/r-k/d-1/p} t^{1/p - 1/\alpha}\int_0^t u^{1/\alpha} f^\ast(u) \frac{du}{u}  \right)^q \frac{dv}{v} \right)^{p/q} \frac{dt}{t} \right)^{1/p}\\
		& \hspace{-2cm} \leq \left(\frac{1}{r} - \frac{k p + d}{dp}\right)^{1/q} \left(  \int_0^1 v^{(1/r-k/d-1/p)q} \left(\int_v^1 \left( t^{1/p - 1/\alpha}\int_0^t u^{1/\alpha} f^\ast(u) \frac{du}{u}  \right)^p \frac{dt}{t} \right)^{q/p} \frac{dv}{v} \right)^{1/q}\\
		& \hspace{-2cm} \lesssim  \left(\frac{1}{r} - \frac{k p + d}{dp}\right)^{1/q} \sum_{l=0}^k  \left(\int_0^1  v^{(1/r - k/d -1/p) q} \int_v^1 (t^{k/d + 1/p} |\nabla^l f|^{\ast \ast}(t))^q \frac{dt}{t}  \frac{dv}{v}\right)^{1/q}  \\
		& \hspace{5cm} \asymp \sum_{l=0}^k \left(\int_0^1  (t^{1/r} |\nabla^l f|^{\ast \ast}(t))^q \frac{dt}{t} \right)^{1/q}.
	\end{align*}
	Therefore, to complete the proof of (\ref{SharpFrankeJawerth2*}) it suffices to show that
	\begin{equation}\label{Thm4.7H}
	 \sum_{l=0}^k \left(\int_0^1  (t^{1/r} |\nabla^l f|^{\ast \ast}(t))^q \frac{dt}{t} \right)^{1/q} \leq C \|f\|_{W^k L_{r,q}(\Omega)}, \quad  r \to \frac{d p}{k p + d}-,
	\end{equation}
	where $C > 0$ is independent of $r$.
	
	To verify  (\ref{Thm4.7H}), we first assume that $q \geq 1$. Since $r \to \frac{d p}{k p + d}-$ and $p > d/(d-k)$, we may assume without loss of generality that $1/r < 1$. Then, by Lemma \ref{LemmaHardySharp}, we have
	\begin{equation*}
			 \left(\int_0^1  (t^{1/r} |\nabla^l f|^{\ast \ast}(t))^q \frac{dt}{t} \right)^{1/q} \lesssim \left(1- \frac{1}{r} \right)^{-1} \||\nabla^l f|\|_{L_{r,q}(\Omega)}, \quad l=0, \ldots, k,
	\end{equation*}
	and so, (\ref{Thm4.7H}) follows because $1 - 1/r \to 1 - k/d + 1/p$ as $r \to \frac{d p}{k p + d}-$.
	
	Secondly, let $q < 1$. Applying monotonicity properties and Fubini's theorem, we obtain for each $l \in \{0, \ldots, k\}$,
	\begin{align*}
	\int_0^1  (t^{1/r} |\nabla^l f|^{\ast \ast}(t))^q \frac{dt}{t} & \asymp \sum_{j=0}^\infty 2^{-j(1/r -1) q} \left(\sum_{\nu=j}^\infty 2^{-\nu} |\nabla^l f|^\ast(2^{-\nu}) \right)^q \\
	&\hspace{-2cm} \leq \sum_{j=0}^\infty 2^{-j(1/r -1) q}  \sum_{\nu=j}^\infty (2^{-\nu} |\nabla^l f|^\ast(2^{-\nu}))^q \\
	&\hspace{-2cm}  \lesssim \left(1-\frac{1}{r} \right)^{-1} \sum_{\nu=0}^\infty (2^{-\nu/r} |\nabla^l f|^\ast(2^{-\nu}))^q \lesssim \||\nabla^l f |\|_{L_{r,q}(\Omega)}^q.
	\end{align*}
	The proof of (\ref{Thm4.7H}) is complete.
	
	To deal with the remaining implication (iii) $\Longrightarrow$ (i), it suffices to take limits as $r \to \frac{d p}{k p + d}-$ (or equivalently, $r^\ast \to p-$) in (\ref{SharpFrankeJawerth2*}) and apply the monotone convergence theorem.
\end{proof}

Proofs of Theorems \ref{ThmStein3Rd} and \ref{ThmStein3RdHom}
follow the same line of argument
as the proof of Theorem \ref{ThmStein3}.

We finish this section with some further generalizations of Theorems \ref{ThmSobRear}--\ref{ThmSobRearRdHom} and \ref{ThmStein3}--\ref{ThmStein3RdHom}.

Let $k \in \N, k < d$. The embeddings
	\begin{equation*}
			(V^k_p(\R^d))_0 \hookrightarrow L_{p^\ast,p}(\R^d), \quad 1 < p < \frac{d}{k}, \quad p^\ast = \frac{d p }{d - k p},
		\end{equation*}
		and
		\begin{equation*}
			(V^k L_{\frac{d p}{k p + d},p}(\R^d))_0 \hookrightarrow L_p(\R^d), \quad \frac{d}{d-k} < p < \infty, 
		\end{equation*}
		which were investigated in detail in Theorems \ref{ThmSobRearRdHom} and \ref{ThmStein3RdHom}, respectively, are special cases of the more general result due to Talenti \cite[(4.6)]{Talenti}: If $ 1 < p < \frac{d}{k}, \, p^\ast = \frac{d p}{d - k p}$ and $1 \leq q \leq \infty$ then
		\begin{equation}\label{FullS}
			(V^{k} L_{p,q}(\R^d))_0 \hookrightarrow L_{p^\ast,q}(\R^d).
		\end{equation}
		See also \cite{MilmanPustylnik}. Furthermore, the target space in (\ref{FullS}) is optimal among all r.i. spaces. In particular,
		\begin{equation*}
			(V^{k} L_{p,q_0}(\R^d))_0 \hookrightarrow L_{p^\ast,q_1}(\R^d) \iff q_0 \leq q_1.
		\end{equation*}
		
		Our method of proof of Theorems \ref{ThmSobRearRdHom} and \ref{ThmStein3RdHom} can easily be adapted to establish the corresponding characterizations of (\ref{FullS}). Namely, the following result holds.

		\begin{thm}\label{ThmSobRearFull}
Let $1 < p < \infty, 1 \leq q_0, q_1 \leq \infty$ and $k \in \mathbb{N}$. Assume that $k < d/p$. Let $p^\ast = d p /(d - k p)$ and $1/\alpha = 1 - k/d$. The following statements are equivalent:
	\begin{enumerate}[\upshape(i)]
		\item
		\begin{equation*}
			(V^k L_{p,q_0}(\R^d))_0 \hookrightarrow L_{p^\ast,q_1}(\R^d),
		\end{equation*}
		\item for $ f \in (V^k_1 (\R^d))_0+ (V^k L_{p,q_0}(\R^d))_0$ and $t \in (0,\infty)$, we have
		\begin{equation}\label{SharpKolyada2*'Full}
	\left(\int_t^\infty \left(v^{1/p-k/d - 1/\alpha} \int_0^v u^{1/\alpha} f^\ast(u) \frac{du}{u} \right)^{q_1} \frac{dv}{v} \right)^{1/q_1} \lesssim  \left(\int_t^\infty (v^{1/p} |\nabla^k f|^{\ast \ast}(v))^{q_0} \frac{dv}{v} \right)^{1/q_0},
\end{equation}
			\item if $r \to p-$ then
			\begin{equation*}
			(V^k L_{r,q_0}(\R^d))_0 \hookrightarrow L_{r^\ast,q_1}(\R^d), \quad r^\ast = \frac{d r}{d - k r},
			\end{equation*}
			 with norm $\mathcal{O}(1)$, i.e., there exists $C > 0$, which is independent of $r$, such that
		\begin{equation*}
			\|f\|_{L_{r^\ast,q_1}(\R^d)} \leq C \||\nabla^k f|\|_{L_{r,q_0}(\R^d)}
		\end{equation*}
		for $f \in (V^k L_{r,q_0}(\R^d))_0$,
		\item
		\begin{equation*}
		q_0 \leq q_1.
		\end{equation*}
	\end{enumerate}

\end{thm}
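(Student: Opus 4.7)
The strategy is to mimic the argument used for Theorem \ref{ThmSobRearRdHom} (which corresponds to the special case $q_0=p$), with two modifications: the Lebesgue/Lorentz bookkeeping now has to accommodate two distinct fine indices $q_0$ and $q_1$, and the Hardy-type manipulations must be performed uniformly in $r$ for the extrapolation step. The equivalence (i) $\iff$ (iv) is classical (Talenti \cite{Talenti}, Milman--Pustylnik \cite{MilmanPustylnik}) and is assumed throughout.

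For (i) $\Longrightarrow$ (ii), I would combine the endpoint Gagliardo--Nirenberg embedding $(V^k_1(\R^d))_0 \hookrightarrow L_{\alpha,1}(\R^d)$ (with $1/\alpha=1-k/d$) with hypothesis (i) to obtain, for every $f$ in the sum space,
\begin{equation*}
K(t, f; L_{\alpha,1}(\R^d), L_{p^\ast,q_1}(\R^d)) \lesssim K(t, f; (V^k_1(\R^d))_0, (V^k L_{p,q_0}(\R^d))_0).
\end{equation*}
The right-hand side reduces, via \eqref{LemmaKFunctLorentzSobolev}, to $K(t, |\nabla^k f|; L_1(\R^d), L_{p,q_0}(\R^d))$, and both $K$-functionals are then written explicitly using \eqref{LemmaKFunctLorentz}. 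A suitable choice $t \asymp t_0^{\,1/\alpha - 1/p^\ast}$ and an application of Lemma \ref{vspom1} (together with the identity $v^{1/p} |\nabla^k f|^{\ast\ast}(v) = v^{1/p-1} \int_0^v |\nabla^k f|^\ast(u)\,du$ averaged against a Hardy operator) produces exactly the two sides of \eqref{SharpKolyada2*'Full}.

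For (ii) $\Longrightarrow$ (iii), with $r \to p-$, I would multiply \eqref{SharpKolyada2*'Full} by $t^{(1/r-1/p)q_1}$, integrate $\int_0^\infty \cdots \frac{dt}{t}$, and swap the order of integration. On the left, Minkowski's inequality lets me pull the outer $t$-integral inside and recognise
\begin{equation*}
\Bigl(\int_0^\infty \bigl(v^{1/r-k/d-1/\alpha} \smallint_0^v u^{1/\alpha} f^\ast(u)\tfrac{du}{u}\bigr)^{q_1} \tfrac{dv}{v}\Bigr)^{1/q_1} \gtrsim \|f\|_{L_{r^\ast,q_1}(\R^d)},
\end{equation*}
the missing factor being $(1/r-1/p)^{1/q_1}$. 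On the right, Fubini and a Hardy inequality give
\begin{equation*}
\Bigl(\int_0^\infty (t^{1/r} |\nabla^k f|^{\ast\ast}(t))^{q_0} \tfrac{dt}{t}\Bigr)^{1/q_0} \lesssim \||\nabla^k f|\|_{L_{r,q_0}(\R^d)},
\end{equation*}
with the factor $(1/r-1/p)^{1/q_1}$ balancing the one coming from the left-hand side as in Theorem \ref{ThmSobRearRdHom}. As in the proof of \eqref{Thm4.7H}, when $q_0<1$ one replaces Hardy's inequality by a discretisation using the monotonicity of rearrangements, so that the constant remains uniformly bounded as $r \to p-$. For (iii) $\Longrightarrow$ (i), I would simply take $r \to p-$ in (iii) and invoke monotone convergence, using density of test functions in $(V^k L_{p,q_0}(\R^d))_0$.

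The main obstacle is the $r$-uniformity of the constants in the extrapolation step (iii), particularly when $\min(q_0,q_1)<1$: Lemma \ref{LemmaHardySharp} no longer applies directly, so one must reproduce the dyadic sum argument from \eqref{Thm4.7H} in the two-parameter setting, checking that the factor $(1/r-1/p)^{-1/q_1}$ produced by the shift from $L_{r^\ast,q_1}$ down toward $L_{p^\ast,q_1}$ is exactly cancelled by the gain $(1/r-1/p)^{1/q_1}$ coming from Minkowski's inequality on the left-hand side. Secondary care is needed to verify that the limit $r\to p-$ commutes with the norms in (iii), which follows from Fatou's lemma once $f$ is first approximated in $C^\infty_0(\R^d)$.
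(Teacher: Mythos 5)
Your overall architecture is the right one and matches what the paper intends: for (i) $\Longrightarrow$ (ii) you set up the $K$-functional inequality between the Gagliardo--Nirenberg pair $(L_{\alpha,1},L_{p^\ast,q_1})$ and the Sobolev--Lorentz pair, invoke \eqref{LemmaKFunctLorentzSobolev} and \eqref{LemmaKFunctLorentz}, and translate via Lemma~\ref{vspom1}; for (iii) $\Longrightarrow$ (i) you pass to the limit on test functions; and (i) $\iff$ (iv) is quoted. This is exactly the adaptation of Theorems~\ref{ThmSobRearRdHom} and \ref{ThmStein3RdHom} that the paper points to, and the conditions \eqref{Cond} are satisfied for the pair $(L_1,L_{p,q_0})$ since $q_0 \geq 1$.

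However, there is a concrete arithmetic gap in your (ii) $\Longrightarrow$ (iii). You propose to weight by $t^{(1/r-1/p)q_1}$ and claim that a factor $(1/r-1/p)^{1/q_1}$ appears on \emph{both} sides. That is not what happens: the correct weight is $t^{(1/r-1/p)q_0}$, and the whole inequality must be raised to the power $q_0$ (not $q_1$) before integrating. With the $q_0$-weight, Fubini on the right-hand side (whose outer exponent is $1/q_0$) yields the clean factor $(1/r-1/p)^{-1/q_0}$ and a genuine $L_{r,q_0}$-integral of $|\nabla^k f|^{\ast\ast}$; Minkowski's inequality in the form $\|\cdot\|_{L^{q_1}_v L^{q_0}_t} \leq \|\cdot\|_{L^{q_0}_t L^{q_1}_v}$ (valid since $q_0\leq q_1$, which you may assume by (ii)$\iff$(iv)) on the left produces the \emph{same} factor $(1/r-1/p)^{-1/q_0}$, and the two cancel exactly. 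If instead you weight by $t^{(1/r-1/p)q_1}$ and raise to $q_1$, the left side indeed yields $(1/r-1/p)^{-1/q_1}$, but the right side then requires Hardy's inequality \eqref{HardyIneq} at exponent $q_1/q_0\geq 1$, which produces $(1/r-1/p)^{-1/q_0}$, so the two sides differ by a factor $(1/r-1/p)^{1/q_1-1/q_0}$; since $q_0\leq q_1$ this blows up as $r\to p-$ and destroys the uniform bound $\mathcal{O}(1)$ claimed in (iii). This is not a presentational issue: with the weight as you wrote it, the step simply fails. Compare with the paper's proof of Theorem~\ref{ThmSobRear}, where $q_0 = p$ and the weight is $t^{(1/r-1/p)p}$ — i.e.\ $q_0$, never $q_1$.

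A secondary remark: your concern about the case $\min(q_0,q_1)<1$ (and the dyadic replacement of Hardy) is moot here, since the hypothesis is $1\le q_0,q_1\le\infty$; the Hardy inequality \eqref{HardyIneq*} applies directly with an $r$-uniform constant, and the only place where the $q_0\le q_1$ restriction is used is the Minkowski step above.
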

		
		Observe that the estimates (\ref{SharpKolyada2*'Full}) comprise (\ref{SharpKolyada2*'RdHom}) and (\ref{SharpKolyada2*RdHom}). More precisely, if $q_0 = q_1=p$ in (\ref{SharpKolyada2*'Full}) then we recover (\ref{SharpKolyada2*'RdHom}), i.e.,
			\begin{equation*}
	\left(\int_t^\infty \left(v^{1/p-k/d - 1/\alpha} \int_0^v u^{1/\alpha} f^\ast(u) \frac{du}{u} \right)^p \frac{dv}{v} \right)^{1/p} \lesssim \left(\int_t^\infty ( |\nabla^k f|^{\ast \ast}(v))^p dv \right)^{1/p},
\end{equation*}
and setting $r > d/(d-k), 1/p = k/d + 1/r$, and $q_0=q_1=r$ in (\ref{SharpKolyada2*'Full}) we obtain (\ref{SharpKolyada2*RdHom}), i.e.,
	\begin{equation*}
	\left(\int_t^\infty \left(v^{1/r - 1/\alpha} \int_0^v u^{1/\alpha} f^\ast(u) \frac{du}{u} \right)^r \frac{dv}{v} \right)^{1/r} \lesssim \left(\int_t^\infty (v^{k/d + 1/r} |\nabla^k f|^{\ast \ast}(v))^r \frac{dv}{v} \right)^{1/r}.
\end{equation*}

In a similar fashion as in Theorem \ref{ThmSobRearFull}, one can establish the corresponding results for $W^k L_{p,q_0}(\Omega) \hookrightarrow L_{p^\ast,q_1}(\Omega)$ (cf. Theorems \ref{ThmSobRear} and \ref{ThmStein3}) and for $W^k L_{p,q_0}(\R^d) \hookrightarrow L_{p^\ast,q_1}(\R^d)$ (cf. Theorems  \ref{ThmSobRearRd} and \ref{ThmStein3Rd}.) Further details are left to the reader.

\section{Critical case}\label{critical}
\subsection{Trudinger type embeddings with fixed target space}

Let $1 < p < \infty$ and $\frac{d}{p} \in \mathbb{N}$. The Trudinger embedding asserts that
\begin{equation}\label{AMT}
	W^{d/p}_p(\Omega) \hookrightarrow L_\infty (\log L)_{-1/p'}(\Omega).
\end{equation}
See \cite{Peetre, Pohozhaev, Strichartz, Trudinger, Yudovich}. Furthermore, this embedding is optimal within the class of Orlicz spaces. In particular, we have
\begin{equation}\label{AMTOptimal}
W^{d/p}_p(\Omega) \hookrightarrow L_\infty (\log L)_{-b}(\Omega) \iff b \geq 1/p'.
\end{equation}

The first goal of this section is to obtain characterizations of (\ref{AMT}) via growth estimates of rearrangements in terms of moduli of smoothness and extrapolation estimates. Namely, we have

\begin{thm}\label{ThmTruMos}
	Let $1 < p < \infty, \frac{d}{p} \in \mathbb{N}$ and $b \geq 0$. The following statements are equivalent:
	\begin{enumerate}[\upshape(i)]
		\item
		\begin{equation*}
		W^{d/p}_p(\Omega) \hookrightarrow L_\infty (\log L)_{-b}(\Omega),
		\end{equation*}
		\item for $f \in L_p(\Omega)$ and $t \in (0,1)$, we have
		\begin{align}\label{ThmTruMos1*}
			\left(\int_0^{t^d} f^*(u)^p du \right)^{1/p} +  t^{d/p} (1-\log t)^b \sup_{t^d \leq u \leq 1} (1-\log u)^{-b} f^\ast(u) & \\
			&\hspace{-5cm}\lesssim t^{d/p} (1-\log t)^b \|f\|_{L_p(\Omega)}  +(1 - \log t)^b \omega_{d/p}(f, t)_{p;\Omega}, \nonumber
		\end{align}
			\item if $q < \infty$ then
			\begin{equation*}
			W^{d/p}_p(\Omega) \hookrightarrow L_q(\Omega)
			\end{equation*}
			 with norm $\mathcal{O}(q^b)$, i.e., there exists $C > 0$, which is independent of $q$, such that
		\begin{equation*}
			\|f\|_{L_q(\Omega)} \leq C q^b \|f\|_{W^{d/p}_p(\Omega)},
		\end{equation*}
		\item
		\begin{equation*}
		b \geq 1/p'.
		\end{equation*}
	\end{enumerate}
\end{thm}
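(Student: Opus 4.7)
The plan is to establish the cycle $(\text{iv})\Leftrightarrow(\text{i})$ together with $(\text{i})\Rightarrow(\text{ii})\Rightarrow(\text{iii})\Rightarrow(\text{i})$; the equivalence $(\text{iv})\Leftrightarrow(\text{i})$ is the classical optimality of the Trudinger embedding \eqref{AMTOptimal} and requires no further work.

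For $(\text{i})\Rightarrow(\text{ii})$ I would first identify the target Zygmund space as a limiting interpolation space using \eqref{LorentzZygmundLimiting} with $q=\infty$, namely $L_\infty(\log L)_{-b}(\Omega)=(L_p(\Omega),L_\infty(\Omega))_{(1,-b),\infty}$. The embedding (i) is then equivalent to the uniform K-functional dominance $K(s,f;L_p,L_\infty(\log L)_{-b})\lesssim K(s,f;L_p,W^{d/p}_p)$ for all $s>0$. On the right side, Lemma A.2 of the Appendix gives $K(s,f;L_p,W^{d/p}_p)\asymp s\|f\|_{L_p(\Omega)}+\omega_{d/p}(f,s^{p/d})_{p;\Omega}$; on the left, Holmstedt's limiting formula \eqref{LemmaHolmstedt2} taken with $r=\infty$, combined with the standard identification $K(s,f;L_p,L_\infty)\asymp(\int_0^{s^p}f^*(v)^p\,dv)^{1/p}$, evaluates the K-functional at $s=t^{d/p}(1-\log t)^b$. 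Inserting both expressions into the inequality and then using $w^{-d/p}\bigl(\int_0^{w^d}f^*(v)^p\,dv\bigr)^{1/p}\geq f^*(w^d)$ (valid since $f^*$ is decreasing) inside the resulting supremum produces (ii) after the substitution $u=w^d$.

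For $(\text{ii})\Rightarrow(\text{iii})$, I fix $u\in(0,1)$ and apply (ii) at $t=u^{1/d}$. Since $\omega_{d/p}(f,t)_{p;\Omega}\lesssim t^{d/p}\||\nabla^{d/p}f|\|_{L_p(\Omega)}$ by \eqref{DerMod}, the right-hand side is $\lesssim t^{d/p}(1-\log t)^b\|f\|_{W^{d/p}_p(\Omega)}$; isolating the supremum term on the left yields the pointwise estimate $f^*(u)\lesssim(1-\log u)^b\|f\|_{W^{d/p}_p(\Omega)}$ for every $u\in(0,1)$. Raising to the $q$-th power and integrating gives
\[
\|f\|_{L_q(\Omega)}^q\lesssim\|f\|_{W^{d/p}_p(\Omega)}^q\int_0^1(1-\log u)^{bq}\,du\asymp\Gamma(bq+1)\|f\|_{W^{d/p}_p(\Omega)}^q,
\]
and Stirling's asymptotic $\Gamma(bq+1)^{1/q}\asymp q^b$ delivers (iii). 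For $(\text{iii})\Rightarrow(\text{i})$ I would then invoke the extrapolation description \eqref{ExtrapolationLZ}, $\|f\|_{L_\infty(\log L)_{-b}(\Omega)}\asymp\sup_{j\geq 0}2^{-jb}\|f\|_{L_{2^jd}(\Omega)}$, evaluate (iii) at $q=2^jd$ to obtain $2^{-jb}\|f\|_{L_{2^jd}(\Omega)}\lesssim\|f\|_{W^{d/p}_p(\Omega)}$ uniformly in $j$, and take the supremum.

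The main obstacle is the step $(\text{i})\Rightarrow(\text{ii})$: the limiting interpolation identification of $L_\infty(\log L)_{-b}$ must be married to the correct Holmstedt reiteration formula, and one must verify that the $L_p$-averages produced on the interpolation side dominate from below the cleaner pointwise tail $\sup_{t^d\leq u\leq 1}(1-\log u)^{-b}f^*(u)$ appearing in the statement of (ii).
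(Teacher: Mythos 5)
Your proposal is correct and follows essentially the same route as the paper: limiting interpolation identification of $L_\infty(\log L)_{-b}(\Omega)$ via \eqref{LorentzZygmundLimiting}, Holmstedt's formula \eqref{LemmaHolmstedt2} with $r=\infty$ and the $K$-functional for $(L_p,L_\infty)$ for (i)$\Rightarrow$(ii), the pointwise bound on $f^\ast$ together with Stirling for (ii)$\Rightarrow$(iii), the extrapolation formula \eqref{ExtrapolationLZ} for (iii)$\Rightarrow$(i), and the classical optimality \eqref{AMTOptimal} for (i)$\Leftrightarrow$(iv). The only step you left tacit is using \eqref{HomoMod} to absorb the logarithmic factor from the dilated modulus $\omega_{d/p}(f,t(1-\log t)^{bp/d})_{p;\Omega}$ into the coefficient $(1-\log t)^b\,\omega_{d/p}(f,t)_{p;\Omega}$, which the paper spells out explicitly in that last line of (i)$\Rightarrow$(ii).
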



\begin{rem}
	(i) The two terms given in the left-hand side of \eqref{ThmTruMos1*} are independent of each other. More precisely, setting
	\begin{equation*}
		I:= \left(\int_0^{t^d} f^*(v)^p dv \right)^{1/p} \quad \text{and} \quad II:= t^{d/p} (1-\log t)^b \sup_{t^d \leq u \leq 1} (1-\log u)^{-b} f^\ast(u)
	\end{equation*}
	and taking $f^*(t) = t^{-1/p} (1-\log t)^{-\xi}, \, \xi > 1/p,$ it is plain to see that $II \lesssim I$. On the other hand, if $f^*(t) = 1$ then $I \lesssim II$.
	
	(ii) Estimate \eqref{ThmTruMos1*} (with $b=1/p'$) implies that
	\begin{equation}\label{ThmTruMos1*new+x}
		(1-\log t)^{1/p-1}\left(\int_0^{t^d} f^*(v)^p dv \right)^{1/p} \lesssim t^{d/p}  \|f\|_{L_p(\Omega)} + \omega_{d/p}(f, t)_{p;\Omega}
	\end{equation}
	and
	\begin{equation}\label{ThmTruMos1*new+}
		t^{d/p} (1-\log t)^{1/p-1} f^*(t^d) \lesssim t^{d/p} \|f\|_{L_p(\Omega)} + \omega_{d/p}(f, t)_{p;\Omega},
 	\end{equation}
{significantly improving Corollary 5 from Kolyada's paper \cite{Kolyada}.}
	
\end{rem}

\begin{proof}[Proof of Theorem \ref{ThmTruMos}]
	(i) $\Longrightarrow$ (ii): By (i), we have
	\begin{equation}\label{ThmTruMos1}
		K(t^{d/p}, f; L_p(\Omega), L_\infty (\log L)_{-b}(\Omega)) \lesssim K(t^{d/p}, f; L_p(\Omega), W^{d/p}_p(\Omega)) \asymp t^{d/p} \|f\|_{L_p(\Omega)} +  \omega_{d/p}(f,t)_{p;\Omega},
	\end{equation}
	where the last estimate follows from Lemma A.2 (iv).
	
	Next we compute $K(t, f; L_p(\Omega), L_\infty (\log L)_{-b}(\Omega))$. Using that $L_\infty (\log L)_{-b}(\Omega) = (L_p(\Omega), L_\infty(\Omega))_{(1,-b), \infty}$ (see (\ref{LorentzZygmundLimiting})), we can apply \eqref{LemmaHolmstedt2} and Lemma \ref{LemmaHolm4} together with elementary computations to establish
	\begin{align}
		K(t (1 - \log t)^b, f; L_p(\Omega), L_\infty (\log L)_{-b}(\Omega)) \nonumber \\
		&\hspace{-6cm} \asymp K(t (1-\log t)^b, f; L_p(\Omega), (L_p(\Omega), L_\infty(\Omega))_{(1,-b), \infty})  \nonumber \\
		& \hspace{-6cm} \asymp  \left(\int_0^{t^p} f^\ast(v)^p dv \right)^{1/p} +  t (1-\log t)^b \sup_{t^p \leq u \leq 1} u^{-1/p} (1-\log u)^{-b}  \left(\int_0^{u} f^\ast(v)^p dv \right)^{1/p}   \nonumber\\
		& \hspace{-6cm} \asymp  \left(\int_0^{t^p} f^\ast(v)^p dv \right)^{1/p}
		 + t (1-\log t)^b \sup_{t^p \leq u \leq 1} u^{-1/p} (1-\log u)^{-b}  \left(\int_{t^p}^u f^\ast(v)^p dv \right)^{1/p} \nonumber \\
		 & \hspace{-6cm} \asymp  \left(\int_0^{t^p} f^\ast(v)^p dv \right)^{1/p}  + t (1-\log t)^b \sup_{t^p \leq u \leq 1} (1-\log u)^{-b} f^\ast(u). \label{ThmTruMos2}
	\end{align}

	Now it follows from (\ref{ThmTruMos2}), (\ref{ThmTruMos1}) and (\ref{HomoMod}) that
	\begin{align*}
		\left(\int_0^{t^d} f^*(v)^p dv \right)^{1/p} +  t^{d/p} (1-\log t)^b \sup_{t^d \leq u \leq 1} (1-\log u)^{-b} f^\ast(u)  & \\
		& \hspace{-8cm}\asymp K(t^{d/p} (1 - \log t)^{b}, f; L_p(\Omega), L_\infty (\log L)_{-b}(\Omega)) \\
		& \hspace{-8cm} \lesssim t^{d/p} (1-\log t)^b \|f\|_{L_p(\Omega)} + \omega_{d/p}(f,t (1-\log t)^{b p/d})_{p;\Omega} \\
		& \hspace{-8cm}\lesssim  t^{d/p} (1-\log t)^b \|f\|_{L_p(\Omega)} +(1 - \log t)^b \omega_{d/p}(f, t)_{p;\Omega}.
	\end{align*}
	
		(ii) $\Longrightarrow$ (iii): Using the well-known fact that rearrangements preserve Lebesgue norms (see, e.g., \cite[Proposition 1.8, Chapter 2, page 43]{BennettSharpley}) together with (\ref{ThmTruMos1*}) (cf. also \eqref{ThmTruMos1*new+}) and (\ref{SobolevModuliD}), we obtain
	\begin{align*}
		\|f\|_{L_q(\Omega)} &\lesssim \left(\int_0^1 (-\log t)^{b q} d t\right)^{1/q} \|f\|_{L_p(\Omega)} + \left( \int_0^1 (t^{-1/p} (1-\log t)^b \omega_{d/p}(f, t^{1/d})_{p;\Omega})^q dt \right)^{1/q} \\
		& \lesssim \left(\int_0^1 (-\log t)^{b q} d t\right)^{1/q}  \left(\|f\|_{L_p(\Omega)} + \sup_{0 < t < 1} t^{-1/p}  \omega_{d/p}(f, t^{1/d})_{p;\Omega} \right) \\
		& \lesssim I_q \|f\|_{W^{d/p}_p(\Omega)},
	\end{align*}
	where $I_q := \left(\int_0^1 (-\log t)^{b q} d t\right)^{1/q}$. Next we show that $I_q \asymp q^b$, which leads to (iii). Indeed, employing Stirling's formula for the Gamma function we have $(\Gamma(\xi))^{1/\xi} \asymp \xi$ as $\xi \to \infty$, then
	\begin{equation*}
		I_q = \Gamma(b q + 1)^{1/q} = (b q)^{1/q} \Gamma (b q)^{1/q} \asymp \left(\Gamma(b q)^{1/(b q)} \right)^b \asymp q^b.
	\end{equation*}

	(iii) $\Longrightarrow$ (i): According to (iii) we have
	\begin{equation*}
		\sup_{j \geq 0} 2^{-j b} \|f\|_{L_{2^j d}(\Omega)} \lesssim \|f\|_{W^{d/p}_p(\Omega)},
	\end{equation*}
	and thus, the embedding $W^{d/p}_p(\Omega) \hookrightarrow L_\infty (\log L)_{-b}(\Omega)$ follows immediately from (\ref{ExtrapolationLZ}).

	The equivalence between (i) and (iv) was stated in (\ref{AMTOptimal}).
	
%
%
%
%

\end{proof}

\begin{rem}
It turns out that the statement (iii) in Theorem \ref{ThmTruMos} can be replaced by any  of the following extrapolation results:

$\text{(iii)}_a$ If $q < \infty$ then $W^{d/p}_p(\Omega) \hookrightarrow L_{q,\infty}(\Omega)$ with norm $\mathcal{O}(q^b)$. More precisely, there exists $C > 0$, which is independent of $q$, such that
		\begin{equation*}
			\|f\|_{L_{q,\infty}(\Omega)} \leq C q^b \|f\|_{W^{d/p}_p(\Omega)}.
		\end{equation*}
		
	$\text{(iii)}_b$ If $q < \infty$ then $B^{d/p-d/q}_{p,\infty}(\Omega) \hookrightarrow L_{q,\infty}(\log L)_{-b}(\Omega)$ with embedding constant uniformly bounded. More precisely, there exists $C > 0$, which is independent of $q$, such that
		\begin{equation*}
			\|f\|_{L_{q,\infty}(\log L)_{-b}(\Omega)} \leq C \|f\|_{B^{d/p-d/q}_{p,\infty}(\Omega), d/p}.
		\end{equation*}	
		

\end{rem}

\begin{proof}
		We start by proving that (ii) $\Longrightarrow \text{(iii)}_a$. By (ii) (cf. also \eqref{ThmTruMos1*new+}) and (\ref{SobolevModuliD}), we have for each $t \in (0,1)$,
		\begin{equation*}
			f^\ast(t) \lesssim (1-\log t)^{b} \Big( \|f\|_{L_p(\Omega)} + \sup_{0 < u < 1} u^{-1/p} \omega_{d/p}(f,u^{1/d})_{p;\Omega} \Big) \asymp (1-\log t)^{b}  \|f\|_{W^{d/p}_p(\Omega)},
		\end{equation*}
		which yields
		\begin{equation*}
			\|f\|_{L_{q,\infty}(\Omega)} = \sup_{0 < t < 1} t^{1/q} f^\ast(t) \lesssim  \|f\|_{W^{d/p}_p(\Omega)} \sup_{0 < t < 1} t^{1/q} (1-\log t)^{b} \asymp q^b \|f\|_{W^{d/p}_p(\Omega)}.
		\end{equation*}
		
		Next we show that $\text{(iii)}_a \Longrightarrow$ (i). According to $\text{(iii)}_a$, we have
		\begin{align*}
			\|f\|_{W^{d/p}_p(\Omega)} & \gtrsim \sup_{q > 1} q^{- b} \|f\|_{L_{q,\infty}(\Omega)}   = \sup_{0 < t < 1} f^\ast(t) \sup_{q > 1} e^{-(-\log t)/q} q^{-b}  \\
			& \asymp \sup_{0 < t < 1} (-\log t)^{-b} f^\ast(t) \asymp \|f\|_{L_\infty (\log L)_{-b}(\Omega)}.
		\end{align*}
		
		The implication (ii) $\Longrightarrow \text{(iii)}_b$ is obvious. Further, $\text{(iii)}_b \Longrightarrow$ (i) holds true. Indeed, it follows from $\text{(iii)}_b$ and (\ref{SobolevModuliD}) that
		\begin{align*}
			\|f\|_{L_{q,\infty} (\log L)_{-b}(\Omega)} & \lesssim \|f\|_{L_p(\Omega)} +
			 \sup_{q > 1} \sup_{0 < t < 1} t^{-d/p + d/q} \omega_{d/p}(f,t)_{p;\Omega} \\
			 &  = \|f\|_{L_p(\Omega)} + \sup_{0 < t < 1} t^{-d/p} \omega_{d/p}(f,t)_{p;\Omega} \sup_{q > 1} t^{d/q} \\
			 & \lesssim  \|f\|_{L_p(\Omega)} + \sup_{0 < t < 1} t^{-d/p} \omega_{d/p}(f,t)_{p;\Omega} \lesssim \|f\|_{W^{d/p}_p(\Omega)}.
		\end{align*}
		Passing to the limits $q \to \infty$, we obtain (i).
\end{proof}
		
		Some comments are in order. Assume $1 < p < \infty, k \in \N$, and $0 < s < d/p \leq k$. Let $1/r = 1 / p - s /d$. According to Karadzhov-Milman-Xiao \cite[Theorems 5 and 7]{KaradzhovMilmanXiao}, the following sharp version of the Sobolev inequality for Besov spaces holds
		\begin{equation}\label{KMX0}
			\|f\|_{L_{r,\infty}(\R^d)} \leq C (d - s p)^{-1} \|f\|_{\dot{B}^{s}_{p,\infty}(\R^d), k}, \quad f \in \dot{B}^{s}_{p,\infty}(\R^d),
		\end{equation}
		where $C > 0$ is independent of $s$. Here, we remark that $d/p$ is not necessarily positive integer. Note that the embedding $\dot{B}^s_{p,\infty}(\R^d) \hookrightarrow L_{r,\infty}(\R^d)$ is optimal within the context of r.i. spaces (cf. \cite{Martin} and \cite{Netrusov89}) and in virtue of (\ref{KMX0}) its norm blows up as $s \to d/p-$ (according to the facts that $\dot{W}^{d/p}_p(\R^d) \not \hookrightarrow L_\infty(\R^d)$ and \eqref{SobolevModuliDPer'}; cf. also \eqref{NonSobBoundedIntro}). This raises the natural question whether it is possible to obtain (non-trivial) free-smoothness Sobolev inequalities for Besov spaces as $s$ approaches the critical value $d/p$. To be more precise, the question is  about the existence of some r.i. space $X$ such that $X \subsetneq L_p(\R^d)$ and $\|f\|_X \leq C \|f\|_{B^{s}_{p,\infty}(\R^d), k}$ with $C$ independent of $f$ and the smoothness parameter $s$. In this respect, Theorem \ref{ThmTruMos} with $\text{(iii)}_b$ provides a positive answer if $k = d/p$ for Besov spaces on domains. Indeed, if $s \to d/p-$, then
		\begin{equation*}
		 B^s_{p,\infty}(\Omega) \hookrightarrow L_{r,\infty}(\log L)_{-1/p'}(\Omega) \quad \text{with uniform norm with respect to $s$}.
		 \end{equation*}
		 Furthermore, this result is sharp in the following sense. There exists $C>0$ independent of $s$ such that
		 \begin{equation*}
		 \|f\|_{L_{r,\infty}(\log L)_{-b}(\Omega)} \leq C \|f\|_{B^{s}_{p,\infty}(\Omega), d/p}
		 \end{equation*}
		 if and only if
		 \begin{equation*}
		 	 b \geq 1/p'.
		 \end{equation*}		
		Note that $L_{r,\infty}(\Omega) \subsetneq L_{r,\infty}(\log L)_{-1/p'}(\Omega) \subsetneq L_p(\Omega)$.

Let $1 < p < \infty$ and $\frac{d}{p} \in \mathbb{N}$. In Theorem \ref{ThmTruMos} we were concerned with characterizations of the embedding
\begin{equation*}
	W^{d/p}_p(\Omega) \hookrightarrow L_\infty (\log L)_{-1/p'}(\Omega)
\end{equation*}
(see (\ref{AMT})). As already mentioned in \eqref{AMTOptimal} this embedding is sharp within the class of Orlicz spaces (and, in particular, Zygmund spaces). However, it can be sharpened  in  the scale of  Lorentz-Zygmund spaces. Namely, in light of the Maz'ya-Hansson-Br\'ezis-Wainger embedding \eqref{BW+}, we have
\begin{equation}\label{BW}
	W^{d/p}_p(\Omega) \hookrightarrow L_{\infty,p} (\log L)_{-1}(\Omega)
\end{equation}
and
\begin{equation}\label{BWNew}
	L_{\infty,p} (\log L)_{-1}(\Omega) \subsetneq  L_\infty (\log L)_{-1/p'}(\Omega).
\end{equation}
This does not contradict (\ref{AMTOptimal}) because $L_{\infty,p} (\log L)_{-1}(\Omega)$ does not fit into the scale of Orlicz spaces. Furthermore, the target space in (\ref{BW}) is the best possible among the class of r.i. spaces (see \cite{Hansson} and \cite{CwikelPustylnik}). In particular, we have
\begin{equation}\label{BWOptimal}
	W^{d/p}_p(\Omega) \hookrightarrow L_{\infty,p} (\log L)_{-b}(\Omega) \iff b \geq 1.
\end{equation}


We now turn to the counterpart of Theorem \ref{ThmTruMos} for (\ref{BW}).

\begin{thm}\label{ThmBreWain}
	Let $1 < p < \infty, \frac{d}{p} \in \mathbb{N}$ and $b > 1/p$. The following statements are equivalent:
	\begin{enumerate}[\upshape(i)]
		\item
		\begin{equation*}
		W^{d/p}_p(\Omega) \hookrightarrow L_{\infty,p} (\log L)_{-b}(\Omega),
		\end{equation*}
		\item for $f \in L_p(\Omega)$ and $t \in (0,1)$, we have
		\begin{align}
			(1-\log t)^{1/p}\left(\int_0^{t^d} f^*(u)^p du \right)^{1/p} + t^{d/p} (1-\log t)^{b} \left(\int_{t^d}^1(1 - \log u)^{- b p} f^\ast(u)^p \frac{du}{u} \right)^{1/p} & \nonumber\\
			& \hspace{-8cm} \lesssim t^{d/p} (1-\log t)^{b} \|f\|_{L_p(\Omega)} + (1-\log t)^{b}  \omega_{d/p}(f, t)_{p;\Omega}, \label{ThmTruMos1*++}
		\end{align}
		\item we have
		\begin{equation*}
			B^{d/p-\lambda}_{p,p}(\Omega) \hookrightarrow L_{d/\lambda,p}(\log L)_{-b}(\Omega)
		\end{equation*}
		with norm $\mathcal{O}(\lambda^{1/p})$ as $\lambda \to 0+$. More precisely, if $0 < \lambda  < d/p$ then there exists $C > 0$, which is independent of $\lambda$, such that
		\begin{equation}\label{SharpSobolevLog*For}
			\|f\|_{L_{d/\lambda,p}(\log L)_{-b}(\Omega)} \leq C \lambda^{1/p} \|f\|_{B^{d/p-\lambda}_{p,p}(\Omega), d/p},
		\end{equation}
		\item
		\begin{equation*}
		b \geq 1.
		\end{equation*}
	\end{enumerate}
\end{thm}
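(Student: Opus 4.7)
The plan is to follow the strategy of Theorem \ref{ThmTruMos}, replacing the Zygmund space by its refined Lorentz-Zygmund analogue. The equivalence (i) $\iff$ (iv) is exactly the known optimality statement \eqref{BWOptimal}, so it suffices to prove (i) $\Longrightarrow$ (ii) $\Longrightarrow$ (iii) $\Longrightarrow$ (i).

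\textbf{Step 1: (i) $\Longrightarrow$ (ii).} The key identification is \eqref{LorentzZygmundLimiting}, which yields
\[
L_{\infty,p}(\log L)_{-b}(\Omega) = (L_p(\Omega), L_\infty(\Omega))_{(1,-b),p}.
\]
Applying Holmstedt's formula \eqref{LemmaHolmstedt2} with $A_0=L_p(\Omega),\, A_1=L_\infty(\Omega),\, r=p$ and $-b$ in place of $b$, together with Lemma \ref{LemmaHolm4} and the change of variables $t \mapsto t^{d/p}$, one obtains
\begin{align*}
K(t^{d/p}(1-\log t)^{b-1/p}, f; L_p, L_{\infty,p}(\log L)_{-b}) &\asymp \left(\int_0^{t^d} f^*(v)^p dv\right)^{1/p} \\
&\hspace{-5cm}+ t^{d/p}(1-\log t)^{b-1/p}\left(\int_{t^d}^1 (1-\log u)^{-bp} f^*(u)^p \frac{du}{u}\right)^{1/p},
\end{align*}
after handling the inner part $\int_0^{u^p}f^*(v)^p dv$ via Fubini and absorbing the $\int_0^{t^d}$ piece. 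Assumption (i) gives $K(\cdot, f; L_p, L_{\infty,p}(\log L)_{-b}) \lesssim K(\cdot, f; L_p, W^{d/p}_p)$, and by Lemma A.2 together with \eqref{HomoMod} one has
\[
K(t^{d/p}(1-\log t)^{b-1/p},f; L_p, W^{d/p}_p) \lesssim t^{d/p}(1-\log t)^{b-1/p}\|f\|_{L_p}+(1-\log t)^{b-1/p}\omega_{d/p}(f,t)_{p;\Omega}.
\]
Multiplying the resulting inequality by $(1-\log t)^{1/p}$ produces (\ref{ThmTruMos1*++}).

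\textbf{Step 2: (ii) $\Longrightarrow$ (iii).} Isolate the second term on the left of \eqref{ThmTruMos1*++}, raise to the $p$-th power and cancel the factor $t^d(1-\log t)^{bp}$ to get
\[
\int_{t^d}^1 (1-\log u)^{-bp} f^*(u)^p \frac{du}{u} \lesssim \|f\|_{L_p(\Omega)}^p + t^{-d}\omega_{d/p}(f,t)_{p;\Omega}^p.
\]
Multiply by $t^{\lambda p -1}$ (with $s=t^d$ substitution) and integrate over $t \in (0,1)$; Fubini converts the left-hand side into $\frac{d}{\lambda p}\|f\|_{L_{d/\lambda,p}(\log L)_{-b}(\Omega)}^p$, while the right-hand side becomes $\frac{d}{\lambda p}\|f\|_{L_p}^p + d\int_0^1 t^{-(d/p-\lambda)p}\omega_{d/p}(f,t)_{p;\Omega}^p \frac{dt}{t}$. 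Multiplying through by $\lambda p/d$ and recalling that by \eqref{DefBesovInh} the normalization constant satisfies $c_{d/p-\lambda, d/p, p}^p \asymp \lambda^{-1}$, so that
\[
\|f\|_{B^{d/p-\lambda}_{p,p}(\Omega), d/p}^p \asymp \lambda^{-1}\|f\|_{L_p}^p + \int_0^1 t^{-(d/p-\lambda)p}\omega_{d/p}(f,t)_{p;\Omega}^p \frac{dt}{t},
\]
we arrive at \eqref{SharpSobolevLog*For}.

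\textbf{Step 3: (iii) $\Longrightarrow$ (i).} For $f \in W^{d/p}_p(\Omega)$, use \eqref{DerMod} to bound $\omega_{d/p}(f,t)_{p;\Omega} \lesssim t^{d/p}\|f\|_{\dot W^{d/p}_p}$; a direct computation then gives $\lambda^{1/p}\|f\|_{B^{d/p-\lambda}_{p,p}(\Omega), d/p} \lesssim \|f\|_{W^{d/p}_p(\Omega)}$, uniformly in $\lambda$. Combining with \eqref{SharpSobolevLog*For} yields $\|f\|_{L_{d/\lambda,p}(\log L)_{-b}(\Omega)} \lesssim \|f\|_{W^{d/p}_p(\Omega)}$ for all $\lambda \in (0,d/p)$; letting $\lambda \to 0+$ and invoking monotone convergence (since $t^{\lambda p/d} \nearrow 1$ on $(0,1)$) gives (i).

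The main technical obstacle is Step 1: correctly computing $K(\cdot, f; L_p, L_{\infty,p}(\log L)_{-b})$ through Holmstedt's limiting formula and simplifying the double integral $\int_{t}^1 u^{-p-1}(1-\log u)^{-bp}\int_0^{u^p} f^*(v)^p\, dv\, du$ by Fubini into the clean two-term form that matches the right-hand side of \eqref{ThmTruMos1*++}. All remaining steps reduce to the Fubini/normalization-constant bookkeeping carried out above.
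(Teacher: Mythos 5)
Your proposal follows the paper's own argument essentially step by step. In (i)~$\Longrightarrow$~(ii) you correctly use \eqref{LorentzZygmundLimiting} to identify $L_{\infty,p}(\log L)_{-b}(\Omega)$ as the limiting interpolation space $(L_p,L_\infty)_{(1,-b),p}$, then apply the Holmstedt-type formula \eqref{LemmaHolmstedt2} together with Lemma \ref{LemmaHolm4} and Fubini to obtain the two-term $K$-functional expression; this is precisely the paper's computation \eqref{ThmBreWain2}, and the final multiplication by $(1-\log t)^{1/p}$ after the change of variables $t\mapsto t^{d/p}$ reproduces \eqref{ThmTruMos1*++}. Your (ii)~$\Longrightarrow$~(iii) via isolating the second term, multiplying by the power weight, integrating, and using Fubini with the normalization constant $c_{d/p-\lambda,d/p,p}^p\asymp\lambda^{-1}$ is the same bookkeeping carried out in \eqref{ThmBreWain3}--\eqref{ThmBreWain4} (your multiplicative constants $d/(\lambda p)$ versus $1/(\lambda p)$ differ only by an irrelevant factor). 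Your (iii)~$\Longrightarrow$~(i) bounds $\lambda^{1/p}\|f\|_{B^{d/p-\lambda}_{p,p}(\Omega),d/p}$ by $\|f\|_{W^{d/p}_p(\Omega)}$ and passes to the limit by monotone convergence, matching the paper's \eqref{ThmBreWain5} (the paper invokes \eqref{SobolevModuliD} while you use \eqref{DerMod}; both yield the same uniform bound). Finally (i)~$\iff$~(iv) is indeed just \eqref{BWOptimal}. The argument is correct and is essentially the paper's proof.
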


Before we proceed with the proof of this theorem, some comments are in order.

\begin{rem}\label{RemEta}
	(i) To the best of our knowledge,  inequality (\ref{SharpSobolevLog*For}) with $b=1$, i.e.,
		\begin{equation}\label{SharpSobolevLog}
			\|f\|_{L_{d/\lambda,p}(\log L)_{-1}(\Omega)} \leq C \lambda^{1/p} \|f\|_{B^{d/p-\lambda}_{p,p}(\Omega), d/p}, \quad \lambda \to 0+,
		\end{equation}
	 is new. We now pursue a further study of this interesting estimate for its own sake. It can be considered a  sharp form of Sobolev embedding theorem as the smoothness parameter (i.e., $\frac{d}{p} - \lambda$) approaches the  critical value (i.e., $\frac{d}{p}$). This research area attracted a lot of attention starting from the celebrated work of Bourgain, Br\'ezis and Mironescu \cite{BourgainBrezisMironescu}. Among other results, they showed the following: Let $Q$ be a cube in $\R^d$ and assume $p \in [1,\infty), s \in [1/2,1)$ and $s < d/p$. Then there exists a positive constant $c_d$, which depends only on $d$, such that
	\begin{equation}\label{BBM}
		\Big\|f - \displaystyle\stackinset{c}{}{c}{}{-\mkern4mu}{\displaystyle\int_Q} f\Big\|_{L_{\frac{dp}{d - sp}}(Q)} \leq c_d ((1-s) (d - sp)^{1-p})^{1/p} \left(\int_Q \int_Q \frac{|f(x) - f(y)|^p}{|x - y|^{d + s p}} \,  {d} x \, {d} y \right)^{1/p}
	\end{equation}
	whenever the right-hand side is finite (recall that this functional defines a seminorm which is referred as Gagliardo seminorm). 
Here,
$\displaystyle\stackinset{c}{}{c}{}{-\mkern4mu}{\displaystyle\int_Q} f = \frac{1}{|Q|_d} \int_Q f(x) \, {d} x$. The counterpart of \eqref{BBM} for functions $f \in C^\infty_0(\R^d)$ also holds true.
Afterwards, Maz'ya and Shaposhnikova \cite{MazyaShaposhnikova} extended this result to the whole range $s \in (0,1)$. Namely, if $p \in [1,\infty), s \in (0,1)$ and $s < d/p$ then there exists a positive constant $c_{d,p}$, which depends only on $d$ and $p$, such that
		\begin{equation}\label{MS}
		\|f\|_{L_{\frac{dp}{d - sp}}(\R^d)} \leq c_{d,p} (s (1-s) (d - sp)^{1-p})^{1/p} \left(\int_{\R^d} \int_{\R^d} \frac{|f(x) - f(y)|^p}{|x - y|^{d + s p}} \, {d}x \, {d}y \right)^{1/p}
	\end{equation}
	for all $f \in C^\infty_0(\R^d)$. Further, we remark that
	\begin{equation*}
		\left(\int_{\R^d} \int_{\R^d} \frac{|f(x) - f(y)|^p}{|x - y|^{d + s p}} \, {d}x \, {d}y \right)^{1/p} \asymp \|f\|_{\dot{B}^s_{p,p}(\R^d),1} = \left(\int_0^\infty t^{-s p} \omega_1(f,t)_{p;\R^d}^p \frac{dt}{t} \right)^{1/p}
	\end{equation*}
 with equivalence constants independent of $s \in (0,1)$ (see \cite[Proposition 2.3]{KolyadaLerner} and \cite[Section 10.3]{DominguezTikhonov}).
	
	The corresponding sharp inequalities obtained by replacing the Lebesgue norms in (\ref{BBM}), (\ref{MS}) by Lorentz norms were studied in \cite{KaradzhovMilmanXiao}, \cite{KolyadaLerner}, \cite{EdmundsEvansKaradzhov}, and \cite{EdmundsEvansKaradzhov2}. In particular, it was shown by Karadzhov, Milman and Xiao \cite[Theorem 5]{KaradzhovMilmanXiao} that
	\begin{equation}\label{KMX}
		\|f\|_{L_{d/\lambda,p}(\R^d)} \leq C \lambda^{1/p - 1} \|f\|_{\dot{B}^{d/p - \lambda}_{p,p}(\R^d), d/p}, \quad \lambda \to 0+.
	\end{equation}
	
	Comparing (\ref{SharpSobolevLog}) and the counterpart of (\ref{KMX}) for domains we see that the target space in (\ref{KMX}) is smaller than the corresponding one in (\ref{SharpSobolevLog}), i.e.,
	\begin{equation*}
		\|f\|_{L_{d/\lambda,p}(\log L)_{-1}(\Omega)} \leq C \|f\|_{L_{d/\lambda,p}(\Omega)}
	\end{equation*}
	with $C > 0$ independent of $\lambda$. However, the crucial point, which makes our argument rather optimal, is the fundamental  difference between the behavior of the constants in (\ref{SharpSobolevLog}) and (\ref{KMX}) with respect to $\lambda$.
  Namely, the embedding constant in (\ref{KMX}) (i.e., $\lambda^{1/p-1}$)  blows up as $\lambda \to 0+$. This links to the fact that the corresponding target space $L_{d/\lambda,p}$ becomes trivial if $\lambda = 0$ (cf. Section \ref{SectionFunctionSpaces}). On the other hand, the embedding constant in (\ref{SharpSobolevLog}) remains bounded as $\lambda \to 0+$. In particular, taking limits as $\lambda \to 0 +$ in (\ref{SharpSobolevLog}) we find that
	\begin{equation*}
		B^{d/p}_{p,p}(\Omega) \hookrightarrow L_{\infty,p}(\log L)_{-1}(\Omega),
	\end{equation*}
	which is optimal in the setting of r.i. spaces (see \cite[Theorem 7]{Martin}).

	(ii) It is an open question whether the Br\'ezis-Wainger embedding (\ref{BW}) can be obtained from Sobolev embeddings via extrapolation methods. An explicit formulation was stated in Mart\'in and Milman \cite{MartinMilman}. In that  paper the authors observed that limits of Lorentz norms in the following Talenti inequality (cf. \cite[(4.6)]{Talenti} for $k =1$ and iterating for $k > 1$; see also (\ref{FullS}))
	\begin{equation}\label{Tal}
	\|f\|_{L_{r,q}(\Omega)} \leq C r \|f\|_{W^k L_{p,q}(\Omega)},  \quad 1 <p < \frac{d}{k}, \quad \frac{1}{r} = \frac{1}{p}-\frac{k}{d}, \quad 1 \leq q \leq \infty,
	\end{equation}
	are not useful to achieve (\ref{BW}). Here, $C$ is uniformly bounded as $p \to d/k -$. However, they showed that some limiting embeddings into $L_\infty(\Omega)$ follow from (\ref{Tal}) by extrapolation means.
	
	Theorem \ref{ThmBreWain} provides a positive answer to the above question and shows that (\ref{BW}) follows from
		\begin{equation*}
			\|f\|_{L_{r,p}(\log L)_{-1}(\Omega)} \leq C r^{-1/p} \|f\|_{B^{s}_{p,p}(\Omega),d/p}, \quad  0 < s < \frac{d}{p}, \quad \frac{1}{r} = \frac{1}{p}-\frac{s}{d},
		\end{equation*}
	(see (\ref{SharpSobolevLog})) by means of taking limits as $s \to d/p-$ (or equivalently, $r \to \infty$). Hence, we circumvent the problems  on
the limits of Lorentz norms in  embeddings (\ref{Tal}), as well as their  counterparts for the Besov space
	\begin{equation*}
		\|f\|_{L_{r,p}(\R^d)} \leq C r^{1 - 1/p} \|f\|_{\dot{B}^{s}_{p,p}(\R^d), d/p}
	\end{equation*}
	(see (\ref{KMX})),
	 by switching the target space from $L_{r,p}(\Omega)$ to the bigger Lorentz-Zygmund space $L_{r,p}(\log L)_{-1}(\Omega)$ but substantially improving the sharp exponent of the asymptotic decay of the norm from $ r^{1 - 1/p}$ to $r^{-1/p}$.

	 (iii) Estimate \eqref{ThmTruMos1*++} is stronger than \eqref{ThmTruMos1*} because
	 \begin{equation*}
	 	\left(\int_0^{t^d} f^*(u)^p du \right)^{1/p} \lesssim (1-\log t)^{1/p}\left(\int_0^{t^d} f^*(u)^p du \right)^{1/p}
	 \end{equation*}
	 and
	 \begin{equation*}
	 	\sup_{t^d \leq u \leq 1} (1-\log u)^{-b} f^\ast(u)  \lesssim \left(\int_{t^d}^1(1 - \log u)^{- b p} f^\ast(u)^p \frac{du}{u} \right)^{1/p}.
	 \end{equation*}
	 This fact can also be observed from \eqref{BWNew} and the equivalence between Sobolev inequalities and pointwise inequalities provided by Theorems \ref{ThmTruMos}(i), (ii) and \ref{ThmBreWain}(i), (ii).
	
	 (iv)  The two terms given in the left-hand side of \eqref{ThmTruMos1*++} are independent of each other. This can be seen by taking the functions $f_1(t) = 1$ and $f_2(t) = t^{-1/p} (1-\log t)^{-\xi}, \, \xi > 1/p$.

	 (v) It follows from \eqref{ThmTruMos1*++} (with $b=1$) that
	 \begin{equation*}
	  (1-\log t)^{1/p-1}\left(\int_0^{t^d} f^*(u)^p du \right)^{1/p} \lesssim t^{d/p} \|f\|_{L_p(\Omega)} +  \omega_{d/p}(f, t)_{p;\Omega}
	 \end{equation*}
	 (see also \eqref{ThmTruMos1*new+x}) and
	 \begin{equation}\label{ThmTruMos1*new+p}
	 t^{d/p} \left(\int_{t^d}^1(1 - \log u)^{-  p} f^\ast(u)^p \frac{du}{u} \right)^{1/p} \lesssim  t^{d/p} \|f\|_{L_p(\Omega)} +   \omega_{d/p}(f, t)_{p;\Omega}.
	 \end{equation}
	 It turns out that the left-hand sides of \eqref{ThmTruMos1*new+} and \eqref{ThmTruMos1*new+p} are not comparable.
This can be easily checked by taking
$f_1$ and $f_2$ such that
$f_1^\ast(t) = t^{-\varepsilon}$ for some $\varepsilon > 0$
and
$f_2^\ast(t) = (1-\log t)^{1/p'}$. Note that this does not contradict the fact that
\eqref{AMT} is weaker than \eqref{BW}. Here we observe that if $t \to 0+$ then the left-hand side of \eqref{ThmTruMos1*new+p} (after multiplication by $t^{-d/p}$) dominates the corresponding one for \eqref{ThmTruMos1*new+} (cf. \eqref{BWNew}).

%
%
%

\end{rem}

\begin{proof}[Proof of Theorem \ref{ThmBreWain}]
	(i) $\Longrightarrow$ (ii): Assume $W^{d/p}_p(\Omega) \hookrightarrow L_{\infty,p} (\log L)_{-b}(\Omega)$. Then, by Lemma A.2 (iv), we have
	\begin{eqnarray}\label{ThmBreWain1}\nonumber
		K(t^{d/p}, f; L_p(\Omega), L_{\infty,p} (\log L)_{-b}(\Omega)) &\lesssim& K(t^{d/p}, f; L_p(\Omega), W^{d/p}_p(\Omega))\\&&\qquad \asymp t^{d/p} \|f\|_{L_p(\Omega)} + \omega_{d/p}(f,t)_{p;\Omega}.
	\end{eqnarray}
	
	To estimate $K(t, f; L_p(\Omega), L_{\infty,p} (\log L)_{-b}(\Omega))$, we first note that $L_{\infty,p} (\log L)_{-b}(\Omega) = (L_p(\Omega), L_\infty(\Omega))_{(1,-b), p}$ (see (\ref{LorentzZygmundLimiting})), and then (\ref{LemmaHolmstedt2}), Lemma \ref{LemmaHolm4} and Fubini's theorem allow us to obtain
	\begin{align}
		K(t (1 - \log t)^{b - 1/p}, f; L_p(\Omega), L_{\infty,p} (\log L)_{-b}(\Omega)) \nonumber \\
		&\hspace{-7cm} \asymp K(t (1-\log t)^{b-1/p}, f; L_p(\Omega), (L_p(\Omega), L_\infty(\Omega))_{(1,-b), p}) \nonumber \\
		& \hspace{-7cm} \asymp  \left(\int_0^{t^p} f^\ast(u)^p du \right)^{1/p} \nonumber \\
		& \hspace{-6cm}+  t (1-\log t)^{b-1/p} \left(\int_{t}^1 u^{-p} (1-\log u)^{-b p}  \int_0^{u^p} f^\ast(v)^p dv \frac{du}{u}\right)^{1/p}  \nonumber\\
		& \hspace{-7cm} \asymp  \left(\int_0^{t^p} f^\ast(u)^p du \right)^{1/p} \nonumber \\
		& \hspace{-6cm} +  t (1-\log t)^{b-1/p} \left(\int_{t^p}^1 u^{-1} (1-\log u)^{-b p}  \int_{t^p}^{u} f^\ast(v)^p dv \frac{du}{u}\right)^{1/p} \nonumber \\
		& \hspace{-7cm} \asymp  \left(\int_0^{t^p} f^\ast(u)^p du \right)^{1/p}
+  t (1-\log t)^{b-1/p} \left(\int_{t^p}^1 (1-\log u)^{-b p} f^\ast(u)^p     \frac{du}{u} \right)^{1/p}.
 \label{ThmBreWain2}
	\end{align}
	
	Collecting (\ref{ThmBreWain2}) and (\ref{ThmBreWain1}), we find that
	\begin{align*}
	  \left(\int_0^{t^p} f^\ast(u)^p du \right)^{1/p}+  t (1-\log t)^{b-1/p} \left(\int_{t^p}^1 (1-\log u)^{-b p} f^\ast(u)^p     \frac{du}{u} \right)^{1/p} & \\
		 & \hspace{-8cm}\lesssim K(t (1 - \log t)^{b - 1/p}, f; L_p(\Omega), W^{d/p}_p(\Omega)) \\
		 & \hspace{-8cm}\asymp t (1-\log t)^{b-1/p} \|f\|_{L_p(\Omega)} + \omega_{d/p}(f,t^{p/d} (1-\log t)^{(b p-1)/d})_{p;\Omega} \\
		 & \hspace{-8cm} \lesssim  t (1-\log t)^{b-1/p} \|f\|_{L_p(\Omega)} + (1 - \log t)^{b-1/p} \omega_{d/p}(f,t^{p/d})_{p;\Omega},
	\end{align*}
	where we have used (\ref{HomoMod}) in the last step. Therefore (ii) follows.
	
	
(ii) $\Longrightarrow$ (iii): Applying (ii),  changing variables and taking into account \eqref{DefBesovInh}, we arrive at
\begin{align}
	\int_0^1 t^{\lambda p/d} \int_{t}^1(1 - \log u)^{- b p} f^\ast(u)^p \frac{du}{u} \frac{dt}{t} &\lesssim \int_0^1 t^{\lambda p/d} \frac{dt}{t} \|f\|_{L_p(\Omega)}^p + \int_0^1 t^{(\lambda -d/p) p} \omega_{d/p}(f,t)_{p;\Omega}^p \frac{dt}{t} \nonumber \\
	& \hspace{-4cm}\asymp \lambda^{-1} \|f\|^p_{L_p(\Omega)} + \vertiii{f}_{B^{d/p-\lambda}_{p,p}(\Omega), d/p}^p \asymp  \|f\|_{B^{d/p-\lambda}_{p,p}(\Omega), d/p}^p. \label{ThmBreWain3}
\end{align}
On the other hand, we have
\begin{align}
	\int_0^1 t^{\lambda p/d} \int_{t}^1(1 - \log u)^{- b p} f^\ast(u)^p \frac{du}{u} \frac{dt}{t}
&
\nonumber \\
	& \hspace{-5cm}\asymp \lambda^{-1} \int_0^1 u^{\lambda p/d} (1-\log u)^{-b p} f^\ast(u)^p \frac{du}{u} = \lambda^{-1} \|f\|_{L_{d/\lambda,p}(\log L)_{-b}(\Omega)}^p. \label{ThmBreWain4}
\end{align}
Combining (\ref{ThmBreWain3}) and (\ref{ThmBreWain4}), we obtain \eqref{SharpSobolevLog*For}.

(iii) $\Longrightarrow$ (i): By \eqref{DefBesovInh}, we have
\begin{align}
	 \|f\|_{B^{d/p-\lambda}_{p,p}(\Omega),d/p} &= \lambda^{-1/p} \|f\|_{L_p(\Omega)} + \left(\int_0^1 t^{(\lambda -d/p) p} \omega_{d/p}(f,t)_{p;\Omega}^p \frac{dt}{t}\right)^{1/p} \nonumber\\
	  &\leq  \lambda^{-1/p} \|f\|_{L_p(\Omega)} + \left( \int_0^1 t^{\lambda p} \frac{dt}{t}\right)^{1/p} \sup_{0 < t < 1} t^{-d/p} \omega_{d/p}(f,t)_{p;\Omega} \nonumber \\
	& \asymp \lambda^{-1/p} \Big(\|f\|_{L_p(\Omega)} +  \sup_{0 < t < 1} t^{-d/p} \omega_{d/p}(f,t)_{p;\Omega} \Big) \asymp \lambda^{-1/p} \|f\|_{W^{d/p}_p(\Omega)}, \label{ThmBreWain5}
\end{align}
where the last estimate follows from (\ref{SobolevModuliD}). Hence (iii) and (\ref{ThmBreWain5}) imply
\begin{equation*}
	 \|f\|_{L_{d/\lambda,p}(\log L)_{-b}(\Omega)}  \leq C \|f\|_{W^{d/p}_p(\Omega)}, \quad \lambda \in (0, d/p).
\end{equation*}
We may now let $\lambda \to 0+$ to establish the desired estimate (i).
	
	For (i) $\iff$ (iv), see (\ref{BWOptimal}).

\end{proof}

As application of Theorems \ref{ThmTruMos} and \ref{ThmBreWain} we obtain in Corollary \ref{CorET} below the exact behaviour of the embedding constant of
\begin{equation*}
	W^{d/p}_p(\Omega) \hookrightarrow L_{q,p}(\Omega),
\end{equation*}
where $p$ is fixed and $q \to \infty$ (see (\ref{CriticalLorentzIntro})). Related questions when the target space $L_{q,p}(\Omega)$ is replaced by the bigger Lebesgue space $L_q(\Omega)$ were investigated in \cite[Section 2.7.2, pages 89--92]{EdmundsTriebel}. Such questions are of great interest in connection to extrapolation theory; see \cite[page 92]{EdmundsTriebel} and the references within.

\begin{cor}\label{CorET}
Let $1 < p < \infty$ and $\frac{d}{p} \in \mathbb{N}$. Then
  $W^{d/p}_p(\Omega) \hookrightarrow L_{q,p}(\Omega)$ with norm $\mathcal{O}(q)$ as $q \to \infty$, that is, there exists $C > 0$, which is independent of $q$, such that
		\begin{equation*}
			\|f\|_{L_{q,p}(\Omega)} \leq C q \|f\|_{W^{d/p}_p(\Omega)}.
		\end{equation*}
		Furthermore, this result is sharp in the following sense
		 \begin{equation}\label{SobolevEmbCrit}
			\|f\|_{L_{q,p}(\Omega)} \leq C q^b \|f\|_{W^{d/p}_p(\Omega)} \iff b \geq 1.
		\end{equation}
\end{cor}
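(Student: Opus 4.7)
The plan is to derive the embedding from Theorem~\ref{ThmBreWain} applied at $b=1$, and then read off the sharpness of the exponent $1$ via an extrapolation argument that upgrades uniform bounds for the $L_{q,p}$-norms to a Lorentz-Zygmund embedding that is itself characterized by Theorem~\ref{ThmBreWain}.

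For the upper bound I would first apply Theorem~\ref{ThmBreWain} (iv)$\Rightarrow$(i) with $b=1$ to obtain $W^{d/p}_p(\Omega) \hookrightarrow L_{\infty,p}(\log L)_{-1}(\Omega)$, so that only the purely rearrangement-level inequality
\begin{equation*}
\|f\|_{L_{q,p}(\Omega)} \leq C\, q\, \|f\|_{L_{\infty,p}(\log L)_{-1}(\Omega)}
\end{equation*}
remains. This I would get by writing
\begin{equation*}
\|f\|_{L_{q,p}(\Omega)}^p = \int_0^1 \bigl(t^{p/q}(1-\log t)^p\bigr)(1-\log t)^{-p} f^\ast(t)^p \,\frac{dt}{t},
\end{equation*}
extracting the sup of the bracketed factor, and using the elementary variational computation $\sup_{t \in (0,1)} t^{p/q}(1-\log t)^p \asymp q^p$ (the maximum being attained near $t=e^{-(q-1)}$). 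This settles the case $b=1$, and all $b>1$ follow since $q^b \geq q$ for $q \geq 1$.

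For the nontrivial direction of the sharpness claim in \eqref{SobolevEmbCrit}, I would argue that if $\|f\|_{L_{q,p}(\Omega)} \leq C q^b \|f\|_{W^{d/p}_p(\Omega)}$ holds for all large $q$ and all $f$, then for every $\varepsilon > 0$ one has $W^{d/p}_p(\Omega) \hookrightarrow L_{\infty,p}(\log L)_{-b-\varepsilon}(\Omega)$. By the equivalence (i)$\iff$(iv) of Theorem~\ref{ThmBreWain} this forces $b + \varepsilon \geq 1$, and letting $\varepsilon \to 0+$ yields $b \geq 1$. To produce the Lorentz-Zygmund embedding I would dyadically discretize in the logarithmic scale: set $a_j := \int_{e^{-j-1}}^{e^{-j}} f^\ast(t)^p \,dt/t$, note that
\begin{equation*}
\|f\|_{L_{\infty,p}(\log L)_{-b-\varepsilon}(\Omega)}^p \asymp \sum_{j\geq 0}(j+1)^{-(b+\varepsilon)p} a_j
\end{equation*}
since $(1-\log t) \asymp j+1$ on $[e^{-j-1}, e^{-j}]$, and observe the lower bound $\sum_{j<q} a_j \lesssim \|f\|_{L_{q,p}(\Omega)}^p$ coming from $t^{p/q} \asymp 1$ on $[e^{-q},1]$. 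The hypothesis then gives $A_J := \sum_{j<J} a_j \lesssim J^{bp} \|f\|_{W^{d/p}_p(\Omega)}^p$, and a summation-by-parts (Abel) argument bounds $\sum_{j\geq 0} (j+1)^{-(b+\varepsilon)p} a_j$ by $C_\varepsilon \|f\|_{W^{d/p}_p(\Omega)}^p$.

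The main obstacle is the unavoidable $\varepsilon$-loss when passing from uniform $L_{q,p}$-estimates to a Lorentz-Zygmund estimate: the extremal choice $f^\ast(t)=(1-\log t)^{-b-1/p}$ shows that the naive inequality $\|f\|_{L_{\infty,p}(\log L)_{-b}} \lesssim \sup_q q^{-b}\|f\|_{L_{q,p}}$ fails at the boundary, so the extrapolation must unavoidably target the strictly larger space $L_{\infty,p}(\log L)_{-b-\varepsilon}$ before one sends $\varepsilon \to 0+$ at the end.
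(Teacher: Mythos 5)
Your proof is correct, and the upper bound goes essentially the same way as the paper's (you extract the same variational bound $\sup_{0<t<1}t^{p/q}(1-\log t)^p\asymp q^p$; the paper feeds it into the pointwise rearrangement estimate \eqref{ThmTruMos1*new+p} from Theorem~\ref{ThmBreWain}(ii), you feed it into the resulting embedding (i), which amounts to the same thing).

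Where you genuinely diverge from the paper is the sharpness direction. The paper's argument is a one-liner: combine the hypothesis with the elementary norm comparison $\|f\|_{L_q(\Omega)}\le (p/q)^{1/p}\|f\|_{L_{q,p}(\Omega)}$ to get $\|f\|_{L_q(\Omega)}\lesssim q^{b-1/p}\|f\|_{W^{d/p}_p(\Omega)}$, then invoke the equivalence (iii)$\iff$(iv) of Theorem~\ref{ThmTruMos} to conclude $b-1/p\ge 1/p'$, i.e.\ $b\ge 1$. You instead perform an honest converse-extrapolation step: dyadically discretize the logarithmic scale, run an Abel summation on $A_J=\sum_{j<J}a_j\lesssim J^{bp}\|f\|_{W^{d/p}_p}^p$, and recover the $\varepsilon$-degraded Lorentz--Zygmund embedding $W^{d/p}_p(\Omega)\hookrightarrow L_{\infty,p}(\log L)_{-b-\varepsilon}(\Omega)$, after which Theorem~\ref{ThmBreWain}(i)$\iff$(iv) gives $b+\varepsilon\ge 1$. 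Both are valid; the paper's route is shorter because the passage $L_{q,p}\to L_q$ absorbs the $q^{1/p}$-loss automatically and the Trudinger threshold $1/p'$ does the rest, while yours reconstructs the limiting Lorentz--Zygmund space directly and is more in the spirit of Tao-type converse extrapolation. Your route also illustrates why the $\varepsilon$-loss is intrinsic, which is a nice piece of intuition that the paper's proof does not expose.

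One small point worth making explicit: Theorem~\ref{ThmBreWain} is stated under the standing assumption $b>1/p$, so to apply it with exponent $b+\varepsilon$ you should note that $b+\varepsilon>1/p$ is automatic. This is free: testing the hypothesis $\|f\|_{L_{q,p}(\Omega)}\le Cq^b\|f\|_{W^{d/p}_p(\Omega)}$ on the constant $f=\chi_\Omega$ gives $\|\chi_\Omega\|_{L_{q,p}(\Omega)}\asymp q^{1/p}$, forcing $b\ge 1/p$ before you even start extrapolating, so the hypothesis of the theorem is satisfied for every $\varepsilon>0$.
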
	
\begin{proof}
		For $b \in \mathbb{R}$, we have
	\begin{align*}
		\left(\int_t^1 u^{p/q} f^\ast(u)^p \frac{du}{u}\right)^{1/p} & \leq \left(\int_t^1 (-\log u)^{-b p} f^\ast(u)^p \frac{du}{u}\right)^{1/p}  \sup_{t \leq u \leq 1} u^{1/q} (- \log u)^b \\
		& = q^b \left(\int_t^1 (-\log u)^{-b p} f^\ast(u)^p \frac{du}{u}\right)^{1/p}  \sup_{t^{1/q} \leq u \leq 1} u (- \log u)^b \\
		& \lesssim q^b \left(\int_t^1 (-\log u)^{-b p} f^\ast(u)^p \frac{du}{u}\right)^{1/p}.
	\end{align*}
	Thus, applying the inequality given in Theorem \ref{ThmBreWain}(ii) with $b=1$ (see also \eqref{ThmTruMos1*new+p}), we obtain
	\begin{equation*}
			\left(\int_t^1 u^{p/q} f^\ast(u)^p \frac{du}{u}\right)^{1/p} \lesssim q ( \|f\|_{L_p(\Omega)} +  t^{-1/p} \omega_{d/p}(f, t^{1/d})_p).
	\end{equation*}
	Taking the supremum over all $t \in (0,1)$ on both sides and using a simple change of variables, we derive
	\begin{equation*}
		\|f\|_{L_{q,p}(\Omega)} \lesssim q \big(\|f\|_{L_p(\Omega)} + \sup_{0 < t < 1} t^{-d/p} \omega_{d/p}(f,t)_p\big) \lesssim q \|f\|_{W^{d/p}_p(\Omega)},
	\end{equation*}
	where the last estimate follows from (\ref{SobolevModuliD}).
	
	It remains to show the only-if part in (\ref{SobolevEmbCrit}). Assume that there exists $b$ such that
	\begin{equation*}
			\|f\|_{L_{q,p}(\Omega)} \leq C q^b \|f\|_{W^{d/p}_p(\Omega)}.
		\end{equation*}
		Combining this inequality together with the estimate (cf. \cite[p. 192]{SteinWeiss})
		\begin{equation*}
			\|f\|_{L_q(\Omega)} \leq \left(\frac{p}{q} \right)^{1/p} \|f\|_{L_{q,p}(\Omega)}, \quad p < q < \infty,
		\end{equation*}
		we arrive at
		\begin{equation*}
			\|f\|_{L_{q}(\Omega)} \leq C q^{b-1/p} \|f\|_{W^{d/p}_p(\Omega)}.
		\end{equation*}
		Then, according to Theorem \ref{ThmTruMos}, we have $b -1/p \geq 1/p'$. 
\end{proof}

\subsection{Sobolev embeddings into $L_\infty$}
We start with the following 
 Sobolev embeddings into $L_\infty(\Omega)$: if $k < d$, then
\begin{equation}\label{Stein}
	W^k L_{d/k,1}(\Omega) \hookrightarrow L_\infty(\Omega),
\end{equation}
see \cite{Stein81} and \cite{DeVoreSharpley}. Furthermore, this embedding is sharp, that is,
	\begin{equation}\label{SharpStein}
		W^k L_{d/k,q}(\Omega) \hookrightarrow L_\infty(\Omega) \iff  q \leq 1.
	\end{equation}

%

Next we complement Theorems \ref{ThmTruMos} and \ref{ThmBreWain} by characterizations of (\ref{Stein}) in terms of sharp inequalities of the $L_\infty$-moduli of smoothness, as well as extrapolation estimates of Jawerth-Franke type embeddings for Lorentz-Sobolev spaces. More specifically, we establish the following three theorems.

\begin{thm}\label{ThmStein}
Let $k \in \mathbb{N}, d > k$ and $0 < q \leq \infty$. The following statements are equivalent:
	\begin{enumerate}[\upshape(i)]
		\item
		\begin{equation*}
		W^k L_{d/k,q}(\Omega) \hookrightarrow L_\infty(\Omega),
		\end{equation*}
		\item for $ f \in W^k L_{d/k,q}(\Omega)$ and $t \in (0,1)$, we have
		\begin{equation}\label{SharpKolyada}
	t^k \|f\|_{L_\infty(\Omega)} + \omega_k(f,t)_{\infty;\Omega} \lesssim \sum_{l=0}^k \left(\int_0^{t^d} (u^{k/d} |\nabla^l f|^\ast(u))^q  \frac{du}{u} \right)^{1/q},
\end{equation}
			\item if $r \to \frac{d}{k}+$ then
			\begin{equation}\label{FrankeJawerth}
			W^k L_{r,q}(\Omega) \hookrightarrow B^{k - d/r}_{\infty,q}(\Omega)
			\end{equation}
			 with norm $\mathcal{O}((r-\frac{d}{k})^{-1/q})$, i.e., for any $m \in \N$ there exists $C > 0$, which is independent of $r$, such that
		\begin{equation}\label{SharpFrankeJawerth}
			\|f\|_{B^{k - d/r}_{\infty,q}(\Omega), m} \leq C \Big(r - \frac{d}{k}\Big)^{-1/q} \|f\|_{W^k L_{r,q}(\Omega)},
		\end{equation}
		\item
		\begin{equation*}
		q \leq 1.
		\end{equation*}
	\end{enumerate}
\end{thm}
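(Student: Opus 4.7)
The plan is to follow the four-step template successfully used for Theorem \ref{ThmStein2}, adapting each step to the endpoint $p=\infty$. The equivalence (i) $\iff$ (iv) is classical (cf.\ \eqref{SharpStein}), so I focus on the circle (i) $\Longrightarrow$ (ii) $\Longrightarrow$ (iii) $\Longrightarrow$ (i).

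For (i) $\Longrightarrow$ (ii), I would start from the chain of inclusions
\begin{equation*}
W^k L_{d/k,q}(\Omega) \hookrightarrow L_\infty(\Omega), \qquad W^k_\infty(\Omega) \hookrightarrow L_\infty(\Omega),
\end{equation*}
and apply Lemma A.2 to write, for $f \in W^k L_{d/k,q}(\Omega)$,
\begin{equation*}
K(t^k,f;L_\infty(\Omega),W^k_\infty(\Omega)) \lesssim K(t^k, f; W^k L_{d/k,q}(\Omega), W^k_\infty(\Omega)) \asymp \sum_{l=0}^k K(t^k, |\nabla^l f|; L_{d/k,q}(\Omega), L_\infty(\Omega)).
\end{equation*}
The left-hand side is equivalent to $t^k \|f\|_{L_\infty(\Omega)} + \omega_k(f,t)_{\infty;\Omega}$ by Lemma A.2(iv), while each summand on the right is computed by the Holmstedt-type formula (\ref{LemmaKFunctLorentz}) for $(L_{d/k,q}(\Omega), L_\infty(\Omega))$, yielding $\big(\int_0^{t^d} (u^{k/d} |\nabla^l f|^*(u))^q \frac{du}{u}\big)^{1/q}$. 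This is exactly \eqref{SharpKolyada}.

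For (ii) $\Longrightarrow$ (iii), the strategy mirrors the proof of Theorem \ref{ThmStein2}. Since $r \to d/k+$, one has $k - d/r \to 0+$, so it suffices to take $m=1$ in view of \eqref{JacksonInequal}. Using the Marchaud inequality \eqref{MarchaudInequal} and a discrete Hardy estimate (which is the analogue of \eqref{ThmStein2.2prev1} and survives for $q < 1$), I would reduce the estimation of $\vertiii{f}_{B^{k-d/r}_{\infty,q}(\Omega),1}$ to controlling
\begin{equation*}
\int_0^1 t^{-(k-d/r)q} \omega_k(f,t)_{\infty;\Omega}^q \frac{dt}{t} + \|f\|_{L_\infty(\Omega)}^q.
\end{equation*}
Inserting \eqref{SharpKolyada} into the first integral and applying Fubini produces exactly
\begin{equation*}
\bigl((k-d/r)q\bigr)^{-1} \sum_{l=0}^k \int_0^1 (u^{1/r} |\nabla^l f|^*(u))^q \frac{du}{u} \asymp (k - d/r)^{-1} \|f\|^q_{W^k L_{r,q}(\Omega)},
\end{equation*}
because the exponent $k/d - (k-d/r)/d$ collapses to $1/r$. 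The $\|f\|_{L_\infty}^q$ term is absorbed using $\|f\|_{L_\infty(\Omega)} \lesssim \|f\|_{W^k L_{d/k,q}(\Omega)} \leq \|f\|_{W^k L_{r,q}(\Omega)}$ (bounded domain). Finally, the normalization constant $c_{k-d/r,m,q} \asymp (k-d/r)^{-1/q} \asymp (r-d/k)^{-1/q}$ in \eqref{DefBesovInh} accounts for the $L_\infty$ term in the full Besov norm, giving \eqref{SharpFrankeJawerth}.

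For (iii) $\Longrightarrow$ (i), the Besov-norm normalization works in our favour: from \eqref{DefBesovInh}, $c_{k-d/r,m,q}\|f\|_{L_\infty(\Omega)} \leq \|f\|_{B^{k-d/r}_{\infty,q}(\Omega),m}$, so
\begin{equation*}
\|f\|_{L_\infty(\Omega)} \lesssim (r - d/k)^{1/q} \|f\|_{B^{k-d/r}_{\infty,q}(\Omega),m} \leq C \|f\|_{W^k L_{r,q}(\Omega)}
\end{equation*}
with $C$ independent of $r$, and passing $r \to d/k+$ together with the monotone convergence theorem yields (i).

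The main technical obstacle is the bookkeeping of constants in step (ii) $\Longrightarrow$ (iii): one must verify that \emph{every} appearance of $(k-d/r)$ is tracked so as to produce precisely the blow-up rate $(r-d/k)^{-1/q}$ on the right of \eqref{SharpFrankeJawerth}, including the delicate interplay between the Marchaud step (whose Hardy constant stays uniformly bounded as $r \to d/k+$) and the Fubini step (which is where the genuine $(k-d/r)^{-1}$ factor appears). Handling the quasi-Banach range $q < 1$ by replacing Hardy's inequality with a direct summation argument, exactly as in \eqref{ThmStein2.2prev1}, is the other point that requires care.
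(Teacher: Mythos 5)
Your proposal follows essentially the same route as the paper's proof: step (i) $\Rightarrow$ (ii) is the $K$-functional comparison via Lemma A.2 and Lemma A.1 (Holmstedt), step (ii) $\Rightarrow$ (iii) is the Marchaud plus discrete-Hardy reduction followed by Fubini, and step (iii) $\Rightarrow$ (i) uses the normalization constant in \eqref{DefBesovInh} and monotone convergence. One small point worth cleaning up: in absorbing the $\|f\|_{L_\infty(\Omega)}$ term in (ii) $\Rightarrow$ (iii), you invoke $\|f\|_{L_\infty(\Omega)} \lesssim \|f\|_{W^k L_{d/k,q}(\Omega)}$, which is literally statement (i); to keep the implication (ii) $\Rightarrow$ (iii) self-contained you should note that this bound follows directly from \eqref{SharpKolyada} (e.g.\ by letting $t\to 1-$, or, as the paper does, by multiplying through by $t^{d/r - k}$ and taking a supremum), after which the monotone inclusion $\|g\|_{L_{d/k,q}(\Omega)}\le\|g\|_{L_{r,q}(\Omega)}$ for $r>d/k$ on the unit-measure domain gives the $r$-uniform bound you need. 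With that remark the argument is complete and matches the paper.
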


\begin{thm}\label{ThmSteinRd}
Let $k \in \mathbb{N}, d > k$ and $0 < q \leq \infty$. The following statements are equivalent:
	\begin{enumerate}[\upshape(i)]
		\item
		\begin{equation*}
		W^k L_{d/k,q}(\R^d) \hookrightarrow L_\infty(\R^d),
		\end{equation*}
		\item for $ f \in W^k L_{d/k,q}(\R^d) + W^k_\infty(\R^d)$ and $t \in (0,\infty)$, we have
		\begin{equation}\label{SharpKolyadaRd}
	\min\{1,t^k\} \|f\|_{L_\infty(\R^d)} + \omega_k(f,t)_{\infty;\R^d} \lesssim \sum_{l=0}^k \left(\int_0^{t^d} (u^{k/d} |\nabla^l f|^\ast(u))^q  \frac{du}{u} \right)^{1/q},
\end{equation}
			\item if $r \to \frac{d}{k}+$ then
			\begin{equation}\label{FrankeJawerthRd}
			W^k L_{r,q}(\R^d) \hookrightarrow B^{k - d/r}_{\infty,q}(\R^d)
			\end{equation}
			 with norm $\mathcal{O}((r-\frac{d}{k})^{-1/q})$, i.e., for any $m \in \N$ there exists $C > 0$, which is independent of $r$, such that
		\begin{equation}\label{SharpFrankeJawerthRd}
			\|f\|_{B^{k - d/r}_{\infty,q}(\R^d), m} \leq C \Big(r - \frac{d}{k}\Big)^{-1/q} \|f\|_{W^k L_{r,q}(\R^d)},
		\end{equation}
		\item
		\begin{equation*}
		q \leq 1.
		\end{equation*}
	\end{enumerate}
\end{thm}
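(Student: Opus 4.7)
The plan is to mirror the strategy of Theorem \ref{ThmStein2Rd}, with the Lorentz target $L_p(\R^d)$ replaced by $L_\infty(\R^d)$. The equivalence (i) $\iff$ (iv) is classical (see \eqref{SharpStein}), so only the implications (i) $\Rightarrow$ (ii) $\Rightarrow$ (iii) $\Rightarrow$ (i) need attention.

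For (i) $\Rightarrow$ (ii), the assumed embedding yields
\begin{equation*}
K(t^k, f; L_\infty(\R^d), W^k_\infty(\R^d)) \lesssim K(t^k, f; W^k L_{d/k,q}(\R^d), W^k_\infty(\R^d))
\end{equation*}
for $f \in W^k L_{d/k,q}(\R^d) + W^k_\infty(\R^d)$. The left-hand side is identified via the $\R^d$-analogue of Lemma A.2 (cf.\ the use of Lemma A.2(ii) in Theorem \ref{ThmStein2Rd}) as $\min\{1,t^k\}\|f\|_{L_\infty(\R^d)} + \omega_k(f,t)_{\infty;\R^d}$. Splitting the right-hand side componentwise over $l=0,\dots,k$ and applying the Holmstedt-type formula from Lemma \ref{LemmaHolm4} gives
\begin{equation*}
K(t, |\nabla^l f|; L_{d/k,q}(\R^d), L_\infty(\R^d)) \asymp \left(\int_0^{t^{d/k}} (u^{k/d}|\nabla^l f|^\ast(u))^q \frac{du}{u}\right)^{1/q},
\end{equation*}
with no $t$-weighted tail term since $L_\infty$ is the largest endpoint. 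Setting $t \mapsto t^k$ produces \eqref{SharpKolyadaRd}.

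For (ii) $\Rightarrow$ (iii), I follow the same scheme as in the proof of (ii) $\Rightarrow$ (iii) of Theorem \ref{ThmStein2Rd}: by the Marchaud inequality \eqref{MarchaudInequal}, it suffices to treat $\omega_1$, and then
\begin{equation*}
\vertiii{f}^q_{\dot B^{k-d/r}_{\infty,q}(\R^d),1} \lesssim \int_0^\infty t^{(1-k+d/r)q} \left(\int_t^\infty \frac{\omega_k(f,u)_{\infty;\R^d}}{u}\frac{du}{u}\right)^q \frac{dt}{t}.
\end{equation*}
A Hardy-type estimate (analogous to \eqref{ThmStein2.2prev1}, with the same $q\geq 1$ / $q<1$ case split) transfers this to an integral of $t^{-(k-d/r)q}\omega_k(f,t)_{\infty;\R^d}^q$, into which I substitute \eqref{SharpKolyadaRd}. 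Fubini and the Hardy inequality from Lemma \ref{LemmaHardySharp} (or a dyadic resummation when $q<1$) then bound the result by $(k/d - 1/r)^{-1}\sum_{l=0}^k \||\nabla^l f|\|_{L_{r,q}(\R^d)}^q$. The $L_\infty$ norm is handled by taking $t\to\infty$ in \eqref{SharpKolyadaRd} and applying the interpolation-constant estimate used in \eqref{ThmStein2.6++Rd}--\eqref{gamma2} to show $\|f\|_{L_\infty(\R^d)} \leq C\|f\|_{W^k L_{r,q}(\R^d)}$ uniformly in $r$. Since $r\to (d/k)+ \iff k/d - 1/r \to 0+$, combining everything yields the prefactor $(r-d/k)^{-1/q}$ in \eqref{SharpFrankeJawerthRd}.

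For (iii) $\Rightarrow$ (i) I use the critical limit formula
\begin{equation*}
\lim_{r\to(d/k)+}\Bigl(r-\tfrac{d}{k}\Bigr)^{1/q}\|f\|_{B^{k-d/r}_{\infty,q}(\R^d),m}\asymp \|f\|_{L_\infty(\R^d)}+\text{(lower-order terms controlled by }\|f\|_{W^k L_{d/k,q}(\R^d)}\text{)},
\end{equation*}
an extrapolation identity of Bourgain--Br\'ezis--Mironescu / Karadzhov--Milman--Xiao type in the supercritical/critical regime (cf.\ \cite{Milman05, KaradzhovMilmanXiao}); passing to the limit in \eqref{SharpFrankeJawerthRd} and invoking monotone convergence produces (i). The \textbf{main obstacle} will be the tracking of uniform-in-$r$ constants in the step (ii) $\Rightarrow$ (iii), particularly identifying the correct balance between the Hardy-type estimate in the quasi-Banach range $q<1$ and the $r$-dependent weight $t^{(1-k+d/r)q}$, so that the blow-up is exactly $(r-d/k)^{-1/q}$ rather than a worse rate; a secondary subtlety is justifying the uniform $L_\infty$ bound in the $\R^d$ setting without access to a finite ambient measure, which requires the interpolation-constant argument \eqref{gamma2new} rather than a direct inclusion.
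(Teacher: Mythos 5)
Your overall scheme — (i)$\Rightarrow$(ii) via $K$-functional identification, (ii)$\Rightarrow$(iii) via Marchaud plus Hardy, (iii)$\Rightarrow$(i) via extrapolation, with (i)$\iff$(iv) classical — is the right one, and it is what the paper does (the paper's proof of Theorem \ref{ThmSteinRd} simply points to the proof of Theorem \ref{ThmStein} and says it transfers line by line). Two places where you depart from the paper deserve a flag.

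First, the target of \eqref{SharpFrankeJawerthRd} is the \emph{inhomogeneous} $B^{k-d/r}_{\infty,q}(\R^d)$, whose seminorm $\vertiii{\cdot}_{B^{k-d/r}_{\infty,q}(\R^d),1}$ integrates over $(0,1)$, not $(0,\infty)$; your display writes $\vertiii{f}^q_{\dot B^{k-d/r}_{\infty,q},1}$ with an outer $\int_0^\infty$, which is a different (homogeneous) object. In the paper's argument (cf. \eqref{ThmStein2.2prev} with $p=\infty$), after Marchaud the inner integral $\int_t^\infty$ is split at $1$, with the tail $\int_1^\infty \omega_k(f,u)_{\infty}/u \cdot du/u$ absorbed into $\|f\|_{L_\infty(\R^d)}$; the outer integral stays on $(0,1)$ so that the Hardy step \eqref{ThmStein2.2prev1} applies on a finite interval. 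If you keep $\int_0^\infty$ and $\dot B$ you are proving a slightly different estimate and you then still need to recover the $c_{s,m,q}\|f\|_{L_\infty}$ piece of \eqref{DefBesovInh} separately.

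Second, for (iii)$\Rightarrow$(i) you invoke a Bourgain--Br\'ezis--Mironescu / Karadzhov--Milman--Xiao style limit formula; that machinery is what the paper actually uses for the \emph{homogeneous} version (Theorem \ref{ThmSteinRdHom}, where the Besov seminorm contains no $L_\infty$ term to extract), but for the inhomogeneous statement here the argument is a one-liner: by the normalization in \eqref{DefBesovInh}, $c_{k-d/r,m,q}\asymp (k-d/r)^{-1/q}\asymp (r-d/k)^{-1/q}$, hence $\|f\|_{L_\infty(\R^d)}\leq c^{-1}_{k-d/r,m,q}\|f\|_{B^{k-d/r}_{\infty,q}(\R^d),m}\lesssim (r-d/k)^{1/q}\|f\|_{B^{k-d/r}_{\infty,q}(\R^d),m}$, and combining with \eqref{SharpFrankeJawerthRd} and letting $r\to d/k+$ gives (i) directly. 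Similarly, your proposal to establish the uniform-in-$r$ bound $\|f\|_{L_\infty(\R^d)}\leq C\|f\|_{W^k L_{r,q}(\R^d)}$ via the interpolation-constant device of \eqref{ThmStein2.6++Rd}--\eqref{gamma2} would work, but the paper gets it in two lines directly from (ii): multiply and divide by $u^{1/r-k/d}$ inside the integral of \eqref{SharpKolyadaRd} to obtain $t^{d/r}\|f\|_{L_\infty(\R^d)}\lesssim\sum_l \||\nabla^l f|\|_{L_{r,q}(\R^d)}$ and then take $t\uparrow 1$ (cf. \eqref{p+p}). Neither of your alternatives is wrong, but they import heavier tools than the proof needs and obscure the uniformity-in-$r$ bookkeeping, which is exactly the point you correctly identify as the delicate part.
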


\begin{thm}\label{ThmSteinRdHom}
Let $k \in \mathbb{N}, d > k$ and $0 < q \leq \infty$. The following statements are equivalent:
	\begin{enumerate}[\upshape(i)]
		\item
		\begin{equation*}
		(V^k L_{d/k,q}(\R^d))_0 \hookrightarrow L_\infty(\R^d),
		\end{equation*}
		\item for $ f \in (V^k L_{d/k,q}(\R^d))_0 + (V^k_\infty(\R^d))_0$ and $t \in (0,\infty)$, we have
		\begin{equation}\label{SharpKolyadaRdHom}
 \omega_k(f,t)_{\infty;\R^d} \lesssim \left(\int_0^{t^d} (u^{k/d} |\nabla^k f|^\ast(u))^q  \frac{du}{u} \right)^{1/q},
\end{equation}
			\item $r \to \frac{d}{k}+$ then
			\begin{equation}\label{FrankeJawerthRdHom}
			(V^k L_{r,q}(\R^d))_0 \hookrightarrow B^{k - d/r}_{\infty,q}(\R^d)
			\end{equation}
			 with norm $\mathcal{O}((r-\frac{d}{k})^{-1/q})$, i.e., for any $m \in \N$ there exists $C > 0$, which is independent of $r$, such that
		\begin{equation}\label{SharpFrankeJawerthRdHom}
			\|f\|_{\dot{B}^{k - d/r}_{\infty,q}(\R^d), m} \leq C \Big(r - \frac{d}{k}\Big)^{-1/q} \| |\nabla^k f|\|_{L_{r,q}(\R^d)}
		\end{equation}
		for $f \in (V^k L_{r,q}(\R^d))_0$,
		\item
		\begin{equation*}
		q \leq 1.
		\end{equation*}
	\end{enumerate}
\end{thm}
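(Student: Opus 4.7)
The plan is to follow the cyclic scheme (i) $\Longrightarrow$ (ii) $\Longrightarrow$ (iii) $\Longrightarrow$ (i), together with the known equivalence (i) $\iff$ (iv) recorded in \eqref{SharpStein+}. The structure mirrors the proof of Theorem \ref{ThmStein2RdHom} above; the main adaptations are dictated by replacing the $L_p$-target by $L_\infty$ and working with homogeneous spaces on $\R^d$.

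For (i) $\Longrightarrow$ (ii), I would fix $f\in(V^k L_{d/k,q}(\R^d))_0 + (V^k_\infty(\R^d))_0$ and, as in \eqref{ThmStein2.1RdHom}--\eqref{New24*}, combine the interpolation monotonicity
\[
K(t^k, f; L_\infty(\R^d), (V^k_\infty(\R^d))_0) \lesssim K(t^k, f; (V^k L_{d/k,q}(\R^d))_0, (V^k_\infty(\R^d))_0),
\]
the Lorentz-Sobolev $K$-functional identity in \eqref{LemmaKFunctLorentzSobolev},
\[
K(t, f; (V^k L_{d/k,q}(\R^d))_0, (V^k_\infty(\R^d))_0) \asymp K(t, |\nabla^k f|; L_{d/k,q}(\R^d), L_\infty(\R^d)),
\]
and the classical Holmstedt-type evaluation $K(t, g; L_{d/k,q}(\R^d), L_\infty(\R^d)) \asymp (\int_0^{t^{d/k}}(u^{k/d}g^\ast(u))^q\,du/u)^{1/q}$ from Lemma A.1. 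The left-hand side $K(t^k, f; L_\infty(\R^d), (V^k_\infty(\R^d))_0)$ controls $\omega_k(f,t)_{\infty;\R^d}$ via the standard estimate $\omega_k(f,t)_{\infty;\R^d}\lesssim \|f-g\|_{L_\infty(\R^d)}+t^k\||\nabla^k g|\|_{L_\infty(\R^d)}$, which after infimizing over $g$ yields \eqref{SharpKolyadaRdHom}.

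For (ii) $\Longrightarrow$ (iii), I would proceed as in the proof of Theorem \ref{ThmStein2RdHom}: it suffices (by \eqref{JacksonInequal}) to treat $m=1$. Using the Marchaud inequality \eqref{MarchaudInequal} to dominate $\omega_1$ by a weighted integral of $\omega_k$ and invoking \eqref{SharpKolyadaRdHom}, I would obtain
\[
\vertiii{f}^q_{\dot{B}^{k-d/r}_{\infty,q}(\R^d),1}
\lesssim \int_0^\infty t^{-(k-d/r)q}\int_0^{t^d}(u^{k/d}|\nabla^k f|^\ast(u))^q\,\frac{du}{u}\,\frac{dt}{t},
\]
and a Fubini interchange would collapse the double integral to $(k/d-1/r)^{-1}\||\nabla^k f|\|^q_{L_{r,q}(\R^d)}$, producing the factor $(r-d/k)^{-1/q}$. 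Special care is needed when $q<1$ (discrete Hardy in sequence form, as in the proof of Theorem \ref{ThmStein2}) and to handle the upper integration limit on the outer integral, which requires the bound $\omega_k(f,t)_{\infty;\R^d}\lesssim \||\nabla^k f|\|_{L_{d/k,q}(\R^d)}\lesssim \||\nabla^k f|\|_{L_{r,q}(\R^d)}$ valid under the assumption $r\to(d/k)+$ together with the a priori endpoint (which is already available by (ii) itself applied on a truncated range).

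For (iii) $\Longrightarrow$ (i), I would apply the extrapolation limit for homogeneous Besov norms from \cite{Milman05} (used at the end of the proof of Theorem \ref{ThmStein2RdHom}),
\[
\lim_{r\to (d/k)+}\Big(r-\tfrac{d}{k}\Big)^{1/q}\|f\|_{\dot{B}^{k-d/r}_{\infty,q}(\R^d),m}\asymp \|f\|_{L_\infty(\R^d)},
\]
which combined with \eqref{SharpFrankeJawerthRdHom} yields $\|f\|_{L_\infty(\R^d)}\lesssim \||\nabla^k f|\|_{L_{r,q}(\R^d)}$ uniformly, and density of $C_0^\infty(\R^d)$ closes the argument. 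Finally (i) $\iff$ (iv) is \eqref{SharpStein+}. The main obstacle is the implication (ii) $\Longrightarrow$ (iii): tracking the precise dependence of the constants on $r$ as $r\to (d/k)+$ forces separate treatments of the ranges $q\geq 1$ and $q<1$, and one must justify that the piece of the outer integral over $t\geq 1$ does not produce spurious growth, which is the only real novelty compared with the subcritical case already handled in Theorem \ref{ThmStein2RdHom}.
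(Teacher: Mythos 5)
Your outline of (i)$\Longrightarrow$(ii) and (ii)$\Longrightarrow$(iii) matches the paper's approach closely (up to a minor notational slip: for the homogeneous $\R^d$ Besov norm you want $\|\cdot\|_{\dot{B}^{k-d/r}_{\infty,q}(\R^d),1}$ integrated over $(0,\infty)$, not $\vertiii{\cdot}$, and then the Fubini collapses the double integral cleanly with no need to split off a $t\geq 1$ piece — that concern is relevant only for the bounded-domain Theorem \ref{ThmStein}, where an extra $\|f\|_{L_\infty(\Omega)}$ term appears).

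The genuine gap is in (iii)$\Longrightarrow$(i). You assert
\[
\lim_{r\to (d/k)+}\Big(r-\tfrac{d}{k}\Big)^{1/q}\|f\|_{\dot{B}^{k-d/r}_{\infty,q}(\R^d),m}\asymp \|f\|_{L_\infty(\R^d)}
\]
as if it follows from the same Milman reference that was invoked in the $p<\infty$ case of Theorem \ref{ThmStein2RdHom}. It does not: the abstract Milman limit only produces $\lim_{\theta\to 0+}\theta^{1/q}\|f\|_{(L_\infty,\dot{W}^m_\infty)_{\theta,q}}\asymp\lim_{t\to\infty}K(t,f;L_\infty,\dot{W}^m_\infty)\asymp\lim_{t\to\infty}\omega_m(f,t)_{\infty;\R^d}$, and for $p=\infty$ the right-hand side is \emph{not} $\|f\|_{L_\infty(\R^d)}$ in general (for a nonzero constant $f$ one has $\omega_m(f,t)_{\infty}\equiv 0$, while $\|f\|_{L_\infty}>0$). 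The paper's proof of this implication contains precisely the missing step: it shows $\lim_{t\to\infty}\omega_m(f,t)_{\infty;\R^d}\asymp\|f\|_{L_\infty(\R^d)}$ under the additional hypothesis that the level sets $\{x:|f(x)|>\lambda\}$ have finite measure for every $\lambda>0$ (which is available because $f\in(V^kL_{d/k,q}(\R^d))_0$). This is proved by picking a near-supremum point $x_\varepsilon$ and a point $y_\varepsilon$ where $|f(y_\varepsilon)|\leq\varepsilon$ (which exists by the finite-level-set condition), giving $\omega_1(f,|x_\varepsilon-y_\varepsilon|)_\infty\geq\|f\|_\infty-2\varepsilon$, and then bootstrapping to $\omega_m$ via Marchaud. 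Without this argument the limit identity is false, and your proposal does not close the loop.
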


\begin{rem}\label{Remark ThmStein}
(i) Inequality (\ref{SharpKolyadaRdHom}) with $q=1$, i.e.,
	\begin{equation*}
	\omega_k(f,t)_\infty \lesssim \int_0^t u^k |\nabla^k f|^\ast(u^d)  \frac{du}{u}
\end{equation*}
was shown by DeVore and Sharpley \cite[Lemma 2]{DeVoreSharpley} if $k=1$ and functions $f$ defined on the unit cube in $\R^d$ and by Kolyada and P\'erez L\'azaro \cite[(1.6)]{KolyadaPerez} for higher-order derivatives and functions $f$ on $\R^d$. It plays a central role in the theory of function spaces as can be seen in \cite{Haroske}, \cite{GogatishviliMouraNevesOpic}, and the references given there. 

(ii) Recall that the Jawerth-Franke embedding for Lorentz-Sobolev spaces asserts that if $1 < r < p < \infty, 0 < q \leq \infty$ and $k > d \Big(\frac{1}{r} - \frac{1}{p} \Big)$, then
\begin{equation}\label{SeegerTrebels*}
	W^k L_{r,q}(\R^d) \hookrightarrow B^{k - d/r + d/p}_{p,q}(\R^d);
\end{equation}
cf. (\ref{SeegerTrebels}). However, the interesting case $p = \infty$ in (\ref{SeegerTrebels*}) was left open in \cite{SeegerTrebels}. As a byproduct of (\ref{FrankeJawerthRd}), we can cover this case and, in addition, obtain sharp estimates of the rates of blow-up of the corresponding embedding constant. More specifically, we derive
 \begin{equation}\label{newFJ}
	(V^k L_{r,q}(\R^d))_0 \hookrightarrow \dot{B}^{k - d/r}_{\infty, q}(\R^d), \quad q \leq 1, \quad d/r < k < d,
\end{equation}
with norm $\mathcal{O}((r-\frac{d}{k})^{-1/q})$. In particular, this shows that (\ref{newFJ}) does not hold in the limiting case $r=d/k$ in the case  when the Besov spaces are defined in terms of  moduli of smoothness. This matches the fact that $\dot{B}^0_{\infty,q}(\R^d)$ endowed with $\|f\|_{\dot{B}^0_{\infty,q}(\Omega), m} = \left(\int_0^\infty \omega_m(f,t)_\infty^q \frac{dt}{t} \right)^{1/q}, \, m \in \N,$ (see \eqref{DefBesov}) becomes trivial.

(iii) The difference  between the subcritical and critical cases given in Theorems \ref{ThmStein2} and \ref{ThmStein}, respectively, is the sharp exponent $q$. More specifically, in the subcritical case we obtain the optimal index $q = p$ (see Theorem \ref{ThmStein2}(iv)), whereas $q=1$ in the critical case (see Theorem \ref{ThmStein}(iv)).
%
\end{rem}

\begin{proof}[Proof of Theorem \ref{ThmStein}]
	(i) $\Longrightarrow$ (ii): By (\ref{LemmaKFunctLorentzSobolev}) and (\ref{LemmaKFunctLorentzLinfty}) we have
	\begin{align*}
		t^k \|f\|_{L_\infty(\Omega)}& + \omega_k(f,t)_{\infty;\Omega} \asymp K(t^k, f ; L_\infty(\Omega), W^k_\infty(\Omega))  \nonumber \\
		& \lesssim K(t^k, f; W^k L_{d/k,q}(\Omega), W^k_\infty(\Omega))
   \asymp \sum_{l=0}^k \left(\int_0^{t^{d}} (u^{k/d} |\nabla^l f|^\ast(u))^q  \frac{du}{u} \right)^{1/q}.
	\end{align*}
	
	
	(ii) $\Longrightarrow$ (iii): Since $r \to d/k+$, we may assume, without loss of generality, that $k - d/r < 1$. In light of (\ref{JacksonInequal}), it is enough to show (\ref{SharpFrankeJawerth}) with $m=1$. Following (\ref{ThmStein2.2prev}) and (\ref{ThmStein2.2prev1}) (with $p=\infty$), we have
	\begin{equation*}
		\vertiii{f}_{B^{k-d/r}_{\infty,q}(\Omega), 1} \leq C  \left( \int_0^1 t^{-(k-d/r) q} \omega_k(f, t)_{\infty;\Omega}^q \frac{dt}{t}\right)^{1/q} + C \|f\|_{L_\infty(\Omega)},
	\end{equation*}
	where $C > 0$ does not depend on $r$. Therefore, applying (ii) and a simple change of variables, we obtain
	\begin{align}
		\vertiii{f}_{B^{k-d/r}_{\infty,q}(\Omega), 1}
		& \lesssim \|f\|_{L_\infty(\Omega)} + \sum_{l=0}^k \left( \int_0^1 t^{-(k-d/r) q}  \int_0^t (u^k |\nabla^l f|^\ast(u^d))^q  \frac{du}{u} \frac{dt}{t}\right)^{1/q} \nonumber\\
		& \asymp \|f\|_{L_\infty(\Omega)} +  (k r-d)^{-1/q} \sum_{l=0}^k \left(\int_0^1 (t^{1/r} |\nabla^l f|^\ast(t))^q \frac{dt}{t} \right)^{1/q}. \label{6new}
	\end{align}
	It follows from \eqref{DefBesovInh} and \eqref{6new} that
	\begin{align*}
		\|f\|_{B^{k-d/r}_{\infty,q}(\Omega), 1} &\asymp \Big(k-\frac{d}{r}\Big)^{-1/q} \|f\|_{L_\infty(\Omega)} + \vertiii{f}_{B^{k-d/r}_{\infty,q}(\Omega), 1} \\
		& \lesssim  \Big(k-\frac{d}{r}\Big)^{-1/q}  (\|f\|_{L_\infty(\Omega)} + \|f\|_{W^k L_{r,q}(\Omega)}).
	\end{align*}
	Thus the proof of (iii) will be complete  if we show that
	\begin{equation}\label{p+p}
		\|f\|_{L_\infty(\Omega)} \leq C \|f\|_{W^k L_{r,q}(\Omega)},
	\end{equation}
	where $C$ is independent of $r$. This inequality can be derived as follows. Using (ii) we obtain
	\begin{align*}
	 t^k \|f\|_{L_\infty(\Omega)} &\lesssim \sum_{l=0}^k \left(\int_0^{t^d} (u^{k/d} |\nabla^l f|^\ast(u))^q  \frac{du}{u} \right)^{1/q} \\
	 & \leq  \sum_{l=0}^k \left(\int_0^{t^d} (u^{1/r} |\nabla^l f|^\ast(u))^q  \frac{du}{u} \right)^{1/q}  \sup_{0 < u < t^d} u^{k/d-1/r} \\
	 & = t^{k-d/r} \sum_{l=0}^k \left(\int_0^{t^d} (u^{1/r} |\nabla^l f|^\ast(u))^q  \frac{du}{u} \right)^{1/q}
	\end{align*}
	for all $t \in (0,1)$. Thus,
	\begin{equation*}
		t^{d/r} \|f\|_{L_\infty(\Omega)} \lesssim  \sum_{l=0}^k \left(\int_0^{t^d} (u^{1/r} |\nabla^l f|^\ast(u))^q  \frac{du}{u} \right)^{1/q}, \quad t \in (0,1).
	\end{equation*}
	Taking the supremum over all $t \in (0,1)$ in the previous inequality we arrive at \eqref{p+p}.

	(iii) $\Longrightarrow$ (i): Assume $r > d/k$. According to \eqref{DefBesovInh}, we have
	\begin{equation*}
		\|f\|_{L_\infty(\Omega)} \leq C \Big( k - \frac{d}{r}\Big)^{1/q} \|f\|_{B^{k-d/r}_{\infty,q}(\Omega), m} \asymp C  \Big(r-\frac{d}{k} \Big)^{1/q} \|f\|_{B^{k-d/r}_{\infty,q}(\Omega), m},
	\end{equation*}
	where $C > 0$ is independent of $r$. Invoking now (iii) we infer that
	\begin{equation*}
		\|f\|_{L_\infty(\Omega)} \leq C \|f\|_{W^k L_{r,q}(\Omega)},
	\end{equation*}
	where $C > 0$ does not depend on $r$. Thus from monotone convergence we deduce that $W^k L_{d/k,q}(\Omega) \hookrightarrow L_\infty(\Omega)$.

	The equivalence (i) $\iff$ (iv) was already stated in (\ref{SharpStein}).
\end{proof}

Proof of Theorem \ref{ThmSteinRd}  follows line by line the proof of Theorem \ref{ThmStein}.

\begin{proof}[Proof of Theorem \ref{ThmSteinRdHom}]
	(i) $\Longrightarrow$ (ii): Given any decomposition $f=g+h$ with $g \in L_\infty(\R^d)$ and $h \in (V^k_\infty(\R^d))_0$, it follows from the triangle inequality and \eqref{DerMod} that
	\begin{equation*}
		\omega_k(f,t)_{\infty;\R^d} \leq \omega_k(g,t)_{\infty;\R^d} + \omega_k(h,t)_{\infty;\R^d} \lesssim \|g\|_{L_\infty(\R^d)} + t^k \| |\nabla^k h| \|_{L_\infty(\R^d)}.
	\end{equation*}
	Thus taking the infimum over all decompositions of $f$, we derive
	\begin{equation}\label{New24}
	\omega_k(f,t)_{\infty;\R^d} \lesssim K(t^k,f ; L_\infty(\R^d), (V^k_\infty(\R^d))_0).
	\end{equation}
	 By \eqref{New24}, (i), (\ref{LemmaKFunctLorentzSobolev}) and (\ref{LemmaKFunctLorentzLinfty}) we have
	\begin{equation*}
		 \omega_k(f,t)_{\infty;\R^d}  \lesssim K(t^k, f; (V^k L_{d/k,q}(\R^d))_0, (V^k_\infty(\R^d))_0) \asymp  \left(\int_0^{t^{d}} (u^{k/d} |\nabla^k f|^\ast(u))^q  \frac{du}{u} \right)^{1/q}.
	\end{equation*}

The proof of 	(ii) $\Longrightarrow$ (iii) is similar to the proof in Theorem \ref{ThmStein}.
	
	(iii) $\Longrightarrow$ (i): Without loss of generality we may assume that $k - \frac{d}{r} < m$. Then
	\begin{equation*}
		 \|f\|_{\dot{B}^{k - d/r}_{\infty,q}(\R^d), m} \asymp \|f\|_{(L_\infty(\R^d), \dot{W}^m_\infty(\R^d))_{\frac{k r-d}{m r}}, q}
	\end{equation*}
	with equivalence constants independent of $r$. This enables us to rewrite (iii) as
	\begin{equation*}
		 \Big(k - \frac{d}{r}\Big)^{1/q} \|f\|_{(L_\infty(\R^d), \dot{W}^m_\infty(\R^d))_{\frac{1}{m} (k - \frac{d}{r})}, q} \leq C  \| |\nabla^k f|\|_{L_{r,q}(\R^d)}
	\end{equation*}
	where $C$ does not depend on $r$. After the change of variables $\theta = \frac{1}{m} \big(k-\frac{d}{r} \big)$, we can take limits in the previous inequality as $r \to \frac{d}{k}+$ (or equivalently, $\theta \to 0+$) to find that
	\begin{equation*}
		\lim_{\theta \to 0+}  \theta^{1/q}  \|f\|_{(L_\infty(\R^d), \dot{W}^m_\infty(\R^d))_{\theta, q}}  \leq C \||\nabla^k f|\|_{L_{d/k, q}(\R^d)}, \quad f \in (V^k L_{d/k,q}(\R^d))_0.
	\end{equation*}
	The limit of the interpolation norms given in the left-hand side can be expressed in terms of the moduli of smoothness by invoking \cite[Theorem 1]{Milman05}, namely,
	\begin{equation*}
	\lim_{\theta \to 0+}  \theta^{1/q}  \|f\|_{(L_\infty(\R^d), \dot{W}^m_\infty(\R^d))_{\theta, q}}  = \lim_{t \to \infty} K(t,f; L_\infty(\R^d), \dot{W}^m_\infty(\R^d)) \asymp  \lim_{t \to \infty} \omega_m(f,t)_{\infty;\R^d}.
	\end{equation*}
	Then the proof will be complete  if we show that
	\begin{equation}\label{Linfty}
		 \lim_{t \to \infty} \omega_m(f,t)_{\infty;\R^d} \asymp \|f\|_{L_\infty(\R^d)}
	\end{equation}
	whenever $\{x \in \R^d : |f(x)| > \lambda\}$ is of finite measure for each $\lambda > 0$ (note that however  formula \eqref{Linfty}  does not hold for any  function from  $L_\infty(\R^d)$). Clearly, we have $\omega_m(f,t)_{\infty;\R^d} \lesssim \|f\|_{L_\infty(\R^d)}, \, t > 0,$ and thus $\lim_{t \to \infty} \omega_m(f,t)_{\infty;\R^d} \lesssim \|f\|_{L_\infty(\R^d)}$. To show the converse inequality we shall first assume $m=1$.  Note that
	\begin{equation}\label{Linfty2}
		\omega_1(f,t)_{\infty;\R^d}  = \sup_{|h| \leq t} \sup_{x \in \R^d} |f(x+h)-f(x)| = \sup_{|x-y| \leq t} |f(y)-f(x)|.
	\end{equation}
	Let $\varepsilon > 0$, there exists $x_\varepsilon \in \R^d$ such that $\|f\|_{L_\infty(\R^d)} < |f(x_\varepsilon)| + \varepsilon$. On the other hand, there exists $y_\varepsilon \in \R^d, \, y_\varepsilon \neq x_\varepsilon,$ such that $|f(y_\varepsilon)| \leq \varepsilon$. Otherwise, the level set $\{x \in \R^d : |f(x)| > \varepsilon\} = \R^d$, which is not possible. Consequently, by \eqref{Linfty2}, we have
	\begin{align*}
		\sup_{t > 0}\omega_1(f,t)_{\infty;\R^d} & \geq \omega_1(f,|x_\varepsilon - y_\varepsilon|)_{\infty;\R^d} \geq |f(x_\varepsilon)-f(y_\varepsilon) | \\
		& \geq |f(x_\varepsilon)| - |f(y_\varepsilon)| \geq \|f\|_{L_\infty(\R^d)} - 2 \varepsilon.
	\end{align*}
	Taking limits as $\varepsilon \to 0+$ and using the monotonicity properties of the moduli of smoothness, we arrive at
	\begin{equation}\label{666}
		\lim_{t \to \infty} \omega_1(f,t)_{\infty;\R^d} = \sup_{t > 0}\omega_1(f,t)_{\infty;\R^d} \geq \|f\|_{L_\infty(\R^d)}.
	\end{equation}
	If $m > 1$ and $t > 0$ then, by \eqref{MarchaudInequal},
	\begin{equation*}
			\omega_1 (f,t)_{\infty;\R^d} \lesssim t \int_t^\infty \frac{\omega_m(f,u)_{\infty; \R^d}}{u} \frac{du}{u} \leq \sup_{u > 0} \omega_m(f,u)_{\infty;\R^d},
 \end{equation*}
 so that
 \begin{equation}\label{6661}
 	\sup_{t > 0} \omega_1 (f,t)_{\infty;\R^d} \lesssim \sup_{t > 0} \omega_m(f,t)_{\infty;\R^d}.
 \end{equation}
 Now the desired estimate $\lim_{t \to \infty} \omega_m(f,u)_{\infty;\R^d} \gtrsim \|f\|_{L_\infty(\R^d)}$ follows from the monotonicity properties of the moduli of smoothness, \eqref{6661} and \eqref{666}.

	The equivalence (i) $\iff$ (iv) was already stated in (\ref{SharpStein}).
\end{proof}

Next we deal with the counterpart of Theorem \ref{ThmStein} for estimates involving only rearrangements.

\begin{thm}\label{ThmStein*}
Let $k \in \mathbb{N}, d > k, 1/\alpha = 1 - k/d$, and $0 < q \leq \infty$. The following statements are equivalent:
	\begin{enumerate}[\upshape(i)]
		\item
		\begin{equation*}
		W^k L_{d/k,q}(\Omega) \hookrightarrow L_\infty(\Omega),
		\end{equation*}
		\item for $ f \in W^k_1(\Omega)$ and $t \in (0,1/2)$, we have
\begin{equation}\label{SharpKolyada**}
	 t^{-1/\alpha} \int_0^t v^{1/\alpha} f^\ast(v) \frac{dv}{v}  \lesssim \sum_{l=0}^k \left(\int_{t}^1 (v^{k/d} |\nabla^l f|^{\ast \ast}(v))^q \frac{dv}{v} \right)^{1/q},
\end{equation}
	\item if $r \to \frac{d}{k}-$ then
			\begin{equation}\label{FrankeJawerth**}
			W^k L_{r,q}(\Omega) \hookrightarrow L_{r^\ast, q}(\Omega), \quad r^\ast = \frac{d r}{d - k r},
			\end{equation}
			 with norm $\mathcal{O}((r^\ast)^{1/q})$, i.e., there exists $C > 0$, which is independent of $r$, such that
		\begin{equation}\label{SharpFrankeJawerth**}
			\|f\|_{L_{r^\ast, q}(\Omega)} \leq C (r^\ast)^{1/q} \|f\|_{W^k L_{r,q}(\Omega)},
		\end{equation}
		\item
		\begin{equation*}
		q \leq 1.
		\end{equation*}
	\end{enumerate}
\end{thm}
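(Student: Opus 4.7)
The strategy mirrors the one used for Theorem~\ref{ThmStein3}, but with the refined scale $(L_{\alpha,1}(\Omega), L_\infty(\Omega))$ in place of $(L_{\alpha,1}(\Omega), L_p(\Omega))$ to capture the critical target $L_\infty(\Omega)$. The equivalence (i)~$\iff$~(iv) was already recorded in \eqref{SharpStein}, so the work is in proving (i)~$\Longrightarrow$~(ii)~$\Longrightarrow$~(iii)~$\Longrightarrow$~(i).

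For (i)~$\Longrightarrow$~(ii), I would combine the hypothesis with the classical Sobolev embedding $W^k_1(\Omega) \hookrightarrow L_{\alpha,1}(\Omega)$ (this is the limit case of \eqref{SobolevClassical*}) to derive the $K$-functional comparison
\begin{equation*}
K(t, f; L_{\alpha,1}(\Omega), L_\infty(\Omega)) \lesssim K(t, f; W^k_1(\Omega), W^k L_{d/k,q}(\Omega)), \qquad f \in W^k_1(\Omega).
\end{equation*}
Next I would evaluate both sides at $t = t^{-1/\alpha}$. Using the $L_{\alpha,1}$--$L_\infty$ Holmstedt-type formula (Lemma A.1, specialized to $p=\infty$) one gets
\begin{equation*}
K(t^{-1/\alpha}, f; L_{\alpha,1}(\Omega), L_\infty(\Omega)) \asymp \int_0^t v^{1/\alpha} f^\ast(v)\frac{dv}{v} + t^{-1/\alpha} \|f\|_{L_\infty(\Omega)};
\end{equation*}
while \eqref{LemmaKFunctLorentzSobolev} combined with \eqref{LemmaKFunctLorentz} yields
\begin{equation*}
K(t^{-1/\alpha}, f; W^k_1(\Omega), W^k L_{d/k,q}(\Omega)) \asymp \sum_{l=0}^k \bigg[\int_0^t |\nabla^l f|^\ast(v)\, dv + t^{-1/\alpha}\Big(\int_t^1 (v^{k/d}|\nabla^l f|^\ast(v))^q \frac{dv}{v}\Big)^{1/q}\bigg].
\end{equation*}
Dropping the $L_\infty$ term on the left and passing from $f^\ast$ to $f^{\ast\ast}$ on the right via Lemma~\ref{vspom1} (since $k/d<1$) gives \eqref{SharpKolyada**}.

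For (ii)~$\Longrightarrow$~(iii), I would follow the template of the proof of Theorem~\ref{ThmStein3}: start from \eqref{SharpKolyada**} evaluated at $v$ and integrate in $v$ with the weight $v^{q/r^\ast}\frac{dv}{v}$. Applying Minkowski's inequality to move the outer integral inside, then Fubini to exchange the order of integration in the right-hand side, produces
\begin{equation*}
\|f\|_{L_{r^\ast,q}(\Omega)} \lesssim (r^\ast)^{1/q}\sum_{l=0}^k \Big(\int_0^1 (v^{1/r}|\nabla^l f|^{\ast\ast}(v))^q \frac{dv}{v}\Big)^{1/q},
\end{equation*}
after which a uniform Hardy inequality (as in \eqref{Thm4.7H}, valid because $1/r \to k/d < 1$) converts $f^{\ast\ast}$ back to $f^\ast$ with a constant bounded uniformly as $r \to (d/k)-$. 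This yields the sharp blow-up $(r^\ast)^{1/q}$ in \eqref{SharpFrankeJawerth**}. For (iii)~$\Longrightarrow$~(i), I would take any fixed $r_0 < d/k$ and, using that $L_{r_0^\ast,q}(\Omega) \hookrightarrow L_\infty(\Omega)$ fails, combine \eqref{SharpFrankeJawerth**} with the sharp dependence on $r^\ast$ of the embedding $L_{r^\ast,q}(\Omega) \hookrightarrow L_\infty(\Omega) + L_{r_0^\ast,q}(\Omega)$; letting $r \to (d/k)-$ and invoking monotone convergence in the integrability parameter recovers $W^k L_{d/k,q}(\Omega) \hookrightarrow L_\infty(\Omega)$.

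The main obstacle will be the bookkeeping in (ii)~$\Longrightarrow$~(iii): one must track the exact dependence of all constants on $r$ (through $r^\ast$), verify that the Hardy inequalities used do not themselves blow up, and handle separately the cases $q\geq 1$ and $q<1$ (where the monotonicity-based discretization argument of Theorem~\ref{ThmStein3} has to be adapted, using that $f^{\ast\ast}$ is decreasing). A secondary subtlety in (iii)~$\Longrightarrow$~(i) is that the target space $L_\infty$ is not the natural limit of $L_{r^\ast,q}$ as $r \to (d/k)-$ unless $q\leq 1$; this forces the use of the sharp factor $(r^\ast)^{1/q}$ and the constraint in (iv) will emerge naturally from the divergence of the embedding constant when $q>1$.
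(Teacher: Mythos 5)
Your outline of (i)~$\Longrightarrow$~(ii) and (ii)~$\Longrightarrow$~(iii) is substantially the same as the paper's. Two minor points there: the Holmstedt formula \eqref{LemmaKFunctLorentzLinfty} for $K(t,f;L_{\alpha,1}(\Omega),L_\infty(\Omega))$ is purely $\int_0^{t^\alpha} v^{1/\alpha} f^\ast(v)\frac{dv}{v}$ with no $\|f\|_{L_\infty(\Omega)}$ term (you then drop it anyway, so this is harmless), and in (ii)~$\Longrightarrow$~(iii) no Minkowski inequality is needed --- one just uses the pointwise bound $f^\ast(t) \lesssim t^{-1/\alpha}\int_0^t v^{1/\alpha}f^\ast(v)\frac{dv}{v}$ (monotonicity), then \eqref{SharpKolyada**}, then Fubini, and finally the uniform Hardy bound \eqref{Thm4.7H}.

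The genuine gap is in (iii)~$\Longrightarrow$~(i). Your description --- fix $r_0 < d/k$ and exploit a sharp embedding $L_{r^\ast,q}(\Omega) \hookrightarrow L_\infty(\Omega) + L_{r_0^\ast,q}(\Omega)$ --- does not go through: on a bounded domain $L_\infty(\Omega) \hookrightarrow L_{r_0^\ast,q}(\Omega)$, so that sum space is just $L_{r_0^\ast,q}(\Omega)$ and gives no quantitative link to $L_\infty(\Omega)$. The step that is actually required, and is the whole technical content of this implication, is the limit formula
\begin{equation*}
\lim_{r^\ast \to \infty} (r^\ast)^{-1/q} \|f\|_{L_{r^\ast,q}(\Omega)} \asymp \|f\|_{L_\infty(\Omega)},
\end{equation*}
proved by a direct $\varepsilon$--$\delta$ argument (split $\int_0^1 t^{q/r^\ast} f^\ast(t)^q \frac{dt}{t}$ at a small $\delta$ where $f^\ast(t)^q$ is within $\varepsilon$ of $\|f\|_{L_\infty(\Omega)}^q$, show the tail over $(\delta,1)$ contributes $o(r^\ast)$, and compute the head exactly). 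Once this is in hand, dividing \eqref{SharpFrankeJawerth**} by $(r^\ast)^{1/q}$, letting $r \to (d/k)-$, and using monotone convergence (here the inclusion $\|f\|_{L_{r,q}(\Omega)} \leq \|f\|_{L_{d/k,q}(\Omega)}$ for $r<d/k$ is exact, not merely asymptotic) gives (i). Your final remark that ``$L_\infty$ is not the natural limit unless $q\leq 1$'' is also off: the limit formula above holds for all $q$; the restriction to $q\leq 1$ is already encoded in (iv) via \eqref{SharpStein} and is not forced by this implication.
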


\begin{thm}\label{ThmStein*Rd}
Let $k \in \mathbb{N}, d > k, 1/\alpha = 1 - k/d$, and $0 < q \leq \infty$. The following statements are equivalent:
	\begin{enumerate}[\upshape(i)]
		\item
		\begin{equation*}
		W^k L_{d/k,q}(\R^d) \hookrightarrow L_\infty(\R^d),
		\end{equation*}
		\item for $ f \in W^k_1(\R^d) + W^k L_{d/k,q}(\R^d)$ and $t \in (0,\infty)$, we have
\begin{equation}\label{SharpKolyada**Rd}
	 t^{-1/\alpha} \int_0^t v^{1/\alpha} f^\ast(v) \frac{dv}{v}  \lesssim \sum_{l=0}^k \left(\int_{t}^\infty (v^{k/d} |\nabla^l f|^{\ast \ast}(v))^q \frac{dv}{v} \right)^{1/q},
\end{equation}
	\item if $r \to \frac{d}{k}-$ then 
			\begin{equation}\label{FrankeJawerth**Rd}
			W^k L_{r,q}(\R^d) \hookrightarrow L_{r^\ast, q}(\R^d), \quad r^\ast = \frac{d r}{d - k r},
			\end{equation}
			 with norm $\mathcal{O}((r^\ast)^{1/q})$, i.e., there exists $C > 0$, which is independent of $r$, such that
		\begin{equation}\label{SharpFrankeJawerth**Rd}
			\|f\|_{L_{r^\ast, q}(\R^d)} \leq C (r^\ast)^{1/q} \|f\|_{W^k L_{r,q}(\R^d)},
		\end{equation}
		\item
		\begin{equation*}
		q \leq 1.
		\end{equation*}
	\end{enumerate}
\end{thm}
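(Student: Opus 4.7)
The plan is to prove the cyclic chain (i) $\Longrightarrow$ (ii) $\Longrightarrow$ (iii) $\Longrightarrow$ (i), taking (i) $\iff$ (iv) as known (the $\R^d$ analogue of \eqref{SharpStein}, which was already invoked in Theorems \ref{ThmSteinRd} and \ref{ThmStein}). The overall strategy parallels the proof of Theorem \ref{ThmStein*} for $\Omega$, with the usual adjustments: integrals are taken over $(0,\infty)$, and the auxiliary role of a ``large'' Sobolev endpoint is played by $W^k_\infty(\R^d)$ or $(V^k_\infty(\R^d))_0$ where needed. Everything is driven by $K$-functional identities from Appendix~A together with Lemma \ref{vspom1}.

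For (i) $\Longrightarrow$ (ii), I would chain (i) with the sharp subcritical Sobolev embedding $W^k_1(\R^d) \hookrightarrow L_{\alpha,1}(\R^d)$ to obtain
\begin{equation*}
K(t,f; L_{\alpha,1}(\R^d), L_\infty(\R^d)) \lesssim K(t,f; W^k_1(\R^d), W^k L_{d/k,q}(\R^d)),
\end{equation*}
and evaluate both sides at the scale $t^{1/\alpha}$ using the Holmstedt-type formulas \eqref{LemmaKFunctLorentz} and \eqref{LemmaKFunctLorentzSobolev}. The left-hand side returns $\int_0^t v^{1/\alpha} f^*(v)\,\frac{dv}{v}$, while the right-hand side becomes
\begin{equation*}
\sum_{l=0}^k \Bigl[\int_0^t |\nabla^l f|^*(v)\,dv + t^{1/\alpha}\Bigl(\int_t^\infty (v^{k/d}|\nabla^l f|^*(v))^q \frac{dv}{v}\Bigr)^{1/q}\Bigr].
\end{equation*}
Dividing by $t^{1/\alpha}$ and absorbing the remaining first term inside the $|\nabla^l f|^{**}$-expression via Lemma \ref{vspom1} (using $k/d-1 = -1/\alpha$) yields exactly \eqref{SharpKolyada**Rd}.

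For (ii) $\Longrightarrow$ (i), I would exploit the monotonicity bound $t^{-1/\alpha}\int_0^t v^{1/\alpha}f^*(v)\,\frac{dv}{v}\gtrsim f^*(t)$, take the supremum in $t$ to recover $\|f\|_{L_\infty(\R^d)}$ on the left, let $t\to 0+$ on the right, and convert the resulting $|\nabla^l f|^{**}$ integral into $\||\nabla^l f|\|_{L_{d/k,q}(\R^d)}$ by Hardy's inequality (Lemma \ref{LemmaHardySharp}). For (ii) $\Longrightarrow$ (iii) I would closely follow the Minkowski/Fubini manipulations from the proof of Theorem \ref{ThmStein3}: multiply \eqref{SharpKolyada**Rd} by $t^{(1/r-1/p)\cdot q}$ (now with $p=\infty$), integrate in $t$ over $(0,\infty)$, swap the order of integration by Minkowski's inequality, and recognize that the $t$-integration contributes exactly $(d/k-1/r)^{-1/q}\asymp (r^*)^{1/q}$, which matches the claimed blow-up. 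Finally (iii) $\Longrightarrow$ (i) would be obtained by letting $r\to(d/k)-$ in \eqref{SharpFrankeJawerth**Rd} and invoking an extrapolation description of the $L_\infty$-norm from the Lorentz scale, in the spirit of \eqref{ExtrapolationLZ} (the homogeneous version used in Theorems \ref{ThmSteinRdHom} and \ref{ThmStein2RdHom}).

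The main obstacle will be the direction (iii) $\Longrightarrow$ (i). Unlike the bounded-domain situation of Theorem \ref{ThmStein*}, here the Lorentz norms $\|f\|_{L_{r^*,q}(\R^d)}$ must be controlled simultaneously as $r\to(d/k)-$ and as $r^*\to\infty$, and the naive monotone-convergence argument of the $\Omega$-case fails. I would circumvent this either via an extrapolation-type identity $\|f\|_{L_\infty(\R^d)}\asymp \sup_{r>d/k}(r^*)^{-1/q}\|f\|_{L_{r^*,q}(\R^d)}$ analogous to \eqref{ExtrapolationLZ}, or, following the pattern of Theorem \ref{ThmSteinRdHom}, by reducing the endpoint to a limit of homogeneous Besov norms and applying the Milman limiting formula \cite{Milman05}. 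All remaining steps are purely technical consequences of the Hardy inequalities in Section \ref{SectionHardy} together with Lemma \ref{vspom1}.
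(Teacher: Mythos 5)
Your proposal follows the same cyclic chain (i)\,$\Rightarrow$\,(ii)\,$\Rightarrow$\,(iii)\,$\Rightarrow$\,(i) and uses the same ingredients (the $K$-functional identity from Lemma~\ref{LemmaHolm4}, \eqref{DS2}, Lemma~\ref{vspom1}, Fubini, and the limit formula for $(r^*)^{-1/q}\|f\|_{L_{r^*,q}}$) as the paper's own argument, which is stated only as ``similar to the proof of Theorem~\ref{ThmStein*}.'' The concern you raise about (iii)\,$\Rightarrow$\,(i) is well-founded, and in fact you place it on a step that is even more fragile than the one you name: the $\Omega$-proof not only uses $\lim_{r^*\to\infty}(r^*)^{-1/q}\|f\|_{L_{r^*,q}(\Omega)}\asymp\|f\|_{L_\infty(\Omega)}$, but also $\lim_{r\to(d/k)-}\|f\|_{W^kL_{r,q}(\Omega)}=\|f\|_{W^kL_{d/k,q}(\Omega)}$ by monotone convergence, and on $\R^d$ a generic $f\in W^kL_{d/k,q}(\R^d)$ need not even belong to $W^kL_{r,q}(\R^d)$ for $r<d/k$, so there is nothing to pass to the limit in. The clean fix is exactly the density route the paper uses in Theorem~\ref{ThmSobRdHom}(iii)\,$\Rightarrow$\,(i): apply (iii) to $f\in C^\infty_0(\R^d)$, for which all Lorentz--Sobolev norms $\|f\|_{W^kL_{r,q}(\R^d)}$ are finite and converge as $r\to(d/k)-$ and for which the limit formula \eqref{ThmStein*8} goes through on $(0,\infty)$ (the tail $\frac{1}{r^*}\int_1^\infty t^{q/r^*}f^*(t)^q\,\frac{dt}{t}$ vanishes since $f^*$ has compact support), then conclude by density. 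One small imprecision: in (ii)\,$\Rightarrow$\,(iii) the weight you should use to recover $\|f\|_{L_{r^*,q}(\R^d)}$ is $t^{q/r^*}=t^{q(1/r-k/d)}$, not $t^{q(1/r-1/p)}|_{p=\infty}=t^{q/r}$; the $-k/d$ matters because it is precisely what makes the Fubini step land on $\|\cdot\|_{L_{r,q}}$ after combining with $v^{k/d}$.
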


\begin{thm}\label{ThmStein*RdHom}
Let $k \in \mathbb{N}, d > k, 1/\alpha = 1 - k/d$, and $0 < q \leq \infty$. The following statements are equivalent:
	\begin{enumerate}[\upshape(i)]
		\item
		\begin{equation*}
		(V^k L_{d/k,q}(\R^d))_0 \hookrightarrow L_\infty(\R^d),
		\end{equation*}
		\item for $ f \in (V^k_1(\R^d))_0 + (V^k L_{d/k,q}(\R^d))_0$ and $t \in (0,\infty)$, we have
\begin{equation}\label{SharpKolyada**RdHom}
	 t^{-1/\alpha} \int_0^t v^{1/\alpha} f^\ast(v) \frac{dv}{v}  \lesssim \left(\int_{t}^\infty (v^{k/d} |\nabla^k f|^{\ast \ast}(v))^q \frac{dv}{v} \right)^{1/q},
\end{equation}
	\item if $r \to \frac{d}{k}-$ then
			\begin{equation}\label{FrankeJawerth**RdHom}
			(V^k L_{r,q}(\R^d))_0 \hookrightarrow L_{r^\ast, q}(\R^d), \quad r^\ast = \frac{d r}{d - k r},
			\end{equation}
			 with norm $\mathcal{O}((r^\ast)^{1/q})$, i.e., there exists $C > 0$, which is independent of $r$, such that
		\begin{equation}\label{SharpFrankeJawerth**RdHom}
			\|f\|_{L_{r^\ast, q}(\R^d)} \leq C (r^\ast)^{1/q} \||\nabla^k f|\|_{L_{r,q}(\R^d)}
		\end{equation}
		for $f \in (V^k L_{r,q}(\R^d))_0$,
		\item
		\begin{equation*}
		q \leq 1.
		\end{equation*}
	\end{enumerate}
\end{thm}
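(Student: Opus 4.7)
The plan is to establish the cycle (i) $\Longrightarrow$ (ii) $\Longrightarrow$ (iii) $\Longrightarrow$ (i), the equivalence (i) $\iff$ (iv) being the sharpness assertion for Stein's embedding \eqref{SharpStein+}.

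For (i) $\Longrightarrow$ (ii), I would combine (i) with the Gagliardo--Nirenberg embedding $(V^k_1(\R^d))_0 \hookrightarrow L_{\alpha,1}(\R^d)$ (valid since $k<d$ and $1/\alpha=1-k/d$) to obtain
\begin{equation*}
K(t, f; L_{\alpha,1}(\R^d), L_\infty(\R^d)) \lesssim K(t, f; (V^k_1(\R^d))_0, (V^k L_{d/k,q}(\R^d))_0).
\end{equation*}
By \eqref{LemmaKFunctLorentzSobolev} the right-hand side equals, up to equivalence, $K(t, |\nabla^k f|; L_1(\R^d), L_{d/k,q}(\R^d))$, whose form is given by \eqref{LemmaKFunctLorentz}. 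The left-hand $K$-functional is computed through the Lorentz formulas of Appendix A. After choosing the scaling $t \mapsto t^{1/\alpha}$ and applying Lemma \ref{vspom1} to rewrite the resulting Hardy-type averages of $|\nabla^k f|^\ast$ in terms of $|\nabla^k f|^{\ast\ast}$, one recovers precisely \eqref{SharpKolyada**RdHom}.

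For (ii) $\Longrightarrow$ (iii), monotonicity of $f^\ast$ yields $f^\ast(t)\,t^{1/\alpha} \lesssim \int_0^t v^{1/\alpha} f^\ast(v)\,\tfrac{dv}{v}$, so (ii) gives the pointwise bound
\begin{equation*}
f^\ast(t) \lesssim \left(\int_{t}^\infty (v^{k/d} |\nabla^k f|^{\ast \ast}(v))^q \frac{dv}{v} \right)^{1/q}.
\end{equation*}
Multiplying by $t^{1/r^\ast}$, taking the $L^q(\tfrac{dt}{t})$-norm, and applying Fubini's theorem together with the identity $k/d + 1/r^\ast = 1/r$ leads to
\begin{equation*}
\|f\|_{L_{r^\ast,q}(\R^d)} \lesssim (r^\ast/q)^{1/q} \left(\int_0^\infty (v^{1/r} |\nabla^k f|^{\ast\ast}(v))^q \frac{dv}{v}\right)^{1/q}.
\end{equation*}
A final Hardy-type inequality (Lemma \ref{LemmaHardySharp} when $q\geq 1$, and using the monotonicity-based variant from Lemma \ref{LemmaHardyLog} when $q<1$) replaces $|\nabla^k f|^{\ast\ast}$ by $|\nabla^k f|^\ast$ with a uniformly bounded constant, since $r$ remains bounded away from $1$ (as $d/k > 1$), producing the sharp rate $(r^\ast)^{1/q}$ in \eqref{SharpFrankeJawerth**RdHom}.

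For (iii) $\Longrightarrow$ (i), I would invoke the Milman-type extrapolation formula $\lim_{r^\ast \to \infty}(r^\ast)^{-1/q}\|f\|_{L_{r^\ast,q}(\R^d)} \asymp \|f\|_{L_\infty(\R^d)}$, valid for $f$ whose decreasing rearrangement vanishes at infinity, in the spirit of \cite[Theorem 2]{Milman05} and as already exploited in the proofs of Theorems \ref{ThmStein2RdHom} and \ref{ThmSteinRdHom}. Applying this to (iii) on the dense subclass $C^\infty_c(\R^d)$ (where $|\nabla^k f| \in L_{r,q}(\R^d)$ uniformly for $r$ close to $d/k$, so $\||\nabla^k f|\|_{L_{r,q}} \to \||\nabla^k f|\|_{L_{d/k,q}}$) yields (i) for $f \in C^\infty_c(\R^d)$, and the general case follows by density. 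The main obstacle is the careful tracking of constants so that no extraneous unbounded factors appear either in the Fubini--Hardy step of (ii) $\Longrightarrow$ (iii) or in the limiting step of (iii) $\Longrightarrow$ (i), where the Milman extrapolation identity must be applied with uniform control on the relevant function class.
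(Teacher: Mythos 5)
Your proposal is correct and follows essentially the same route as the paper, which proves Theorem \ref{ThmStein*} (the $\Omega$-version) in full and then notes that the proofs of Theorems \ref{ThmStein*Rd} and \ref{ThmStein*RdHom} are analogous: (i)$\Rightarrow$(ii) by comparing $K$-functionals for the pairs $(L_{\alpha,1},L_\infty)$ and $((V^k_1)_0,(V^kL_{d/k,q})_0)$ via \eqref{LemmaKFunctLorentzSobolev}, \eqref{LemmaKFunctLorentz} and Lemma \ref{vspom1}; (ii)$\Rightarrow$(iii) by the pointwise bound from monotonicity, Fubini, and a uniform Hardy step to pass from $|\nabla^k f|^{\ast\ast}$ to $|\nabla^k f|^\ast$; and (iii)$\Rightarrow$(i) by the Milman-type limit $\lim_{r^\ast\to\infty}(r^\ast)^{-1/q}\|f\|_{L_{r^\ast,q}}\asymp\|f\|_{L_\infty}$. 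The only cosmetic difference is that for $q<1$ the paper runs a direct dyadic/monotonicity argument instead of citing Lemma \ref{LemmaHardyLog}, and for (iii)$\Rightarrow$(i) the paper (in the $\Omega$-version) applies the limiting identity directly without the explicit reduction to $C^\infty_c(\R^d)$, which you sensibly add for the $\R^d$ homogeneous setting.
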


The following set of comments pertains to Theorems \ref{ThmStein*}--\ref{ThmStein*RdHom}.

\begin{rem}\label{RemEps}
	(i)  Let $q=1$. Inequality (\ref{SharpKolyada**RdHom}) reads as follows
	\begin{equation}\label{NewK}
	 t^{-1/\alpha} \int_0^t v^{1/\alpha} f^\ast(v) \frac{dv}{v}  \lesssim \int_{t}^\infty v^{k/d} |\nabla^k f|^{\ast \ast}(v) \frac{dv}{v}.
\end{equation}
	It turns out that (\ref{NewK}) provides an improvement of the following estimate given in \cite[Corollary 3.2]{KolyadaNafsa}
	\begin{equation*}
	f^{\ast \ast}(t)  \lesssim \int_{t}^\infty v^{k/d} |\nabla^k f|^{\ast \ast}(v) \frac{dv}{v}, \quad f \in C^\infty_0(\R^d).
\end{equation*}
This follows from 
\begin{equation*}
\int_0^t f^\ast(v) dv \leq t^{1 -1/\alpha} \int_0^t v^{1/\alpha} f^\ast(v) \frac{dv}{v}.
\end{equation*}
In addition, it is clear  that the terms $f^{\ast \ast}(t)$ and $ t^{-1/\alpha} \int_0^t v^{1/\alpha} f^\ast(v) \frac{dv}{v}$ are not comparable (consider, e.g., the function $f^\ast(t) = t^{-1/\alpha} (1-\log t)^{\varepsilon}, \,\varepsilon < -1$, and invoke \cite[Chapter 2, Corollary 7.8, p. 86]{BennettSharpley}).

(ii) Setting $q=1$ in (\ref{SharpFrankeJawerth**}) we recover the sharp blow-up of the norm of the embedding (\ref{FrankeJawerth**}) as $r \to (d/k)-$ obtained by Talenti (see (\ref{Tal})).

(iii)  Theorem \ref{ThmStein*} provides the limiting version of Theorem \ref{ThmStein3} with $p=\infty$. Note that there are significant distinctions between these two theorems. For instance, in virtue of (\ref{SharpKolyada2*}) (with $q=p$) and (\ref{SharpKolyada**}) (with $q=1$), we have
	\begin{equation}\label{SharpKolyada2*New}
	\left(\int_t^1 \left(v^{1/p - 1/\alpha} \int_0^v u^{1/\alpha} f^\ast(u) \frac{du}{u} \right)^p \frac{dv}{v} \right)^{1/p} \lesssim \sum_{l=0}^k \left(\int_t^1 (v^{k/d + 1/p} |\nabla^l f|^{\ast \ast}(v))^p \frac{dv}{v} \right)^{1/p}
\end{equation}
and
\begin{equation}\label{SharpKolyada**new}
	 t^{-1/\alpha} \int_0^t v^{1/\alpha} f^\ast(v) \frac{dv}{v}  \lesssim \sum_{l=0}^k \int_{t}^1 v^{k/d} |\nabla^l f|^{\ast \ast}(v) \frac{dv}{v},
\end{equation}
respectively. Here  the left-hand side of (\ref{SharpKolyada**new}) corresponds to the one given in (\ref{SharpKolyada2*New}) with $p=\infty$, however the right-hand sides involve different norms.

Concerning the extrapolation estimates given in Theorems \ref{ThmStein3}(iii) and \ref{ThmStein*}(iii), we make the following observations. The optimal inequalities read as follows (see (\ref{SharpFrankeJawerth2*}) and (\ref{SharpFrankeJawerth**}))
	\begin{equation}\label{SharpFrankeJawerth2*new}
			\|f\|_{L_{r^\ast,p}(\Omega)} \leq C \|f\|_{W^k L_{r,p}(\Omega)}, \quad \frac{d}{d-k} < p < \infty, \quad  r \to \frac{d p}{k p + d}-,
		\end{equation}
		and
			\begin{equation}\label{SharpFrankeJawerth**new}
			\|f\|_{L_{r^\ast, 1}(\Omega)} \leq C r^\ast \|f\|_{W^k L_{r,1}(\Omega)}, \quad  r \to \frac{d}{k}-.
		\end{equation}
Here $r^\ast = \frac{d r}{d - k r}$ and $d > k$. The inequality \eqref{SharpFrankeJawerth**new} can be considered as the limiting version of \eqref{SharpFrankeJawerth2*new} with $p=\infty$ (formally speaking, $\frac{d p}{k p + d} = \frac{d}{k}$ if $p=\infty$). However, there are some distinctions between these two inequalities. Firstly, we switch from the second index $p$ in Lorentz spaces in (\ref{SharpFrankeJawerth2*new}) to the index $1$ in (\ref{SharpFrankeJawerth**new}). Secondly, the uniform behaviour of the embedding constant in (\ref{SharpFrankeJawerth2*new}) with respect to $r$ breaks down for $p=\infty$ where we obtain the blow up $r^\ast$ (see (\ref{SharpFrankeJawerth**new})). In particular, \eqref{SharpFrankeJawerth**new} fails to be true if $r=d/k$ (and so, $r^*=\infty$). Indeed, we observe that in this case the Lorentz space $L_{r*,1}(\Omega) = \{0\}$ (cf. Section \ref{SectionFunctionSpaces}.)

\end{rem}

\begin{proof}[Proof of Theorem \ref{ThmStein*}]
	(i) $\Longrightarrow$ (ii): It follows from (i) and the embedding
	\begin{equation*}
	W^k_1(\Omega) \hookrightarrow L_{\alpha,1}(\Omega)
	\end{equation*}
	 (cf. (\ref{SobolevClassical*})) that
	\begin{equation}\label{ThmStein*1}
		K(t, f; L_{\alpha,1}(\Omega), L_\infty(\Omega)) \lesssim K(t,f; W^k_1(\Omega) , W^k L_{d/k,q}(\Omega)).
	\end{equation}
	
	By the Holmstedt's formula (\ref{LemmaKFunctLorentzLinfty}),
	\begin{equation}\label{ThmStein*2}
	K(t, f; L_{\alpha,1}(\Omega), L_\infty(\Omega)) \asymp \int_0^{t^\alpha} v^{1/\alpha} f^\ast(v) \frac{dv}{v}.
	\end{equation}
	On the other hand, applying (\ref{DS2})   together with (\ref{LemmaKFunctLorentz}), we arrive at
	\begin{align}
	K(t,f; W^k_1(\Omega) , W^k L_{d/k,q}(\Omega)) & \asymp \sum_{l=0}^k K(t, |\nabla^l f|; L_1(\Omega),  L_{d/k,q}(\Omega)) \nonumber \\
	& \hspace{-3cm} \asymp \sum_{l=0}^k \bigg[ \int_0^{t^\alpha} |\nabla^l f|^\ast(v) dv + t \left( \int_{t^\alpha}^1 (v^{k/d} |\nabla^l f|^\ast(v))^q \frac{dv}{v}\right)^{1/q} \bigg]. \label{ThmStein*3}
	\end{align}
	Therefore, by (\ref{ThmStein*1}), (\ref{ThmStein*2}) and  (\ref{ThmStein*3}), we have
		\begin{equation*}
	t^{-1/\alpha} \int_0^t v^{1/\alpha} f^\ast(v) \frac{dv}{v} \lesssim \sum_{l=0}^k \bigg[ t^{-1/\alpha}  \int_0^{t} |\nabla^l f|^\ast(v) dv + \left( \int_{t}^1 (v^{k/d} |\nabla^l f|^\ast(v))^q \frac{dv}{v}\right)^{1/q} \bigg].
\end{equation*}
Hence, we complete the proof of (\ref{SharpKolyada**}) by invoking (\ref{HardyInequal1***}).

(ii) $\Longrightarrow$ (iii): Applying monotonicity properties and (\ref{SharpKolyada**}), we have
\begin{align*}
	\int_0^1 (t^{1/r^\ast} f^\ast(t))^q \frac{dt}{t} & \asymp \int_0^{1/2} (t^{1/r^\ast} f^\ast(t))^q \frac{dt}{t}  \\
	& \lesssim \int_0^{1/2} \left( t^{1/r^\ast - 1/\alpha} \int_0^t v^{1/\alpha} f^\ast(v) \frac{dv}{v}\right)^q \frac{dt}{t} \\
	 &\lesssim  \sum_{l=0}^k \int_0^1 t^{q/r^\ast} \int_{t}^1 (v^{k/d} |\nabla^l f|^{\ast \ast}(v))^q \frac{dv}{v} \frac{dt}{t} \\
	& \asymp r^\ast \sum_{l=0}^k \int_0^1 (v^{1/r} |\nabla^l f|^{\ast \ast} (v))^q \frac{dv}{v}.
\end{align*}
Therefore, to complete the proof of (iii), it is enough to apply the following inequality
\begin{equation}\label{ThmStein*7}
\sum_{l=0}^k	\left(\int_0^1 (v^{1/r} |\nabla^l f|^{\ast \ast} (v))^q \frac{dv}{v} \right)^{1/q} \leq C \|f\|_{W^k L_{r,q}(\Omega)}, \quad r \to \frac{d}{k}-,
\end{equation}
where $C > 0$ is independent of $r$. The proof of (\ref{ThmStein*7}) is completely analogous to that in the proof of (\ref{Thm4.7H}), so we omit further details.

(iii) $\Longrightarrow$ (i): Firstly, we claim that
\begin{equation}\label{ThmStein*8}
	\lim_{r^\ast \to \infty} (r^\ast)^{-1/q} \|f\|_{L_{r^\ast,q}(\Omega)} \asymp \|f\|_{L_\infty(\Omega)}.
\end{equation}
To prove this formula, we follow some ideas given in \cite{Milman05} in the abstract setting of interpolation spaces.  Since $\lim_{t \to 0+} f^\ast(t) = \|f\|_{L_\infty(\Omega)}$, given any $\varepsilon > 0$ there exists $\delta \in (0,1)$ such that
\begin{equation}\label{ThmStein*9}
	 \|f\|_{L_\infty(\Omega)}^q - \varepsilon  \leq f^\ast(t)^q \leq \|f\|_{L_\infty(\Omega)}^q + \varepsilon, \quad t \in (0, \delta).
\end{equation}
Since $r^\ast \to \infty$, we may assume that $r^\ast > 2$. Then
\begin{equation*}
	\frac{1}{r^\ast}  \int_\delta^1 t^{q/r^\ast} f^\ast(t)^q \frac{dt}{t}  = \frac{\delta^{q/r^\ast}}{r^\ast}  \int_\delta^1 \Big(\frac{t}{\delta}\Big)^{q/r^\ast} f^\ast(t)^q \frac{dt}{t} \leq  \frac{\delta^{q(1/r^\ast - 1/2)}}{r^\ast}  \int_0^1 t^{q/2} f^\ast(t)^q \frac{dt}{t}.
\end{equation*}
Now, taking limits as $r^\ast \to \infty$, we derive
\begin{equation}\label{ThmStein*10}
 \lim_{r^\ast \to \infty} \frac{1}{r^\ast}  \int_\delta^1 t^{q/r^\ast} f^\ast(t)^q \frac{dt}{t} = 0.
 \end{equation}

  On the other hand, it follows from (\ref{ThmStein*9}) that
\begin{align*}
	\frac{1}{q} \delta^{q/r^\ast}  (\|f\|_{L_\infty(\Omega)}^q - \varepsilon) &=  \frac{1}{r^\ast}  (\|f\|_{L_\infty(\Omega)}^q - \varepsilon) \int_0^{\delta} t^{q/r^\ast} \frac{dt}{t} \\
	 & \leq \frac{1}{r^\ast}  \int_0^{\delta} t^{q/r^\ast} f^\ast(t)^q \frac{dt}{t} \\
	 & \leq \frac{1}{r^\ast}  (\|f\|_{L_\infty(\Omega)}^q + \varepsilon) \int_0^{\delta} t^{q/r^\ast} \frac{dt}{t} = \frac{1}{q} \delta^{q/r^\ast}  (\|f\|_{L_\infty(\Omega)}^q + \varepsilon),
\end{align*}
which yields
\begin{equation*}
	\frac{1}{q} (\|f\|_{L_\infty(\Omega)}^q - \varepsilon) \leq \lim_{r^\ast \to \infty}  \frac{1}{r^\ast}  \int_0^{\delta} t^{q/r^\ast} f^\ast(t)^q \frac{dt}{t} \leq \frac{1}{q}  (\|f\|_{L_\infty(\Omega)}^q + \varepsilon).
\end{equation*}
Using that $\varepsilon > 0$ was arbitrary, we obtain
\begin{equation}\label{ThmStein*11}
 \lim_{r^\ast \to \infty} \frac{1}{r^\ast}  \int_0^{\delta} t^{q/r^\ast} f^\ast(t)^q \frac{dt}{t} = \frac{1}{q} \|f\|_{L_\infty(\Omega)}^q.
\end{equation}
Therefore, (\ref{ThmStein*8}) follows from (\ref{ThmStein*10}) and (\ref{ThmStein*11}).

Now we are ready to show the implication (iii) $\Longrightarrow$ (i). Taking limits in (\ref{SharpFrankeJawerth**}) (noting that $r^\ast \to \infty$ if and only if $r \to (d/k)-$) and using (\ref{ThmStein*8}), we have
\begin{align*}
	\|f\|_{L_\infty(\Omega)} \asymp \lim_{r^\ast \to \infty} (r^\ast)^{-1/q}\|f\|_{L_{r^\ast, q}(\Omega)} \lesssim \lim_{r \to (d/k)-}  \|f\|_{W^k L_{r,q}(\Omega) } = \|f\|_{W^k L_{d/k,q}(\Omega)}.
\end{align*}

For the equivalence (i) $\iff$ (iv), see (\ref{SharpStein}).

\end{proof}

The proofs of Theorems \ref{ThmStein*Rd} and \ref{ThmStein*RdHom} are similar to
the one of Theorem \ref{ThmStein*}.

\section{Supercritical case}\label{supercritical}

\subsection{Br\'ezis-Wainger inequalities in terms of Lipschitz conditions}

A famous result by Br\'ezis and Wainger \cite{BrezisWainger} asserts that functions in the Sobolev space $\dot{H}^{1 + d/p}_p(\T^d), \, 1 < p < \infty,$ are Lipschitz-continuous up the logarithmic term. More precisely, they proved that
\begin{equation}\label{3333}
	\dot{H}^{1 + d/p}_p(\T^d) \hookrightarrow \text{Lip}^{(1, -1/p')}_{\infty, \infty}(\T^d).
\end{equation}
This embedding has been extensively studied in the literature and has found deep applications in the PDE's and the theory of function spaces (see, e.g., \cite{DominguezTikhonov19, Haroske} and the references given there).  
In particular, \eqref{3333} was recently extended in \cite{DominguezTikhonov19} to any Sobolev space in the supercritical case, that is,
\begin{equation}\label{BrezisWaingerClassic}
	\dot{H}^{\alpha + d/p}_p(\T^d) \hookrightarrow \text{Lip}^{(\alpha, -1/p')}_{\infty, \infty}(\T^d), \qquad \alpha > 0.
\end{equation}
Furthermore, if $d=1$ then the previous embedding is optimal in the following sense
\begin{equation}\label{BrezisWaingerClassic2}
	\dot{H}^{\alpha + 1/p}_p(\T) \hookrightarrow \text{Lip}^{(\alpha, -b)}_{\infty, \infty}(\T) \iff b \geq 1/p',
\end{equation}
see \cite{DominguezTikhonov19}. We also remark that the counterpart of \eqref{BrezisWaingerClassic} for function spaces on $\R^d$ holds. The arguments given in \cite{DominguezTikhonov19}, originally developed in the inhomogeneous situation, carry over to the homogeneous case as well.

We will see that \eqref{BrezisWaingerClassic} is closely connected to the so-called Ulyanov  inequalities for moduli of smoothness. Recall that the classical Ulyanov inequality states that (cf. \cite{Ulyanov})
\begin{equation}\label{UlyanovClassic}
	\omega_k(f,t)_{q;\T} \lesssim \left(\int_0^t (u^{-\theta} \omega_k(f,u)_{p;\T})^{q^\ast} \frac{du}{u} \right)^{1/q^\ast}
\end{equation}
where
 \begin{equation*}
		k \in \N, \quad 1 \leq p < q \leq \infty, \quad \theta = \frac{1}{p} - \frac{1}{q} \quad \text{and} \quad q^\ast = \left\{\begin{array}{cl} q,& q < \infty ,  \\
	1 , & q=\infty.
		       \end{array}
                        \right.
	\end{equation*}
	The sharp version of this inequality for $1<p<q<\infty$ was obtained in \cite{SimonovTikhonov, Trebels} with the help of the fractional moduli of smoothness. If $\alpha > 0$ then
\begin{equation}\label{UlyanovTik2}
		\omega_\alpha (f, t)_{q;\T} \lesssim \left( \int_0^t (u^{-\theta}  \omega_{\alpha + \theta} (f, u)_{p;\T})^{q} \frac{du}{u}\right)^{1/q}, \quad f \in L_p(\T).
	\end{equation}
Note that \eqref{UlyanovTik2} is a stronger estimate than \eqref{UlyanovClassic} (see \eqref{JacksonInequal}). The  inequality \eqref{UlyanovTik2} remains valid for functions $f \in L_p(\T^d)$ and $f \in L^p(\R^d)$. In the limiting cases, if $p=1$ or $q=\infty$ then for any $t \in (0,1)$, we have (cf. {\cite[Theorem 1]{Tikhonov}}, \cite{KolomoitsevTikhonov})
\begin{equation}\label{UlyanovTik}
		\omega_\alpha (f, t)_{q;\T} \lesssim \left( \int_0^t (u^{-\theta} (1 - \log u)^{1/\min\{p',q\}} \omega_{\alpha + \theta} (f, u)_{p;\T})^{q^\ast} \frac{du}{u}\right)^{1/q^\ast}, \quad f \in L_p(\T).
	\end{equation}

Our first concern in this section is to carry out the programme developed in the previous sections for (\ref{BrezisWaingerClassic}). Namely, we obtain the following





\begin{thm}\label{UlyanovBrezisWainger}
	Let $1 <p < \infty, \alpha > 0$ and $b \geq 0$. The following statements are equivalent:
	\begin{enumerate}[\upshape(i)]
	\item \begin{equation}\label{BrezisWainger}
		\dot{H}^{\alpha + 1/p}_p(\T) \hookrightarrow \emph{Lip}^{(\alpha, -b)}_{\infty, \infty}(\T),
	\end{equation}
	\item for $f \in \dot{B}^{1/p}_{p,1}(\T)$ and $t \in (0,1)$, we have
		\begin{equation}\label{UlyanovTikSharp}
		\omega_\alpha (f, t)_{\infty;\T} \lesssim  \int_0^{t (1 - \log t)^{b/\alpha}} u^{-1/p} \omega_{\alpha + 1/p} (f, u)_{p;\T} \frac{du}{u},
	\end{equation}
		\item if $\alpha_0 \to \alpha -$ then\footnote{We recall that
$	 \vertiii{f}_{\mathcal{C}^{\alpha_0}(\T),\alpha} :=  \sup_{0 < t < 1} (t^{-{\alpha_0}} \omega_{\alpha} (f,t)_{\infty;\T}).$}
		\begin{equation*}
		\dot{H}^{\alpha + 1/p}_p(\T) \hookrightarrow \dot{\mathcal{C}}^{\alpha_0}(\T)
		\end{equation*}
		 with norm $\mathcal{O}((\alpha - \alpha_0)^{-b})$, i.e., there exists $C > 0$, which is independent of $\alpha_0$, such that
		\begin{equation}\label{UlyanovBrezisWaingerSharp**<<}
		\vertiii{f}_{\mathcal{C}^{\alpha_0}(\T), \alpha} \leq C (\alpha- \alpha_0)^{-b} \|f\|_{\dot{H}^{\alpha + 1/p}_p(\T)}
		\end{equation}
		where $C$ is a positive constant independent of $\alpha_0$,
	\item \begin{equation*}
	b \geq 1/p'.
	\end{equation*}
	\end{enumerate}
\end{thm}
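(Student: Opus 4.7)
\textbf{Proof proposal for Theorem \ref{UlyanovBrezisWainger}.} The equivalence (i) $\iff$ (iv) is \eqref{BrezisWaingerClassic2} and nothing needs to be done. My plan is to establish the chain (i) $\Rightarrow$ (ii) $\Rightarrow$ (iii) $\Rightarrow$ (i) following the methodology used for the analogous Theorems \ref{ThmTruMos} and \ref{ThmBreWain}, based on combining the embedding in (i) with the critical Sobolev embedding $\dot{B}^{1/p}_{p,1}(\T) \hookrightarrow L_\infty(\T)$ and then applying Holmstedt-type formulas to identify both sides.

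For (i) $\Rightarrow$ (ii): for any $g\in \dot{H}^{\alpha+1/p}_p(\T)$ the triangle inequality gives $\omega_\alpha(f,t)_{\infty;\T}\le \omega_\alpha(f-g,t)_{\infty;\T}+\omega_\alpha(g,t)_{\infty;\T}\lesssim \|f-g\|_{L_\infty(\T)}+t^\alpha(1-\log t)^b\|g\|_{\dot H^{\alpha+1/p}_p(\T)}$, where the last estimate uses (i) and the definition of the Lipschitz semi-norm. Combined with $\dot B^{1/p}_{p,1}(\T)\hookrightarrow L_\infty(\T)$ and taking the infimum over $g$, this yields
\begin{equation*}
\omega_\alpha(f,t)_{\infty;\T}\lesssim K\bigl(t^\alpha(1-\log t)^b,\,f;\,\dot B^{1/p}_{p,1}(\T),\,\dot H^{\alpha+1/p}_p(\T)\bigr).
\end{equation*}
Since $\dot B^{1/p}_{p,1}(\T)=(L_p(\T),\dot H^{\alpha+1/p}_p(\T))_{\theta,1}$ with $\theta=\tfrac{1}{\alpha p+1}$, the Holmstedt formula \eqref{LemmaHolmstedt1*} combined with $K(u,f;L_p(\T),\dot H^{\alpha+1/p}_p(\T))\asymp \omega_{\alpha+1/p}(f,u^{1/(\alpha+1/p)})_{p;\T}$ (see Lemma \ref{LemmaKfunctLS} applied in the periodic case) and the change of variable $v=u^{1/(\alpha+1/p)}$ (a direct calculation shows the upper limit becomes $t(1-\log t)^{b/\alpha}$ and the factor $u^{-\theta}$ becomes $v^{-1/p}$) produces exactly the right-hand side of \eqref{UlyanovTikSharp}.

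For (ii) $\Rightarrow$ (iii): invoking the homogeneous Sobolev characterization \eqref{SobolevModuliDPer'} for $1<p<\infty$, i.e., $\omega_{\alpha+1/p}(f,u)_{p;\T}\lesssim u^{\alpha+1/p}\|f\|_{\dot H^{\alpha+1/p}_p(\T)}$, and feeding this into \eqref{UlyanovTikSharp} gives
\begin{equation*}
t^{-\alpha_0}\omega_\alpha(f,t)_{\infty;\T}\lesssim t^{-\alpha_0}(t(1-\log t)^{b/\alpha})^\alpha\,\|f\|_{\dot H^{\alpha+1/p}_p(\T)}=t^{\alpha-\alpha_0}(1-\log t)^b\|f\|_{\dot H^{\alpha+1/p}_p(\T)}.
\end{equation*}
A standard calculus exercise (writing $\sigma=(\alpha-\alpha_0)(1-\log t)$ and maximizing $e^{-\sigma}(1+\sigma/(\alpha-\alpha_0))^b$) yields $\sup_{0<t<1} t^{\alpha-\alpha_0}(1-\log t)^b\asymp (\alpha-\alpha_0)^{-b}$ as $\alpha_0\to\alpha-$, which is \eqref{UlyanovBrezisWaingerSharp**<<}.

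For (iii) $\Rightarrow$ (i): given $t\in(0,1/2)$ I choose $\alpha-\alpha_0=1/(1-\log t)$, so that $t^{-(\alpha-\alpha_0)}\asymp 1$ and $(\alpha-\alpha_0)^{-b}=(1-\log t)^b$, and then (iii) yields $t^{-\alpha}(1-\log t)^{-b}\omega_\alpha(f,t)_{\infty;\T}\lesssim \|f\|_{\dot H^{\alpha+1/p}_p(\T)}$ uniformly in $t$, which is (i). The main technical obstacle is the computation of the Holmstedt-type reduction in (i) $\Rightarrow$ (ii), including the book-keeping of the log-weights and the change-of-variables bringing the upper integration limit to $t(1-\log t)^{b/\alpha}$; once the Holmstedt formula \eqref{LemmaHolmstedt1*} is applied on the couple $(L_p,\dot H^{\alpha+1/p}_p)$ with the fractional-$K$-functional identification from Lemma \ref{LemmaKfunctLS}, the rest of the implications reduce to elementary calculus on a single real variable.
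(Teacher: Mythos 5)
Your proof is correct and follows the same overall methodology as the paper (the chain (i)$\Rightarrow$(ii)$\Rightarrow$(iii)$\Rightarrow$(i), using Holmstedt's formula, the critical embedding $\dot{B}^{1/p}_{p,1}(\T)\hookrightarrow L_\infty(\T)$, the fractional Sobolev characterization \eqref{SobolevModuliDPer'}, and an extrapolation step), but two of your implementations are noticeably cleaner than the paper's. For (i)$\Rightarrow$(ii), the paper first introduces the Gagliardo-completed spaces $L_\infty(\T)+\infty\dot{W}^k_\infty(\T)$ and $L_\infty(\T)+\infty\dot{H}^\alpha_\infty(\T)$, identifies $\text{Lip}^{(\alpha,-b)}_{\infty,\infty}(\T)$ as a limiting interpolation space of the pair $(L_\infty+\infty\dot{H}^\alpha_\infty,\dot{H}^\alpha_\infty)$, and applies the limiting Holmstedt formula \eqref{LemmaHolmstedt2} to extract a lower bound $\omega_\alpha(f,t)_{\infty;\T}$ from the corresponding $K$-functional. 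Your decomposition $f=(f-g)+g$ together with subadditivity of $\omega_\alpha$, the bound $\omega_\alpha(g,t)_{\infty;\T}\lesssim t^\alpha(1-\log t)^b\|g\|_{\dot H^{\alpha+1/p}_p(\T)}$ from (i), and $\omega_\alpha(f-g,t)_{\infty;\T}\lesssim\|f-g\|_{L_\infty(\T)}\lesssim\|f-g\|_{\dot B^{1/p}_{p,1}(\T)}$ reaches the same inequality $\omega_\alpha(f,t)_{\infty;\T}\lesssim K(t^\alpha(1-\log t)^b,f;\dot B^{1/p}_{p,1}(\T),\dot H^{\alpha+1/p}_p(\T))$ directly, with no Gagliardo completion. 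For (iii)$\Rightarrow$(i), the paper works with the dyadic family $\alpha_j=\alpha-2^{-j}$ and proves an extrapolation identity $\sup_{j\geq j_0}2^{-jb}\vertiii{f}_{\mathcal{C}^{\alpha_j}(\T),\alpha}\asymp\|f\|_{\L^{(\alpha,-b)}_{\infty,\infty}(\T)}$; your continuous choice $\alpha_0=\alpha-1/(1-\log t)$ per fixed $t$ gives the same conclusion with a one-line calculus argument.

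One small point to nail down in (iii)$\Rightarrow$(i): the hypothesis of (iii) is a bound valid only as $\alpha_0\to\alpha-$, i.e.\ for $\alpha_0$ in some left neighborhood of $\alpha$ (and $\alpha_0>0$ is needed for $\dot{\mathcal{C}}^{\alpha_0}$ to make sense). Your choice $\alpha-\alpha_0=1/(1-\log t)$ satisfies this only for $t$ small enough; for $t$ in the remaining compact range $[t_0,1)$ you should add a line saying the desired bound follows from \eqref{HomoMod} and the bound already obtained at $t=t_0$, since $t^\alpha(1-\log t)^b$ is bounded away from zero there. With that remark the argument is complete.
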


\begin{rem}\label{RemUlyanovBrezisWainger}
(i) Inequality (\ref{UlyanovTikSharp}) with $b=1/p'$, i.e.,
	\begin{equation}\label{UlyanovTikSharp*}
		\omega_\alpha (f, t)_{\infty;\T} \lesssim  \int_0^{t (1 - \log t)^{1/\alpha p'}} u^{-1/p} \omega_{\alpha + 1/p} (f, u)_{p;\T} \frac{du}{u}
	\end{equation}
	sharpens the known estimate (see (\ref{UlyanovTik}))
\begin{equation}\label{UlyanovTik*}
		\omega_\alpha (f, t)_{\infty;\T} \lesssim \int_0^t u^{-1/p} (1 - \log u)^{1/p'} \omega_{\alpha + 1/p} (f, u)_{p;\T} \frac{du}{u}.
	\end{equation}
 Indeed, by monotonicity properties of the moduli of smoothness (cf. Section \ref{SectionModuli}) and elementary estimates, we derive that
	\begin{align}
		\int_t^{t (1-\log t)^{1/\alpha p'}} u^{-1/p} \omega_{\alpha + 1/p}(f,u)_{p;\T} \frac{du}{u} & \lesssim  t^{-1/p-\alpha} \omega_{\alpha+1/p}(f,t)_{p;\T} \int_t^{t (1-\log t)^{1/\alpha p'}} u^\alpha \frac{du}{u} \nonumber \\
		& \hspace{-4cm}\asymp   t^{-1/p} (1-\log t)^{1/p'} \omega_{\alpha+1/p}(f,t)_{p;\T}  \nonumber \\
		& \hspace{-4cm} \lesssim \int_0^{t} u^{-1/p} (1-\log u)^{1/p'} \omega_{\alpha + 1/p}(f,u)_{p;\T} \frac{du}{u} \label{13}.
	\end{align}
Thus the right-hand side of (\ref{UlyanovTikSharp*}) is smaller than the corresponding one of (\ref{UlyanovTik*}).
		Note that  inequality (\ref{UlyanovTikSharp*}) is valid for a bigger class of functions than (\ref{UlyanovTik*}). For instance, take 
 $f \in L_p(\T)$ such that $\omega_{\alpha + 1/p}(f, u)_{p;\T} \asymp u^{1/p} (1-\log u)^{-\varepsilon}$, \,$1 < \varepsilon < 1 + 1/p'$ (see \cite{Tikhonovreal}).

(ii) We observe that the sharp estimate (\ref{UlyanovBrezisWaingerSharp**<<}) involves the semi-norm $\vertiii{f}_{\mathcal{C}^{\alpha_0}(\T), \alpha}$ rather than the usual one $\|f\|_{\dot{\mathcal{C}}^{\alpha_0}(\T), \alpha}= \sup_{t > 0} (t^{-\alpha_0} \omega_\alpha(f,t)_{\infty;\T})$. This technical issue is required in order to characterize the space $\L^{(\alpha,-b)}_{\infty,\infty}(\T)$ (see (\ref{DefLipschitz})) in terms of extrapolation of the scale $\dot{\mathcal{C}}^{\alpha_0}(\T), \, \alpha_0 < \alpha$.

(iii) The assumption $ b \geq 0$ in Theorem \ref{UlyanovBrezisWainger} is imposed to avoid trivial spaces. Recall that $\text{Lip}^{(\alpha, -b)}_{\infty, \infty}(\T) = \{0\}$ if $b < 0$ (see Section \ref{SectionFunctionSpaces}).

(iv)	Despite the fact that (\ref{BrezisWainger}) is optimal within the class $\text{Lip}^{(\alpha, -b)}_{\infty, \infty}(\T)$, we will prove in Theorem \ref{ThmBWRef1} below that (\ref{BrezisWainger}) can be improved with the help of the finer scale $\text{Lip}^{(\alpha, -b)}_{\infty, q}(\T), \, q < \infty$. See also Remark \ref{RemThmBWRef1}(ii) below.

(v) In the previous theorem we restrict our attention to univariate periodic functions.
 A comment for functions on $\T^d$ and $\R^d$ will be given in Remark \ref{Remark6.3} below.
\end{rem}

\begin{proof}[Proof of Theorem \ref{UlyanovBrezisWainger}]
(i) $\Longrightarrow$ (ii): Let $k \in \N$. Recall that $L_\infty(\T) + \infty \dot{W}^k_\infty(\T)$ means the \emph{Gagliardo completion} of $L_\infty(T)$ in $L_\infty(\T) + \dot{W}^k_\infty(\T)$ and
\begin{equation*}
	\|f\|_{L_\infty(\T) + \infty \dot{W}^k_\infty(\T)} := \sup_{t > 0} K(t, f; L_\infty(\T), \dot{W}^k_\infty(\T));
\end{equation*}
cf. \cite[p. 295]{BennettSharpley}.

According to Lemma \ref{LemmaKfunctLS} and \eqref{UlyanovClassic} with $q=\infty$, we have
\begin{equation}\label{BBounded}
	\|f\|_{L_\infty(\T) + \infty \dot{W}^k_\infty(\T)} \asymp \sup_{t > 0} \omega_k(f,t)_{\infty;\T} \lesssim \|f\|_{\dot{B}^{1/p}_{p,1}(\T),k}.
\end{equation}

On the other hand, we claim that
\begin{equation}\label{6664}
	\|f\|_{L_\infty(\T) + \infty \dot{W}^k_\infty(\T)} \asymp \|f\|_{L_\infty(\T) + \infty \dot{H}^\alpha_\infty(\T)}
\end{equation}
where
\begin{equation*}
	\|f\|_{L_\infty(\T) + \infty \dot{H}^\alpha_\infty(\T)} := \sup_{t > 0} K(t,f; L_\infty(\T), \dot{H}^\alpha_\infty(\T)),
\end{equation*}
Indeed, note that \eqref{6664} can be rewritten in terms of moduli of smoothness as (cf. Lemma \ref{LemmaKfunctLS})
\begin{equation}\label{0009}
	\sup_{t > 0} \omega_k(f,t)_{\infty;\T} \asymp \sup_{t > 0} \omega_\alpha(f,t)_{\infty;\T}
\end{equation}
and the validity of this formula is a simple consequence of \eqref{JacksonInequal} and \eqref{MarchaudInequal}.

Combining \eqref{BBounded} with \eqref{6664} we obtain
\begin{equation}\label{6665}
\|f\|_{L_\infty(\T) + \infty \dot{H}^\alpha_\infty(\T)}  \lesssim \|f\|_{\dot{B}^{1/p}_{p,1}(\T),k}, \quad f \in \dot{B}^{1/p}_{p,1}(\T). 
\end{equation}

It follows from \eqref{6665} and (i) that
	\begin{equation}\label{7}
		 K(t,f; L_\infty(\T) + \infty \dot{H}^\alpha_\infty(\T), \text{Lip}^{(\alpha,-b)}_{\infty,\infty}(\T)) \lesssim K(t,f; \dot{B}^{1/p}_{p,1}(\T), \dot{H}^{\alpha + 1/p}_p (\T)).
	\end{equation}
	Next, we proceed to estimate these $K$-functionals. According to \eqref{BInter}, we have
	\begin{equation*}
	\dot{B}^{1/p}_{p,1}(\T) = (L_p(\T), \dot{H}^{\alpha +1/p}_p(\T))_{\frac{1}{1+ \alpha p}, 1}
	\end{equation*}
	and thus, by (\ref{LemmaHolmstedt1*}), we obtain
	\begin{align*}
		K(t^{1-\frac{1}{1+ \alpha p}}, f; \dot{B}^{1/p}_{p,1}(\T) ,  \dot{H}^{\alpha +1/p}_p(\T)) & \asymp \int_0^t u^{-\frac{1}{1+ \alpha p}} K(u,f; L_p(\T), \dot{H}^{\alpha+1/p}_p(\T)) \frac{du}{u}	\\
		&\hspace{-4.5cm} \asymp \int_0^{t^{\frac{p}{1+ \alpha p}}} u^{-1/p} K(u^{\alpha + 1/p}, f; L_p(\T), \dot{H}^{\alpha +1/p}_p(\T)) \frac{du}{u}.
	\end{align*}
	Hence, a simple change of variables and Lemma \ref{LemmaKfunctLS} allow us to derive
	\begin{equation}\label{8}
		K(t^\alpha,f ; \dot{B}^{1/p}_{p,1}(\T) ,  \dot{H}^{\alpha+1/p}_p(\T)) \asymp \int_0^t u^{-1/p} \omega_{\alpha + 1/p}(f,u)_{p;\T} \frac{du}{u}.
	\end{equation}
	
	Next we estimate $K(t,f; L_\infty(\T) + \infty \dot{H}^\alpha_\infty(\T), \text{Lip}^{(\alpha,-b)}_{\infty,\infty}(\T))$. To do this, we first recall the well-known fact (cf. \cite[Theorem 1.5, Chapter 5, p. 297]{BennettSharpley})
	\begin{equation}\label{9099999}
		K(t,f;L_\infty(\T) + \infty \dot{H}^\alpha_\infty(\T),  \dot{H}^\alpha_\infty(\T)) = K(t,f;L_\infty(\T), \dot{H}^\alpha_\infty(\T)).
	\end{equation}
	This implies (cf. (\ref{LipLimInter}))
	\begin{equation}\label{909}
		\text{Lip}^{(\alpha,-b)}_{\infty,\infty}(\T) = (L_\infty(\T) + \infty \dot{H}^\alpha_\infty(\T), \dot{H}^\alpha_\infty(\T))_{(1,-b),\infty}.
	\end{equation}

	It follows from \eqref{909}, (\ref{LemmaHolmstedt2}) and Lemma \ref{LemmaKfunctLS} that
	\begin{align}
		K(t^\alpha (1- \log t)^{b},f; L_\infty(\T) + \infty \dot{H}^\alpha_\infty(\T), \text{Lip}^{(\alpha,-b)}_{\infty,\infty}(\T)) \nonumber \\
		&  \hspace{-7.8cm} \asymp K(t^\alpha (1-\log t)^{b},f; L_\infty(\T)+ \infty \dot{H}^\alpha_\infty(\T),  (L_\infty(\T)+ \infty \dot{H}^\alpha_\infty(\T), \dot{H}^\alpha_\infty(\T))_{(1,-b),\infty}) \nonumber \\
		& \hspace{-7.8cm} \asymp K(t^\alpha,f; L_\infty(\T)+ \infty \dot{H}^\alpha_\infty(\T), \dot{H}^\alpha_\infty(\T)) \nonumber \\
		&\hspace{-7.3cm} + t^\alpha (1- \log t)^{b} \sup_{t^\alpha < u < 1} u^{-1} (1-\log u)^{-b} K(u,f;L_\infty(\T)+ \infty \dot{H}^\alpha_\infty(\T), \dot{H}^\alpha_\infty(\T)) \nonumber \\
		& \hspace{-7.8cm} \gtrsim   K(t^\alpha,f; L_\infty(\T)+ \infty \dot{H}^\alpha_\infty(\T), \dot{H}^\alpha_\infty(\T)) \asymp \omega_\alpha(f,t)_{\infty;\T}. \label{KFunctLip2}
	\end{align}
	
	Plugging the estimates (\ref{KFunctLip2}) and (\ref{8}) into (\ref{7}), we arrive at
	\begin{equation*}
		 \omega_\alpha(f,t)_{\infty;\T} \lesssim  \int_0^{t (1-\log t)^{b/\alpha}} u^{-1/p} \omega_{\alpha+ 1/p}(f,u)_{p;\T} \frac{du}{u}.
	\end{equation*}
	
	(ii) $\Longrightarrow$ (iii): 	According to (ii), we have
	\begin{align*}
		\omega_\alpha (f, t)_{\infty;\T} & \lesssim  \int_0^{t (1 - \log t)^{b/\alpha}} u^{-1/p} \omega_{\alpha + 1/p} (f, u)_{p;\T} \frac{du}{u} \\
		& \lesssim t^\alpha (1-\log t)^b \sup_{u> 0} u^{-\alpha - 1/p} \omega_{\alpha + 1/p}(f,u)_{p;\T} \\
		& \asymp t^\alpha (1-\log t)^b \|f\|_{\dot{H}^{\alpha + 1/p}_p(\T)},
	\end{align*}
	where we have also used \eqref{SobolevModuliDPer'} in the last estimate. Therefore, applying a simple change of variables, we establish
	\begin{align*}
		\vertiii{f}_{\mathcal{C}^{\alpha_0}(\T), \alpha}&\asymp \sup_{0 < t < 1/2} t^{-\alpha_0} \omega_\alpha(f,t)_{\infty;\T}  \lesssim \|f\|_{\dot{H}^{\alpha + 1/p}_p(\T)} \sup_{0 < t < 1/2} t^{\alpha- \alpha_0} (-\log t)^b \\
		& \leq  (\alpha - \alpha_0)^{-b} \|f\|_{\dot{H}^{\alpha + 1/p}_p(\T)}  \sup_{0 < t < 1} t (- \log t)^b \\
		& \lesssim (\alpha - \alpha_0)^{-b} \|f\|_{\dot{H}^{\alpha + 1/p}_p(\T)}.
	\end{align*}

%
	
	(iii) $\Longrightarrow$ (i): Let $j_0 \in \N_0$ be such that $2^{-j_0} < \alpha$ and set $\alpha_j = \alpha - 2^{-j}, \, j \geq j_0$. By (iii), we have
	\begin{equation*}
		 2^{-j b}  \sup_{0 < t < 1/2} t^{-\alpha_j} \omega_\alpha(f,t)_{\infty;\T}  \leq C \|f\|_{\dot{H}^{\alpha + 1/p}_p(\T)}, \quad j \geq j_0,
	\end{equation*}
	where $C > 0$ is independent of $j$. Hence, taking the supremum over all $j \geq j_0$, we derive
	\begin{equation*}
		 \sup_{j \geq j_0} 2^{-j b}  \sup_{0 < t < 1/2} t^{-\alpha_j} \omega_\alpha(f,t)_{\infty;\T} \leq C \|f\|_{\dot{H}^{\alpha + 1/p}_p(\T)}
	\end{equation*}
	and thus, we will show (i) if we show that
	\begin{equation}\label{extrapol}
		  \sup_{j \geq j_0} 2^{-j b}  \sup_{0 < t < 1/2} t^{-\alpha_j} \omega_\alpha(f,t)_{\infty;\T}  \asymp \|f\|_{\text{Lip}^{(\alpha,-b)}_{\infty, \infty}(\T)}.
	\end{equation}
	Applying Fubini's theorem and elementary computations, we obtain
	\begin{align*}
	  \sup_{j \geq j_0} 2^{-j b}  \sup_{0 < t < 1/2} t^{-\alpha_j} \omega_\alpha(f,t)_{\infty;\T} & = \sup_{0 < t < 1/2} t^{-\alpha} \omega_\alpha(f,t)_{\infty;\T} \sup_{j \geq j_0} 2^{-j b} t^{2^{-j}}  \\
	  & \hspace{-3cm} \asymp \sup_{0 < t < 1/2} t^{-\alpha} (1-\log t)^{-b} \omega_\alpha(f,t)_{\infty;\T} \asymp \|f\|_{\text{Lip}^{(\alpha,-b)}_{\infty, \infty}(\T)},
	\end{align*}
	where the penultimate estimate follows from the change of variables
	\begin{equation*}
		\sup_{j \geq j_0} 2^{-j b} t^{2^{-j}} \asymp (-\log t)^{-b} \sup_{j \geq j_0 - \log (-\log t)} 2^{-2^{-j}} 2^{-j b}
	\end{equation*}
	together with the fact that
	\begin{equation*}
	2^{-2^{-j_0}} 2^{-j_0 b} \leq \sup_{j \geq j_0 - \log (-\log t)} 2^{-2^{-j}} 2^{-j b} \leq \sup_{x > 0} 2^{-x} x^b < \infty.
	\end{equation*}
	Hence (\ref{extrapol}) holds.
	
	The equivalence between (i) and (iv) was already stated in \eqref{BrezisWaingerClassic2}.

\end{proof}

\begin{rem}\label{Remark6.3}
	Imitating the proof of the implication (i) $\Longrightarrow$ (ii) in Theorem \ref{UlyanovBrezisWainger},  embedding \eqref{BrezisWaingerClassic} allows us to derive the following Ulyanov type inequality for functions $f \in L_p(\T^d), \, 1 < p < \infty$. Namely, if $\alpha > 0$ then
	 \begin{equation*}
		\omega_\alpha (f, t)_{\infty;\T^d} \lesssim  \int_0^{t (1 - \log t)^{1/\alpha p'}} u^{-d/p} \omega_{\alpha + d/p} (f, u)_{p;\T^d} \frac{du}{u}, \quad t \in (0,1),
	\end{equation*}
	whenever the right-hand side is finite. The corresponding inequality for functions $f \in L_p(\R^d)$ also holds true.
\end{rem}

Before going further, we briefly recall the definition of \emph{Besov spaces of logarithmic smoothness}. Let $0 < s < \alpha, 1 \leq p \leq \infty, 0 < q \leq \infty,$ and $-\infty < b < \infty$. Then the (periodic) space $\dot{B}^{s, b}_{p,q}(\T)$ is formed by all $f \in L_p(\T)$ such that
	 \begin{equation}\label{DefBesovLog}
	\|f\|_{\dot{B}^{s,b}_{p,q}(\T),\alpha} = \left(\int_0^\infty (t^{-s} (1 + |\log t|)^{b} \omega_{\alpha} (f,t)_{p;\T})^q \frac{dt}{t}\right)^{1/q} < \infty
\end{equation}
(with the usual change when $q=\infty$). Frequently, the integral in (\ref{DefBesovLog}) is taken over $(0,1)$, that is,
 \begin{equation}\label{DefHolZygLog}
	\vertiii{f}_{B^{s,b}_{p,q}(\T),\alpha} = \left(\int_0^1 (t^{-s} (1 -\log t)^{b} \omega_{\alpha} (f,t)_{p;\T})^q \frac{dt}{t}\right)^{1/q}.
\end{equation}
The space $B^{s,b}_{p,q}(\T)$ is formed by all those $f \in L_p(\T)$ such that \eqref{DefHolZygLog} is finite. 
In particular, setting $b=0$ in $\|f\|_{\dot{B}^{s,b}_{p,q}(\T),\alpha}$ (respectively, $\vertiii{f}_{B^{s,b}_{p,q}(\T),\alpha}$) we recover $\|f\|_{\dot{B}^s_{p,q}(\T),\alpha}$, see (\ref{DefBesov}) (respectively, $\vertiii{f}_{B^s_{p,q}(\T),\alpha}$, see (\ref{DefHolZyg})). For more details on function spaces of logarithmic smoothness, we refer the reader to \cite{DominguezTikhonov}.

Next we establish other sharpness assertions for \eqref{UlyanovTikSharp*} which complement that given by (ii) $\iff$ (iv) in Theorem \ref{UlyanovBrezisWainger}.

\begin{rem}\label{RemShaprnessAss}
	Inequality (\ref{UlyanovTikSharp*}) obtained in Theorem \ref{UlyanovBrezisWainger} is optimal in the following senses
	\begin{equation}\label{SharpnessAssUT}
		\omega_\alpha (f, t)_{\infty;\T} \lesssim  \left(\int_0^{t (1 - \log t)^{1/\alpha p'}} (u^{-1/p} \omega_{\alpha + 1/p} (f, u)_{p;\T})^q \frac{du}{u}\right)^{1/q} \iff q \leq 1
	\end{equation}
	and
	\begin{equation}\label{SharpnessAssUT2}
		\omega_\alpha (f, t)_{\infty;\T} \lesssim  \int_0^{t (1 - \log t)^{1/\alpha p'}} u^{-1/p} (1+| \log u|)^b \omega_{\alpha + 1/p} (f, u)_{p;\T} \frac{du}{u} \iff b \geq 0.
	\end{equation}
\end{rem}
	
\begin{proof}[Proof of Remark \ref{RemShaprnessAss}]	We show that the inequality
	\begin{equation*}
	\omega_\alpha (f, t)_{\infty;\T} \lesssim  \left(\int_0^{t (1 - \log t)^{1/\alpha p'}} (u^{-1/p} \omega_{\alpha + 1/p} (f, u)_{p;\T})^q \frac{du}{u}\right)^{1/q}
	\end{equation*}
	yields $q \leq 1$. Indeed, taking $t$ sufficiently large in the previous estimate we obtain $B^{1/p}_{p,q}(\T) \hookrightarrow L_\infty(\T)$. Since
	\begin{equation}\label{SickelTriebel}
	B^{1/p}_{p,q}(\T) \hookrightarrow L_\infty(\T) \iff q \leq 1,
	\end{equation}
	 see \cite[Theorem 3.3.1]{SickelTriebel}, (\ref{SharpnessAssUT}) follows.
	
	Next, we show that if
	\begin{equation}\label{SharpnessAssUT2*-}
		\omega_\alpha (f, t)_{\infty;\T} \lesssim  \int_0^{t (1 - \log t)^{1/\alpha p'}} u^{-1/p} (1+|\log u|)^b \omega_{\alpha + 1/p} (f, u)_{p;\T} \frac{du}{u}
	\end{equation}
	holds then $b \geq 0$. We will proceed by contradiction, that is, assume that there exists $b < 0$ such that (\ref{SharpnessAssUT2*-}) holds. Then, taking $t$ sufficiently large in (\ref{SharpnessAssUT2*-}), we derive
	\begin{equation}\label{SharpnessAssUT2**}
	B^{1/p, b}_{p,1}(\T) \hookrightarrow L_\infty(\T).
	\end{equation}
 Let us distinguish two possible cases. Assume first that $b \in (-1,0)$. Let $q \in \left(1, \frac{1}{b+1}\right)$. Then we have $B^{1/p}_{p,q}(\T) \hookrightarrow B^{1/p, b}_{p,1}(\T)$ (see \cite[Proposition 6.1]{DominguezTikhonov}) and, by (\ref{SharpnessAssUT2**}), $B^{1/p}_{p,q}(\T) \hookrightarrow L_\infty(\T)$, which is not true for $q > 1$ (see (\ref{SickelTriebel})). If $b \leq -1$ then the proof follows from the trivial embeddings $B^{1/p,b_0}_{p,1}(\T) \hookrightarrow B^{1/p,b}_{p,1}(\T), \, b_0 > b,$ and the previous case.
\end{proof}

\subsection{Br\'ezis-Wainger inequalities in terms of integral Lipschitz conditions}

According to \eqref{BrezisWaingerClassic2} the Br\'ezis-Wainger embedding
\begin{equation}\label{BWRef}
		\dot{H}^{\alpha + 1/p}_p(\T) \hookrightarrow \text{Lip}^{(\alpha, -1/p')}_{\infty, \infty}(\T)
	\end{equation}
	is the best possible among the class of the logarithmic Lipschitz spaces $\text{Lip}^{(\alpha, -b)}_{\infty, \infty}(\T)$. However, as already mentioned in Remark \ref{RemUlyanovBrezisWainger}(iv), (\ref{BWRef}) is not optimal within the broader scale of the spaces $\text{Lip}^{(\alpha, -b)}_{\infty, q}(\T)$.
	
	\begin{thm}\label{ThmBWRef1}
		Let $\alpha > 0, 1 < p < \infty$ and $b > 1/p$. Then we have
		\begin{equation}\label{ThmBWRef2}
		\dot{H}^{\alpha + 1/p}_p(\T) \hookrightarrow \emph{Lip}^{(\alpha, -b)}_{\infty, p}(\T) \iff b \geq 1.
	\end{equation}
	\end{thm}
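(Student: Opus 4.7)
The plan is to establish both implications of the equivalence separately.

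For the necessity, I would first prove the auxiliary embedding
\begin{equation*}
\text{Lip}^{(\alpha,-b)}_{\infty,p}(\T) \hookrightarrow \text{Lip}^{(\alpha,-(b-1/p))}_{\infty,\infty}(\T), \qquad b > 1/p.
\end{equation*}
The scaling inequality \eqref{HomoMod} implies that $s \mapsto s^{-\alpha}\omega_\alpha(f,s)_{\infty;\T}$ is quasi-decreasing, whence for each $t\in(0,1)$,
\begin{equation*}
\|f\|_{\text{Lip}^{(\alpha,-b)}_{\infty,p}(\T)}^p \gtrsim (t^{-\alpha}\omega_\alpha(f,t)_{\infty;\T})^p \int_0^t (1-\log s)^{-bp}\frac{ds}{s}\asymp (t^{-\alpha}\omega_\alpha(f,t)_{\infty;\T})^p (1-\log t)^{1-bp},
\end{equation*}
where the convergence of the integral uses $bp>1$. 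Combined with the hypothesis, this yields $\dot{H}^{\alpha+1/p}_p(\T)\hookrightarrow \text{Lip}^{(\alpha,-(b-1/p))}_{\infty,\infty}(\T)$, and the sharpness assertion \eqref{BrezisWaingerClassic2} forces $b-1/p \geq 1/p'$, i.e., $b\geq 1$.

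For the sufficiency, monotonicity of the target in $b$ reduces the problem to the critical case $b=1$. I would apply the Marchaud inequality \eqref{MarchaudInequal} with a fixed $m>\alpha$ to estimate
\begin{equation*}
t^{-\alpha}\omega_\alpha(f,t)_{\infty;\T} \lesssim \int_t^\infty u^{-\alpha}\omega_m(f,u)_{\infty;\T}\frac{du}{u},
\end{equation*}
plug this into the definition \eqref{DefLipschitz} of $\text{Lip}^{(\alpha,-1)}_{\infty,p}(\T)$, split the inner integral as $\int_t^\infty=\int_t^1+\int_1^\infty$ (the tail over $[1,\infty)$ being controlled by $\|f\|_{L_\infty(\T)}\lesssim \|f\|_{\dot{H}^{\alpha+1/p}_p(\T)}$ via \eqref{BrezisWaingerClassic}), and invoke the limiting logarithmic Hardy inequality \eqref{HardyInequal4} with parameter $b=-1$ (admissible since $-1+1/p<0$ for $p>1$) to reduce matters to
\begin{equation*}
\|f\|_{\text{Lip}^{(\alpha,-1)}_{\infty,p}(\T)}\lesssim \|f\|_{\dot{B}^\alpha_{\infty,p}(\T),m}+\|f\|_{\dot{H}^{\alpha+1/p}_p(\T)}.
\end{equation*}
The proof then concludes by applying the Jawerth-Franke embedding $\dot{H}^{\alpha+1/p}_p(\T)\hookrightarrow \dot{B}^\alpha_{\infty,p}(\T)$, which is precisely Theorem \ref{ThmJawerth} specialized to $d=1$ and $q=p$.

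The principal obstacle lies in attaining the critical exponent $b=1$. A direct combination of the pointwise Br\'ezis-Wainger bound $\omega_\alpha(f,t)_{\infty;\T}\lesssim t^\alpha(1-\log t)^{1/p'}\|f\|_{\dot{H}^{\alpha+1/p}_p(\T)}$ with H\"older's inequality only delivers the embedding for $b>1$, since $\int_0^1(1-\log t)^{-1}\,dt/t=\infty$. Bridging this logarithmic gap requires the integral form of the Jawerth-Franke embedding coupled with Hardy's inequality \eqref{HardyInequal4} at its limiting regime $b+1=0$, which is exactly calibrated to absorb the $(1-\log t)^{-1}$ weight present in the target Lipschitz norm.
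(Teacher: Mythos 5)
Your argument follows the same route as the paper's: for necessity, the elementary embedding $\L^{(\alpha,-b)}_{\infty,p}(\T)\hookrightarrow\L^{(\alpha,-(b-1/p))}_{\infty,\infty}(\T)$ combined with the sharpness assertion \eqref{BrezisWaingerClassic2}; for sufficiency, the chain Marchaud $\to$ Hardy \eqref{HardyInequal4} $\to$ Jawerth--Franke. One technical point needs repair. Your tail estimate invokes ``$\|f\|_{L_\infty(\T)}\lesssim\|f\|_{\dot{H}^{\alpha+1/p}_p(\T)}$,'' but that inequality is false for the homogeneous seminorm: any nonzero constant has $\dot{H}^{\alpha+1/p}_p$-seminorm zero. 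What the tail $\int_1^\infty u^{-\alpha}\omega_m(f,u)_{\infty;\T}\,\frac{du}{u}$ is actually bounded by is the oscillation $\sup_{u>0}\omega_m(f,u)_{\infty;\T}$, and this quantity, unlike $\|f\|_{L_\infty(\T)}$, \emph{is} controlled by the homogeneous Besov seminorm $\|f\|_{\dot{B}^\alpha_{\infty,p}(\T),m}$ (the contribution of $[1,\infty)$ to the Besov integral already dominates it), which then folds into your Jawerth--Franke step. With that substitution your proof closes. The paper avoids the tail altogether by using the local fractional Marchaud inequality \eqref{Marchaud}, valid on $(0,t_0)$ with integral $\int_t^1$, which leaves only the boundary term $\int_{t_0}^1 u^{-\alpha}\omega_{\alpha+\gamma}(f,u)_{\infty;\T}\,\frac{du}{u}\lesssim\omega_{\alpha+\gamma}(f,t_0)_{\infty;\T}$, absorbed directly into the Besov norm; this is slightly cleaner than the splitting $\int_t^\infty=\int_t^1+\int_1^\infty$ but otherwise equivalent in effort.
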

	
	\begin{rem}\label{RemThmBWRef1}
		(i) For function spaces over $\R^d$ and $\alpha = 1$, a local version of the embedding given in (\ref{ThmBWRef2}) was obtained by Triebel \cite[(1.242), page 49]{Triebel06} and Haroske \cite[(6.18), page 96]{Haroske} as a part of the computation of the so-called continuity envelope of Sobolev spaces. However, their arguments do not allow to consider the fractional setting, that is, $\alpha > 0$. Below, we will give a new approach which will enable us to establish (\ref{ThmBWRef2}) for all $\alpha > 0$.
	
		(ii) Embedding (\ref{ThmBWRef2}) sharpens (\ref{BWRef}). More precisely, we will show that
		\begin{equation}\label{ThmBWRef3}
			 \text{Lip}^{(\alpha, -1)}_{\infty, p}(\T) \hookrightarrow  \text{Lip}^{(\alpha, -1/p')}_{\infty, \infty}(\T).
		\end{equation}
		Let $t \in (0,1)$. Using monotonicity properties of the moduli of smoothness (see Section \ref{SectionModuli}), we derive
		\begin{align*}
			\|f\|_{ \text{Lip}^{(\alpha, -1)}_{\infty, p}(\T)} & \geq \left(\int_0^t (u^{-\alpha} (1 - \log u)^{-1} \omega_\alpha(f,u)_{\infty;\T})^p \frac{du}{u} \right)^{1/p} \\
			& \gtrsim t^{-\alpha} \omega_\alpha(f,t)_{\infty;\T} \left( \int_0^t (1 - \log u)^{-p} \frac{du}{u}\right)^{1/p} \\
			& \asymp t^{-\alpha} (1 - \log t)^{-1/p'} \omega_\alpha(f,t)_{\infty;\T}.
		\end{align*}
		Now the embedding (\ref{ThmBWRef3}) follows by taking the supremum over all $t \in (0,1)$.
		
		Moreover, it is not hard to see that $\text{Lip}^{(\alpha, -1)}_{\infty, p}(\T) \neq \text{Lip}^{(\alpha, -1/p')}_{\infty, \infty}(\T).$
	\end{rem}
	
	\begin{proof}[Proof of Theorem \ref{ThmBWRef1}] We will make use of the fractional counterpart of (\ref{MarchaudInequal}) which states that given $\alpha, \gamma > 0$ there exists $t_0 \in (0,1)$ sufficiently small such that 
	\begin{equation}\label{Marchaud}
		\omega_\alpha(f,t)_{\infty;\T} \lesssim t^\alpha \int_t^1 \frac{\omega_{\gamma + \alpha}(f,u)_{\infty;\T}}{u^\alpha} \frac{du}{u}, \quad t \in (0,t_0).
	\end{equation}
	See \cite[Theorem 4.4]{KolomoitsevTikhonov}.
	
	We start by showing that
	\begin{equation}\label{ThmBWRef1.1}
	\dot{B}^\alpha_{\infty,p}(\T) \hookrightarrow \text{Lip}^{(\alpha, -1)}_{\infty, p}(\T).
	\end{equation}
	Applying (\ref{Marchaud}) and (\ref{HardyInequal4}), we have
	\begin{align*}
		\|f\|_{ \text{Lip}^{(\alpha, -1)}_{\infty, p}(\T)} &\asymp \left(\int_0^{t_0} (t^{-\alpha} (1 - \log t)^{-1} \omega_\alpha(f,t)_{\infty;\T})^p \frac{dt}{t} \right)^{1/p} \\
		& \lesssim \left( \int_0^{t_0}\left((1 - \log t)^{-1}   \int_t^1 \frac{\omega_{\alpha + \gamma}(f,u)_{\infty;\T}}{u^\alpha} \frac{du}{u} \right)^p \frac{dt}{t}\right)^{1/p} \\
		& \lesssim \left( \int_0^{t_0}\left((1 - \log t)^{-1}   \int_t^{t_0} \frac{\omega_{\alpha + \gamma}(f,u)_{\infty;\T}}{u^\alpha} \frac{du}{u} \right)^p \frac{dt}{t}\right)^{1/p} \\
		&\hspace{1cm} +  \int_{t_0}^1 \frac{\omega_{\alpha + \gamma}(f,u)_{\infty;\T}}{u^\alpha} \frac{du}{u}\\
		& \lesssim \left(\int_0^{t_0} (t^{-\alpha} \omega_{\alpha + \gamma}(f,t)_{\infty;\T})^p \frac{dt}{t} \right)^{1/p} + \omega_{\alpha + \gamma}(f,t_0)_{\infty;\T} \lesssim \|f\|_{\dot{B}^\alpha_{\infty,p}(\T), \alpha + \gamma}.
	\end{align*}
	
	 Combining (\ref{ThmBWRef1.1}) with the Jawerth-Franke embedding $\dot{H}^{\alpha + 1/p}_p(\T) \hookrightarrow \dot{B}^\alpha_{\infty,p}(\T)$ (see (\ref{FrankeMarschall}) below), we arrive at the embedding stated in (\ref{ThmBWRef2}).
	
	 Next we show the only-if part. We will proceed by contradiction. Assume that there exists $b \in (1/p,1)$ such that $\dot{H}^{\alpha + 1/p}_p(\T) \hookrightarrow \L^{(\alpha, -b)}_{\infty, p}(\T)$. It is plain to see that $\L^{(\alpha, -b)}_{\infty,p}(\T) \hookrightarrow \L^{(\alpha,-b+1/p)}_{\infty, \infty}(\T)$. Hence
	 \begin{equation*}
	 	\dot{H}^{\alpha + 1/p}_p(\T) \hookrightarrow \L^{(\alpha,-b+1/p)}_{\infty, \infty}(\T) \quad \text{for some} \quad b < 1.
	 \end{equation*}
	 However, this contradicts (\ref{BrezisWaingerClassic2}).
	\end{proof}

	\begin{rem}\label{Remark6.7}
	Similarly one can  show that
	\begin{equation}\label{RemarkNew1}
			\dot{H}^{\alpha + d/p}_p(\mathcal{X}) \hookrightarrow \text{Lip}^{(\alpha, -1)}_{\infty, p}(\mathcal{X}),\qquad \mathcal{X}\in \{\T^d,\R^d\}, \quad\alpha > 0, 1 < p < \infty.
		\end{equation}
	\end{rem}

The characterization of the refinement of the Br\'ezis-Wainger embedding, i.e., 
$
		\dot{H}^{\alpha + 1/p}_p(\T) \hookrightarrow \text{Lip}^{(\alpha, -1)}_{\infty, p}(\T)
 $, 
 	takes the following form.
	
\begin{thm}\label{UlyanovBrezisWaingerSharp}
	Let $1 <p < \infty, \alpha > 0$ and $b > 1/p$. The following statements are equivalent:
	\begin{enumerate}[\upshape(i)]
	\item \begin{equation*}
		\dot{H}^{\alpha + 1/p}_p(\T) \hookrightarrow \emph{Lip}^{(\alpha, -b)}_{\infty, p}(\T),
	\end{equation*}
		\item for $f \in \dot{B}^{1/p}_{p,1}(\T)$ and $t \in (0,1)$, we have
		\begin{align}
		\omega_\alpha (f, t)_{\infty;\T} + t^\alpha (1 - \log t)^{b-1/p} \left(\int_t^1 (u^{-\alpha} (1 - \log u)^{-b} \omega_\alpha (f,u)_{\infty;\T})^p \frac{du}{u} \right)^{1/p} \nonumber\\
		& \hspace{-10cm}\lesssim \int_0^{t (1 - \log t)^{(b-1/p)/\alpha}} u^{-1/p} \omega_{\alpha + 1/p} (f, u)_{p;\T} \frac{du}{u}, \label{UlyanovBrezisWaingerSharp<}
	\end{align}
		\item if $\lambda \to 0+$ then
		\begin{equation*}
		 \dot{B}^{\alpha + 1/p - \lambda}_{p,p}(\T) \hookrightarrow \dot{B}^{\alpha - \lambda, -b}_{\infty, p}(\T)
		 \end{equation*}
		  with norm $\mathcal{O}(\lambda^{1/p})$, i.e., there exists $C > 0$, which is independent of $\lambda$, such that
		\begin{equation}\label{UlyanovBrezisWaingerSharp**}
			\vertiii{f}_{B^{\alpha - \lambda, -b}_{\infty, p}(\T), \alpha} \leq C \lambda^{1/p}  \vertiii{f}_{B^{\alpha + 1/p - \lambda}_{p,p}(\T),  \alpha + 1/p },
		\end{equation}
	\item \begin{equation*}
	b \geq 1.
	\end{equation*}
	\end{enumerate}
\end{thm}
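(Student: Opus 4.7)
The plan is to mirror the four-part scheme of Theorem \ref{UlyanovBrezisWainger}, with the appropriate modifications arising from the second index $p$ (instead of $\infty$) in the target Lipschitz space. Since the equivalence (i) $\iff$ (iv) is already established in Theorem \ref{ThmBWRef1}, I focus on the chain (i) $\Rightarrow$ (ii) $\Rightarrow$ (iii) $\Rightarrow$ (i), which combines Holmstedt-type $K$-functional formulas with an integration step controlled by Hardy's inequality.

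For (i) $\Rightarrow$ (ii), I would proceed as in the corresponding step of Theorem \ref{UlyanovBrezisWainger}. The embedding $\dot{B}^{1/p}_{p,1}(\T) \hookrightarrow L_\infty(\T) + \infty \dot{H}^\alpha_\infty(\T)$ recorded in \eqref{6665} together with (i) yields
\begin{equation*}
K(s, f; L_\infty(\T) + \infty \dot{H}^\alpha_\infty(\T), \text{Lip}^{(\alpha,-b)}_{\infty,p}(\T)) \lesssim K(s, f; \dot{B}^{1/p}_{p,1}(\T), \dot{H}^{\alpha+1/p}_p(\T)).
\end{equation*}
At $s = t^\alpha$ the right-hand side is already computed in \eqref{8} as $\int_0^t u^{-1/p} \omega_{\alpha + 1/p}(f,u)_{p;\T} \, du/u$. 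For the left-hand side I would use the identification $\text{Lip}^{(\alpha,-b)}_{\infty,p}(\T) = (L_\infty(\T) + \infty \dot{H}^\alpha_\infty(\T), \dot{H}^\alpha_\infty(\T))_{(1,-b),p}$ (the exact analog of \eqref{909}, proved in the same way via \eqref{9099999} and \eqref{LipLimInter}) and then apply the Holmstedt formula \eqref{LemmaHolmstedt2} with $r = p$. Evaluating both sides at $s = t^\alpha(1-\log t)^{b-1/p}$ and converting $K$-functionals into moduli of smoothness through Lemma \ref{LemmaKfunctLS} produces the two summands on the left-hand side of \eqref{UlyanovBrezisWaingerSharp<}.

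For (ii) $\Rightarrow$ (iii), I would take the $p$-th power of \eqref{UlyanovBrezisWaingerSharp<}, retain only the second summand on the left, multiply through by $t^{(\lambda-\alpha)p}(1-\log t)^{-(b-1/p)p}$, and integrate in $dt/t$ over $(0,1)$. On the left, Fubini's theorem produces $(\lambda p)^{-1}\vertiii{f}_{B^{\alpha-\lambda,-b}_{\infty,p}(\T),\alpha}^p$. On the right, the substitution $\tau = t(1-\log t)^{(b-1/p)/\alpha}$ together with the elementary equivalences $d\tau/\tau \asymp dt/t$ and $1-\log\tau \asymp 1-\log t$ reduces matters to the classical Hardy estimate \eqref{HardyIneq*} applied to the function $g(u) = u^{-1/p}\omega_{\alpha+1/p}(f,u)_{p;\T}$. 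The logarithmic weights that remain after the substitution combine into a factor of the form $(1-\log\tau)^{-p(b-1/p)\lambda/\alpha} \leq 1$, which is uniform in $\tau \in (0,1)$ and in small $\lambda > 0$, and is therefore harmless. Rearranging delivers \eqref{UlyanovBrezisWaingerSharp**}.

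For (iii) $\Rightarrow$ (i), pass to the limit $\lambda \to 0+$ in \eqref{UlyanovBrezisWaingerSharp**}. The left-hand side increases monotonically to $\|f\|_{\text{Lip}^{(\alpha,-b)}_{\infty,p}(\T)}$ by monotone convergence (since $t^{\lambda p} \uparrow 1$ for each $t \in (0,1)$), and the pointwise bound $\omega_{\alpha+1/p}(f,t)_{p;\T} \lesssim t^{\alpha+1/p}\|f\|_{\dot{H}^{\alpha+1/p}_p(\T)}$ furnished by \eqref{SobolevModuliDPer'} gives $\lambda^{1/p}\vertiii{f}_{B^{\alpha+1/p-\lambda}_{p,p}(\T),\alpha+1/p} \lesssim \|f\|_{\dot{H}^{\alpha+1/p}_p(\T)}$ uniformly in $\lambda$. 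Combining these yields (i). The main technical obstacle is the change of variables performed in Step (ii) $\Rightarrow$ (iii): the logarithmic distortion in the upper limit of integration on the RHS of \eqref{UlyanovBrezisWaingerSharp<} must be absorbed into the weights precisely enough that the sharp factor $\lambda^{1/p}$ in \eqref{UlyanovBrezisWaingerSharp**} emerges with an implicit constant independent of $\lambda$.
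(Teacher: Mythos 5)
Your outline for (i) $\Rightarrow$ (ii), (iii) $\Rightarrow$ (i), and the equivalence with (iv) coincides with the paper's proof. The interesting divergence is in (ii) $\Rightarrow$ (iii). The paper first \emph{removes} the logarithmic shift in the upper limit by moving it into the integrand as a weight $(1-\log u)^{b-1/p}$ (this is inequality \eqref{UlyanovBrezisWaingerSharp.b}), then uses H\"older's inequality to introduce the factor $(\alpha-\lambda)^{-p/p'}$ and put the integral into $L^p$ form, and finally performs a two-step Fubini computation staying entirely inside $(0,1)$. You instead keep the shifted upper limit, integrate against $t^{(\lambda-\alpha)p}(1-\log t)^{-(b-1/p)p}\,dt/t$ first, apply Fubini on the left to extract $(\lambda p)^{-1}\vertiii{\cdot}^p$, then kill the shift on the right by the substitution $\tau = t(1-\log t)^{(b-1/p)/\alpha}$ and conclude with Hardy's inequality \eqref{HardyIneq*}. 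Both deliver the sharp exponent $\lambda^{1/p}$ via the same elementary mechanism $\int_0^1 t^{\lambda p}\,dt/t = (\lambda p)^{-1}$; you have correctly identified that the residual logarithmic weight after substitution is $(1-\log\tau)^{-p(b-1/p)\lambda/\alpha}\le 1$, and the Hardy constant $(\alpha-\lambda)^{-1}$ stays bounded as $\lambda\to 0+$. Your route is shorter, but does rely on two small points you should make explicit: the map $t\mapsto\tau$ fails to be monotone for $t$ near $1$ when $(b-1/p)/\alpha>1$, so one should split the outer integral at a small fixed $\delta$ and estimate the compact piece directly; and after substitution the Hardy bound is an integral over $(0,M)$ with possibly $M>1$, whose tail $\int_1^M$ must be bounded by $\vertiii{f}_{B^{\alpha+1/p-\lambda}_{p,p}(\T),\alpha+1/p}^p$ using monotonicity of the modulus and the elementary lower bound $\vertiii{f}^p \gtrsim \omega_{\alpha+1/p}(f,1)^p_{p;\T}$. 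Once these routine details are supplied the argument is complete. In contrast the paper's H\"older-plus-double-Fubini argument avoids any change of variables and so never leaves $(0,1)$, at the cost of a lengthier computation.
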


\begin{rem}\label{RemarkUlyanovBrezisWaingerSharp}
(i) The two terms given in the left-hand side of \eqref{UlyanovBrezisWaingerSharp<} are not comparable. Indeed, take $f \in L_\infty(\T)$ with $\omega_\alpha(f,t)_{\infty;\T} \asymp t^\alpha (1-\log t)^{1/p'} (1 + \log (1-\log t))^{-\varepsilon}$ where $\varepsilon < 1/p$ (cf. \cite{Tikhonovreal}).  Elementary computations show that $\omega_\alpha (f, t)_{\infty;\T} \lesssim t^\alpha (1 - \log t)^{1/p'} \left(\int_t^1 (u^{-\alpha} (1 - \log u)^{-1} \omega_\alpha (f,u)_{\infty;\T})^p \frac{du}{u} \right)^{1/p}$. On the other hand, consider $g \in L_\infty(\T)$ such that $\omega_\alpha(g,t)_{\infty;\T} \asymp t^\eta$ for some $\eta \in (0, \alpha)$ (cf. \cite{Tikhonovreal}).  It is readily seen that $t^\alpha (1 - \log t)^{1/p'} \left(\int_t^1 (u^{-\alpha} (1 - \log u)^{-1} \omega_\alpha (g,u)_{\infty;\T})^p \frac{du}{u} \right)^{1/p} \lesssim \omega_\alpha (g, t)_{\infty;\T}$.

(ii) The Ulyanov-type inequality (\ref{UlyanovBrezisWaingerSharp<}) with $b=1$, that is,
	 \begin{align}
		\omega_\alpha (f, t)_{\infty;\T} + t^\alpha (1 - \log t)^{1/p'} \left(\int_t^1 (u^{-\alpha} (1 - \log u)^{-1} \omega_\alpha (f,u)_{\infty;\T})^p \frac{du}{u} \right)^{1/p} \nonumber
\\
		& \hspace{-10cm}\lesssim \int_0^{t (1 - \log t)^{1/\alpha p'}} u^{-1/p} \omega_{\alpha + 1/p} (f, u)_{p;\T} \frac{du}{u}
\label{vspom}	
\end{align}
	sharpens the estimate
		\begin{equation*}
		\omega_\alpha (f, t)_{\infty;\T} \lesssim  \int_0^{t (1 - \log t)^{1/\alpha p'}} u^{-1/p} \omega_{\alpha + 1/p} (f, u)_{p;\T} \frac{du}{u}
	\end{equation*}
 given in Theorem \ref{UlyanovBrezisWainger}(ii) with $b = 1/p'$ (see also (\ref{UlyanovTikSharp*})). In particular, (\ref{vspom}) sharpens the Ulyanov inequality (\ref{UlyanovTik*}).
	To be more precise, let
	\begin{equation*}
		I(t) = \omega_\alpha (f, t)_{\infty;\T} + t^\alpha (1 - \log t)^{1/p'} \left(\int_t^1 (u^{-\alpha} (1 - \log u)^{-1} \omega_\alpha (f,u)_{\infty;\T})^p \frac{du}{u} \right)^{1/p}
	\end{equation*}
	and
$
		J(t) = \omega_\alpha (f, t)_{\infty;\T}.
	$
	Obviously, $J(t) \leq I(t)$. Setting $f \in L_\infty(\T)$ with $J(t) = \omega_\alpha (f, t)_{\infty;\T} \asymp t^\alpha (1-\log t)^{1/p'} (1 + \log (1 - \log t))^{-\varepsilon}, \, \varepsilon < 1/p$ (see \cite{Tikhonovreal}), it is easy to check that $I(t) \asymp t^\alpha (1-\log t)^{1/p'} (1 + \log (1 - \log t))^{-\varepsilon + 1/p}$. Thus $J(t)$ and $I(t)$ are not equivalent.
Such an improvement is consistent with the fact that the embedding stated in Theorem \ref{UlyanovBrezisWaingerSharp}(i), i.e,
	 \begin{equation*}
		\dot{H}^{\alpha + 1/p}_p(\T) \hookrightarrow \text{Lip}^{(\alpha, -1)}_{\infty, p}(\T),
	\end{equation*}
	is a refinement of that given in Theorem \ref{UlyanovBrezisWainger}(i)
	\begin{equation*}
		\dot{H}^{\alpha + 1/p}_p(\T) \hookrightarrow \text{Lip}^{(\alpha, -1/p')}_{\infty, \infty}(\T);
	\end{equation*}	
	see Remark \ref{RemThmBWRef1}(ii).
	
	(iii) The sharp norm estimates of the classical Sobolev embeddings for Besov spaces
	\begin{equation}\label{LernerKolyada}
	 \dot{B}^{k - \lambda}_{p,r}(\T) \hookrightarrow \dot{B}^{k - 1/p + 1/q- \lambda}_{q, r}(\T), \quad 1 \leq p < q \leq \infty, \quad 0 < r \leq \infty,  \quad k \in \mathbb{N},
	 \end{equation}
	  as  $\lambda \to 0+$ or $\lambda \to (k - 1/p + 1/q)-$ were settled in \cite{KolyadaLerner} and \cite{Dominguez}. Here both Besov (semi-)norms in (\ref{LernerKolyada}) are defined in terms of the corresponding modulus of smoothness with fixed order $k$.
This is in sharp contrast with (\ref{UlyanovBrezisWaingerSharp**})
where the semi-norms $\vertiii{f}_{B^{\alpha - \lambda, -b}_{\infty, p}(\T), \alpha}$ and $\vertiii{f}_{B^{\alpha + 1/p - \lambda}_{p,p}(\T),  \alpha + 1/p }$ involve the moduli of smoothness of order $\alpha$ and $\alpha + 1/p$, respectively. See (\ref{DefHolZygLog}). Furthermore, it turns out that the  sharp estimates for (\ref{LernerKolyada}) obtained in \cite{KolyadaLerner} and \cite{Dominguez} and those given in (\ref{UlyanovBrezisWaingerSharp**}) with $b=1$ are independent of each other. Indeed, assume $k= \alpha + 1/p \in \mathbb{N}$. According to \cite[Remark 3.3]{Dominguez}, if $q = \infty$ and $r=p$ in (\ref{LernerKolyada}) then
	  	\begin{equation}\label{UlyanovBrezisWaingerSharp**1}
			\|f\|_{\dot{B}^{\alpha - \lambda}_{\infty, p}(\T), k} \leq C \lambda^{1/p}  \|f\|_{\dot{B}^{\alpha + 1/p - \lambda}_{p,p}(\T),k }, \quad \lambda \to 0+.
		\end{equation}
	At the same time,  it follows from (\ref{UlyanovBrezisWaingerSharp**}) that
		\begin{equation}\label{UlyanovBrezisWaingerSharp**2new}
			\vertiii{f}_{B^{\alpha - \lambda, -1}_{\infty, p}(\T), \alpha} \leq C \lambda^{1/p}  \|f\|_{\dot{B}^{\alpha + 1/p - \lambda}_{p,p}(\T), k }, \quad \lambda \to 0+.
		\end{equation}
		Notice that (\ref{UlyanovBrezisWaingerSharp**2new}) is not an immediate consequence of  (\ref{UlyanovBrezisWaingerSharp**1}). To be more precise, invoking Marchaud inequality (\ref{Marchaud}) (noting that $\alpha < k = \alpha + 1/p$) and Hardy's inequality (\ref{HardyIneq}), it is plain to check that
		\begin{equation}\label{UlyanovBrezisWaingerSharp**2}
			\vertiii{f}_{B^{\alpha - \lambda, -1}_{\infty, p}(\T), \alpha} \leq \left(\int_0^\infty t^{-(\alpha - \lambda) p}  \omega_\alpha(f,t)_{\infty;\T}^p \frac{dt}{t} \right)^{1/p} \lesssim \lambda^{-1} \|f\|_{\dot{B}^{\alpha - \lambda}_{\infty, p}(\T),k}
		\end{equation}
		and so, by (\ref{UlyanovBrezisWaingerSharp**1}),
		\begin{equation*}
			\vertiii{f}_{B^{\alpha - \lambda, -1}_{\infty, p}(\T), \alpha} \lesssim \lambda^{1/p-1}  \|f\|_{\dot{B}^{\alpha + 1/p - \lambda}_{p,p}(\T), k}.
		\end{equation*}
		Here, the embedding constant blows up as $\lambda \to 0+$, which is not the case in (\ref{UlyanovBrezisWaingerSharp**2new}).


	(iv) Notice that a remark parallel to Remark \ref{RemUlyanovBrezisWainger}(ii) applies on (\ref{UlyanovBrezisWaingerSharp**}).
	
	(v) The assumption $ b > 1/p$ in Theorem \ref{UlyanovBrezisWaingerSharp} is imposed to avoid trivial spaces. Recall that $\text{Lip}^{(\alpha, -b)}_{\infty, p}(\T) = \{0\}$ if $b \leq 1/p$ (see Section \ref{SectionFunctionSpaces}).
\end{rem}

\begin{proof}[Proof of Theorem \ref{UlyanovBrezisWaingerSharp}]
(i) $\Longrightarrow$ (ii): By (i) and \eqref{6665}, we obtain
\begin{equation}\label{KFunctUlyanovBrezisWaingerSharp}
	K(t, f ; L_\infty(\T) + \infty \dot{H}^\alpha_\infty(\T),  \text{Lip}^{(\alpha, -b)}_{\infty, p}(\T)) \lesssim K(t, f; \dot{B}^{1/p}_{p,1}(\T), \dot{H}^{\alpha + 1/p}_p(\T)).
\end{equation}
We have (see (\ref{8}))
\begin{equation}\label{8*}
		K(t^\alpha,f ; \dot{B}^{1/p}_{p,1}(\T) ,  \dot{H}^{\alpha+1/p}_p(\T)) \asymp \int_0^t u^{-1/p} \omega_{\alpha + 1/p}(f,u)_{p;\T} \frac{du}{u}.
	\end{equation}
	Next we compute $K(t, f ; L_\infty(\T)+ \infty \dot{H}^\alpha_\infty(\T),  \text{Lip}^{(\alpha, -b)}_{\infty, p}(\T))$. Since $\L^{(\alpha,-b)}_{\infty,p}(\T) = (L_\infty(\T)+ \infty \dot{H}^\alpha_\infty(\T), \dot{H}^\alpha_\infty(\T))_{(1,-b),p}$ (see (\ref{LipLimInter}) and \eqref{9099999}), we can apply (\ref{LemmaHolmstedt2}) to establish 
	\begin{align*}
		K(t (1 - \log t)^{b - 1/p}, f ;  L_\infty(\T)+ \infty \dot{H}^\alpha_\infty(\T),  \text{Lip}^{(\alpha, -b)}_{\infty, p}(\T)) \\
		& \hspace{-8.5cm}\asymp K(t (1 - \log t)^{b - 1/p}, f ;  L_\infty(\T)+ \infty \dot{H}^\alpha_\infty(\T),  (L_\infty(\T)+ \infty \dot{H}^\alpha_\infty(\T), \dot{H}^\alpha_\infty(\T))_{(1,-b),p}) \\
		& \hspace{-8.5cm} \asymp K(t,f ; L_\infty(\T)+ \infty \dot{H}^\alpha_\infty(\T), \dot{H}^\alpha_\infty(\T)) \\
		& \hspace{-8cm} + t (1 - \log t)^{b - 1/p} \left(\int_t^1 (u^{-1} (1-\log u)^{-b} K(u,f ; L_\infty(\T)+ \infty \dot{H}^\alpha_\infty(\T), \dot{H}^\alpha_\infty(\T)))^p \frac{du}{u} \right)^{1/p}.
	\end{align*}
	Therefore, by \eqref{9099999} and Lemma \ref{LemmaKfunctLS},
	\begin{align}
		K(t^\alpha (1 - \log t)^{b - 1/p}, f ;  L_\infty(\T)+ \infty \dot{H}^\alpha_\infty(\T),  \text{Lip}^{(\alpha, -b)}_{\infty, p}(\T))& \asymp \omega_\alpha(f,t)_{\infty;\T} \nonumber \\
		& \hspace{-6cm}  + t^\alpha (1 - \log t)^{b - 1/p} \left(\int_t^1 (u^{-\alpha} (1-\log u)^{-b} \omega_\alpha(f,u)_{\infty;\T})^p \frac{du}{u} \right)^{1/p}. \label{UlyanovBrezisWaingerSharp.a}
	\end{align}
	Putting together (\ref{KFunctUlyanovBrezisWaingerSharp}), (\ref{8*}) and (\ref{UlyanovBrezisWaingerSharp.a}), it follows that
	\begin{align*}
		 \omega_\alpha(f,t)_{\infty;\T} + t^\alpha (1 - \log t)^{b - 1/p} \left(\int_t^1 (u^{-\alpha} (1-\log u)^{-b} \omega_\alpha(f,u)_{\infty;\T})^p \frac{du}{u} \right)^{1/p} \\
		 &\hspace{-10cm} \lesssim  \int_0^{t (1 - \log t)^{(b-1/p)/\alpha}} u^{-1/p} \omega_{\alpha + 1/p} (f, u)_{p;\T} \frac{du}{u}.
	\end{align*}
	
	(ii) $\Longrightarrow$ (iii): Firstly, we will show that (ii) implies

	\begin{align}
		 \left(\int_t^1 (u^{-\alpha} (1 - \log u)^{-b} \omega_\alpha (f,u)_{\infty;\T})^p \frac{du}{u} \right)^{1/p} \nonumber \\
		& \hspace{-5cm} \lesssim t^{-\alpha} (1 - \log t)^{-b+1/p} \int_0^t u^{-1/p} (1 - \log u)^{b - 1/p} \omega_{\alpha + 1/p}(f,u)_{p;\T} \frac{du}{u} \label{UlyanovBrezisWaingerSharp.b}
	\end{align}
	for $t \in (0,1)$. Indeed, using monotonicity properties of the moduli of smoothness (see Section \ref{SectionModuli}), we write
	\begin{align*}
		 \int_0^{t (1 - \log t)^{(b-1/p)/\alpha}} u^{-1/p} \omega_{\alpha + 1/p} (f, u)_{p;\T} \frac{du}{u}
&\lesssim \int_0^t u^{-1/p} (1 - \log u)^{b-1/p} \omega_{\alpha + 1/p}(f,u)_{p;\T} \frac{du}{u}
 \\
		 & \hspace{-1cm} + \omega_{\alpha + 1/p}(f,t)_{p;\T} t^{-\alpha - 1/p} \int_t^{t (1 - \log t)^{(b-1/p)/\alpha}} u^\alpha \frac{du}{u} \\
		 & \hspace{-1cm} \lesssim \int_0^t u^{-1/p} (1 - \log u)^{b-1/p} \omega_{\alpha + 1/p}(f,u)_{p;\T} \frac{du}{u}.
	\end{align*}
	Therefore, by (ii) we derive 
	\begin{align*}
		 t^\alpha (1 - \log t)^{b-1/p} \left(\int_t^1 (u^{-\alpha} (1 - \log u)^{-b} \omega_\alpha (f,u)_{\infty;\T})^p \frac{du}{u} \right)^{1/p} \\
		 & \hspace{-9cm} \lesssim \int_0^{t (1 - \log t)^{(b-1/p)/\alpha}} u^{-1/p} \omega_{\alpha + 1/p} (f, u)_{p;\T} \frac{du}{u} \\
		 & \hspace{-9cm} \lesssim \int_0^t u^{-1/p} (1 - \log u)^{b-1/p} \omega_{\alpha + 1/p}(f,u)_{p;\T} \frac{du}{u}.
	\end{align*}
	This yields (\ref{UlyanovBrezisWaingerSharp.b}).
	
	Let $0 < \lambda < \alpha$. Applying H\"older's inequality,

	\begin{align}
\label{UlyanovBrezisWaingerSharp.c}	&\left( \int_0^t u^{-1/p} (1 - \log u)^{b - 1/p} \omega_{\alpha + 1/p}(f,u)_{p;\T} \frac{du}{u}\right)^p  \\
	&
 \lesssim (\alpha - \lambda)^{-p/p'}  t^{(\alpha-\lambda) p/2}  \int_0^t \left(u^{- (\alpha-\lambda)/2-1/p} (1 - \log u)^{b - 1/p} \omega_{\alpha + 1/p}(f,u)_{p;\T} \right)^p \frac{du}{u}. \nonumber
	\end{align}
	In view of (\ref{DefHolZygLog}), (\ref{UlyanovBrezisWaingerSharp.b}) and (\ref{UlyanovBrezisWaingerSharp.c}) and applying Fubini's theorem twice, we have
	\begin{align*}
		\vertiii{f}_{B^{\alpha-\lambda, -b}_{\infty,p}(\T), \alpha}^p & = \int_0^1 u^{-\alpha p + \lambda p} (1 - \log u)^{-b p} \omega_\alpha(f, u)_{\infty;\T}^p \frac{du}{u} \nonumber\\
		&\hspace{-2cm} \asymp \lambda \int_0^1 u^{-\alpha p} (1 - \log u)^{-b p}  \omega_\alpha(f, u)_{\infty;\T}^p  \int_0^u t^{\lambda p} \frac{dt}{t} \frac{du}{u} \nonumber \\
		& \hspace{-2cm}  = \lambda \int_0^1 t^{\lambda p} \int_t^1  u^{-\alpha p} (1 - \log u)^{-b p}  \omega_\alpha(f, u)_{\infty;\T}^p \frac{du}{u} \frac{dt}{t} \nonumber \\
		&\hspace{-2cm} \lesssim \lambda \int_0^1 t^{\lambda p- \alpha p} (1 - \log t)^{-b p + 1}  \left( \int_0^t u^{-1/p} (1 - \log u)^{b - 1/p} \omega_{\alpha + 1/p}(f,u)_{p;\T} \frac{du}{u}\right)^p \frac{dt}{t} \\
		& \hspace{-2cm} \lesssim \lambda (\alpha - \lambda)^{-p/p'} \int_0^1 t^{(\lambda - \alpha + (\alpha-\lambda)/2 )p} (1 - \log t)^{-b p + 1} \\
		&\hspace{2cm} \int_0^t \left(u^{- (\alpha-\lambda)/2-1/p} (1 - \log u)^{b - 1/p} \omega_{\alpha + 1/p}(f,u)_{p;\T} \right)^p \frac{du}{u} \frac{dt}{t} \\
		& \hspace{-2cm} = \lambda (\alpha - \lambda)^{-p/p'} \int_0^1 \left(u^{- (\alpha-\lambda)/2-1/p} (1 - \log u)^{b - 1/p} \omega_{\alpha + 1/p}(f,u)_{p;\T} \right)^p \\
		& \hspace{2cm} \int_u^1 t^{-( \alpha - \lambda)p/2} (1 - \log t)^{-b p + 1} \frac{dt}{t} \frac{du}{u} \\
		& \hspace{-2cm} \asymp  \lambda (\alpha - \lambda)^{-p/p' + b p - 2} \int_0^1 \left(u^{- (\alpha-\lambda)/2-1/p} (1 - \log u)^{b - 1/p} \omega_{\alpha + 1/p}(f,u)_{p;\T} \right)^p \\
		& \hspace{2cm} \int_{u^{\alpha-\lambda}}^1 t^{-p/2} (- \log t)^{-b p + 1} \frac{dt}{t} \frac{du}{u} \\
		& \hspace{-2cm} \lesssim  \lambda (\alpha - \lambda)^{-p/p' - 1} \int_0^1 \left(u^{- \alpha+\lambda-1/p}  \omega_{\alpha + 1/p}(f,u)_{p;\T} \right)^p  \frac{du}{u} \asymp \lambda \vertiii{f}_{B^{\alpha + 1/p - \lambda}_{p,p}(\T), \alpha + 1/p}^p,
	\end{align*}
	where the last estimate follows from the fact that $ (\alpha - \lambda)^{-p/p' - 1}$ is uniformly bounded with $\lambda \to 0+$.
	
	(iii) $\Longrightarrow$ (i): We claim that there is a positive constant $C$, which does not depend on $\lambda$, satisfying
	\begin{equation}\label{UlyanovBrezisWaingerSharp.d}
		\vertiii{f}_{B^{\alpha + 1/p - \lambda}_{p,p}(\T), \alpha + 1/p} \leq C \lambda^{-1/p} \|f\|_{\dot{H}^{\alpha + 1/p}_p(\T)}, \quad \lambda \to 0+.
	\end{equation}
	Indeed, we have
	\begin{align*}
		\vertiii{f}_{B^{\alpha + 1/p - \lambda}_{p,p}(\T), \alpha + 1/p} &=  \left(\int_0^1 t^{-(\alpha + 1/p -\lambda)p} \omega_{\alpha + 1/p}(f,t)_{p;\T}^p \frac{dt}{t} \right)^{1/p}  \\
		& \leq  \left(\int_0^1 t^{\lambda p} \frac{dt}{t} \right)^{1/p} \sup_{0 < t < 1} t^{-(\alpha + 1/p)} \omega_{\alpha + 1/p}(f,t)_{p;\T} \\
		& \lesssim \lambda^{-1/p} \|f\|_{\dot{H}^{\alpha + 1/p}_p(\T)},
	\end{align*}
	where we have used Lemma \ref{Lemma3.6} in the last estimate.
	
	According to (iii) and (\ref{UlyanovBrezisWaingerSharp.d}), we obtain
	\begin{equation*}
		\vertiii{f}_{B^{\alpha - \lambda, -b}_{\infty, p}(\T), \alpha} \lesssim \lambda^{1/p}  \vertiii{f}_{B^{\alpha + 1/p - \lambda}_{p,p}(\T), \alpha + 1/p } \lesssim \|f\|_{\dot{H}^{\alpha + 1/p}_p(\T)},
	\end{equation*}
	where the hidden constant is independent of $\lambda$. Then the embedding given in (i) follows by passing to the limit $\lambda \to 0+$ and applying the monotone convergence theorem (see (\ref{DefLipschitz}) and (\ref{DefHolZygLog})).
	
	The equivalence (i) $\iff$ (iv) was already shown in Theorem \ref{ThmBWRef1}.
	
%
\end{proof}

\begin{rem}
Repeating the proof of the implication (i) $\Longrightarrow$ (ii) in Theorem \ref{UlyanovBrezisWaingerSharp} line by line but now using \eqref{RemarkNew1}, one can establish the multivariate counterpart of \eqref{vspom}. Namely, if $1 < p < \infty$ and $\alpha > 0$ then
	\begin{align*}
		\omega_\alpha (f, t)_{\infty;\T^d} + t^\alpha (1 - \log t)^{1/p'} \left(\int_t^1 (u^{-\alpha} (1 - \log u)^{-1} \omega_\alpha (f,u)_{\infty;\T^d})^p \frac{du}{u} \right)^{1/p} \nonumber\\
		& \hspace{-10cm}\lesssim \int_0^{t (1 - \log t)^{1/\alpha p'}} u^{-d/p} \omega_{\alpha + d/p} (f, u)_{p;\T^d} \frac{du}{u}, \quad f \in L_p(\T^d),
	\end{align*}
	whenever the right-hand side is finite. The corresponding inequality for functions $f \in L_p(\R^d)$ also holds true.
\end{rem}

\begin{rem}
	The Ulyanov-type inequality (\ref{UlyanovBrezisWaingerSharp<}) (with $b=1$) obtained in Theorem \ref{UlyanovBrezisWaingerSharp} is optimal in the following senses
	\begin{align*}
		\omega_\alpha (f, t)_{\infty;\T} + t^\alpha (1 - \log t)^{1/p'} \left(\int_t^1 (u^{-\alpha} (1 - \log u)^{-1} \omega_\alpha (f,u)_{\infty;\T})^p \frac{du}{u} \right)^{1/p} \\
		& \hspace{-10cm}\lesssim \left(\int_0^{t (1 - \log t)^{1/\alpha p'}} (u^{-1/p} \omega_{\alpha + 1/p} (f, u)_{p;\T})^q \frac{du}{u} \right)^{1/q} \iff q \leq 1,
	\end{align*}
	and
	\begin{align*}
		\omega_\alpha (f, t)_{\infty;\T} + t^\alpha (1 - \log t)^{1/p'} \left(\int_t^1 (u^{-\alpha} (1 - \log u)^{-1} \omega_\alpha (f,u)_{\infty;\T})^p \frac{du}{u} \right)^{1/p} \\
		& \hspace{-10cm}\lesssim \int_0^{t (1 - \log t)^{1/\alpha p'}} u^{-1/p} (1-\log u)^b \omega_{\alpha + 1/p} (f, u)_{p;\T} \frac{du}{u} \iff b \geq 0.
	\end{align*}
	The proofs of these assertions proceed in complete analogy with those given to show (\ref{SharpnessAssUT}) and (\ref{SharpnessAssUT2}) and they are left to the reader.
\end{rem}

\subsection{Embeddings of Jawerth-Franke}
The Jawerth-Franke embeddings establish relations between Besov spaces and Triebel-Lizorkin spaces with different metrics. See \cite{Jawerth} and \cite{Franke} (cf. also \cite{Marschall} and \cite{Vybiral}). In particular, for the Sobolev spaces the result reads as follows.

\begin{thm}\label{ThmFrankeJawerthRecall}
	Let $1 \leq p_0 < p < p_1 \leq \infty$ and $\alpha \geq 0$. Then
\begin{equation}\label{FrankeMarschall}
		\dot{B}^{\alpha + d(1/p_0 -1/p)}_{p_0,p}(\T^d) \hookrightarrow \dot{H}^{\alpha}_p(\T^d) \hookrightarrow \dot{B}^{\alpha + d(1/p_1 - 1/p)}_{p_1,p}(\T^d).
	\end{equation}
	The previous embeddings also hold true for function spaces over $\R^d$.
\end{thm}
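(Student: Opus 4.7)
The plan is to deduce both embeddings from the classical fractional Sobolev embedding of Bessel-potential spaces, namely
\begin{equation*}
\dot{H}^{s}_{q_0}(\R^d) \hookrightarrow \dot{H}^{s - d(1/q_0 - 1/q_1)}_{q_1}(\R^d), \qquad 1 < q_0 < q_1 < \infty,
\end{equation*}
combined with real interpolation and the formula \eqref{BInter}. The periodic versions will follow by standard transference/localization arguments, so I focus on $\R^d$.

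For the first embedding I would fix $0 < s_0 < s_1$ with $(1-\theta)s_0 + \theta s_1 = \alpha + d(1/p_0 - 1/p)$ and $s_i > d(1/p_0 - 1/p)$, apply the Sobolev embedding above at $s_0$ and $s_1$ (with $q_0 = p_0$ and $q_1 = p$), and then apply the real interpolation functor $(\cdot,\cdot)_{\theta, p}$. Using the general real interpolation formula $(\dot{H}^{s_0}_q, \dot{H}^{s_1}_q)_{\theta, p} = \dot{B}^{(1-\theta)s_0 + \theta s_1}_{q, p}$, which is a direct extension of \eqref{BInter}, on both sides yields
\begin{equation*}
\dot{B}^{\alpha + d(1/p_0 - 1/p)}_{p_0, p}(\R^d) \hookrightarrow \dot{B}^{\alpha}_{p, p}(\R^d).
\end{equation*}
A parallel interpolation argument gives $\dot{B}^\alpha_{p, p}(\R^d) \hookrightarrow \dot{B}^{\alpha + d(1/p_1 - 1/p)}_{p_1, p}(\R^d)$. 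The limiting endpoint cases $p_0 = 1$ and $p_1 = \infty$ require a separate treatment using direct Sobolev embeddings for $\dot{W}^k_1$ and Bernstein-type inequalities for smooth band-limited functions, respectively.

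The genuine difficulty lies in bridging the gap between the Besov space $\dot{B}^\alpha_{p, p}$ produced by real interpolation and the Bessel-potential space $\dot{H}^\alpha_p$, since these coincide only when $p = 2$. When $1 < p \leq 2$ one has $\dot{B}^\alpha_{p, p} \hookrightarrow \dot{H}^\alpha_p$, so the first embedding follows directly; dually $\dot{H}^\alpha_p \hookrightarrow \dot{B}^\alpha_{p, p}$ when $2 \leq p < \infty$, so the second embedding follows. For the remaining ranges I would switch to the Littlewood--Paley / Triebel--Lizorkin framework, exploit the identification $\dot{H}^\alpha_p = \dot{F}^\alpha_{p, 2}$ valid for $1 < p < \infty$, and invoke Jawerth's sharper embedding $\dot{F}^{\alpha_0}_{p_0, q_0} \hookrightarrow \dot{F}^{\alpha_1}_{p_1, q_1}$ (under the differential-dimension condition $\alpha_0 - d/p_0 = \alpha_1 - d/p_1$ with $p_0 < p_1$, for arbitrary $q_0, q_1$). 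The proof of this last ingredient rests on the Fefferman--Stein vector-valued maximal inequality together with the Peetre--Plancherel--P\'olya pointwise bound for band-limited functions, and it is precisely what is \emph{not} available through real interpolation alone; this is the main obstacle. For the detailed arguments I would refer to \cite{Jawerth, Franke}, with streamlined modern expositions in \cite{Marschall, Vybiral}.
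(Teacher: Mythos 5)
You are trying to prove a result the paper itself does not prove: the authors state \eqref{FrankeMarschall} as a recall of the Jawerth--Franke embeddings and simply cite \cite{Jawerth, Franke, Marschall, Vybiral}. Your first part, deducing $\dot B^{\alpha+d(1/p_0-1/p)}_{p_0,p}\hookrightarrow \dot B^{\alpha}_{p,p}$ from the Bessel-potential Sobolev embedding via $(\cdot,\cdot)_{\theta,p}$ and the formula \eqref{BInter}, is sound, and combined with $\dot B^\alpha_{p,p}\hookrightarrow \dot H^\alpha_p$ (valid for $p\le 2$) it does give the left embedding in that restricted range, with a dual argument giving the right embedding for $p\ge 2$.

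The gap is in your proposed fix for the remaining ranges. The embedding you invoke, $\dot F^{\alpha_0}_{p_0,q_0}\hookrightarrow \dot F^{\alpha_1}_{p_1,q_1}$ under the differential-dimension condition with arbitrary $q$-indices, is true but is not the Jawerth--Franke theorem and, crucially, does not imply the statement of the theorem. To pass from $\dot B^{\alpha+d(1/p_0-1/p)}_{p_0,p}$ to some $\dot F^{\cdot}_{p_0,r}$ at fixed metric $p_0$ one needs the elementary inclusion $\dot B^s_{p_0,p}\hookrightarrow \dot F^s_{p_0,r}$, which requires $p\le\min(p_0,r)$; since $p>p_0$ this fails for every $r$, so there is no way to enter the $F\hookrightarrow F$ scale on the left. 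The same obstruction occurs on the right: $\dot F^\cdot_{p_1,r}\hookrightarrow \dot B^\cdot_{p_1,p}$ needs $p\ge\max(p_1,r)$, impossible since $p<p_1$. What is actually needed is the genuine Jawerth embedding $\dot B^{s_0}_{p_0,u}\hookrightarrow \dot F^{s_1}_{p_1,q}$ for $u\le p_1$ (applied with $u=p=p_1$, $q=2$) for the left half, and Franke's dual $\dot F^{s_0}_{p_0,q}\hookrightarrow \dot B^{s_1}_{p_1,u}$ for $u\ge p_0$ (applied with $p_0=p=u$, $q=2$) for the right half. These are cross-scale $B\!\leftrightarrow\! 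F$ embeddings that strictly improve the $F\hookrightarrow F$ statement, and they are the content of the references you (and the paper) cite; they are not obtainable from the $F\hookrightarrow F$ embedding by routine chaining. As written, your argument therefore does not close, and it also leaves the endpoint cases $p_0=1$, $p_1=\infty$ and the transfer $\R^d\to\T^d$ as unworked placeholders.
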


Note that working with Fourier-analytically defined function spaces, these embeddings can be extended to any $\alpha \in \R$.

Applying the relationships between Lipschitz spaces and Besov spaces obtained in \cite{DominguezTikhonov19}, we observe that the Br\'ezis-Wainger-type embeddings (cf. Theorem \ref{ThmBWRef1} and Remark \ref{Remark6.7}) can be strengthened by the Jawerth-Franke embeddings. To be more precise, it follows from
\begin{equation*}
\dot{B}^\alpha_{\infty,p}(\T^d) \hookrightarrow \text{Lip}^{(\alpha, -1)}_{\infty, p}(\T^d), \quad \alpha > 0, \quad 1 < p < \infty,
\end{equation*}
that
\begin{equation}\label{BWVSFJ2}
		\dot{H}^{\alpha + d/p}_p(\T^d) \hookrightarrow \dot{B}^\alpha_{\infty,p}(\T^d)
	\end{equation}	
consists of an improvement of
\begin{equation*}
		\dot{H}^{\alpha + d/p}_p(\T^d) \hookrightarrow \text{Lip}^{(\alpha, -1)}_{\infty, p}(\T^d).
	\end{equation*}

	Our next goal is to study the estimates in terms of the moduli of smoothness and extrapolation inequalities related to (\ref{BWVSFJ2}).

\begin{thm}\label{ThmJawerth}
	Let $1 <p < \infty, \alpha > 0$ and $0 < q \leq \infty$. The following statements are equivalent:
	\begin{enumerate}[\upshape(i)]
	\item \begin{equation*}
		\dot{H}^{\alpha + d/p}_{p}(\T^d) \hookrightarrow \dot{B}^\alpha_{\infty,q}(\T^d),
	\end{equation*}
		\item for $f \in \dot{B}^{d/p}_{p,1}(\T^d)$ and $t > 0$, we have
		 \begin{equation}\label{ThmJawerth1}
		t^\alpha \left(\int_t^\infty (u^{-\alpha} \omega_{\alpha + d/p} (f,u)_{\infty;\T^d})^q \frac{du}{u} \right)^{1/q} \lesssim \int_0^{t} u^{-d/p} \omega_{\alpha + d/p} (f, u)_{p;\T^d} \frac{du}{u},
	\end{equation}
	\item if  $\lambda \to 0+$ then
	\begin{equation*}
	\dot{B}^{\alpha + d/p - \lambda}_{p,q}(\T^d) \hookrightarrow \dot{B}^{\alpha - \lambda}_{\infty, q}(\mathbb{T}^d)
	\end{equation*}
	 with norm $\mathcal{O}(\lambda^{1/q})$, i.e., there exists $C > 0$, which is independent of $\lambda$, such that
		\begin{equation}\label{ThmJawerth1*}
			\|f\|_{\dot{B}^{\alpha - \lambda}_{\infty, q}(\T^d), \alpha + d/p} \leq C \lambda^{1/q}  \|f\|_{\dot{B}^{\alpha + d/p - \lambda}_{p,q}(\T^d), \alpha + d/p},
		\end{equation}
	\item \begin{equation*}
	q \geq p.
	\end{equation*}
	\end{enumerate}
	The corresponding result also holds true for $\R^d$.
\end{thm}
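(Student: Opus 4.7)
The plan is to mimic the strategy of Theorems \ref{UlyanovBrezisWainger} and \ref{UlyanovBrezisWaingerSharp}, reducing the four implications to K-functional computations together with Holmstedt-type reiteration. The equivalence (i) $\iff$ (iv) is the classical sharp Jawerth--Franke embedding \eqref{FrankeMarschall}, so we focus on the circular chain (i) $\Longrightarrow$ (ii) $\Longrightarrow$ (iii) $\Longrightarrow$ (i).

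For (i) $\Longrightarrow$ (ii), set $A_0 = L_\infty(\T^d) + \infty\, \dot{H}^{\alpha+d/p}_\infty(\T^d)$ and $A_1 = \dot{H}^{\alpha+d/p}_\infty(\T^d)$, so that by Lemma \ref{LemmaKfunctLS} and the Gagliardo completeness argument used in \eqref{BBounded}, $K(t,f;A_0,A_1) \asymp \omega_{\alpha+d/p}(f,t)_{\infty;\T^d}$. Since the critical Sobolev embedding gives $\dot{B}^{d/p}_{p,1}(\T^d)\hookrightarrow L_\infty(\T^d)$, and hence into $A_0$, together with (i) one obtains
\[
K(t,f;A_0,\dot{B}^\alpha_{\infty,q}(\T^d)) \lesssim K(t,f;\dot{B}^{d/p}_{p,1}(\T^d),\dot{H}^{\alpha+d/p}_p(\T^d)).
\]
Next, I identify $\dot{B}^\alpha_{\infty,q}(\T^d) = (A_0,A_1)_{\alpha p/(\alpha p+d),q}$ via \eqref{BInter} and apply \eqref{LemmaHolmstedt1} followed by a change of variables $s = t^{p/(\alpha p+d)}$ to get
\[
K(s^\alpha,f;A_0,\dot{B}^\alpha_{\infty,q}(\T^d)) \asymp s^\alpha \left(\int_s^\infty (v^{-\alpha}\omega_{\alpha+d/p}(f,v)_{\infty;\T^d})^q \frac{dv}{v}\right)^{1/q}.
\]
On the right-hand side, using \eqref{BInter} to write $\dot{B}^{d/p}_{p,1}(\T^d) = (L_p(\T^d),\dot{H}^{\alpha+d/p}_p(\T^d))_{d/(\alpha p+d),1}$ and invoking \eqref{LemmaHolmstedt1*} together with Lemma \ref{LemmaKfunctLS} and the same change of variables yields
\[
K(s^\alpha,f;\dot{B}^{d/p}_{p,1}(\T^d),\dot{H}^{\alpha+d/p}_p(\T^d)) \asymp \int_0^s v^{-d/p} \omega_{\alpha+d/p}(f,v)_{p;\T^d} \frac{dv}{v},
\]
and inequality \eqref{ThmJawerth1} follows.

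For (ii) $\Longrightarrow$ (iii), multiply \eqref{ThmJawerth1} by $t^{-\lambda q}$ and integrate over $t\in(0,\infty)$ with respect to $dt/t$. On the left, Fubini's theorem gives
\[
\int_0^\infty t^{(\alpha-\lambda)q}\int_t^\infty(u^{-\alpha}\omega_{\alpha+d/p}(f,u)_{\infty;\T^d})^q \frac{du}{u}\frac{dt}{t} \asymp \frac{1}{(\alpha-\lambda)q}\,\|f\|_{\dot{B}^{\alpha-\lambda}_{\infty,q}(\T^d),\alpha+d/p}^q,
\]
which produces the factor $\lambda^{1/q}$ only after transferring it to the right side. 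On the right, raising to the $q$-th power requires an extra trick when $q\ge 1$ (direct application of Hardy's inequality \eqref{HardyIneq*} with weight $\lambda$) and a dyadic rearrangement when $q<1$ (as in the estimate of $J_2$ in the proof of Theorem \ref{ThmStein2Rd}); both give
\[
\int_0^\infty t^{-\lambda q}\left(\int_0^t v^{-d/p}\omega_{\alpha+d/p}(f,v)_{p;\T^d}\frac{dv}{v}\right)^q \frac{dt}{t} \lesssim \lambda^{-1}\int_0^\infty(v^{-d/p-\lambda}\omega_{\alpha+d/p}(f,v)_{p;\T^d})^q \frac{dv}{v},
\]
which is $\lambda^{-1}\|f\|_{\dot{B}^{\alpha+d/p-\lambda}_{p,q}(\T^d),\alpha+d/p}^q$. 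Combining these two asymptotics yields \eqref{ThmJawerth1*}.

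For (iii) $\Longrightarrow$ (i), I use the Milman--Karadzhov--Xiao type limiting relation
\[
\lim_{\lambda\to 0+}\lambda^{1/q}\|f\|_{\dot{B}^{\alpha+d/p-\lambda}_{p,q}(\T^d),\alpha+d/p} \asymp \|f\|_{\dot{H}^{\alpha+d/p}_p(\T^d)},
\]
which follows from \cite[Theorem 2]{KaradzhovMilmanXiao} (or its extension \cite[Theorem 1]{Milman05}) combined with Lemma \ref{Lemma3.6}. Passing to the limit $\lambda\to 0+$ in \eqref{ThmJawerth1*} and applying Fatou's lemma in the defining integral \eqref{DefBesov} of $\dot{B}^\alpha_{\infty,q}(\T^d)$ delivers (i) on the dense class $\dot{B}^{d/p}_{p,1}(\T^d)\cap \dot{H}^{\alpha+d/p}_p(\T^d)$ and then by completion.

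The main technical obstacle I anticipate is the simultaneous handling of the case $q<1$ in Step 2 (the Hardy-type estimate of the right-hand side fails in quasi-Banach form) and the matching of the limiting normalizations on both sides of \eqref{ThmJawerth1*}; both points will be dealt with by a dyadic decomposition analogous to those already used, respectively, in Lemma \ref{vspom1} and in the proof of Theorem \ref{ThmStein2Rd}. The higher-dimensional counterpart on $\R^d$ follows verbatim, since all K-functional identities in Lemma \ref{LemmaKfunctLS} and all Holmstedt formulas apply in both settings.
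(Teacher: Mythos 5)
Your overall plan---(i)$\Rightarrow$(ii) via Holmstedt reiteration on the pair $(L_\infty(\T^d) + \infty\,\dot{H}^{\alpha+d/p}_\infty(\T^d),\ \dot{H}^{\alpha+d/p}_\infty(\T^d))$, (ii)$\Rightarrow$(iii) via a weighted integration and Fubini/Hardy, (iii)$\Rightarrow$(i) via the Milman/Karadzhov--Milman--Xiao limiting formula---is exactly the route the paper takes, and your Steps~1 and~3 are essentially correct (in Step~1 the precise input is the $K$-functional bound \eqref{6665}, i.e.\ the Gagliardo-completion norm, rather than a bare $L_\infty$-embedding: the homogeneous Besov seminorm annihilates constants, so $\dot{B}^{d/p}_{p,1}(\T^d)\hookrightarrow L_\infty(\T^d)$ is not literally what is needed).

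The arithmetic in Step~2, however, does not close. You multiply the $q$-th power of \eqref{ThmJawerth1} by $t^{-\lambda q}$ and display the left side as $\int_0^\infty t^{(\alpha-\lambda)q}\int_t^\infty (u^{-\alpha}\omega)^q\frac{du}{u}\frac{dt}{t}$; but Fubini here yields $\frac{1}{(\alpha-\lambda)q}\int_0^\infty u^{-\lambda q}\omega(u)^q\frac{du}{u}=\frac{1}{(\alpha-\lambda)q}\|f\|^q_{\dot{B}^{\lambda}_{\infty,q}(\T^d),\,\alpha+d/p}$, not $\frac{1}{(\alpha-\lambda)q}\|f\|^q_{\dot{B}^{\alpha-\lambda}_{\infty,q}(\T^d),\,\alpha+d/p}$ as you assert. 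Likewise the right side becomes $\int_0^\infty t^{-\lambda q}(\int_0^t v^{-d/p}\omega_p\frac{dv}{v})^q\frac{dt}{t}$, whose Hardy estimate is $\lesssim \lambda^{-q}\int_0^\infty (v^{-(d/p+\lambda)}\omega_p)^q\frac{dv}{v}=\lambda^{-q}\|f\|^q_{\dot{B}^{d/p+\lambda}_{p,q},\,\alpha+d/p}$ --- neither the norm $\|f\|^q_{\dot{B}^{\alpha+d/p-\lambda}_{p,q},\,\alpha+d/p}$ nor the constant $\lambda^{-1}$ you claim. The correct weight is $t^{(\lambda-\alpha)q}$, i.e.\ you want $t^{\lambda q}$ in front of $\int_t^\infty(u^{-\alpha}\omega)^q\frac{du}{u}$: then Fubini on the left produces $\frac{1}{\lambda q}\|f\|^q_{\dot{B}^{\alpha-\lambda}_{\infty,q},\,\alpha+d/p}$, and on the right the Hardy weight is $t^{-(\alpha-\lambda)q}$, whose constant $(\alpha-\lambda)^{-q}$ stays bounded as $\lambda\to 0+$. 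The factor $\lambda^{1/q}$ in \eqref{ThmJawerth1*} thus comes entirely from the Fubini insertion of $u^{\lambda q}=\lambda q\int_0^u t^{\lambda q}\frac{dt}{t}$, not from the Hardy step. With this correction your argument coincides with the paper's.
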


\begin{rem}
(i)
Inequality (\ref{ThmJawerth1}) with $q=p$ is actually the fractional counterpart of the following Kolyada's inequality \cite{Kolyada2} (see also \cite{Netrusov}):
$$
		t^{k-d/p} \left(\int_t^\infty (u^{d/p-k} \omega_{k} (f,u)_{\infty;\R^d})^p \frac{du}{u} \right)^{1/p} \lesssim \int_0^{t} u^{-d/p} \omega_k (f, u)_{p;\R^d} \frac{du}{u},\qquad k > d/p.
$$
	
	(ii) Let $d=1$. Inequality (\ref{ThmJawerth1}) with $q=p$ implies
	\begin{equation}\label{KolyadaJaw2}
		t^\alpha \left(\int_t^1 (u^{-\alpha} \omega_{\alpha + 1/p} (f,u)_{\infty;\T})^p \frac{du}{u} \right)^{1/p} \lesssim \int_0^{t} u^{-1/p} \omega_{\alpha + 1/p} (f, u)_{p;\T} \frac{du}{u}, \quad t \in (0,1).
	\end{equation}
	This is stronger than (\ref{UlyanovBrezisWaingerSharp<}) with $b=1$ given by
	 \begin{align}
		\omega_\alpha (f, t)_{\infty;\T} + t^\alpha (1 - \log t)^{1/p'} \left(\int_t^1 (u^{-\alpha} (1 - \log u)^{-1} \omega_\alpha (f,u)_{\infty;\T})^p \frac{du}{u} \right)^{1/p} \nonumber \\
		& \hspace{-10cm}\lesssim \int_0^{t (1 - \log t)^{1/\alpha p'}} u^{-1/p} \omega_{\alpha + 1/p} (f, u)_{p;\T} \frac{du}{u}. \label{UlyanovBrezisWaingerSharp<*}
	\end{align}
	Indeed, assume that (\ref{KolyadaJaw2}) holds true. Then, applying the Marchaud inequality (\ref{Marchaud}) together with Hardy's inequality (\ref{HardyInequal4}), we derive
	\begin{align*}
		 \left(\int_{t (1-\log t)^{1/\alpha p'}}^1 (u^{-\alpha} (1 - \log u)^{-1} \omega_\alpha (f,u)_{\infty;\T})^p \frac{du}{u} \right)^{1/p} \\
		 & \hspace{-7cm} \lesssim \left(\int_{t (1-\log t)^{1/\alpha p'}}^1 \left( (1-\log u)^{-1} \int_u^1 \frac{\omega_{\alpha + 1/p}(f,v)_{\infty;\T}}{v^\alpha} \frac{dv}{v}\right)^p \frac{du}{u} \right)^{1/p} \\
		 & \hspace{-7cm} \lesssim \left( \int_{t (1-\log t)^{1/\alpha p'}}^1 (u^{-\alpha} \omega_{\alpha + 1/p}(f,u)_{\infty;\T})^p \frac{du}{u}\right)^{1/p}.
	\end{align*}
	Hence, it follows from (\ref{KolyadaJaw2}) that
	\begin{align}
	t^{\alpha} (1-\log t)^{1/p'} \left(\int_{t (1-\log t)^{1/\alpha p'}}^1 (u^{-\alpha} (1 - \log u)^{-1} \omega_\alpha (f,u)_{\infty;\T})^p \frac{du}{u} \right)^{1/p} \nonumber \\
	  & \hspace{-10cm} \lesssim  \int_0^{t(1-\log t)^{1/\alpha p'}} u^{-1/p} \omega_{\alpha + 1/p} (f, u)_{p;\T} \frac{du}{u}. \label{KolyadaJaw3}
	\end{align}
	On the other hand, invoking again (\ref{Marchaud}), we have
	\begin{equation}\label{KolyadaJaw4}
		\omega_\alpha(f,t)_{\infty;\T}  \lesssim  t^\alpha \int_{t}^1 u^{-\alpha} \omega_{\alpha + 1/p}(f,u)_{\infty;\T} \frac{du}{u} =  I + II,
	\end{equation}
	where
	\begin{equation*}
		I := t^\alpha \int_{t}^{t (1-\log t)^{1/\alpha p'} } u^{-\alpha} \omega_{\alpha + 1/p}(f,u)_{\infty;\T} \frac{du}{u}
	\end{equation*}
	and
	\begin{equation*}
	 II := t^\alpha \int_{t (1-\log t)^{1/\alpha p'} }^1 u^{-\alpha} \omega_{\alpha + 1/p}(f,u)_{\infty;\T} \frac{du}{u}.
	\end{equation*}
	By H\"older's inequality, we obtain 
\begin{align}
	II & \leq t^\alpha  \left(\int_{t (1-\log t)^{1/\alpha p'} }^1 (u^{-\alpha} \omega_{\alpha + 1/p}(f,u)_{\infty;\T})^p \frac{du}{u}\right)^{1/p} \left(\int_{t (1-\log t)^{1/\alpha p'}}^1 \frac{du}{u}\right)^{1/p'} \nonumber \\
	& \asymp t^\alpha (1-\log t)^{1/p'} \left(\int_{t (1-\log t)^{1/\alpha p'} }^1 (u^{-\alpha} \omega_{\alpha + 1/p}(f,u)_{\infty;\T})^p \frac{du}{u}\right)^{1/p} \nonumber \\
	& \lesssim \int_0^{t (1-\log t)^{1/\alpha p'} } u^{-1/p} \omega_{\alpha + 1/p} (f, u)_{p;\T} \frac{du}{u} \label{KolyadaJaw5}
\end{align}
where we have used (\ref{KolyadaJaw2}) in the last step.

Next we estimate $I$. Using the monotonicity properties of the moduli of smoothness (see Section \ref{SectionModuli}),
\begin{align}
	I &\leq t^\alpha \omega_{\alpha + 1/p}(f,t(1-\log t)^{1/\alpha p'})_{\infty;\T}  \int_{t}^{t (1-\log t)^{1/\alpha p'} } u^{-\alpha}  \frac{du}{u} \nonumber \\
	& \lesssim  \omega_{\alpha + 1/p}(f,t(1-\log t)^{1/\alpha p'})_{\infty;\T} \lesssim \frac{ \omega_{\alpha + 1/p}(f,t(1-\log t)^{1/\alpha p'})_{\infty;\T} }{(t (1-\log t)^{1/\alpha p'})^{1/p}}  \nonumber  \\
	& \asymp  \frac{ \omega_{\alpha + 1/p}(f,t(1-\log t)^{1/\alpha p'})_{\infty;\T} }{(t (1-\log t)^{1/\alpha p'})^{\alpha + 1/p}} \int_0^{t(1-\log t)^{1/\alpha p'}} u^\alpha \frac{du}{u}  \nonumber  \\
	& \lesssim \int_0^{t(1-\log t)^{1/\alpha p'}} u^{-1/p} \omega_{\alpha + 1/p}(f,u)_{p;\T} \frac{du}{u}. \label{KolyadaJaw6}
\end{align}
Therefore, (\ref{KolyadaJaw4})--(\ref{KolyadaJaw6}) yield that
\begin{equation} \label{KolyadaJaw7}
	\omega_\alpha(f,t)_{\infty;\T} \lesssim  \int_0^{t(1-\log t)^{1/\alpha p'}} u^{-1/p} \omega_{\alpha + 1/p}(f,u)_{p;\T} \frac{du}{u}.
\end{equation}
Moreover,  elementary computations lead to
\begin{align}
t^{\alpha} (1-\log t)^{1/p'} \left(\int_{t}^{t (1-\log t)^{1/\alpha p'}} (u^{-\alpha} (1 - \log u)^{-1} \omega_\alpha (f,u)_{\infty;\T})^p \frac{du}{u} \right)^{1/p} \nonumber \\
&\hspace{-10cm} \lesssim (1-\log t)^{1/p'} \omega_\alpha(f,t)_{\infty;\T} \left(\int_{t}^{t (1-\log t)^{1/\alpha p'}} (1-\log u)^{-p} \frac{du}{u} \right)^{1/p} \nonumber \\
& \hspace{-10cm} \lesssim \omega_\alpha(f,t)_{\infty;\T}. \label{KolyadaJaw8}
\end{align}

Finally, putting together (\ref{KolyadaJaw3}), (\ref{KolyadaJaw7}) and (\ref{KolyadaJaw8}), we arrive at (\ref{UlyanovBrezisWaingerSharp<*}).

(iii) Another proof of (\ref{ThmJawerth1*}) with $\alpha + d/p \in \mathbb{N}$ was obtained in \cite[Remark 3.3]{Dominguez}. See also Remark \ref{RemarkUlyanovBrezisWaingerSharp}(iii).

\end{rem}

\begin{proof}[Proof of Theorem \ref{ThmJawerth}]
	(i) $\Longrightarrow$ (ii): According to (i) and \eqref{6665}, we have
	\begin{equation}\label{ThmJawerth2}
	K(t^\alpha, f ; L_\infty(\T^d) + \infty \dot{H}^{\alpha + d/p}_p(\T^d) ,  \dot{B}^\alpha_{\infty, q}(\mathbb{T}^d)) \lesssim K(t^\alpha, f; \dot{B}^{d/p}_{p,1}(\T^d), \dot{H}^{\alpha + d/p}_p(\T^d)).
\end{equation}
It turns out that
\begin{equation}\label{ThmJawerth3}
		K(t^\alpha,f ; B^{d/p}_{p,1}(\T^d) ,  \dot{H}^{\alpha+d/p}_p(\T^d)) \asymp \int_0^t u^{-d/p} \omega_{\alpha + d/p}(f,u)_{p;\T^d} \frac{du}{u},
	\end{equation}
	see (\ref{8*}).

	
	Next we estimate $K(t, f ; L_\infty(\T^d) + \infty\dot{H}^{\alpha+d/p}_p(\T^d) ,  \dot{B}^\alpha_{\infty, q}(\mathbb{T}^d))$. Since
	\begin{align*}
		\dot{B}^\alpha_{\infty,q}(\T^d) &= (L_\infty(\T^d), \dot{H}^{\alpha + d/p}_\infty(\T^d))_{\frac{\alpha p }{\alpha p + d}, q}  \\
		&= (L_\infty(\T^d) + \infty \dot{H}^{\alpha + d/p}_\infty(\T^d) , \dot{H}^{\alpha + d/p}_\infty(\T^d))_{\frac{\alpha p }{\alpha p + d}, q}
	\end{align*}
	(see (\ref{BInter}) and \eqref{9099999}), we can invoke (\ref{LemmaHolmstedt1}) together with Lemma \ref{LemmaKfunctLS} to establish
	\begin{align}
		K(t^\alpha,f;  L_\infty(\T^d) + \infty\dot{H}^{\alpha+d/p}_p(\T^d),  \dot{B}^\alpha_{\infty, q}(\mathbb{T}^d))  \nonumber\\
		& \hspace{-6cm} \asymp t^\alpha \left(\int_t^\infty (u^{-\alpha} K(u^{\alpha + d/p},f; L_\infty(\T^d)+ \infty\dot{H}^{\alpha+d/p}_p(\T^d), \dot{H}^{\alpha + d/p}_\infty(\T^d)) )^q \frac{du}{u} \right)^{1/q}  \nonumber\\\
		& \hspace{-6cm} \asymp t^\alpha	\left( \int_t^\infty (u^{-\alpha} \omega_{\alpha + d/p}(f,u)_{\infty;\T^d})^q \frac{du}{u} \right)^{1/q}. \label{ThmJawerth4}
	\end{align}
	
	Collecting (\ref{ThmJawerth2}), (\ref{ThmJawerth3}) and (\ref{ThmJawerth4}), we obtain
	\begin{equation*}
		t^\alpha  \left(\int_{t}^\infty (u^{-\alpha} \omega_{\alpha + d/p}(f,u)_{\infty;\T^d})^q \frac{du}{u} \right)^{1/q} \lesssim \int_0^t u^{-d/p} \omega_{\alpha + d/p}(f,u)_{p;\T^d} \frac{du}{u}.
	\end{equation*}
	
	(ii) $\Longrightarrow$ (iii): Let $\lambda > 0$. Applying Fubini's theorem, we have
	\begin{align*}
		\|f\|_{\dot{B}^{\alpha - \lambda}_{\infty, q}(\mathbb{T}^d), \alpha + d/p} & = \left(\int_0^\infty u^{-(\alpha - \lambda) q} \omega_{\alpha + d/p}(f,u)_{\infty;\T^d}^q \frac{du}{u} \right)^{1/q} \\
		& \asymp \lambda^{1/q} \left(\int_0^\infty u^{-\alpha q} \omega_{\alpha + d/p}(f,u)_{\infty;\T^d}^q \int_0^u t^{\lambda q} \frac{dt}{t} \frac{du}{u} \right)^{1/q} \\
		& = \lambda^{1/q} \left(\int_0^\infty t^{\lambda q} \int_t^\infty u^{-\alpha q} \omega_{\alpha + d/p}(f,u)_{\infty;\T^d}^q \frac{du}{u} \frac{dt}{t} \right)^{1/q}.
	\end{align*}
	Therefore, in virtue of (\ref{ThmJawerth1}) we arrive at
	\begin{equation}\label{ThmJawerth5}
	\|f\|_{\dot{B}^{\alpha - \lambda}_{\infty, q}(\mathbb{T}^d), \alpha + d/p}  \lesssim  \lambda^{1/q} \left(\int_0^\infty t^{-(\alpha - \lambda) q} \left(\int_0^t u^{-d/p} \omega_{\alpha + d/p}(f,u)_{p;\T^d} \frac{du}{u} \right)^q \frac{dt}{t} \right)^{1/q}.
	\end{equation}
	
	We distinguish two possible cases. Firstly, assume that $q \geq 1$. Then, noting that $\lambda \to 0+$, it follows from (\ref{HardyIneq*}) that
	\begin{align}
		\left(\int_0^\infty t^{-(\alpha - \lambda) q} \left(\int_0^t u^{-d/p} \omega_{\alpha + d/p}(f,u)_{p;\T^d} \frac{du}{u} \right)^q \frac{dt}{t} \right)^{1/q} \nonumber\\
		& \hspace{-6cm}\lesssim \left(\int_0^\infty (t^{-(\alpha + d/p - \lambda)} \omega_{\alpha + d/p}(f,t)_{p;\T^d})^q \frac{dt}{t} \right)^{1/q} \label{ThmJawerth6}
	\end{align}
	uniformly with respect to $\lambda$.
	
	Let $q < 1$. Notice that we may assume without loss of generality that $\alpha > \lambda$ because $\lambda \to 0+$. Then applying monotonicity properties and Fubini's theorem, we have
	\begin{align}
	\int_0^\infty t^{-(\alpha - \lambda) q} \left(\int_0^t u^{-d/p} \omega_{\alpha + d/p}(f,u)_{p;\T^d} \frac{du}{u} \right)^q \frac{dt}{t} \nonumber \\
	& \hspace{-6cm} \asymp \sum_{j=-\infty}^\infty 2^{j (\alpha -\lambda) q} \left(\sum_{i=j}^\infty 2^{i d/p} \omega_{\alpha + d/p}(f, 2^{-i})_{p;\T^d} \right)^q \nonumber \\
	&\hspace{-6cm} \leq  \sum_{j=-\infty}^\infty 2^{j(\alpha -\lambda) q} \sum_{i=j}^\infty (2^{i d/p} \omega_{\alpha + d/p}(f, 2^{-i})_{p;\T^d})^q \nonumber \\
	& \hspace{-6cm} \lesssim \frac{1}{2^{(\alpha - \lambda) q} - 1}  \sum_{i=-\infty}^\infty (2^{i (\alpha+ d/p - \lambda)} \omega_{\alpha + d/p}(f, 2^{-i})_{p;\T^d})^q \nonumber \\
	& \hspace{-6cm} \asymp \int_0^\infty (t^{-(\alpha + d/p - \lambda)} \omega_{\alpha + d/p}(f,t)_{p;\T^d})^q \frac{dt}{t} \nonumber
	\end{align}
	since $\lambda \to 0+$.
Thus (\ref{ThmJawerth6}) holds for $q > 0$.

	Inserting this estimate into (\ref{ThmJawerth5}), we derive 
	 \begin{align*}
	 	\|f\|_{\dot{B}^{\alpha - \lambda}_{\infty, q}(\mathbb{T}^d), \alpha + d/p} & \lesssim  \lambda^{1/q}  \left(\int_0^\infty (t^{-(\alpha + d/p - \lambda)} \omega_{\alpha + d/p}(f,t)_{p;\T^d})^q \frac{dt}{t} \right)^{1/q}  \\
		&= \lambda^{1/q} \|f\|_{\dot{B}^{\alpha + d/p -\lambda}_{p,q}(\T^d), \alpha + d/p}.
	 \end{align*}
	
	 (iii) $\Longrightarrow$ (i): By Lemma \ref{LemmaKfunctLS}, one has
	 \begin{equation*}
	 \|f\|_{\dot{B}^{\alpha + d/p - \lambda}_{p, q}(\mathbb{T}^d), \alpha + d/p} \asymp \|f\|_{(L_p(\T^d), \dot{H}^{\alpha + d/p}_p(\T^d))_{1-\lambda (\alpha+d/p)^{-1}, q}}
	 \end{equation*}
	 with uniform constants with respect to $\lambda$. Applying now \cite[Theorem 1]{Milman05} (cf. also \cite[Theorem 1]{KaradzhovMilmanXiao}), Lemma \ref{LemmaKfunctLS}, monotonicity properties of the moduli of smoothness and (\ref{SobolevModuliDPer'}), we obtain
	 \begin{align*}
	 	\lim_{\lambda \to 0+} \lambda^{1/q} \|f\|_{\dot{B}^{\alpha + d/p - \lambda}_{p, q}(\mathbb{T}^d), \alpha + d/p} & \asymp \lim_{\lambda \to 0+} \lambda^{1/q}  \|f\|_{(L_p(\T^d), \dot{H}^{\alpha + d/p}_p(\T^d))_{1-\lambda (\alpha+d/p)^{-1}, q}} \\
		& \hspace{-4cm}= \lim_{t \to 0+} \frac{K(t,f; L_p(\T^d), \dot{H}^{\alpha + d/p}_p(\T^d))}{t} \asymp \sup_{t >0} \frac{\omega_{\alpha+d/p}(f,t)_{p;\T^d}}{t^{\alpha+d/p}} \asymp \|f\|_{\dot{H}^{\alpha+d/p}_p(\T^d)}.
	 \end{align*}
	 Thus the desired embedding (i) follows by taking limits as $\lambda \to 0+$ on both sides of \eqref{ThmJawerth1*}  and applying the monotone convergence theorem.
	 
	
	
	The equivalence between (i) and (iv) is well known. See \cite[Theorem 3.2.1]{SickelTriebel}. In this reference only inhomogeneous spaces are considered. However, the arguments carry over to homogeneous spaces.
	
	The same method of proof also works when $\T^d$ is replaced by $\R^d$.
\end{proof}

\begin{rem}
	The method of proof of Theorem \ref{ThmJawerth} can also be applied to show that the sharp estimate (\ref{ThmJawerth1*}) also holds when the Besov semi-norms $\|\cdot \|_{\dot{B}^{\alpha - \lambda}_{\infty, q}(\T^d), \alpha + d/p}$ and $\|\cdot \|_{\dot{B}^{\alpha + d/p - \lambda}_{p,q}(\T^d), \alpha + d/p}$ are replaced by $\vertiii{\cdot}_{B^{\alpha - \lambda}_{\infty, q}(\T^d), \alpha + d/p}$ and $\vertiii{\cdot}_{B^{\alpha + d/p - \lambda}_{p,q}(\T^d), \alpha + d/p}$, respectively. Further details are left to the reader.
\end{rem}

\subsection{Embeddings between Lipschitz spaces}
Our next goal is to provide additional insights on the Ulyanov inequality (\ref{UlyanovTik}) with $p=1$, i.e.,
	\begin{equation}\label{UlyanovTikNew}
		\omega_\alpha (f, t)_{q;\T} \lesssim \left( \int_0^t (u^{-1/q'} (1 - \log u)^{1/q} \omega_{\alpha + 1/q'} (f, u)_{1;\T})^{q} \frac{du}{u}\right)^{1/q}, \quad 1 < q < \infty.
	\end{equation}
	To establish  this we will rely on the following embeddings between logarithmic Lipschitz spaces, which were recently obtained in \cite{DominguezTikhonov19}.
	
	\begin{lem}\label{ThmEmbLipschitz}
	Let $\alpha > 0$ and $1 < q < \infty$. Then we have
	\begin{equation}\label{ThmEmbLipschitz*alpha}
	\emph{Lip}^{(\alpha + d/q',0)}_{1,\infty}(\T^d) \hookrightarrow \emph{Lip}^{(\alpha, -1/q)}_{q, \infty}(\T^d).
	\end{equation}
	The corresponding embedding for Lipschitz spaces on $\R^d$ also holds true. In addition, if $d=1$ and $b \geq 0$ then
	\begin{equation}\label{ThmEmbLipschitz*}
		\emph{Lip}^{(\alpha + 1/q',0)}_{1,\infty}(\T) \hookrightarrow \emph{Lip}^{(\alpha, -b)}_{q, \infty}(\T) \iff b \geq 1/q.
	\end{equation}
\end{lem}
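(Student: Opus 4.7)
My plan is to derive the embedding \eqref{ThmEmbLipschitz*alpha} directly from the multivariate Ulyanov inequality \eqref{UlyanovTikNew} (which is the $p=1$ endpoint of \eqref{UlyanovTik}) by plugging in the Lipschitz assumption on $f$, and then to reduce the equivalence \eqref{ThmEmbLipschitz*} to a sharpness analysis via lacunary-series counterexamples.

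For the direct embedding \eqref{ThmEmbLipschitz*alpha}, I would start from the $d$-dimensional analogue of \eqref{UlyanovTikNew},
\begin{equation*}
    \omega_\alpha(f,t)_{q;\T^d} \lesssim \left( \int_0^t \bigl(u^{-d/q'} (1-\log u)^{1/q}\omega_{\alpha+d/q'}(f,u)_{1;\T^d}\bigr)^q \frac{du}{u} \right)^{1/q},
\end{equation*}
valid for $1<q<\infty$ and small $t$. Assuming $\|f\|_{\L^{(\alpha+d/q',0)}_{1,\infty}(\T^d)} < \infty$, we have $\omega_{\alpha+d/q'}(f,u)_{1;\T^d} \lesssim u^{\alpha+d/q'}$ uniformly in $u\in(0,1)$, and substitution gives
\begin{equation*}
    \omega_\alpha(f,t)_{q;\T^d} \lesssim \left( \int_0^t u^{\alpha q}(1-\log u)\, \frac{du}{u}\right)^{1/q} \asymp t^\alpha (1-\log t)^{1/q},
\end{equation*}
the last equivalence being an elementary calculation (change variables $v=-\log u$ and integrate by parts). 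Dividing by $t^\alpha (1-\log t)^{1/q}$ and taking a supremum over $t\in(0,1)$ yields $\|f\|_{\L^{(\alpha,-1/q)}_{q,\infty}(\T^d)} \lesssim \|f\|_{\L^{(\alpha+d/q',0)}_{1,\infty}(\T^d)}$. The same argument works verbatim on $\R^d$ once one invokes the $\R^d$-version of the endpoint Ulyanov inequality.

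For the equivalence \eqref{ThmEmbLipschitz*} in the one-dimensional case, the ``if'' part with $b \geq 1/q$ is immediate: if $b \geq 1/q$, then $(1-\log t)^{-b} \leq (1-\log t)^{-1/q}$ on $(0,1)$, so $\L^{(\alpha,-1/q)}_{q,\infty}(\T) \hookrightarrow \L^{(\alpha,-b)}_{q,\infty}(\T)$ trivially, and combining with the embedding already established gives the claim. The only substantive point is the ``only if'' direction, which I would prove by contradiction via a lacunary construction: pick
\begin{equation*}
    f(x) = \sum_{k=1}^\infty 2^{-k(\alpha+1/q')} \cos(2^k x),
\end{equation*}
and verify using standard Littlewood-Paley/lacunary estimates that $\omega_{\alpha+1/q'}(f,t)_{1;\T} \lesssim t^{\alpha+1/q'}$ (so $f \in \L^{(\alpha+1/q',0)}_{1,\infty}(\T)$), while on the other hand $\omega_\alpha(f,2^{-N})_{q;\T}\asymp 2^{-N\alpha} N^{1/q}$, so $f\notin \L^{(\alpha,-b)}_{q,\infty}(\T)$ whenever $b<1/q$.

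The main obstacle will be the sharp lower estimate $\omega_\alpha(f,2^{-N})_{q;\T}\gtrsim 2^{-N\alpha} N^{1/q}$ for the lacunary example: the upper bound on $\omega_{\alpha+1/q'}(f,t)_{1;\T}$ and the upper bound on $\omega_\alpha(f,t)_{q;\T}$ are both routine once one has the equivalence of Besov/Lipschitz norms with sequence-space norms of Littlewood-Paley blocks, but the lower bound needs a quantitative lower estimate that isolates $N$ lacunary blocks and uses the fact that $\|\cdot\|_{L_q(\T)}$ of a lacunary polynomial is comparable to the $\ell_2$-norm of its coefficients (Khintchine-type inequality for lacunary series). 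Everything else in the argument is essentially bookkeeping using \eqref{HomoMod}, \eqref{JacksonInequal}, and \eqref{MarchaudInequal}.
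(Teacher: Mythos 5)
The forward embedding \eqref{ThmEmbLipschitz*alpha} and the trivial direction $b\geq 1/q\Rightarrow$ embedding are handled correctly: plugging the Lipschitz bound $\omega_{\alpha+d/q'}(f,u)_{1;\T^d}\lesssim u^{\alpha+d/q'}$ into the $p=1$ endpoint Ulyanov inequality and computing $\int_0^t u^{\alpha q}(1-\log u)\,du/u\asymp t^{\alpha q}(1-\log t)$ is exactly right, provided one accepts the multivariate endpoint Ulyanov inequality as an external input (the paper cites \cite{Tikhonov}, \cite{KolomoitsevTikhonov}; note that the paper itself derives \eqref{RemLast2.2} \emph{from} \eqref{ThmEmbLipschitz*alpha}, so you must be careful to invoke an independent proof of the Ulyanov inequality to avoid circularity).

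The ``only if'' direction, however, fails: your lacunary example does not belong to the domain space. By the Zygmund/lacunary Khintchine theorem, $\|\sum_k c_k e^{i 2^k x}\|_{L_p(\T)} \asymp (\sum_k |c_k|^2)^{1/2}$ for \emph{every} $0<p<\infty$, and $\Delta^s_h f$ is again a lacunary series with coefficients $c_k\,|2\sin(2^{k-1}h)|^s$ (up to phases). Taking $c_k=2^{-ks}$, $s=\alpha+1/q'$ and $t\asymp 2^{-N}$, one computes
\begin{equation*}
\omega_s(f,t)_{1;\T}\asymp\Big(\sum_{k\le N} 2^{-2ks}(2^k t)^{2s}+\sum_{k>N}2^{-2ks}\Big)^{1/2}\asymp t^s\sqrt{N}\asymp t^s(1-\log t)^{1/2},
\end{equation*}
so $f\notin\L^{(\alpha+1/q',0)}_{1,\infty}(\T)$. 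Likewise, $\omega_\alpha(f,2^{-N})_{q;\T}\asymp\big(\sum_{k\le N}2^{-2k(\alpha+1/q')}2^{2\alpha(k-N)}\big)^{1/2}\asymp 2^{-N\alpha}\big(\sum_{k\le N}2^{-2k/q'}\big)^{1/2}\asymp 2^{-N\alpha}$, with \emph{no} factor $N^{1/q}$. The underlying obstruction is structural: for lacunary series all $L_p$ norms ($p<\infty$) collapse to the $\ell_2$ norm of coefficients, so a lacunary example cannot discriminate between the $L_1$ and $L_q$ moduli of smoothness, which is precisely the distinction that \eqref{ThmEmbLipschitz*} is about. To exhibit sharpness you need a non-lacunary function whose moduli of smoothness are simultaneously prescribed at both endpoints, e.g.\ by the constructions referenced in \cite{Tikhonovreal} (which the paper invokes in adjacent sharpness arguments), or by a Fourier series with dense spectrum such as $\hat f(n)\sim n^{-\gamma}$ with $\gamma$ tuned so that $f$ saturates the embedding.
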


\begin{thm}\label{UlyanovBrezisWainger2}
	Let $1 <q < \infty, \alpha > 0$ and $b \geq 0$. The following statements are equivalent:
	\begin{enumerate}[\upshape(i)]
	\item \begin{equation*}
		\emph{Lip}^{(\alpha + 1/q',0)}_{1,\infty}(\T) \hookrightarrow \emph{Lip}^{(\alpha,-b)}_{q, \infty}(\T),
	\end{equation*}
	\item for $f \in \dot{B}^{1/q'}_{1,q}(\T)$ and $t \in (0,1)$, we have
		\begin{equation}\label{UlyanovBrezisWainger2Sharp}
		\omega_\alpha (f, t)_{q;\T} \lesssim  \Big(\int_0^{t (1 - \log t)^{b/\alpha}} (u^{-1/q'} \omega_{\alpha + 1/q'} (f, u)_{1;\T})^q \frac{du}{u} \Big)^{1/q},
	\end{equation}
	\item if $\alpha_0 \to \alpha -$ then
		\begin{equation*}
		\emph{Lip}^{(\alpha + 1/q',0)}_{1,\infty}(\T)  \hookrightarrow \dot{B}^{\alpha_0}_{q, \infty}(\T)
		\end{equation*}
		 with norm $\mathcal{O}((\alpha - \alpha_0)^{-b})$, i.e., there exists $C > 0$, which is independent of $\alpha_0$, such that
		\begin{equation}\label{UlyanovBrezisWaingerSharp**<<<}
		\vertiii{f}_{B^{\alpha_0}_{q, \infty}(\T), \alpha} \leq C (\alpha- \alpha_0)^{-b} \|f\|_{\emph{Lip}^{(\alpha + 1/q',0)}_{1,\infty}(\T)},
		\end{equation}
	\item \begin{equation*}
		b \geq 1/q.
	\end{equation*}
	\end{enumerate}
\end{thm}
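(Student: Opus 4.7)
The equivalence (i) $\iff$ (iv) is immediate from Lemma \ref{ThmEmbLipschitz} (see \eqref{ThmEmbLipschitz*}), so the remaining task is to show (i) $\Longrightarrow$ (ii) $\Longrightarrow$ (iii) $\Longrightarrow$ (i). The plan is to mimic the $K$-functional strategy used in Theorem \ref{UlyanovBrezisWainger}, with the pairs adapted to the present setting: $(L_q(\T) + \infty \dot{H}^\alpha_q(\T), \dot{H}^\alpha_q(\T))$ on the ``target'' side and $(L_1(\T), \dot{H}^{\alpha+1/q'}_1(\T))$ on the ``source'' side.

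For (i) $\Longrightarrow$ (ii), I would first observe that $\dot{B}^{1/q'}_{1,q}(\T) \hookrightarrow L_q(\T)$ (classical subcritical Sobolev embedding for $q > 1$), hence $\dot{B}^{1/q'}_{1,q}(\T) \hookrightarrow L_q(\T) + \infty \dot{H}^\alpha_q(\T)$. Combining this with (i) via the triangle inequality for $K$-functionals yields
$$
K(t, f; L_q(\T) + \infty \dot{H}^\alpha_q(\T), \text{Lip}^{(\alpha,-b)}_{q,\infty}(\T)) \lesssim K(t, f; \dot{B}^{1/q'}_{1,q}(\T), \text{Lip}^{(\alpha+1/q',0)}_{1,\infty}(\T)).
$$
Next, using the identity $\text{Lip}^{(\alpha,-b)}_{q,\infty}(\T) = (L_q + \infty \dot{H}^\alpha_q, \dot{H}^\alpha_q)_{(1,-b),\infty}$ (cf. \eqref{LipLimInter} and the analog of \eqref{9099999} at exponent $q$), Holmstedt's formula \eqref{LemmaHolmstedt2}, and Lemma \ref{LemmaKfunctLS}, the argument leading to \eqref{KFunctLip2} provides the lower bound
$$
K(t^\alpha (1-\log t)^b, f; L_q + \infty \dot{H}^\alpha_q, \text{Lip}^{(\alpha,-b)}_{q,\infty}(\T)) \gtrsim \omega_\alpha(f,t)_{q;\T}.
$$
For the upper bound on the right-hand $K$-functional, the embedding $\dot{H}^{\alpha+1/q'}_1(\T) \hookrightarrow \text{Lip}^{(\alpha+1/q',0)}_{1,\infty}(\T)$ (i.e., $\omega_{\alpha+1/q'}(f,t)_{1;\T} \lesssim t^{\alpha+1/q'} \|f\|_{\dot{H}^{\alpha+1/q'}_1(\T)}$) reduces matters to $K(t, f; \dot{B}^{1/q'}_{1,q}, \dot{H}^{\alpha+1/q'}_1)$. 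Together with \eqref{BInter} at $\theta = 1/(1 + q'\alpha)$, Holmstedt's formula \eqref{LemmaHolmstedt1*}, and Lemma \ref{LemmaKfunctLS}, this yields
$$
K(t^\alpha, f; \dot{B}^{1/q'}_{1,q}(\T), \text{Lip}^{(\alpha+1/q',0)}_{1,\infty}(\T)) \lesssim \left(\int_0^t (u^{-1/q'} \omega_{\alpha+1/q'}(f,u)_{1;\T})^q \frac{du}{u}\right)^{1/q}.
$$
Matching the arguments of the two $K$-functionals (that is, replacing $t$ by $t(1-\log t)^{b/\alpha}$) delivers \eqref{UlyanovBrezisWainger2Sharp}.

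The implications (ii) $\Longrightarrow$ (iii) and (iii) $\Longrightarrow$ (i) follow the pattern of the corresponding steps in Theorem \ref{UlyanovBrezisWainger}. For the former, I would majorize the right-hand side of \eqref{UlyanovBrezisWainger2Sharp} by $t^\alpha (1-\log t)^b \|f\|_{\text{Lip}^{(\alpha+1/q',0)}_{1,\infty}(\T)}$ and then invoke the elementary estimate $\sup_{0 < t < 1/2} t^{\alpha-\alpha_0} (-\log t)^b \asymp (\alpha-\alpha_0)^{-b}$ to extract the blow-up rate of \eqref{UlyanovBrezisWaingerSharp**<<<}. For the latter, I would apply (iii) along the sequence $\alpha_j = \alpha - 2^{-j}$, take the supremum over $j$, and recognize the resulting quantity as $\|f\|_{\text{Lip}^{(\alpha,-b)}_{q,\infty}(\T)}$ by the same Fubini-type computation carried out around \eqref{extrapol}. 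The main obstacle I expect is the careful handling of the interpolation identity $\dot{B}^{1/q'}_{1,q}(\T) = (L_1(\T), \dot{H}^{\alpha+1/q'}_1(\T))_{\theta, q}$ and the companion $K$-functional formula $K(u, f; L_1, \dot{H}^{\alpha+1/q'}_1) \asymp \omega_{\alpha+1/q'}(f, u^{1/(\alpha+1/q')})_{1;\T}$ at $p=1$ (for which the characterization \eqref{SobolevModuliDPer'} does not directly apply); verifying $\dot{H}^{\alpha+1/q'}_1(\T) \hookrightarrow \text{Lip}^{(\alpha+1/q',0)}_{1,\infty}(\T)$ by a direct estimate of $\Delta^s_h f$ against the fractional directional derivative $D^s_\zeta f$ is the key technical point.
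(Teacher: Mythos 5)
Your proposal is correct, and at the top level it follows the same $K$-functional scheme as the paper: pass from (i) to an inequality between $K$-functionals over the pairs $(L_q+\infty\dot H^\alpha_q,\,\text{Lip}^{(\alpha,-b)}_{q,\infty})$ and $(\dot B^{1/q'}_{1,q},\,\text{Lip}^{(\alpha+1/q',0)}_{1,\infty})$, compute both sides via Holmstedt-type formulas, and then run the extrapolation steps (ii)$\Rightarrow$(iii)$\Rightarrow$(i) exactly as in Theorem~\ref{UlyanovBrezisWainger}.

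The one place you genuinely deviate is the treatment of $K(t,f;\dot B^{1/q'}_{1,q}(\T),\L^{(\alpha+1/q',0)}_{1,\infty}(\T))$. You replace the ``second endpoint'' $\L^{(\alpha+1/q',0)}_{1,\infty}$ by the smaller space $\dot H^{\alpha+1/q'}_1$, obtain an upper bound on the $K$-functional via the embedding $\dot H^{\alpha+1/q'}_1\hookrightarrow\L^{(\alpha+1/q',0)}_{1,\infty}$, and then apply the non-limiting Holmstedt formula \eqref{LemmaHolmstedt1*}. The paper instead writes $\L^{(\alpha+1/q',0)}_{1,\infty}=(L_1,\dot H^{\alpha+1/q'}_1)_{(1,0),\infty}$ and invokes the two-sided limiting reiteration formula \eqref{LemmaHolmstedt3}, then observes that the resulting two-term expression collapses to a single integral by monotonicity of the $K$-functional. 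Both routes yield the identical one-sided estimate needed for (i)$\Rightarrow$(ii); the paper's version has the cosmetic advantage of delivering an \emph{equivalence}, but your one-sided bound is all the argument requires. One further simplification you can make: the embedding $\dot H^{\alpha+1/q'}_1\hookrightarrow\L^{(\alpha+1/q',0)}_{1,\infty}$, which you flag as the key technical obstacle requiring a direct estimate of $\Delta^s_h f$ against $D^s_\zeta f$, is in fact an immediate consequence of Lemma~\ref{LemmaKfunctLS}(iii) (valid for all $1\le p\le\infty$, periodic case included by the cited reference): since $\omega_s(f,t)_{1;\T}\asymp K(t^s,f;L_1(\T),\dot H^s_1(\T))\le t^s\|f\|_{\dot H^s_1(\T)}$, no separate direct argument is needed. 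Finally, a small caveat: the intermediate claim ``$\dot B^{1/q'}_{1,q}(\T)\hookrightarrow L_q(\T)$'' is imprecise for homogeneous spaces (constants have vanishing Besov seminorm), but the conclusion $\dot B^{1/q'}_{1,q}(\T)\hookrightarrow L_q(\T)+\infty\dot H^\alpha_q(\T)$ is correct, being exactly the Gagliardo-completion estimate which the paper extracts from the Ulyanov inequality \eqref{UlyanovClassic}.
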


\begin{rem}
	(i) Inequality (\ref{UlyanovBrezisWainger2Sharp}) with $b=1/q$, i.e.,
	\begin{equation}\label{UlyanovTikNewFirst}
		\omega_\alpha (f, t)_{q;\T} \lesssim  \Big(\int_0^{t (1 - \log t)^{1/\alpha q}} (u^{-1/q'} \omega_{\alpha + 1/q'} (f, u)_{1;\T})^q \frac{du}{u} \Big)^{1/q}
	\end{equation}
 improves the known estimate (\ref{UlyanovTikNew}), i.e.,
	\begin{equation}\label{UlyanovTikNewSecond}
		\omega_\alpha (f, t)_{q;\T} \lesssim \left( \int_0^t (u^{-1/q'} (1 - \log u)^{1/q} \omega_{\alpha + 1/q'} (f, u)_{1;\T})^{q} \frac{du}{u}\right)^{1/q}.
	\end{equation}
Indeed,
 basic properties of the moduli of smoothness (cf. Section \ref{SectionModuli}) allow us to obtain
	\begin{align}
		 \left(\int_t^{t (1 - \log t)^{1/\alpha q}} (u^{-1/q'} \omega_{\alpha + 1/q'} (f, u)_{1;\T})^q \frac{du}{u}\right)^{1/q} \nonumber\\
		 & \hspace{-5cm}\lesssim  t^{-\alpha -1/q'} \omega_{\alpha+1/q'}(f,t)_{1;\T} \left(\int_t^{t (1-\log t)^{1/\alpha q}} u^{\alpha q} \frac{du}{u}\right)^{1/q} \nonumber \\
		& \hspace{-5cm} \lesssim  \left(\int_0^t (u^{-1/q'} (1-\log u)^{1/q}  \omega_{\alpha+1/q'} (f,u)_{1;\T})^q \frac{du}{u} \right)^{1/q}. \nonumber
	\end{align}
Thus
the right-hand side of (\ref{UlyanovTikNewFirst}) is dominated by the right-hand side of (\ref{UlyanovTikNewSecond}).	
We also observe that (\ref{UlyanovTikNewFirst}) holds true for any $f \in B^{1/q'}_{1,q}(\T)$ unlike (\ref{UlyanovTikNewSecond}), which can be applied for $f \in B^{1/q', 1/q}_{1,q}(\T)$.
 Note that $B^{1/q', 1/q}_{1,q}(\T)  \subsetneq B^{1/q'}_{1,q}(\T)$. 

 (ii) The higher-dimensional version of inequality \eqref{UlyanovTikNewFirst} also holds. See Remark \ref{RemLast2} below.
	
	(iii) Setting $\alpha = 1/q$ in (\ref{UlyanovBrezisWaingerSharp**<<<}), we deduce
	\begin{equation}\label{UlyanovTikNewFifth}
		\text{BV}(\T) \hookrightarrow \dot{B}^{\alpha_0}_{q,\infty}(\T), \qquad 1 < q < \infty, \quad 0 < \alpha_0 < \frac{1}{q},
	\end{equation}
	where $\dot{B}^{\alpha_0}_{q,\infty}(\T)$ is equipped with $\vertiii{\cdot}_{B^{\alpha_0}_{q,\infty}(\T), 1/q}$. This inequality is known because $\text{BV}(\T) \hookrightarrow \dot{B}^1_{1,\infty}(\T)$ and, in addition, it is plain to show that $\vertiii{f}_{B^{\alpha_0}_{q,\infty}(\T),k} \lesssim \vertiii{f}_{B^1_{1,\infty}(\T),k}$ for $k \in \N, \, k > \max\{\alpha_0,1\}$. However, the key novelty of (\ref{UlyanovBrezisWaingerSharp**<<<}) relies on the fact that the embedding constants of (\ref{UlyanovTikNewFifth}) can be estimated as follows
	\begin{equation*}
		t^{-\alpha_0} \omega_{1/q}(f,t)_{q;\T} \leq C (1/q - \alpha_0)^{-1/q} \|f\|_{\text{BV}(\T)},
	\end{equation*}
	where $C > 0$ is independent of $f, t \in (0,1)$ and $\alpha_0 \to 1/q-$. Furthermore, the exponent $1/q$ 
 is sharp.
	
	(iv) In analogy to Remark \ref{RemUlyanovBrezisWainger}(ii), the reason for working with the semi-norm $\vertiii{\cdot}_{B^{\alpha_0}_{q, \infty}(\T), \alpha}$ in (\ref{UlyanovBrezisWaingerSharp**<<<}) instead of $\|\cdot\|_{\dot{B}^{\alpha_0}_{q, \infty}(\T), \alpha}$ is to be able to characterize the space $\L^{(\alpha,-b)}_{q, \infty}(\T)$, which is endowed with (\ref{DefLipschitz}), in terms of extrapolation of the scale $\dot{B}^{\alpha_0}_{q, \infty}(\T)$ as $\alpha_0 \to \alpha-$.
	
\end{rem}

\begin{proof}[Proof of Theorem \ref{UlyanovBrezisWainger2}]

	(i) $\Longrightarrow$ (ii): Let $k \in \N$. According to \eqref{UlyanovClassic} and Lemma \ref{LemmaKfunctLS}(iii), one has
	\begin{equation}\label{0007}
		\|f\|_{L_q(\T) + \infty \dot{W}^k_q(\T)}\lesssim \|f\|_{\dot{B}^{1/q'}_{1,q}(\T), k}.
	\end{equation}
	where
	\begin{equation*}
		\|f\|_{L_q(\T) + \infty \dot{W}^k_q(\T)}:=\sup_{t > 0} K(t,f;L_q(\T), \dot{W}^k_q(\T)). 
	\end{equation*}
	Furthermore, a similar reasoning as in \eqref{0009} shows that $L_q(\T) + \infty \dot{W}^k_q(\T) = L_q(\T) + \infty \dot{H}^\alpha_q(\T)$ and thus, by \eqref{0007},
	\begin{equation*}
		\dot{B}^{1/q'}_{1,q}(\T) \hookrightarrow L_q(\T) + \infty \dot{H}^\alpha_q(\T).
	\end{equation*}
	This embedding together with (i) allows us to derive
	\begin{equation}\label{KFunctLip3}
		K(t,f ; L_q(\T) + \infty \dot{H}^\alpha_q(\T), \L^{(\alpha,-b)}_{q, \infty}(\T)) \lesssim K(t,f; \dot{B}^{1/q'}_{1,q}(\T) , \L^{(\alpha + 1/q',0)}_{1,\infty}(\T)).
	\end{equation}
	Since 
	\begin{equation*}
		K(t,f;L_q(\T) + \infty \dot{H}^\alpha_q(\T),  \dot{H}^\alpha_q(\T)) = K(t,f;L_q(\T), \dot{H}^\alpha_q(\T))
	\end{equation*}
	(cf. \cite[Theorem 1.5, Chapter 5, p. 297]{BennettSharpley}) we can apply (\ref{LipLimInter}) and (\ref{LemmaHolmstedt2}) to get
	\begin{align*}
		K(t (1 - \log t)^{b},f ; L_q(\T)+ \infty \dot{H}^\alpha_q(\T), \L^{(\alpha,-b)}_{q, \infty}(\T)) \\
		&\hspace{-6.5cm} \asymp K(t (1 - \log t)^{b},f;L_q(\T)+ \infty \dot{H}^\alpha_q(\T), (L_q(\T)+ \infty \dot{H}^\alpha_q(\T), \dot{H}^\alpha_q(\T))_{(1,-b),\infty}) \\
		& \hspace{-6.5cm}\asymp K(t, f; L_q(\T)+ \infty \dot{H}^\alpha_q(\T), \dot{H}^\alpha_q(\T)) \\
		& \hspace{-6cm}+ t (1-\log t)^{b} \sup_{t < u < 1} u^{-1} (1-\log u)^{-b} K(u,f;L_q(\T)+ \infty \dot{H}^\alpha_q(\T), \dot{H}^\alpha_q(\T)) \\
		& \hspace{-6.5cm} \gtrsim K(t, f; L_q(\T), \dot{H}^\alpha_q(\T))
	\end{align*}
	and thus, by Lemma \ref{LemmaKfunctLS},
	\begin{equation}\label{KFunctLip4}
		K(t^\alpha (1 - \log t)^{b},f ; L_q(\T)+ \infty \dot{H}^\alpha_q(\T), \L^{(\alpha,-b)}_{q, \infty}(\T)) \gtrsim \omega_\alpha(f,t)_{q;\T}.
	\end{equation}
	
	Next we estimate $K(t,f; \dot{B}^{1/q'}_{1,q}(\T), \L^{(\alpha + 1/q',0)}_{1,\infty}(\T))$. To this end, we will make use of the following interpolation formulas (see (\ref{BInter}) and (\ref{LipLimInter}))
	\begin{equation}\label{InterFormulas}
		\dot{B}^{1/q'}_{1,q}(\T) = (L_1(\T), \dot{H}^{\alpha + 1 -1/q}_1(\T))_{\frac{1}{1 + \alpha q'}, q}
	\end{equation}
	and
	\begin{equation}\label{InterFormulas*}
	\L^{(\alpha + 1-1/q,0)}_{1,\infty}(\T) = (L_1(\T), \dot{H}^{\alpha + 1 -1/q}_1(\T))_{(1,0),\infty}.
	\end{equation}
Therefore, by (\ref{InterFormulas}), (\ref{InterFormulas*}) and (\ref{LemmaHolmstedt3}), applying monotonicity properties of the $K$-functional and a simple change of variables, we derive 
	\begin{align*}
		K(t^{\frac{\alpha q'}{\alpha q' + 1}},f; \dot{B}^{\frac{1}{q'}}_{1,q}(\T) , \L^{(\alpha + 1-\frac{1}{q},0)}_{1,\infty}(\T) ) \\
		&\hspace{-5cm} \asymp K(t^{\frac{\alpha q'}{\alpha q' + 1}}, f; (L_1(\T) , \dot{H}^{\alpha + 1 -\frac{1}{q}}_1(\T) )_{\frac{1}{1 + \alpha q'}, q},  (L_1(\T) , \dot{H}^{\alpha + 1 -\frac{1}{q}}_1(\T) )_{(1,0),\infty}) \\
		& \hspace{-5cm} \asymp  \Big(\int_0^t (u^{- \frac{1}{1 + \alpha q'}} K(u,f;L_1(\T) , \dot{H}^{\alpha + 1 -\frac{1}{q}}_1(\T) ))^q \frac{du}{u} \Big)^{\frac{1}{q}} \\
		& \hspace{-3.5cm}+ t^{\frac{\alpha q'}{\alpha q' + 1}} \sup_{t < u < 1} u^{-1} K(u,f; L_1(\T) , \dot{H}^{\alpha + 1 -\frac{1}{q}}_1(\T) ) \\
		& \hspace{-5cm} \asymp  \Big(\int_0^t (u^{- \frac{1}{1 + \alpha q'}} K(u,f;L_1(\T) , \dot{H}^{\alpha + 1 -\frac{1}{q}}_1(\T) ))^q \frac{du}{u} \Big)^{\frac{1}{q}} \\
		& \hspace{-3.5cm}+ t^{-\frac{1}{1 +\alpha q'}}  K(t,f; L_1(\T) , \dot{H}^{\alpha + 1 -\frac{1}{q}}_1(\T) ) \\
		& \hspace{-5cm} \asymp  \Big(\int_0^{t^{(\alpha + \frac{1}{q'})^{-1}}} (u^{- \frac{1}{q'}} K(u^{\alpha + \frac{1}{q'}},f;L_1(\T) , \dot{H}^{\alpha + 1 -\frac{1}{q}}_1(\T) ))^q \frac{du}{u} \Big)^{\frac{1}{q}}.
	\end{align*}
	Applying now Lemma \ref{LemmaKfunctLS} in the previous estimate we obtain
	\begin{equation}\label{KFunctLip5}
	 K(t^{\frac{\alpha q'}{\alpha q' + 1}},f; \dot{B}^{\frac{1}{q'}}_{1,q}(\T) , \L^{(\alpha + 1-\frac{1}{q},0)}_{1,\infty}(\T))\asymp  \Big(\int_0^{t^{(\alpha + \frac{1}{q'})^{-1}}} (u^{- \frac{1}{q'}} \omega_{\alpha + 1 - \frac{1}{q}}(f,u)_{1;\T})^q \frac{du}{u} \Big)^{\frac{1}{q}}.
	\end{equation}
	Collecting (\ref{KFunctLip4}), (\ref{KFunctLip3}) and (\ref{KFunctLip5}), we arrive at
	\begin{align*}
		\omega_\alpha(f,t)_{q;\T} & \lesssim K(t^\alpha (1-\log t)^{b},f; \dot{B}^{1/q'}_{1,q}(\T) , \L^{(\alpha + 1-1/q,0)}_{1, \infty}(\T)) \\
		& \lesssim \Big(\int_0^{t (1- \log t)^{b/\alpha}} (u^{-1/q'} \omega_{\alpha + 1 -1/q}(f,u)_{1;\T})^q \frac{du}{u} \Big)^{1/q}.
	\end{align*}
	
	(ii) $\Longrightarrow$ (iii): By (\ref{UlyanovBrezisWainger2Sharp}),
	\begin{align*}
		\omega_\alpha(f,t)_{q;\T} & \lesssim \left(\int_0^{t (1-\log t)^{b/\alpha}} u^{\alpha q} \frac{du}{u} \right)^{1/q} \|f\|_{\L^{(\alpha + 1/q',0)}_{1, \infty}(\T)} \\
		&\asymp t^\alpha (1-\log t)^b  \|f\|_{\L^{(\alpha + 1/q',0)}_{1, \infty}(\T)}.
	\end{align*}
	Let $\alpha_0 \in (0,  \alpha)$. The previous estimate and (\ref{DefHolZygLog}) yield that
	\begin{align*}
		\vertiii{f}_{B^{\alpha_0}_{q, \infty}(\T), \alpha} & = \sup_{0 < t < 1} t^{-\alpha_0} \omega_\alpha(f,t)_{q;\T} \lesssim  \|f\|_{\L^{(\alpha + 1/q',0)}_{1, \infty}(\T)} \sup_{0 < t < 1} t^{\alpha - \alpha_0} (1 - \log t)^b \\
		& \asymp (\alpha - \alpha_0)^{-b} \|f\|_{\L^{(\alpha + 1/q',0)}_{1, \infty}(\T)}.
			\end{align*}
			
			(iii) $\Longrightarrow$ (i): Let $j_0 \in \N_0$ be such that $2^{-j_0} < \alpha$ and let $\alpha_j = \alpha-2^{-j}, \, j \geq j_0$. According to (\ref{UlyanovBrezisWaingerSharp**<<<}), we have
			\begin{equation}\label{UlyanovBrezisWainger2'}
				\sup_{j \geq j_0} 2^{-j b} \vertiii{f}_{B^{\alpha_j}_{q, \infty}(\T), \alpha} \lesssim \|f\|_{\L^{(\alpha + 1/q',0)}_{1,\infty}(\T)}.
			\end{equation}
			On the other hand, in view of (\ref{DefHolZygLog}) and applying Fubini's theorem, we derive
			\begin{align*}
				\sup_{j \geq j_0} 2^{-j b} \vertiii{f}_{B^{\alpha_j}_{q, \infty}(\T), \alpha} & =\sup_{j \geq j_0} 2^{-j b} \sup_{0 < t < 1} t^{-\alpha_j} \omega_\alpha(f,t)_{q;\T}\\
				& = \sup_{0 < t < 1} t^{-\alpha}  \omega_\alpha(f,t)_{q;\T}\sup_{j \geq j_0} 2^{-j b} t^{2^{-j}} \\
				& \asymp \sup_{0 < t < 1} t^{-\alpha} (1 - \log t)^{-b} \omega_\alpha(f,t)_{q;\T} = \|f\|_{\L^{(\alpha, -b)}_{q,\infty}(\T)}.
			\end{align*}
			Inserting this estimate into (\ref{UlyanovBrezisWainger2'}), we arrive at $\L^{(\alpha + 1/q',0)}_{1,\infty}(\T) \hookrightarrow \L^{(\alpha, -b)}_{q,\infty}(\T)$.
			
			Concerning the equivalence between (i) and (iv), we refer to (\ref{ThmEmbLipschitz*}).
\end{proof}

We conclude this paper with two remarks.

\begin{rem}\label{RemLast}
Inequality (\ref{UlyanovTikNewFirst}) is optimal in the following sense
	\begin{equation}\label{SharpnessAssUT*}
		\omega_\alpha (f, t)_{q;\T} \lesssim  \Big(\int_0^{t (1 - \log t)^{1/\alpha q}} (u^{-1/q'} \omega_{\alpha + 1/q'} (f, u)_{1;\T})^r \frac{du}{u} \Big)^{1/r} \iff r \leq q
	\end{equation}
	and
	\begin{equation}\label{SharpnessAssUT2*}
		\omega_\alpha (f, t)_{q;\T} \lesssim  \Big(\int_0^{t (1 - \log t)^{1/\alpha q}} (u^{-1/q'} (1-\log u)^b \omega_{\alpha + 1/q'} (f, u)_{1;\T})^q \frac{du}{u} \Big)^{1/q} \iff b \geq 0.
	\end{equation}
\end{rem}	
\begin{proof}[Proof of Remark \ref{RemLast}]
	We start by showing (\ref{SharpnessAssUT*}). Since
	\begin{equation}\label{SharpnessAssUT*new}
	 \Big\|f - \frac{1}{2 \pi} \int_0^{2 \pi} f(t) dt \Big\|_{L_q(\T)} \asymp \sup_{t > 0} \omega_\alpha (f, t)_{q;\T},
	 \end{equation}
	 see, e.g., \cite[(12)]{ivanov} and \cite[(13.12)]{DominguezTikhonov}, it follows from the inequality stated in (\ref{SharpnessAssUT*}) that $\dot{B}^{1/q'}_{1,r}(\T)$ is contained in $L_q(\T)$. According to \cite[Theorem 3.2.1]{SickelTriebel}, the previous result holds if and only if $r \leq q$.
	
	Assume that the estimate given in (\ref{SharpnessAssUT2*}) holds. Then, by (\ref{SharpnessAssUT*new}), the space $\dot{B}^{1/q', b}_{1, q} (\T)$ is contained in $L_q(\T)$. Since $L_{q}(\log L)_{b}(\T)$ is the r.i. hull of $B^{1/q', b}_{1, q} (\T)$ (see \cite{Martin}), we derive $L_{q}(\log L)_{b}(\T) \subseteq L_q(\T)$. This yields $b \geq 0$.

	\begin{rem}\label{RemLast2}
	The counterpart of \eqref{UlyanovTikNewFirst} in higher dimensions reads as follows. Assume $\alpha >0$ and $1 < q < \infty$. If $f \in\dot{B}^{d/q'}_{1,q}(\T^d)$ and $t \in (0,1)$ then
	\begin{equation}\label{RemLast2.1}
		\omega_\alpha (f, t)_{q;\T^d} \lesssim  \Big(\int_0^{t (1 - \log t)^{1/\alpha q}} (u^{-d/q'} \omega_{\alpha + d/q'} (f, u)_{1;\T^d})^q \frac{du}{u} \Big)^{1/q}.
	\end{equation}
	As a byproduct, we obtain the multivariate version of \eqref{UlyanovTikNew}, namely,
	\begin{equation}\label{RemLast2.2}
		\omega_\alpha (f, t)_{q;\T^d} \lesssim \left( \int_0^t (u^{-d/q'} (1 - \log u)^{1/q} \omega_{\alpha + d/q'} (f, u)_{1;\T^d})^{q} \frac{du}{u}\right)^{1/q},
	\end{equation}
see also \cite{KolomoitsevTikhonov}.
	Starting from the embedding \eqref{ThmEmbLipschitz*alpha}, the proof of \eqref{RemLast2.1} proceeds in the same vein as was done for the implication (i) $\Longrightarrow$ (ii) in Theorem \ref{UlyanovBrezisWainger2}. Note that this argument can also be applied to show that \eqref{RemLast2.1} (and so, \eqref{RemLast2.2}) also holds true for functions $f$ on $\R^d$.

	\end{rem}

\end{proof}

\newpage

\appendix
\section{Computation of $K$-functionals}

In this appendix we collect a number of well-known characterizations of $K$-functionals (cf. \eqref{DefKFunct}) that are relevant in the developments of this paper. Moreover, for the convenience of the reader, we include the calculations  of some $K$-functionals, which might be known but 
 we could not find
 them in the literature.

\vspace{2mm}

\textbf{$K$-functional for pairs of Lorentz spaces.} The characterization of the $K$-functional for pairs of Lorentz spaces in terms of rearrangements reads as follows. For our purposes, it is enough to consider Lorentz spaces defined on $\mathcal{X} \in \{\R^d, \T^d, \Omega\}$ equipped with the Lebesgue measure, however we note that the following result holds true for general measure spaces.

\begin{lem}[{\cite[Theorem 4.1]{Holmstedt}}]\label{LemmaHolm4}
Assume $0 < p_0 < p_1 < \infty, 0 < q_0, q_1 \leq \infty$. Let $1/\alpha = 1/p_0 - 1/p_1$. Then
\begin{equation}\label{LemmaKFunctLorentz}
	K(t, f; L_{p_0, q_0}(\mathcal{X}), L_{p_1, q_1}(\mathcal{X})) \asymp \left(\int_0^{t^\alpha} (u^{1/p_0} f^\ast(u))^{q_0} \frac{du}{u}  \right)^{1/q_0} + t \left(\int_{t^\alpha}^{|\mathcal{X}|_d} (u^{1/p_1} f^\ast(u))^{q_1} \frac{du}{u} \right)^{1/q_1}
\end{equation}
for $t >0$.

Assume $0 < p_0 < \infty$ and $0 < q_0 \leq \infty$. Then
\begin{equation}\label{LemmaKFunctLorentzLinfty}
	K(t, f; L_{p_0, q_0}(\mathcal{X}), L_\infty(\mathcal{X})) \asymp \left(\int_0^{t^{p_0}} (u^{1/p_0} f^\ast(u))^{q_0} \frac{du}{u}  \right)^{1/q_0}
\end{equation}
for $t >0$.
\end{lem}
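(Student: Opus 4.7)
The plan is to establish both directions of the equivalence (A.1) by explicit construction. For the upper estimate I fix $t>0$, set $\lambda = f^\ast(t^\alpha)$, and decompose $f = f_0 + f_1$ where
\[
f_0(x) = \operatorname{sgn}(f(x))\,(|f(x)|-\lambda)_+, \qquad f_1(x) = f(x)-f_0(x).
\]
A standard computation with distribution functions gives
\[
f_0^\ast(u) = (f^\ast(u)-\lambda)_+ = (f^\ast(u)-f^\ast(t^\alpha))\,\chi_{(0,t^\alpha)}(u),
\qquad f_1^\ast(u)=\min\{f^\ast(u),\lambda\}.
\]
In particular $f_0^\ast$ is supported in $(0,t^\alpha)$ while $f_1^\ast$ is bounded above by $f^\ast(t^\alpha)$ and coincides with $f^\ast$ on $(t^\alpha,|\mathcal{X}|_d)$. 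Using these pointwise bounds in the definition (\ref{DefLZ}) yields, after elementary manipulations and the relation $t = t^{\alpha(1/p_0-1/p_1)}$, that
\[
\|f_0\|_{L_{p_0,q_0}(\mathcal{X})} + t\,\|f_1\|_{L_{p_1,q_1}(\mathcal{X})}
\]
is bounded by the right-hand side of (\ref{LemmaKFunctLorentz}); the terms involving the constant tail $\lambda$ on $(0,t^\alpha)$ are absorbed into the first summand by monotonicity of $f^\ast$.

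For the lower bound I start from an arbitrary decomposition $f = g+h$ with $g\in L_{p_0,q_0}(\mathcal{X})$ and $h\in L_{p_1,q_1}(\mathcal{X})$. The subadditivity of the decreasing rearrangement yields $f^\ast(2u)\leq g^\ast(u)+h^\ast(u)$ for all $u>0$. Splitting the defining Lorentz-type integrals at $u = t^\alpha$, I estimate
\[
\Big(\int_0^{t^\alpha}(u^{1/p_0}f^\ast(u))^{q_0}\tfrac{du}{u}\Big)^{1/q_0}
\lesssim \|g\|_{L_{p_0,q_0}(\mathcal{X})} + \Big(\int_0^{t^\alpha}(u^{1/p_0}h^\ast(u))^{q_0}\tfrac{du}{u}\Big)^{1/q_0},
\]
and the final integral is controlled by $t\,\|h\|_{L_{p_1,q_1}(\mathcal{X})}$ via Hardy's inequality (Lemma \ref{LemmaHardySharp}) because $1/p_0 > 1/p_1$; the tail integral over $(t^\alpha,|\mathcal{X}|_d)$ is handled analogously, also producing a factor of $t$. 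Taking the infimum over all admissible decompositions gives the reverse inequality.

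The case (\ref{LemmaKFunctLorentzLinfty}) uses the same truncation at level $\lambda = f^\ast(t^{p_0})$, for which $\|f_1\|_{L_\infty(\mathcal{X})}=\lambda$. The upper bound then requires the auxiliary estimate
\[
t\,f^\ast(t^{p_0}) \lesssim \Big(\int_0^{t^{p_0}}(u^{1/p_0}f^\ast(u))^{q_0}\tfrac{du}{u}\Big)^{1/q_0},
\]
which follows by monotonicity of $f^\ast$ after bounding the integrand below by $(u^{1/p_0}f^\ast(t^{p_0}))^{q_0}$ on, say, $(t^{p_0}/2,t^{p_0})$. The lower bound again reduces to $f^\ast(2u)\leq g^\ast(u)+h^\ast(u)$ together with the trivial estimate $h^\ast(u)\leq \|h\|_{L_\infty(\mathcal{X})}$.

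The main obstacle is the bookkeeping for the quasi-Banach regime $q_i<1$, where the triangle inequality must be replaced by $p$-convexity, and the verification that Hardy's inequality is still available in that range for the particular monotone functions appearing here; this is ensured by the last clause of Lemma \ref{LemmaHardyLog}, since $u\mapsto u^{1/p_0}f^\ast(u)$ and $u\mapsto u^{1/p_1}f^\ast(u)$ can be written as $u^{\lambda-1}g(u)$ with $g$ suitably monotone.
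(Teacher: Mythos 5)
The paper does not actually reprove this lemma: it is cited to Holmstedt's Theorem~4.1, where the formula arises as a special case of a general reiteration principle for $K$-functionals applied to the couple $(L_1, L_\infty)$. Your proposal instead gives a direct, self-contained argument by truncating $f$ at the level $\lambda = f^*(t^\alpha)$, which is the standard elementary route for this particular couple; it avoids the abstract reiteration machinery at the cost of handling the Lorentz integrals by hand. The decomposition is correct, the computation of $f_0^*$ and $f_1^*$ is correct, and the observation that the constant-plateau contribution $\lambda\, t^{\alpha/p_0}$ is dominated by the first summand via monotonicity of $f^*$ is exactly the right closing step for the upper bound. The $L_\infty$ case via truncation at $f^*(t^{p_0})$ is also fine.

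The lower bound, however, invokes the wrong tool. The estimate $\bigl(\int_0^{t^\alpha}(u^{1/p_0}h^*(u))^{q_0}\,\frac{du}{u}\bigr)^{1/q_0}\lesssim t\,\|h\|_{L_{p_1,q_1}}$ is not an instance of Hardy's inequality (Lemma~\ref{LemmaHardySharp}); that lemma bounds integral averages $\int_0^t f$ or $\int_t^\infty f$, whereas the quantity here has no averaging structure, and rewriting it via $h^{**}$ only leads back to the same expression. What is actually needed is the elementary Lorentz nesting $L_{p_1,q_1}\hookrightarrow L_{p_1,\infty}$ (a consequence of the monotonicity of $h^*$): write $u^{1/p_0}=u^{1/\alpha}\,u^{1/p_1}$, pull out $\sup_u u^{1/p_1}h^*(u)\lesssim\|h\|_{L_{p_1,q_1}}$, and integrate $u^{q_0/\alpha}\,\frac{du}{u}$ over $(0,t^\alpha)$ to produce the factor $t$. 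The symmetric weak-type argument for $g$ handles the tail integral over $(t^\alpha,|\mathcal{X}|_d)$, where the computation $t\cdot\|g\|_{L_{p_0,\infty}}\cdot t^{-1}$ shows that the factor of $t$ is in fact cancelled rather than ``also produced.'' Once this substitution is made, the final paragraph's worry about the availability of Hardy in the quasi-Banach range $q_i<1$ becomes moot; only the $q_i$-power triangle inequality is required there.
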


Further characterizations of $K$-functionals for couples of Banach lattices may be found in \cite[Section 3.9.2]{Brudnyibook} and the references  therein.

\vspace{2mm}

\textbf{$K$-functionals for pairs of Lebesgue and Sobolev spaces.} Recall that $\Omega$ is a bounded Lipschitz domain in $\R^d$.

\begin{lem}\label{LemmaKfunctLS}
Let $1 \leq p \leq \infty, k \in \N$ and $s > 0$. For $t > 0$ we have
\begin{enumerate}[\upshape(i)]
	\item $K(t^k, f; L_p(\R^d), (\dot{W}^k_p(\R^d))_0) \asymp \omega_k(f,t)_{p;\R^d}$;
	\item $K(t^k, f; L_p(\R^d), W^k_p(\R^d)) \asymp \min\{1,t^k\} \|f\|_{L_p(\R^d)} +  \omega_k(f,t)_{p;\R^d}$;
	\item $K(t^s, f; L_p(\R^d), \dot{H}^s_p(\R^d)) \asymp \omega_s(f,t)_{p; \R^d}$;
	\item $K(t^k, f; L_p(\Omega), \dot{W}^k_p(\Omega)) = \inf_{g \in W^k_p(\Omega)} \{ \|f-g\|_{L_p(\Omega)} + t^k \| |\nabla^k g|\|_{L_p(\Omega)} \} \asymp \omega_k(f,t)_{p;\Omega}$.
	A density argument shows that the previous formula also holds true when the infimum is taken over the class formed by all $g_{|\Omega}$ with $g \in C^\infty_0(\R^d)$;
	\item $K(t^k, f; L_p(\Omega), W^k_p(\Omega))  \asymp \min\{1,t^k\} \|f\|_{L_p(\Omega)} +  \omega_k(f,t)_{p;\Omega}$.
	\end{enumerate}
	\end{lem}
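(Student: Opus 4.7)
The plan is to establish (i) first, from which (ii)--(v) follow with minor modifications. For the upper bound in (i), I would fix any $g \in (\dot W^k_p(\mathbb R^d))_0$, use the triangle inequality
\[
\omega_k(f,t)_{p;\mathbb R^d}\le \omega_k(f-g,t)_{p;\mathbb R^d}+\omega_k(g,t)_{p;\mathbb R^d}\lesssim \|f-g\|_{L_p(\mathbb R^d)}+t^k\||\nabla^k g|\|_{L_p(\mathbb R^d)},
\]
invoking the elementary bounds $\omega_k(h,t)_p\le 2^k\|h\|_{L_p}$ and \eqref{DerMod}, and then take the infimum over all admissible $g$. Density of $C^\infty_0(\mathbb R^d)$ in $(\dot W^k_p(\mathbb R^d))_0$ handles the seminorm/quotient issues.

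The main obstacle is the lower bound, i.e.\ producing an explicit decomposition $f=g_t+h_t$ with $\|g_t\|_{L_p(\mathbb R^d)}\lesssim \omega_k(f,t)_{p;\mathbb R^d}$ and $t^k\||\nabla^k h_t|\|_{L_p(\mathbb R^d)}\lesssim \omega_k(f,t)_{p;\mathbb R^d}$. I would use a Steklov-type mollification: fix $\varphi\in C^\infty_0(\mathbb R^d)$ supported in the unit ball with $\int\varphi=1$, set $\varphi_t(x)=t^{-d}\varphi(x/t)$, and define $h_t$ as a $k$-fold iterated convolution
\[
h_t = \Big(I-(I-\varphi_t\ast)\Big)^{\!k}f = \sum_{j=1}^{k}(-1)^{j+1}\binom{k}{j}\varphi_t^{(\ast j)}\ast f,\qquad g_t=f-h_t=(I-\varphi_t\ast)^k f.
\]
The standard identity $\varphi_t\ast f-f=\int \varphi_t(y)\Delta_{-y}f\,dy$ iterated $k$ times yields $\|g_t\|_{L_p}\lesssim \omega_k(f,t)_{p;\mathbb R^d}$ directly. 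For $h_t$, integration by parts transfers all $k$ derivatives onto the mollifier, and one obtains a pointwise representation of $\nabla^k h_t$ as an average of $k$-th differences of $f$ at scale $\lesssim t$ against an $L_1$-kernel of size $t^{-k}$; Minkowski's inequality then gives $t^k\||\nabla^k h_t|\|_{L_p}\lesssim \omega_k(f,t)_{p;\mathbb R^d}$. This proves (i).

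For (ii) the upper bound is the same up to treating $\|f-g\|_{L_p}\le\|f\|_{L_p}$ when one takes $g=0$ (which gives the $\min\{1,t^k\}\|f\|_{L_p}$ term for $t\ge 1$); the lower bound follows from (i) plus the trivial estimate $K(t^k,f;L_p,W^k_p)\le\|f\|_{L_p}$. For (iii), the same convolution decomposition works with the fractional derivative $D^s_\zeta$ distributed onto the mollifier using $(D^s_\zeta\varphi_t)(x)=t^{-s}(D^s_\zeta\varphi)(x/t)$; the fractional difference $\Delta^s_h$ replaces $\Delta^k_h$, and one uses the representation $\|D^s_\zeta\psi\|_{L_p}\asymp \sup_{t>0}t^{-s}\omega_s(\psi,t)_{p;\mathbb R^d}$ from Lemma~3.6 applied to the mollified function. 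For (iv) and (v), the same mollification works whenever $\operatorname{supp}f+B(0,kt)\subset\Omega$; in general, the pointwise convolution decomposition has to be localized. I would fix a smooth partition of unity $\{\psi_i\}$ subordinated to a finite open cover of $\overline\Omega$ in which $\Omega$ is (locally) the epigraph of a Lipschitz function, and apply the $\mathbb R^d$-construction in each patch after a local reflection/translation that pushes the support strictly inside $\Omega$; boundedness of this extension on $L_p(\Omega)$ and on $\dot W^k_p(\Omega)$ reduces (iv)--(v) to (i)--(ii). The density remark in (iv) then follows from standard mollification of elements of $W^k_p(\Omega)$ by functions in $C^\infty_0(\mathbb R^d)$ restricted to $\Omega$, since $\omega_k(\cdot,t)_{p;\Omega}$ is continuous in the $L_p$-topology on $W^k_p$-bounded sets.
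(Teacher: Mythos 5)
The paper gives no proof of this lemma; it is presented with citations only (Bennett--Sharpley for (i)--(ii), Kolomoitsev--Tikhonov for (iii), Johnen--Scherer for (iv)--(v)). Your Steklov-type mollification is indeed the standard route used in those sources for (i)--(ii), so in that respect you have reconstructed the argument the authors delegate to the literature. A small typographical point: the exponent in your first display is misplaced; you want $h_t=\big(I-(I-\varphi_t\ast)^k\big)f$, not $\big(I-(I-\varphi_t\ast)\big)^kf$ (the latter is just $(\varphi_t\ast)^k f$, which is inconsistent with your expansion and with $g_t=(I-\varphi_t\ast)^k f$). Also note that your $(I-\varphi_t\ast)^k$ iteration produces \emph{mixed} differences $\Delta_{-y_1}\cdots\Delta_{-y_k}f$, so you must additionally verify that $\|\Delta_{h_1}\cdots\Delta_{h_k}f\|_p\lesssim\omega_k(f,\max|h_j|)_p$; this is true (a combinatorial reduction works, e.g.\ for $k=2$ one has $\Delta_{h_1}\Delta_{h_2}f(x)=\Delta^2_{(h_1+h_2)/2}f(x)-\Delta^2_{(h_1-h_2)/2}f(x+h_2)$), but it is a step you gloss over, and the construction used in Johnen--Scherer and Bennett--Sharpley avoids it entirely by mollifying a pure $k$-th difference kernel, so that $f-S_tf=\int\Delta^k_{ty}f\,\phi(y)\,dy$ holds directly.

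The genuine gap is in part (iii). You invoke Lemma~3.6 (the identity $\|D^s_\zeta\psi\|_{L_p}\asymp\sup_{t>0}t^{-s}\omega_s(\psi,t)_{p}$) to control the smooth piece, but in this paper Lemma~3.6 is itself \emph{derived from} Lemma~A.2(iii) together with the Gagliardo completeness of $(L_p,\dot H^s_p)$, so the appeal is circular within the paper's logical order. More substantively, the fractional case is not a minor variation of the integer case: the directional fractional derivative $D^s_\zeta$ is a nonlocal Fourier multiplier, $D^s_\zeta\varphi$ is not compactly supported even when $\varphi\in C^\infty_0$, and the proof for $1<p<\infty$ (and the precise treatment of $p=1,\infty$) rests on Mikhlin--H\"ormander multiplier estimates, not on pointwise mollification and Minkowski. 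If you want a self-contained argument for (iii) you should either reproduce the multiplier argument (as in the cited Kolomoitsev--Tikhonov or Wilmes) or prove the identity $\|D^s_\zeta\psi\|_p\asymp\sup_tt^{-s}\omega_s(\psi,t)_p$ for smooth $\psi$ directly, without passing through a statement downstream of Lemma~A.2. For (iv)--(v) the localization/extension sketch is in the right spirit (this is indeed how Johnen--Scherer handle Lipschitz domains), but as written it does not address the care needed near $\partial\Omega$ where $\Omega_{kh}$ shrinks, and invoking ``boundedness of this extension on $\dot W^k_p(\Omega)$'' is exactly the nontrivial content of the Stein/Jones extension theorem.
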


The formulas (i) and (ii) may be found in \cite[Chapter 5, Theorem 4.12, p. 339 and formula (4.42), p. 341]{BennettSharpley}. The formula (iii) also holds true for function spaces over $\T^d$, see \cite[(1.32)]{KT}. Concerning (iv) and (v) we refer to \cite{JohnenScherer}. The previous results admit extensions to r.i. spaces, see \cite[Section 3.9.4]{Brudnyibook}.

For the purposes of this paper, we need the analogue of the formula (iv) when $\dot{W}^k_p(\Omega)$ is replaced by the smaller space $(\dot{W}^k_p(\Omega))_0$ (cf. Section \ref{SectionFunctionSpaces}). We were not able to find a suitable reference,
thus we include below a detailed proof of this result.

%

\begin{prop}\label{Prop} Let $\Omega_0$ be a compact set strictly contained in $\Omega$. Assume $1 \leq p < \infty$ and $k \leq d/p$. Let $f \in L_p(\Omega)$ be such that $\emph{supp }f \subset \Omega_0$. Then, for every $0 < t < \frac{\emph{dist} (\Omega_0, \partial \Omega)}{k}$, we have
\begin{equation}\label{Form}
	C_1 \omega_k(f,t)_{p;\Omega} \leq K(t^k, f; L_p(\Omega), (\dot{W}^k_p(\Omega))_0) \leq C_2 (\emph{dist} (\Omega_0, \partial \Omega))^{-k} \omega_k(f, t)_{p;\Omega},
\end{equation}
where $C_1$ and $C_2$ are positive constants which are independent of $f, t$ and $\Omega_0$.
\end{prop}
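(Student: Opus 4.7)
The argument naturally splits into a lower and an upper estimate. The lower inequality in \eqref{Form} is standard: for any $g \in (\dot W^k_p(\Omega))_0$, realized as a seminorm-limit of functions in $C^\infty_0(\Omega)$, the triangle inequality for $\omega_k(\cdot,t)_{p;\Omega}$ together with $\omega_k(f-g,t)_{p;\Omega} \leq 2^k\|f-g\|_{L_p(\Omega)}$ and property \eqref{DerMod} applied to the approximating sequence give $\omega_k(f,t)_{p;\Omega} \leq C(\|f-g\|_{L_p(\Omega)} + t^k \| |\nabla^k g|\|_{L_p(\Omega)})$, and the left inequality in \eqref{Form} follows by passing to the infimum over $g$.

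The upper bound is the main work. My plan is to extend $f$ by zero to $\tilde f$ on $\R^d$ and apply a Johnen--Scherer-type operator $J_t$ whose output has compact support close to that of its input. The key preliminary observation is that for $|h| \leq t < \mathrm{dist}(\Omega_0,\partial\Omega)/k$ one has $\omega_k(\tilde f,t)_{p;\R^d} = \omega_k(f,t)_{p;\Omega}$: a nonzero value of $\Delta^k_h \tilde f(x)$ forces some sample $x+(k-j)h$ to lie in $\mathrm{supp}(\tilde f) \subset \Omega_0$, and then the whole segment $\{x+sh : 0 \leq s \leq k\}$ sits within distance $kt < \mathrm{dist}(\Omega_0,\partial\Omega)$ of $\Omega_0$, hence in $\Omega$, so $x \in \Omega_{kh}$ and the two differences coincide. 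Applying a Johnen--Scherer operator --- iterated Steklov means of step $t$, or convolution with a smooth compactly supported kernel of vanishing moments up to order $k-1$ --- produces $\tilde h := J_t \tilde f \in C^\infty(\R^d)$ satisfying
\[
\|\tilde f - \tilde h\|_{L_p(\R^d)} + t^k \| |\nabla^k \tilde h|\|_{L_p(\R^d)} \lesssim \omega_k(\tilde f,t)_{p;\R^d},
\]
and, crucially, $\mathrm{supp}(\tilde h) \subset \mathrm{supp}(\tilde f) + \overline{B(0,kt)} \subset \Omega$ under our restriction on $t$. Consequently $\tilde h \in C^\infty_0(\Omega) \subset (\dot W^k_p(\Omega))_0$, and testing the infimum defining the $K$-functional against $f = (f-\tilde h|_\Omega) + \tilde h|_\Omega$ yields $K(t^k, f; L_p(\Omega), (\dot W^k_p(\Omega))_0) \lesssim \omega_k(f,t)_{p;\Omega}$. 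The stated inequality then follows by absorbing the fixed constant $\mathrm{diam}(\Omega)^k$ into $C_2$, since $\mathrm{dist}(\Omega_0,\partial\Omega) \leq \mathrm{diam}(\Omega)$ gives $\mathrm{dist}(\Omega_0,\partial\Omega)^{-k} \geq \mathrm{diam}(\Omega)^{-k}$.

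The main obstacle is the construction of $J_t$ combining the three features simultaneously: the sharp $O(t)$-localization of the support, the full Jackson-type bounds in terms of $\omega_k(\tilde f,t)_{p;\R^d}$ rather than merely $\|\tilde f\|_{L_p(\R^d)}$, and enough smoothness to land in $C^\infty_0(\Omega)$; such operators are classical in the theory of best approximation (see \cite{JohnenScherer}). An alternative route that produces the factor $\mathrm{dist}(\Omega_0,\partial\Omega)^{-k}$ of \eqref{Form} directly would be to take an optimal decomposition on $\R^d$ coming from \eqref{LemmaKfunctLS}(i) and multiply its smooth part by a cutoff $\chi \in C^\infty_c(\Omega)$ with $\chi \equiv 1$ on $\Omega_0$ and $|\nabla^j \chi| \lesssim \mathrm{dist}(\Omega_0,\partial\Omega)^{-j}$; the Leibniz rule then yields the negative power of the distance, but one is forced to control the intermediate derivatives $\| |\nabla^j \tilde h|\|_{L_p(\R^d)}$ for $0<j<k$ --- the delicate step, since a naive Gagliardo--Nirenberg interpolation reintroduces $\|f\|_{L_p(\Omega)}$ terms not present on the right-hand side of \eqref{Form}. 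Hence the support-controlled Johnen--Scherer construction is the preferred route.
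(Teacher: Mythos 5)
You take a genuinely different route for the upper bound, but your stated reason for rejecting the cutoff route is a misconception worth clearing up first. That cutoff route is exactly what the paper's proof does: it multiplies an optimal $\R^d$-decomposition $\bar f = g+h$ by a cutoff $\varphi_\delta$ with $|\nabla^j\varphi_\delta|\lesssim\delta^{-j}$, and then controls the resulting intermediate derivatives $\||\nabla^l h|\|_{L_p(\Omega)}$, $0\le l<k$, \emph{not} by Gagliardo--Nirenberg interpolation (which, as you correctly observe, would reintroduce $\|h\|_{L_p}$) but by the homogeneous Sobolev inequality $\||\nabla^l h|\|_{L_q(\R^d)}\lesssim\||\nabla^k h|\|_{L_p(\R^d)}$ with $1/q=1/p-(k-l)/d>0$, followed by $\||\nabla^l h|\|_{L_p(\Omega)}\le|\Omega|^{1/p-1/q}\||\nabla^l h|\|_{L_q(\Omega)}$ since $\Omega$ is bounded. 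No lower-order norm of $h$ is ever introduced. This is precisely why the hypothesis $k\le d/p$ is present in the statement: it is what makes $q\ge p$ for every $l$, and the borderline case $k=d/p$, $l=0$ is handled via $\dot W^k_p\hookrightarrow\mathrm{BMO}$ and John--Nirenberg. Had you noticed this, the ``alternative route'' you set aside is in fact the cleanest path under the stated hypotheses.

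Your preferred route is sound in spirit and, if carried out, would even give a slightly stronger conclusion (a $C_2$ independent of $\mathrm{dist}(\Omega_0,\partial\Omega)$, and no restriction $k\le d/p$), but it has a genuine gap: you invoke a Johnen--Scherer operator with three simultaneous properties (Jackson bound, Bernstein bound $t^k\||\nabla^k J_t g|\|_p\lesssim\omega_k(g,t)_p$, and support localization) as a black box, and this is where the real work lies. The ``convolution with a smooth compactly supported kernel of vanishing moments'' does not directly give the Bernstein bound for $g\in L_p$: the Taylor-subtraction device that exploits the vanishing moments of $D^\alpha\kappa$ requires smoothness of $g$. The iterated Steklov means work cleanly only in one dimension, via $(S_t^k g)^{(k)}=t^{-k}\Delta^k_t g$; in dimension $d>1$ the product $\prod_i S_{t,e_i}^k$ produces, after $D^\alpha$ with $|\alpha|=k$, a \emph{mixed} difference $\Delta^{\alpha_1}_{te_1}\cdots\Delta^{\alpha_d}_{te_d}g$, and one still has to prove $\|\Delta^{\alpha_1}_{te_1}\cdots\Delta^{\alpha_d}_{te_d}g\|_p\lesssim\omega_k(g,t)_p$ --- a nontrivial mixed-modulus estimate (it holds, for instance by running it back through the $\R^d$ $K$-functional equivalence \eqref{LemmaKfunctLS}(i), but you do not address it), and there is also a dimensional factor $\sqrt d$ lost in the support radius that forces you to run the operator at a smaller scale than $t$. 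Your lower bound, and your observation that $\omega_k(\bar f,t)_{p;\R^d}=\omega_k(f,t)_{p;\Omega}$ for $t<\mathrm{dist}(\Omega_0,\partial\Omega)/k$, coincide with Steps 2 and 3 of the paper's argument and are correct.
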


\begin{rem}
	The assumption $\text{supp }f \subset \Omega_0$ is necessary in the previous result, more precisely, one cannot overlook the constant $(\text{dist} (\Omega_0, \partial \Omega))^{-k}$ in the right-hand side inequality in \eqref{Form}. Indeed, if $g \in C^\infty_0(\Omega)$ and $t \in (0,1)$ then
	\begin{align*}
	t^k \|f\|_{L_p(\Omega)} \leq \|f-g\|_{L_p(\Omega)} + t^k \|g\|_{L_p(\Omega)} \lesssim  \|f-g\|_{L_p(\Omega)} + t^k \||\nabla^k g|\|_{L_p(\Omega)}
	\end{align*}
	where we have used the Poincar\'e inequality in the last step. Hence, taking the infimum over all $g \in C^\infty_0(\Omega)$, we obtain that
	\begin{equation*}
		t^k \|f\|_{L_p(\Omega)} \lesssim K(t^k, f; L_p(\Omega), (\dot{W}^k_p(\Omega))_0), \quad t \in (0,1).
	\end{equation*}
	Assuming that the right-hand side inequality in \eqref{Form} holds true uniformly in $L_p(\Omega)$, we  establish  that if $t$ is sufficiently small then
	\begin{equation}\label{Form2}
		t^k \|f\|_{L_p(\Omega)} \lesssim \omega_k(f,t)_{p;\Omega},
	\end{equation}
	which is not true even for $f \in C^\infty_0(\Omega)$. To see this is enough to consider a sequence of smooth functions $\{f_n\}$ with $f_n \to \chi_{\Omega}$ in $L_p(\Omega)$. According to \eqref{Form2}, we have $t^k \|f_n\|_{L_p(\Omega)} \lesssim \omega_k(f_n,t)_{L_p(\Omega)}$ for all $n$ and thus, taking limits as $n \to \infty$, we arrive at
	\begin{equation*}
		t^k|\Omega|_d^{1/p} \lesssim \omega_k(\chi_\Omega, t)_{p;\Omega} = 0.
	\end{equation*}
	This gives the desired contradiction.
\end{rem}

\begin{proof}[Proof of Proposition \ref{Prop}]
We will divide the proof into three steps.

\textsc{Step 1:} Let $\bar{f}$ be the extension by zero of $f$ to $\R^d$. We claim that there exists a positive constant $C_{p, d, k}$, depending only on $p, d$ and $k$, such that
\begin{align}
	K(t, \bar{f}; L_p(\mathbb{R}^d), (\dot{W}^k_p(\mathbb{R}^d))_0) & \leq  K(t, f; L_p(\Omega), (\dot{W}^k_p(\Omega))_0) \nonumber \\
	&  \leq C_{p, d, k} (\text{dist} (\Omega_0, \partial \Omega))^{-k}  K(t, \bar{f}; L_p(\mathbb{R}^d), (\dot{W}^k_p(\mathbb{R}^d))_0) \label{Claim}
\end{align}
for $t > 0$. To show this we will construct a mollifier $\varphi_\delta \in C^\infty(\R^d)$ depending on $\delta := \text{dist} (\Omega_0, \partial \Omega) > 0$ such that
\begin{itemize}
	\item $0 \leq \varphi_\delta \leq 1$,
	\item $\varphi_\delta = 1$ on $\Omega_0$,
	\item $\varphi_\delta = 0$ on $\R^d\backslash \Omega$,
	\item $\sup_{x \in \Omega} |\nabla^k \varphi_\delta (x)| \lesssim \delta^{-k}$.
\end{itemize}
This can be done in a standard way.
{Consider 
 $\psi \in C^\infty(\R^d)$ such that $0 \leq \psi \leq 1, \, \text{supp } \psi \subset B(0,1)$ and $\int_{B(0,1)} \psi = 1$. Here, $B(0,1)$ is the unit ball in $\R^d$. Define $\varphi_\delta = \chi_{(\Omega_0)_\delta} \ast \psi_\delta$ where $\psi_{\delta}(x) = \big(\frac{4}{\delta} \big)^{d} \psi \big(\frac{4 x}{\delta} \big)$ and $(\Omega_0)_\delta = \{x \in \Omega: \text{dist} (x, \Omega_0) \leq \delta/2\}$. Clearly, $(\Omega_0)_\delta$ is a closed set such that $\Omega_0 \subset (\Omega_0)_\delta \subset \Omega$. Basic computations yield that
\begin{equation}\label{Conv}
	\varphi_\delta(x) = \Big(\frac{4}{\delta} \Big)^{d} \int_{(\Omega_0)_\delta \cap B(x, \frac{\delta}{4})} \psi \Big(\frac{4 (x-y)}{\delta} \Big) \, dy.
\end{equation}
Hence, $0 \leq \varphi_\delta \leq 1$. Assume $x \in \Omega_0$. In this case we have $B(x, \frac{\delta}{4}) \subset (\Omega_0)_\delta$. Indeed, if $y \in B(x, \frac{\delta}{4})$ then $y \in \Omega$; otherwise, if $y \not \in \Omega$ then $\delta = \text{dist}(\Omega_0, \partial \Omega) \leq \text{dist} (x,\partial \Omega) = \text{dist} (x, \R^d \backslash \Omega) \leq |x-y| < \delta/4$, which is a contradiction. In addition, we have $\text{dist} (y, \Omega_0) \leq |y-x| < \delta/4$. According to \eqref{Conv} and a simple change of variables, we obtain
\begin{equation*}
	\varphi_\delta(x) =  \Big(\frac{4}{\delta} \Big)^{d} \int_{B(x, \frac{\delta}{4})} \psi \Big(\frac{4 (x-y)}{\delta} \Big) \, dy = \int_{B(0,1)} \psi(y) \, dy = 1.
\end{equation*}
Furthermore, we have
\begin{equation*}
\text{supp} (\varphi_\delta) \subset (\Omega_0)_\delta + B(0, \frac{\delta}{4}) \subset \Omega.
\end{equation*}
The first inclusion follows from basic properties of convolutions. Concerning the second inclusion, we may argue by contradiction: assume that there is $x \in (\Omega_0)_\delta$ and $|y| < \delta/4$ such that $x + y \not \in \Omega$. By assumption, there exists $x_0 \in \Omega_0$ such that $|x-x_0| \leq \delta/2$ and then
\begin{equation*}
	\delta = \text{dist} (\Omega_0, \partial \Omega) \leq \text{dist} (x_0, \partial \Omega) \leq |x_0 -(x + y)| \leq \frac{\delta}{2}  + \frac{\delta}{4} < \delta,
\end{equation*}
which is not true.

Let $\alpha = (\alpha_1, \ldots, \alpha_d) \in \mathbb{N}_0^d$ with $|\alpha| = \sum_{i=1}^d \alpha_i = k$. It follows from
\begin{equation*}
	D^\alpha \varphi_\delta (x) = \Big(\frac{4}{\delta} \Big)^{d+k}  \int_{B(x, \frac{\delta}{4})} \chi_{(\Omega_0)_\delta}(y)  D^\alpha \psi \Big(\frac{4(x-y)}{\delta} \Big) \, dy
\end{equation*}
and a change of variables that
\begin{equation}\label{partial}
	|D^\alpha \varphi_\delta (x)| \leq  \Big(\frac{4}{\delta} \Big)^{d+k}  \int_{B(x, \frac{\delta}{4})}  \Big| D^\alpha \psi \Big(\frac{4(x-y)}{\delta} \Big) \Big| \, dy = \Big(\frac{4}{\delta} \Big)^k  \|D^\alpha \psi\|_{L_1(\R^d)}.
\end{equation}}

Take any decomposition $\bar{f} = g + h$ with $g \in L_p(\mathbb{R}^d)$ and $h \in C^\infty_0(\R^d)$.  Define now $g_1 = g \varphi_\delta$ and $h_1 = h \varphi_\delta$. For $x \in \Omega$, we have
\begin{equation*}
	f(x) = f(x) \varphi_\delta(x) = \bar{f}(x) \varphi_\delta(x) = g(x) \varphi_\delta(x) + h(x) \varphi_\delta(x) = g_1(x)+h_1(x).
\end{equation*}
Clearly, $g_1 \in L_p(\Omega)$. On the other hand, using the Leibniz formula and \eqref{partial}, we derive
\begin{align}
	\| |\nabla^k h_1|\|_{L_p(\Omega)} &\leq \sum_{|\alpha| = k} \|D^\alpha h_1\|_{L_p(\Omega)} \leq \sum_{|\alpha| = k} \sum_{\beta \leq \alpha} {{\alpha} \choose {\beta}} \|D^\beta \varphi_\delta D^{\alpha-\beta}  h \|_{L_p(\Omega)} \nonumber \\
	& \leq \sum_{|\alpha| = k} \sum_{\beta \leq \alpha} {{\alpha} \choose {\beta}} \Big(\frac{4}{\delta} \Big)^{|\beta|}  \|D^\beta \psi\|_{L_1(\R^d)} \|D^{\alpha-\beta}  h \|_{L_p(\Omega)}. \label{Der}
\end{align}
Assume first $k < d/p$. Thus, by the Sobolev inequality, we have
\begin{equation}\label{SobHom}
	\||\nabla^l h|\|_{L_{q}(\R^d)} \lesssim \||\nabla^k h|\|_{L_p(\R^d)}, \quad 0 \leq l < k, \quad \frac{1}{q}  =  \frac{l-k}{d} + \frac{1}{p},
\end{equation}
and, in particular,
\begin{equation*}
	\||\nabla^l h|\|_{L_p(\Omega)} \leq \||\nabla^l h|\|_{L_q(\Omega)}  \lesssim \||\nabla^k h|\|_{L_p(\R^d)}.
\end{equation*}
Assume now $k=d/p$. In this case, the inequality \eqref{SobHom} with $l=0$ (and so, $q=\infty$) fails to be true. In fact, one cannot replace $L_\infty(\R^d)$ in \eqref{SobHom} by a r.i. space on $\R^d$, but  one can only expect local type inequalities. Indeed, we can use the well-known fact that $(\dot{W}^k_p(\R^d))_0$ is continuously embedded into the spaces $\text{BMO}(\R^d)$ formed by all functions of bounded mean oscillation and thus, by the John-Nirenberg inequality, the space $(\dot{W}^k_p(\R^d))_0$ is locally embedded into $L_p(\Omega)$. Therefore
\begin{equation}\label{SobHom2}
	\||\nabla^l h|\|_{L_p(\Omega)} \lesssim \||\nabla^k h|\|_{L_p(\R^d)}, \quad l = 0, \ldots, k-1, \quad k = \frac{d}{p}.
\end{equation}
Note that standard scaling arguments show that the previous inequality cannot hold if $k > d/p$.

Inserting the estimates \eqref{SobHom} and \eqref{SobHom2} into \eqref{Der}, we obtain
\begin{equation*}
	\| |\nabla^k h_1|\|_{L_p(\Omega)} \lesssim \delta^{-k} \||\nabla^k h|\|_{L_p(\R^d)}, \quad k \leq \frac{d}{p}.
\end{equation*}
Thus $|\nabla^k h_1| \in L_p(\Omega)$ and, in addition, $h_1(x)= h(x) \varphi_\delta(x) = 0$  for all $x \in \partial \Omega$. Consequently,
\begin{align*}
	K(t, f; L_p(\Omega), (\dot{W}^k_p(\Omega))_0) &\leq \|g_1\|_{L_p(\Omega)} + t \||\nabla^k h_1|\|_{L_p(\Omega)} \\
	& \leq C_{k, \psi, d, p} \delta^{-k} [  \|g\|_{L_p(\mathbb{R}^d)} + t \||\nabla^k h|\|_{L_p(\mathbb{R}^d)} ]
\end{align*}
and taking the infimum over all possible decompositions of $\bar{f}$ we arrive at
\begin{equation*}
	K(t, f; L_p(\Omega), (\dot{W}^k_p(\Omega))_0)  \leq C_{k, \psi, d, p} \delta^{-k}  K(t, \bar{f}; L_p(\mathbb{R}^d), (\dot{W}^k_p(\mathbb{R}^d))_0).
\end{equation*}
Next we proceed with the converse estimate. Take any decomposition $f = g + h$ with $g \in L_p(\Omega)$ and $h \in C^\infty_0(\Omega)$. Then $\bar{f} = \bar{g}+\bar{h}$, where $\| \bar{g}\|_{L_p(\R^d)} = \|g\|_{L_p(\Omega)}$ and $\|\nabla^k \bar{h}\|_{L_p(\R^d)} = \|\nabla^k h\|_{L_p(\Omega)}$. Hence,
\begin{equation*}
	K(t, \bar{f}; L_p(\mathbb{R}^d), (\dot{W}^k_p(\mathbb{R}^d))_0) \leq \| \bar{g}\|_{L_p(\R^d)} + t \|\nabla^k \bar{h}\|_{L_p(\R^d)} =  \|g\|_{L_p(\Omega)} + t  \|\nabla^k h\|_{L_p(\Omega)},
\end{equation*}
which yields that $K(t, \bar{f}; L_p(\mathbb{R}^d), (\dot{W}^k_p(\mathbb{R}^d))_0) \leq K(t, f; L_p(\Omega), (\dot{W}^k_p(\Omega))_0)$. This completes the proof of \eqref{Claim}.

\textsc{Step 2:}
We will show that
\begin{equation}\label{Claim2}
 \omega_k(\bar{f},t)_{p;\R^d} = \omega_k(f, t)_{p;\Omega}, \quad 0 < t < \frac{\delta}{k}.
 \end{equation}
For $|h| \leq t$ recall that $\Omega_{k h} = \{x : x + \eta k h \in \Omega, \text{ for all } 0 \leq \eta \leq 1\}$ (see Section \ref{SectionFunctionSpaces}). We have
 \begin{align*}
 	\||\Delta^k_h \bar{f}|\|_{L_p(\R^d)}^p &= \int_{\Omega_{k h}} |\Delta^k_h f(x)|^p \,dx +   \int_{\R^d \backslash \Omega_{k h}} |\Delta^k_h \bar{f}(x)|^p \,dx.
 \end{align*}
 Thus the proof of \eqref{Claim2} will be finished if we show that
 \begin{equation}\label{Zero}
 	 \int_{\R^d \backslash \Omega_{k h}} |\Delta^k_h \bar{f}(x)|^p \,dx = 0, \quad |h| \leq t.
 \end{equation}
 To prove this, it will  be convenient to introduce the notation $(\tilde{\Omega}_0)_{k h} = \{x \in \R^d :  x + j h \in \Omega_0 \text{ for some } j \in \{0, \ldots, k\}\} $. Since $\Delta^k_h \bar{f} (x) = 0$ for $x \not \in (\tilde{\Omega}_0)_{k h}$,  we have
 \begin{equation*}
 	 \int_{\R^d \backslash \Omega_{k h}} |\Delta^k_h \bar{f}(x)|^p \,dx  =  \int_{(\R^d \backslash \Omega_{k h}) \cap (\tilde{\Omega}_0)_{k h}} |\Delta^k_h \bar{f}(x)|^p \,dx.
 \end{equation*}
 Then to verify \eqref{Zero} it is sufficient to show that the set $(\R^d \backslash \Omega_{k h}) \cap (\tilde{\Omega}_0)_{k h}$ is empty. Assume that there exists $x \in \R^d$ such that $x \not \in \Omega_{k h}$ and $x \in  (\tilde{\Omega}_0)_{k h}$. This means that there are $j \in \{0, \ldots, k\}$ and $\eta \in [0,1]$ such that $x + j h \in \Omega_0$ and $x + \eta k h \not \in \Omega$. Then
 \begin{equation*}
 	\delta = \text{dist}(\Omega_0, \partial \Omega) \leq |j - \eta k| |h| \leq  |j - \eta k| t \leq k t < \delta,
 \end{equation*}
 which is not possible. This shows that $(\R^d \backslash \Omega_{k h}) \cap (\tilde{\Omega}_0)_{k h} = \emptyset$.

\textsc{Step 3:}
By Lemma \ref{LemmaKfunctLS}(i) and \eqref{Claim}, we have
\begin{equation*}
	 \omega_k(\bar{f},t)_{p;\R^d} \leq C_{p,d,k}  K(t, f; L_p(\Omega), (\dot{W}^k_p(\Omega))_0)   \leq C_{p, d, k} (\text{dist} (\Omega_0, \partial \Omega))^{-k}   \omega_k(\bar{f},t)_{p;\R^d}
\end{equation*}
for $t > 0$. Finally, \eqref{Claim2} completes the proof.
\end{proof}

\vspace{2mm}

\textbf{$K$-functional for Sobolev spaces.} Let $\mathcal{X} \in \{\R^d, \Omega\}$. The $K$-functional for the pair $(W^k_1(\mathcal{X} ), W^k_\infty(\mathcal{X} )), \, k \in \N,$ was obtained by DeVore and Scherer \cite{DeVoreScherer}. Namely, they showed that
	\begin{equation}
	K(t, f; W^k_1(\mathcal{X} ), W^k_\infty(\mathcal{X} )) \asymp \sum_{|\alpha| \leq k} \int_0^t (D^\alpha f)^*(u) \, du \asymp \int_0^t \Big[ f^*(u) + \sum_{|\alpha| = k}  (D^\alpha f)^*(u) \Big] \, du  \label{DS}
	\end{equation}
	for $t > 0$ and $f \in W^k_1(\mathcal{X} ) + W^k_\infty(\mathcal{X} )$. Another approaches to this formula may be found in \cite{CalderonMilman} and \cite{DeVoreSharpleyb}.
	
	A well-known argument based on \eqref{DS}, the reiteration property of interpolation spaces \cite[Chapter 5, Theorem 2.4, p. 311]{BennettSharpley} and Lemma \ref{LemmaHolm4} yield the following

\begin{lem}
\begin{enumerate}[\upshape(i)] Let $k \in \N$. Assume that one of the conditions
\begin{equation}\label{Cond}
		 \left\{\begin{array}{cll}  p_0 = q_0 = 1, & 1 < p_1 < \infty, & 0 < q_1 \leq \infty, \\
		 & & \\
		 1 < p_0 < p_1 < \infty, & 0 < q_0 , q_1 \leq \infty, \\
		 & & \\
		 1 < p_0 < \infty, & 0 < q_0 \leq \infty, & p_1=q_1=\infty, \\
		 & & \\
		 p_0 = q_0=1, & p_1=q_1=\infty,
		       \end{array}
                        \right.
	\end{equation}
	is satisfied. If $f \in W^k L_{p_0,q_0}(\mathcal{X}) + W^k L_{p_1,q_1}(\mathcal{X})$ and $t > 0$ then
	\begin{equation}
	K(t, f; W^k L_{p_0,q_0}(\mathcal{X}), W^k L_{p_1,q_1}(\mathcal{X}))   \asymp \sum_{|\alpha| \leq k} K(t, D^\alpha f; L_{p_0,q_0}(\mathcal{X}), L_{p_1,q_1}(\mathcal{X})). \label{DS2}
	\end{equation}

\end{enumerate}
\end{lem}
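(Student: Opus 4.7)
The plan is to reduce the claim to the DeVore--Scherer endpoint formula \eqref{DS} via the reiteration (Holmstedt) formulas of Lemma \ref{Holmstedt}, combined with the identification of Lorentz (respectively, Lorentz--Sobolev) spaces as real interpolation spaces between $L_1$ and $L_\infty$ (respectively, between $W^k_1$ and $W^k_\infty$). The easy half,
$$\sum_{|\alpha| \leq k} K(t, D^\alpha f; L_{p_0,q_0}(\mathcal{X}), L_{p_1,q_1}(\mathcal{X})) \lesssim K(t, f; W^k L_{p_0,q_0}(\mathcal{X}), W^k L_{p_1,q_1}(\mathcal{X})),$$
follows at once by decomposition: any splitting $f = g_0 + g_1$ with $g_i \in W^k L_{p_i,q_i}(\mathcal{X})$ yields $D^\alpha f = D^\alpha g_0 + D^\alpha g_1$ for each $|\alpha| \leq k$, whence $K(t, D^\alpha f; L_{p_0,q_0}, L_{p_1,q_1}) \leq \|g_0\|_{W^k L_{p_0,q_0}} + t\|g_1\|_{W^k L_{p_1,q_1}}$. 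One takes the infimum and sums over the (finitely many) multi-indices with $|\alpha| \leq k$.

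For the converse inequality, the extremal case $p_0 = q_0 = 1$, $p_1 = q_1 = \infty$ is precisely \eqref{DS}. In the remaining cases of \eqref{Cond}, Lemma \ref{LemmaHolm4} together with the real-method definition \eqref{DefInter} gives, for $1 < p < \infty$ and $0 < q \leq \infty$,
$$L_{p, q}(\mathcal{X}) = (L_1(\mathcal{X}), L_\infty(\mathcal{X}))_{1 - 1/p,\, q}, \qquad W^k L_{p, q}(\mathcal{X}) = (W^k_1(\mathcal{X}), W^k_\infty(\mathcal{X}))_{1 - 1/p,\, q},$$
the second identity being an immediate consequence of \eqref{DS} (which equates the Peetre $K$-functional of the couple $(W^k_1, W^k_\infty)$ with a sum over $|\alpha| \leq k$ of the $K$-functionals of $D^\alpha f$ in $(L_1, L_\infty)$, so that the resulting interpolation seminorm coincides, up to constants, with $\sum_{|\alpha| \leq k}\|D^\alpha f\|_{L_{p,q}(\mathcal{X})}$). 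Setting $\theta_i := 1 - 1/p_i$ and applying the Holmstedt reiteration formulas of Lemma \ref{Holmstedt} simultaneously to the pairs $(W^k_1, W^k_\infty)$ and $(L_1, L_\infty)$ gives, with the same weighted mixed $L^{q_0}\text{--}L^{q_1}$ functional $\Phi$ on both sides,
$$K(t, f; W^k L_{p_0,q_0}, W^k L_{p_1,q_1}) \asymp \Phi\bigl(t;\, K(\cdot, f; W^k_1, W^k_\infty)\bigr),$$
$$K(t, D^\alpha f; L_{p_0,q_0}, L_{p_1,q_1}) \asymp \Phi\bigl(t;\, K(\cdot, D^\alpha f; L_1, L_\infty)\bigr).$$
Substituting \eqref{DS} inside $\Phi$ on the first line and summing over $|\alpha| \leq k$ then delivers the desired equivalence.

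The main obstacle is organisational rather than conceptual: one must check that a Holmstedt-type reiteration formula applies uniformly in each of the four regimes listed in \eqref{Cond}, including the degenerate endpoints $\theta_0 = 0$ (when $p_0 = q_0 = 1$) and $\theta_1 = 1$ (when $p_1 = q_1 = \infty$). In these degenerate cases the two-sided reiteration formula must be replaced by its one-sided counterparts \eqref{LemmaHolmstedt1} and \eqref{LemmaHolmstedt1*} from Lemma \ref{Holmstedt}; in every such case the same functional $\Phi$ governs both the Lebesgue and the Sobolev side, so that the reduction to \eqref{DS} remains clean and the implicit constants remain independent of $f$ and $t$.
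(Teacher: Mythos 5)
Your proposal is correct and follows essentially the same route the paper indicates. The paper does not actually spell out a proof of this lemma; it simply cites the DeVore--Scherer formula \eqref{DS}, the reiteration (Holmstedt) theorem from Bennett--Sharpley, and Lemma \ref{LemmaHolm4}, calling the combination ``a well-known argument.'' Your proposal is a faithful unpacking of precisely that argument: the easy inequality by direct decomposition, the identification $W^k L_{p,q}=(W^k_1, W^k_\infty)_{1-1/p,\,q}$ via \eqref{DS}, and then reiteration to transfer the endpoint formula \eqref{DS} to the full Lorentz--Sobolev scale, with the one-sided Holmstedt formulas \eqref{LemmaHolmstedt1} and \eqref{LemmaHolmstedt1*} covering the degenerate regimes in \eqref{Cond}. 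The only point left tacit in your write-up is that after substituting \eqref{DS} into the reiteration functional $\Phi$ one must commute $\Phi$ past the finite sum over multi-indices $|\alpha|\le k$; this is immediate from monotonicity and (quasi-)subadditivity of $\Phi$ on a finite sum of nonnegative $K$-functionals, but it is worth stating.
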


	A more general formulation of the previous result may be found in \cite[(6.10)]{Nilsson}.

Working with homogeneous Sobolev spaces on $\R^d$, it is known that
	\begin{equation}\label{DS2-}
	K(t, f; \dot{W}^1_1(\R^d), \dot{W}^1_\infty(\R^d)) \asymp \sum_{|\alpha| = 1} \int_0^t (D^\alpha f)^*(u) \, du
	\end{equation}
	for $f \in W^1_1(\R^d) + W^1_\infty(\R^d)$ and uniformly in $t > 0$; see \cite{MartinMilman06}. The following result provides the counterparts of \eqref{DS2}-\eqref{DS2-}  for the pair $((V^k_1(\R^d))_0, (V^k_\infty(\R^d))_0)$.

\begin{prop}\label{PropDS}
	Let $k \in \N$. If $f \in (V^k_1(\R^d))_0 + (V^k_\infty(\R^d))_0$ and $t > 0$, then
		\begin{equation}\label{DS2*}
	K(t, f; (V^k_1(\R^d))_0, (V^k_\infty(\R^d))_0) \asymp \sum_{|\alpha| = k} \int_0^t (D^\alpha f)^*(u) \, du.
	\end{equation}
	Consequently, if any of the conditions given in \eqref{Cond} holds, then
	\begin{equation}\label{LemmaKFunctLorentzSobolev}
		K(t, f; (V^k L_{p_0,q_0}(\R^d))_0, (V^k L_{p_1,q_1}(\R^d))_0) \asymp K(t, |\nabla^k f|; L_{p_0,q_0}(\R^d), L_{p_1,q_1}(\R^d))
	\end{equation}
	for $f \in (V^k L_{p_0,q_0}(\R^d))_0+ (V^k L_{p_1,q_1}(\R^d))_0$ and $t > 0$.
\end{prop}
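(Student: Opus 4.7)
\textbf{Proof plan for Proposition \ref{PropDS}.} Since $K(t, |\nabla^k f|; L_1(\R^d), L_\infty(\R^d)) = \int_0^t |\nabla^k f|^*(u)\,du$ by Lemma \ref{LemmaHolm4} and $\int_0^t |\nabla^k f|^*(u)\, du \asymp \sum_{|\alpha|=k} \int_0^t (D^\alpha f)^*(u)\,du$ (from $|\nabla^k f| = \sum_{|\alpha|=k} |D^\alpha f|$ and the subadditivity of $\int_0^t(\cdot)^*$), proving \eqref{DS2*} is equivalent to establishing
\begin{equation*}
K(t, f; (V^k_1(\R^d))_0, (V^k_\infty(\R^d))_0) \asymp K(t, |\nabla^k f|; L_1(\R^d), L_\infty(\R^d)).
\end{equation*}
The lower bound $K(t, |\nabla^k f|; L_1, L_\infty) \lesssim K(t, f; (V^k_1)_0, (V^k_\infty)_0)$ is routine: given any admissible decomposition $f = g+h$, the pointwise inequality $|\nabla^k f| \leq |\nabla^k g| + |\nabla^k h|$ and the subadditivity of $\int_0^t(\cdot)^*$ yield
\begin{equation*}
\int_0^t |\nabla^k f|^*(u)\,du \leq \||\nabla^k g|\|_{L_1(\R^d)} + t \||\nabla^k h|\|_{L_\infty(\R^d)},
\end{equation*}
and taking the infimum gives the claim.

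The reverse direction $K(t, f; (V^k_1)_0, (V^k_\infty)_0) \lesssim \int_0^t |\nabla^k f|^*(u)\,du$ requires a decomposition $f = g + h$ realizing the $K$-functional. The strategy is the DeVore--Scherer Whitney construction adapted to the homogeneous setting: with $\lambda := |\nabla^k f|^*(t)$, define the bad set $E_\lambda = \{x \in \R^d : M(|\nabla^k f|)(x) > c\lambda\}$ (where $M$ is the Hardy--Littlewood maximal operator), take a Whitney decomposition $E_\lambda = \bigsqcup_j Q_j$, and on each $Q_j$ let $P_j$ be the polynomial of degree $<k$ that best approximates $f$ on a fixed dilate of $Q_j$. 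Gluing the $P_j$'s into a single function $h$ via a smooth partition of unity subordinate to $\{Q_j\}$, and setting $g = f - h$, one obtains via standard Poincar\'e-type estimates on Whitney cubes the bounds
\begin{equation*}
\||\nabla^k g|\|_{L_1(\R^d)} \lesssim \int_{E_\lambda} |\nabla^k f|\,dx \lesssim \int_0^t |\nabla^k f|^*(u)\,du, \qquad \||\nabla^k h|\|_{L_\infty(\R^d)} \lesssim \lambda \lesssim \frac{1}{t}\int_0^t |\nabla^k f|^*(u)\,du,
\end{equation*}
since $|E_\lambda| \lesssim t$ by the weak-type (1,1) bound for $M$.

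The principal technical obstacle is to verify that the decomposition actually takes values in the homogeneous Sobolev classes: we need $g \in (V^k_1(\R^d))_0$ and $h \in (V^k_\infty(\R^d))_0$, that is, all derivatives of $g$ and $h$ of order $\leq k-1$ must vanish at infinity. The hypothesis $f \in (V^k_1(\R^d))_0 + (V^k_\infty(\R^d))_0$ is used precisely here: $f$ and its derivatives up to order $k-1$ vanish at infinity, and since $h$ coincides with $f$ outside the finite-measure set $E_\lambda$ (while $g$ vanishes there), both $g$ and $h$ inherit the required decay condition. The top-order bounds on Whitney cubes then follow from classical arguments on polynomial approximation (cf. \cite{DeVoreScherer, DeVoreSharpleyb}).

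Finally, the consequence \eqref{LemmaKFunctLorentzSobolev} is derived from \eqref{DS2*} exactly as \eqref{DS2} is obtained from \eqref{DS} in the non-homogeneous case: apply the reiteration theorem of the real interpolation method (cf.\ \cite[Chapter 5, Theorem 2.4, p. 311]{BennettSharpley}) to identify $((V^k_1(\R^d))_0, (V^k_\infty(\R^d))_0)_{\theta, q}$ with $(V^k L_{p,q}(\R^d))_0$ in the appropriate range of parameters, so that the $K$-functional for an intermediate pair $((V^k L_{p_0,q_0})_0, (V^k L_{p_1,q_1})_0)$ is transferred via the map $f \mapsto |\nabla^k f|$ to the $K$-functional for the pair $(L_{p_0,q_0}, L_{p_1,q_1})$, which is governed by Holmstedt's formula (Lemma \ref{LemmaHolm4}).
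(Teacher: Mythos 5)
Your proposal is correct in substance and follows the same underlying strategy as the paper's proof — a DeVore--Scherer/DeVore--Sharpley Whitney-extension decomposition adapted to the homogeneous setting, with the $V$-condition (vanishing at infinity) verified by observing that the extension agrees with $f$ outside a finite-measure set. The reduction to $K(t,|\nabla^k f|; L_1,L_\infty)$, the routine lower bound, the verification of membership in $(V^k_1)_0$ and $(V^k_\infty)_0$, and the passage to \eqref{LemmaKFunctLorentzSobolev} via reiteration and Holmstedt all match the paper.

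Where you differ is the choice of bad set. The paper does not re-derive the Whitney construction; it cites the extension $g_t$ from \cite[Lemma 8.1]{DeVoreSharpleyb}, which is built from the set $E_t = \{f^b_k > (f^b_k)^*(t)\} \cup \{Mf > (Mf)^*(t)\}$ where $f^b_k$ is the DeVore--Sharpley sharp maximal function. This choice has the advantage that $|E_t|_d \leq 2t$ is immediate from the definition of the decreasing rearrangement (each set is a level set at its own $t$-rearrangement value), and the $L_\infty$ bound on $|\nabla^k g_t|$ falls out of the chain $|\nabla^k g_t| \lesssim (g_t)^b_k \lesssim (f^b_k)^*(t) \lesssim (M(|\nabla^k f|))^*(t) \asymp |\nabla^k f|^{**}(t)$ via the Herz--Stein inequality. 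Your version truncates $M(|\nabla^k f|)$ directly, which is closer to the original DeVore--Scherer proof and is perfectly workable, but then $|E_\lambda| \lesssim t$ is not an immediate consequence of the weak-type $(1,1)$ bound at the level $\lambda = |\nabla^k f|^*(t)$ as you assert: for a general $g \in L_1 + L_\infty$ one cannot bound $|\{Mg > c\, g^*(t)\}|$ by $t$. You should instead take $\lambda \asymp |\nabla^k f|^{**}(t)$, for which $|\{M(|\nabla^k f|) > c\lambda\}| \leq t$ does follow from $(Mg)^*(t) \asymp g^{**}(t)$ (Herz--Stein), after which all your subsequent estimates go through unchanged (and the step $\lambda \lesssim \frac{1}{t}\int_0^t |\nabla^k f|^*$ becomes an identity). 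This is a small error in the sketch rather than a gap in the strategy; the essential new content of the paper's proof — the verification of the decay condition \eqref{C} — is handled identically in your argument.
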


The proof of \eqref{DS2*} follows essentially the methodology given in \cite[Theorem 8.4]{DeVoreSharpleyb} to show \eqref{DS}. For completeness, we provide some details below.

\begin{proof}[Proof of Proposition \ref{PropDS}]
 Let $f \in (V^k_1(\R^d))_0 + (V^k_\infty(\R^d))_0$. For $t > 0$, we set
\begin{equation*}
E_t = \{x \in \R^d : f^{b}_k (x) > (f^b_k)^*(t)\} \cup \{x \in \R^d : M f(x) > (M f)^*(t)\}, \quad F_t = \R^d \backslash E_t,
\end{equation*}
where $M$ is the Hardy--Littlewood maximal operator and $f^b_k$ is the sharp maximal operator (see \cite[(1.6)]{DeVoreSharpleyb}). The set $F_t$ is closed and $|E_t|_d \leq 2 t$. 

Let $g_t$ be the extension of $f$ from $F_t$ to $\R^d$ given by  \cite[(8.4)]{DeVoreSharpleyb}. 
We first show that, for each $\lambda > 0$ and $m \in \{ 0, \ldots, k-1\}$, $g_t$ satisfies
the condition 
\begin{equation}\label{C}
	|\{x \in \R^d : |\nabla^m g_t (x)| > \lambda\}|_d < \infty.
\end{equation}
Indeed, we have
\begin{align*}
	\{x \in \R^d : |\nabla^m g_t (x)| > \lambda\} & = \{x \in E_t : |\nabla^m g_t (x)| > \lambda\} \cup \{x \in F_t :  |\nabla^m g_t (x)| > \lambda\} \\
	& \subset E_t \cup  \{x \in F_t :  |\nabla^m f (x)| > \lambda\} \\
	& \subset E_t \cup  \{x \in \R^d :  |\nabla^m f (x)| > \lambda\}
\end{align*}
and thus
\begin{align*}
	|\{x \in \R^d : |\nabla^m g_t (x)| > \lambda\}|_d &\leq |E_t|_d + | \{x \in \R^d :  |\nabla^m f (x)| > \lambda\} |_d \\
	& \leq 2 t +  | \{x \in \R^d :  |\nabla^m f (x)| > \lambda\} |_d < \infty
\end{align*}
because $f \in (V^k_1(\R^d))_0 + (V^k_\infty(\R^d))_0$. 

 According to \cite[Lemma 8.1]{DeVoreSharpleyb}, there holds 
\begin{equation}\label{CZ}
	|g_t(x)| \leq c (M f)^*(t) \quad \text{and} \quad (g_t)^b_k(x) \leq c (f^b_k)^*(t)
\end{equation}
for all $x \in \R^d$. Here $c$ is a positive constant, which is independent of $f, x$ and $t$. We consider the decomposition $f= h_t + g_t$, where $h_t = f-g_t$ and we will show that
\begin{equation}\label{proof1}
	t \||\nabla^k g_t|\|_{L_\infty(\R^d)} \lesssim   \int_0^t |\nabla^k f|^*(u) \, du
\end{equation}
and
\begin{equation}\label{proof2}
	\||\nabla^k h_t|\|_{L_1(\R^d)} \lesssim \int_0^t |\nabla^k f|^*(u) \, du.
\end{equation}
Assuming that these estimates hold, we obtain
\begin{equation*}
 f = h_t + g_t \in (V^k_1(\R^d))_0 + (V^k_\infty(\R^d))_0
 \end{equation*}
  and
\begin{equation*}
	K(t, f; (V^k_1(\R^d))_0, (V^k_\infty(\R^d))_0) \leq \||\nabla^k h_t|\|_{L_1(\R^d)} +  t \||\nabla^k g_t|\|_{L_\infty(\R^d)} \lesssim \int_0^t |\nabla^k f|^*(u) \, du.
\end{equation*}
We start by proving \eqref{proof1}. Using the estimates $f^b_k(x) \lesssim M (|\nabla^k f|)(x)$ and $|\nabla^k g_t (x)| \lesssim (g_t)^b_k(x)$ (see \cite[Theorem 5.6]{DeVoreSharpleyb}) together with the second inequality given in \eqref{CZ}, we infer that
\begin{equation*}
	\||\nabla^k g_t|\|_{L_\infty(\R^d)}  \lesssim \|(g_t)^b_k\|_{L_\infty(\R^d)} \lesssim (f^b_k)^*(t) \lesssim (M(|\nabla^k f|))^*(t) \asymp |\nabla^k f|^{**}(t)
\end{equation*}
where we have used the Herz-Stein inequality (see \cite[Chapter 3, Theorem 3.8, p. 122]{BennettSharpley}) in the last step.

As far as \eqref{proof2}, since $h_t=0$ on $F_t$, we have
\begin{align*}
	\||\nabla^k h_t|\|_{L_1(\R^d)} &= \int_{E_t} |\nabla^k h_t (x) | \, dx \leq \int_{E_t} |\nabla^k f(x)| \, dx + \int_{E_t} |\nabla^k g_t(x)| \, dx \\
	& \leq \int_0^{|E_t|} |\nabla^k f|^*(u) \, du + \||\nabla^k g_t|\|_{L_\infty(\R^d)} |E_t|_d \\
	& \lesssim \int_0^t  |\nabla^k f|^*(u) \, du + t \||\nabla^k g_t|\|_{L_\infty(\R^d)}  \\
	& \lesssim \int_0^t  |\nabla^k f|^*(u) \, du
\end{align*}
where the last estimate follows from \eqref{proof1}.

On the other hand, the estimate
\begin{equation*}
	 \int_0^t |\nabla^k f|^*(u) \, du \lesssim K(t, f; (V^k_1(\R^d))_0, (V^k_\infty(\R^d))_0)
\end{equation*}
follows easily from the fact that $f^*(t) \leq f^{**}(t)$ and the subadditivity of the map $f \to f^{**}$.

\end{proof}

\newpage

\end{document}